\newcounter{mainTheoremCounter}
\newtheoremstyle{mainTheorem}{}{}{\addtolength{\leftskip}{0em}\itshape}{}{\bfseries}{}{.5em}{\thmname{#1} {#2}}
\theoremstyle{mainTheorem}
\newtheorem{mainTheorem}[mainTheoremCounter]{Theorem}
\newtheorem{mainCorollary}[mainTheoremCounter]{Corollary}
\newtheoremstyle{mainDefinition}{}{}{}{}{\bfseries}{}{.5em}{\thmname{#1} {#2}}
\theoremstyle{mainDefinition}
\newtheoremstyle{mainRemark}{}{}{}{}{\bfseries}{}{.5em}{\thmname{#1} {#2}}
\theoremstyle{mainRemark}
\newcounter{appendixTheoremCounter}
\newtheoremstyle{appendixTheorem}{}{}{\addtolength{\leftskip}{0em}\itshape}{}{\bfseries}{}{.5em}{\thmname{#1} {#2}}
\theoremstyle{appendixTheorem}
\theoremstyle{plain}  
\newtheorem{theorem}{Theorem}[subsection]
\newtheorem{lemma}[theorem]{Lemma}
\theoremstyle{definition} 
\newtheorem{definition}[theorem]{Definition}
\newtheorem{re}[theorem]{}
\newtheorem{remark}[theorem]{Remark}
\newcounter{subequation}[equation]  
\newcommand{\num}{\stepcounter{equation}\tag{\arabic{equation}}}
\newcommand{\C}{\mathbb{C}}
\renewcommand{\P}{\mathbb{P}}
\newcommand{\Hilb}{\mathrm{Hilb}}
\newcommand{\Chow}{\mathrm{Chow}}
\newcommand{\Quot}{\mathrm{Quot}}
\newcommand{\DT}{\textnormal{\textsf{DT}}}
\newcommand{\uDT}{\widehat{\textnormal{\textsf{DT}}}}
\newcommand{\Cyc}{\mathsf{Cyc}}
\newcommand{\HilbCyc}{\mathrm{Hilb}_{\Cyc}}
\renewcommand{\O}{\mathcal{O}}
\newcommand{\Spec}{\mathrm{Spec}}
\newcommand{\Sym}{\mathrm{Sym}}
\newcommand{\Supp}{\mathrm{Supp}}
\newcommand{\cycle}[1]{\mathfrak{#1}}
\begin{document}
\setlength{\parindent}{0cm}

\title[Unweighted DT Theory of the Banana 3-fold w/Section Classes]{Unweighted Donaldson-Thomas theory of the banana 3-fold with section classes}

\author{Oliver Leigh}
\address{School of Mathematics and Statistics, The University of Melbourne, Victoria, 3010, Australia.}
\address{Department of Mathematics, The University of British Columbia, Vancouver, BC, V6T 1Z2, Canada.}
\address{Matematiska institutionen, Stockholms universitet, 106 91 Stockholm, Sweden}
\curraddr{}
\email{oliver.leigh@math.su.se}
\thanks{}

\subjclass[2010]{}

\keywords{}
\date{}
\dedicatory{}

\begin{abstract}
We further the study of the Donaldson-Thomas theory of the banana threefolds which were recently discovered and studied by Bryan in \cite{Bryan_Banana}. These are smooth proper Calabi-Yau threefolds which are fibred by Abelian surfaces such that the singular locus of a singular fibre is a non-normal toric curve known as a ``banana configuration''. In \cite{Bryan_Banana} the Donaldson-Thomas partition function for the rank 3 sub-lattice generated by the banana configurations is calculated. In this article we provide calculations with a view towards the rank 4 sub-lattice generated by a section and the banana configurations. We relate the findings to the Pandharipande-Thomas theory for a rational elliptic surface and present new Gopakumar-Vafa invariants for the banana threefold. 

\end{abstract}

\maketitle

\vspace{-0.75cm}
\tableofcontents

\section{Introduction}
\subsection{Donaldson-Thomas Partition Functions}
Donaldson-Thomas theory provides a virtual count of curves on a threefold. It gives us valuable information about the structure of the threefold and has strong links to high-energy physics. \\

For a non-singular Calabi-Yau threefold  $Y$ over $\mathbb{C}$ we let
\[
\Hilb^{\beta,n} (Y) = \Big\{ Z\subset Y\,\,\Big|\,\, [Z] = \beta\in H_2(Y), n=\chi(\O_Z) \Big\}
\]
be the Hilbert scheme of one dimensional proper subschemes with fixed homology class and holomorphic Euler characteristic. We can define the $(\beta,n)$ \textit{Donaldson-Thomas invariant} of $Y$ by:
\[
\DT_{\beta,n}(Y) = 1\cap [\Hilb^{\beta,n} (Y)]^{\mathrm{vir}}.
\]

Behrend proved the surprising result in \cite{Beh} that the Donaldson-Thomas invariants are actually weighted Euler characteristics of the Hilbert scheme: 
\[
\DT_{\beta,n}(Y) =  e(\Hilb^{\beta,n} (Y), \nu) := \sum_{k\in \mathbb{Z}} k \cdot e(\nu^{-1}(k)). 
\]
Here $\nu: \Hilb^{\beta,n}(Y) \rightarrow \mathbb{Z}$ is a constructible function called the \textit{Behrend function} and its values depend formally locally on the scheme structure of $\Hilb^{\beta,n}(Y)$ \cite{Jiang-Motivic}. We also define the \textit{unweighted Donaldson-Thomas} invariants to be:
\[
\uDT_{\beta,n}(Y) =  e(\Hilb^{\beta,n} (Y)). 
\]
These are often closely related to Donaldson-Thomas invariants and their calculation provides insight to the structure of the threefold. Moreover, many important properties of Donaldson-Thomas invariants such as the PT/DT correspondence and the flop formula also hold for the unweighted case \cite{Toda1,Toda2}.\\

The depth of Donaldson-Thomas theory is often not clear until one assembles the invariants into a partition function. Let $\{ C_1, \ldots, C_N\}$ be a basis for $H_2(Y, \mathbb{Z})$, chosen so that if $\beta\in H_2(Y, \mathbb{Z})$ is effective then $\beta =d_1C_1 + \cdots +d_N C_N$ with each $d_i \geq 0$. The Donaldson-Thomas partition function of $Y$ is:
\begin{align*}
Z(Y) :=& \sum_{\beta\in H_2(Y, \mathbb{Z})} \sum_{n\in\mathbb{Z}} \DT_{\beta,n}(Y) Q^\beta p^n\\
:=& \sum_{d_1,\ldots,d_N \geq 0} \sum_{n\in\mathbb{Z}} \DT_{(\sum_i d_iC_i ),n}(Y) Q_1^{d_1}\cdots Q_1^{d_N}p^n.
\end{align*}
We also define the analogous partition function $\widehat{Z}$ for the unweighted Donaldson-Thomas invariants.\\

\begin{remark}
This choice of variable is not necessarily the most canonical as shown in \cite{Bryan_Banana} where the variable $p$ is substituted for $-p$. However, in this article we will be focusing on the unweighted Donaldson-Thomas invariants where this choice makes the most sense. 
\end{remark}

The Donaldson-Thomas partition function is \textit{very} hard to compute. Indeed, for proper Calabi-Yau threefolds, the only known examples of a complete calculation are in computationally trivial cases. However, when we restrict our attention to subsets of $H_2(Y, \mathbb{Z})$ there are many remarkable results. Two interesting cases which are related to the computations in this article are the Schoen (Calabi-Yau) threefold of \cite{Schoen} and the banana (Calabi-Yau) threefold of \cite{Bryan_Banana}.\\

We will employ computational techniques developed in \cite{BK} for studying Donaldson-Thomas theory of local elliptic surfaces. 

\subsection{Donaldson-Thomas Theory of Banana Threefolds}\label{banana_definition_section}

\begin{figure}
  \centering
      \includegraphics[width=0.37\textwidth]{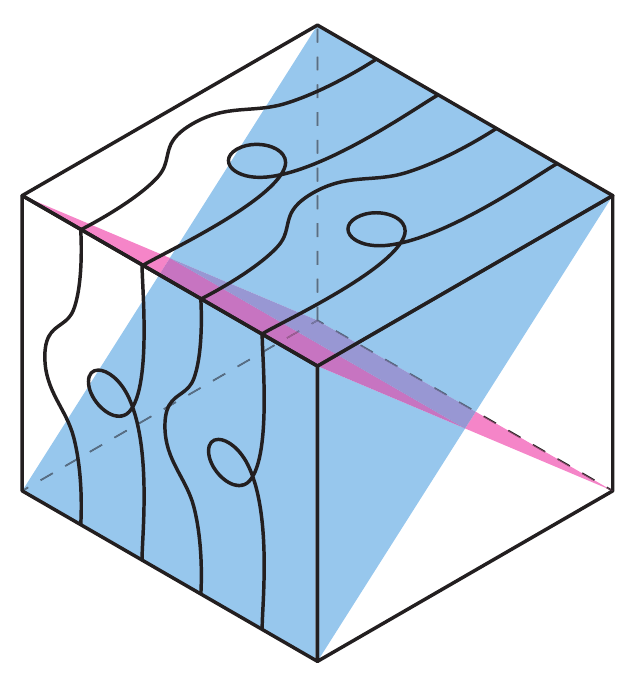}\hspace{1cm}
       \includegraphics[width=0.37\textwidth]{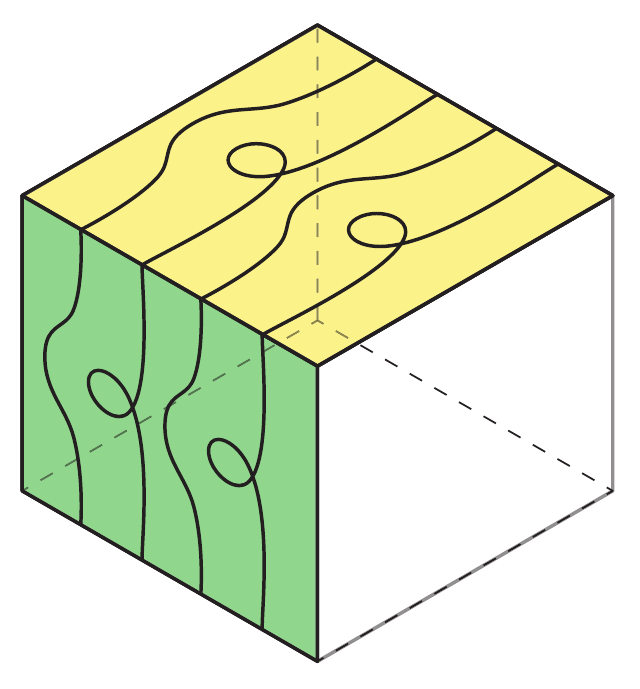}
      \vspace{-1.25em}
  \caption[The banana threefold]{A visual representation of the banana threefold. On the left the diagonal $S_{\Delta}$ and the anti-diagonal $S_{\mathsf{op}}$ are highlighted. On the right the two rational elliptic surfaces $S_{1}$ and $S_{2}$ are highlighted.}
\end{figure}

The banana threefold is of primary interest to us and is defined as follows. Let $\pi: S \rightarrow \P^1$ be a generic rational elliptic surface with a section $\zeta: \P^1 \rightarrow S$. We will take $S$ to be $\P^2$ blown-up at 9 points and $\pi$ given by a generic pencil of cubics. This gives rise to 9 natural choices for $\zeta$ and we choose one. The associated \textit{banana threefold} is the blow-up
\[
X := \mathrm{Bl}_{\Delta} (S\times_{\P^1} S) \num \label{banana_X_definition}
\]
where $\Delta$ is the diagonal divisor in $S\times_{\P^1} S$. The surface $S$ is smooth but the morphism $\pi:S \rightarrow \P^1$ is not. It is singular at 12 points of $S$ which are the nodes of the nodal fibres of $\pi$. This gives rise to 12 conifold singularities of $S\times_{\P^1} S$ that all lie on the divisor $\Delta$. It also, makes $X$ a conifold resolution of $S\times_{\P^1} S$. $X$ is a non-singular simply connected proper Calabi-Yau threefold  \cite[Prop. 28]{Bryan_Banana}.  \\

There is a natural projection $\mathrm{pr}: X\rightarrow \P^1$ and a unique section $\sigma: \P^1\rightarrow X$ arising canonically from $\zeta$.
The generic fibres of the map $\mathrm{pr}: X\rightarrow \P^1$ are Abelian surfaces of the form $E\times E$ where $E=\pi^{-1}(x)$ is the elliptic curve given by the fibre of a point $x\in \P^1$. The projection map $\mathrm{pr}$ also has 12 singular fibres which are non-normal toric surfaces. They are each a compactification of $\C^* \times \C^*$ by a reducible singular curve called a \textit{banana configuration} (c.f. Definition \ref{banana_definition}). Furthermore, the normalisation of a singular $\mathrm{pr}^{-1}(x)$  is isomorphic to $\P^1\times \P^1$ blown up at 2 points \cite[Prop. 24]{Bryan_Banana}.\\

The rational elliptic surface $\pi: S \rightarrow \P^1$, together with the section $\zeta: \P^1 \rightarrow S$, is a Weierstrass fibration. This means that there is a consistent way of choosing Weierstrass coordinates for each fibre (see \cite[III.1.4]{Miranda_Rational}). Thus we have an involution $\iota:S\rightarrow S$ which gives rise to a canonical group law on each fibre where the identity is defined by $\zeta$ and the inverse defined by $\iota$. \\

We will fix  four natural divisors of $X$ for the remainder of this article. The first two arise from considering the natural projections $\mathrm{pr}_i:X \rightarrow S$ and the sections $S_i:S\rightarrow X$ arising from $\zeta$. We denote the corresponding divisors by $S_{1}$ and $S_{2}$. \\

The third and fourth natural divisors of $X$ arise by considering the diagonal $\Delta$ and anti-diagonal $\Delta^{\mathsf{op}}$ (the graph of $\iota$) of $S\times_{\P^1}S$. The anti-diagonal intersects the diagonal in a curve on $\Delta^{\mathsf{op}}$, so it is unaffected by the blow-up. We denote the anti-diagonal divisor in $X$ by $S_{\mathsf{op}}$ and the proper transform of the diagonal by $S_{\Delta}$. The latter is a rational elliptic surface blown-up at the 12 nodal points of the fibres. 

\begin{figure}
  \centering
        \includegraphics[width=0.7\textwidth]{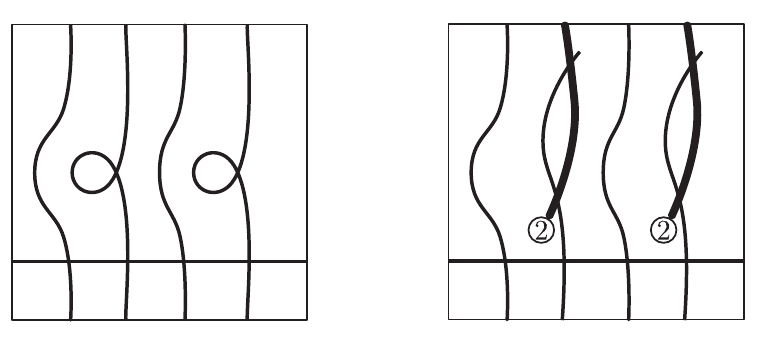}
      \vspace{-1.25em}
  \caption[Natural divisors in the banana threefold]{On the left is a visual representation of the rational elliptic surfaces $S_{1}$, $S_{2}$ and $S_{\mathsf{op}}$. On the right is the diagonal surface  $S_{\Delta}$. Note that the exceptional curves in the fibres of the pencil have order 2.}
\end{figure}

\begin{definition} \label{banana_definition}
A \textit{banana configuration} is a union of three curves $C_1 \cup C_2 \cup C_3$ where $C_i \cong \P^1$ with $N_{C_i/X} \cong \O(-1) \oplus \O(-1)$ and $C_1 \cap C_2 = C_1 \cap C_3 = C_2 \cap C_3 = \{z_1,z_2\}$ where $z_1,z_2\in X$ are distinct points. Also, there exist formal neighbourhoods of $z_1$ and $z_2$ such that the curves $C_i$ become the coordinate axes in those coordinates. We label these curves by their intersection with the natural surfaces in $X$. That is $C_1$ is the unique banana curve that intersects $S_{1}$ at one point. Similarly, $C_2$ intersects $S_{2}$ and $C_3$ intersects $S_{\mathsf{op}}$. 
\end{definition}

\begin{figure}
  \centering
        \includegraphics[width=0.7\textwidth]{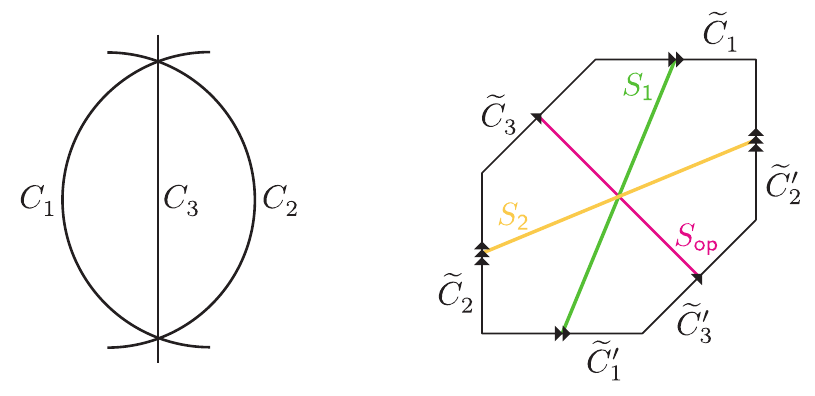}
      \vspace{-1.25em}
  \caption[The banana configuration and the surface containing it]{On the left is a depiction of the banana configuration. On the right is the normalisation of the singular fibre $ F_{\mathsf{ban}}= \mathrm{pr}^{-1}(x)$ with the restrictions of the surfaces $S_{1}$, $S_{2}$, $S_{\mathsf{op}}$.}
\end{figure}

The banana threefold contains 12 copies of the banana configuration. We label the individual banana curves by $C_i^{(j)}$ (and simply $C_i$ when there is no confusion or distinction to be made). We have that $C^{(j_1)}_i \sim C^{(j_2)}_i$ in $H_2(X, \mathbb{Z})$ for each choice of $i,j_1$ and $j_2$. The banana curves $C_1,C_2,C_3$  generate a sub-lattice  $\Gamma_0 \subset H_2(X, \mathbb{Z})$ and we can consider the partition function restricted to these classes:
\[
Z_{\Gamma_0} := \sum_{\beta\in\Gamma_0} \sum_{n\in\mathbb{Z}} \DT_{\beta,n}(X) Q^\beta p^n.
\]
In \cite[Thm. 4]{Bryan_Banana}, this rank three partition function is computed to be:
\[
Z_{\Gamma_0} = \prod_{d_1,d_2,d_3\geq 0} \prod_{k} (1- Q_1^{d_1}Q_2^{d_2}Q_3^{d_3} (-p)^k )^{-12 c(\|\bm{d}\|,k)} \num \label{Bryan_banana_partition_function_main}
\]
where $\bm{d} = (d_1,d_2,d_3)$ and the second product is over $k\in \mathbb{Z}$ unless $\bm{d} = (0,0,0)$ in which case $k>0$. (Note the change in variables from \cite{Bryan_Banana}.) The powers $ c(\|\bm{d}\|,k)$ are defined by
\[
\sum_{a=-1}^\infty \sum_{k\in\mathbb{Z}} c(a,k)Q^a y^k := \frac{\sum_{k\in \mathbb{Z}} Q^{k^2} (-y)^k}{\left(\sum_{k\in \mathbb{Z}+\frac{1}{2}} Q^{2k^2} (-y)^k\right)^2} = \frac{\vartheta_4(2\tau,z)}{\vartheta_1(4\tau,z)^2}
\]
such that $\|\bm{d}\|:= 2d_1d_2 +2 d_2d_3 +2d_3d_1 -d_1^2 -d_2^2-d_3^2$, while $\vartheta_1$ and $\vartheta_4$ are Jacobi theta functions with change of variables $Q=e^{2\pi i \tau}$ and $y=e^{2\pi i z}$.\\

\begin{remark}
 The calculation of (\ref{Bryan_banana_partition_function_main}) uses a motivic method where the values of the Behrend function are explicitly calculated at the contributing points \cite[Prop. 23]{Bryan_Banana}. By removing these weights we can calculate the unweighted partition function $\widehat{Z}_{\Gamma_0}$ directly. In this case, removing the weights corresponds to the change of variables $Q_i \mapsto -Q_i$ and $p\mapsto -p$ in the Donaldson-Thomas partition function. \\
\end{remark}

We can include the class of the section $\sigma$ to generate a larger sub-lattice $\Gamma \subset H_2(X, \mathbb{Z})$. The partition function of this sub-lattice is currently unknown. The purpose of this article is to make progress towards understanding this partition function. We will be calculating the unweighted Donaldson-Thomas theory in the classes:
\[
\beta = \sigma + (0,d_2, d_3) := \sigma + 0\,C_1 +d_2 \,C_2 + d_3\,C_3,
\]
by computing the following partition function
\[
\widehat{Z}_{\sigma + (0,\bullet,\bullet)} :=   \sum_{d_2,d_3\geq 0}\sum_{n\in \mathbb{Z}} \uDT_{\beta,n}(Y) Q_2^{d_2}Q_3^{d_3} p^n,
\]
which we give in terms of the MacMahon functions $M(p,Q) = \prod_{m>0} (1-p^m Q)^{-m}$ and their simpler version $M(p) = M(p,1)$. \\

\begin{mainTheorem}\label{main_DT_calc_theorem_A}
The above unweighted Donaldson-Thomas functions are:
\begin{enumerate}
\item[] $\widehat{Z}_{\sigma + (0,\bullet,\bullet)}$ is:
\[
\widehat{Z}_{(0,\bullet,\bullet)} \frac{ p}{(1-p)^2} \prod_{m>0} \frac{1}{(1-Q_2^m  Q_3^m)^{8}(1-pQ_2^m  Q_3^m)^{2}(1-p^{-1}Q_2^m  Q_3^m)^{2}}
\]
where $\widehat{Z}_{(0,\bullet,\bullet)}$ is the $Q_1^0$ part of the unweighted version of the $\Gamma_0$ partition function (\ref{Bryan_banana_partition_function_main}) and is given by:
\[
M(p)^{24} \prod\limits_{d>0} \dfrac{M(p, Q_2^d Q_3^d)^{24}}{(1-Q_2^d  Q_3^d)^{12} M(p, -Q_2^{d-1}Q_3^d)^{12} M(p, -Q_2^d Q_3^{d-1})^{12}}.\\
\]
\end{enumerate}
\end{mainTheorem}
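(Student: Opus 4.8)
The plan is to work entirely at the level of topological Euler characteristics, since $\widehat{Z}$ records $e\big(\Hilb^{\sigma+(0,d_2,d_3),n}(X)\big)$, and to exploit the fibred geometry of $X$. First I would record the local model of the section. Under $\mathrm{pr}_1:X\to S$ the section $\sigma$ maps to the $(-1)$-curve $\zeta(\P^1)\subset S$, and since $X$ is Calabi-Yau, adjunction gives $N_{S_1/X}\cong K_{S_1}=K_S$; hence a neighbourhood of $S_1$ is the local elliptic surface $\mathrm{Tot}(K_{S_1})$ to which the techniques of \cite{BK} apply, and inside it $\sigma$ is a rigid curve with $N_{\sigma/X}\cong\O(-1)\oplus\O(-1)$ meeting each of the $12$ singular fibres of $\mathrm{pr}$ in one point of the corresponding banana configuration. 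There are two formulas to establish: the product factorisation $\widehat{Z}_{\sigma+(0,\bullet,\bullet)}=\widehat{Z}_{(0,\bullet,\bullet)}\cdot F$, and the closed form of $\widehat{Z}_{(0,\bullet,\bullet)}$ itself.

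The second formula is the easier one. Setting $d_1=0$ in Bryan's partition function (\ref{Bryan_banana_partition_function_main}) gives $\|\bm d\|=2d_2d_3-d_2^2-d_3^2=-(d_2-d_3)^2$, and since $c(a,k)=0$ for $a<-1$ only the classes with $|d_2-d_3|\le 1$ survive; these are exactly the $d_2=d_3$ terms (contributing $c(0,k)$) and the $|d_2-d_3|=1$ terms (contributing $c(-1,k)$). Applying the unweighting substitution of the Remark ($Q_i\mapsto -Q_i$, $p\mapsto -p$) and resumming then produces the stated $M(p)^{24}\prod_{d>0}(\cdots)$ expression. For the factorisation itself I would stratify $\Hilb$ according to the decomposition $Z=Z_\sigma\sqcup Z_0$, where $Z_\sigma$ is the union of connected components meeting $\sigma$ and $Z_0$ is disjoint from $\sigma$; because $Z$ has $C_1$-degree $0$, its one-dimensional support consists of $\sigma$ together with $C_2,C_3$ banana curves plus embedded points. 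Multiplicativity of Euler characteristic over this disjoint decomposition factors the generating series into the $Z_0$-series, which is precisely the section-free theory $\widehat{Z}_{(0,\bullet,\bullet)}$, and the $Z_\sigma$-series $F$.

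The remaining and genuinely new task is to compute $F$, which further splits into the contribution of $\sigma$ away from the singular fibres and the contribution of $C_2,C_3$ curves attached to $\sigma$ inside them. The bare rigid $(-1,-1)$-curve contributes $\tfrac{p}{(1-p)^2}$ to the Euler-characteristic series of its embedded-point thickenings, this being the coefficient of $Q$ in the resolved-conifold series $\prod_m(1-p^mQ)^m$. The interaction factor $\prod_{m>0}(1-Q_2^mQ_3^m)^{-8}(1-pQ_2^mQ_3^m)^{-2}(1-p^{-1}Q_2^mQ_3^m)^{-2}$ should then emerge from the local analysis at each of the $12$ banana fibres of how length-$m$ chains of $C_2$ and $C_3$ attach to the section: connectivity through $\sigma$ together with the effectivity constraint forces the attached vertical class to be a multiple of $C_2+C_3$, explaining the balanced dependence on $Q_2^mQ_3^m$, while the exponents are governed by the local $\C^*\times\C^*$-fixed configurations at the two nodes (the total $8+2+2=12$ reflecting $e(S)=12$). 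I would extract these exponents either by torus localisation on the singular fibres or, more transparently, by matching against the Pandharipande-Thomas theory of the rational elliptic surface $S$ in fibre classes, as foreshadowed in the introduction.

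The hard part will be this last step: pinning down the exact $p$-refinement, i.e.\ the split of the exponent $12$ into $(8,2,2)$ across the $p^0$ and $p^{\pm1}$ terms. This requires controlling not merely the number of attachable chains but the full Euler characteristics of the relevant local Hilbert schemes near each node of the banana configurations --- equivalently the refined Gopakumar-Vafa content in the classes $m(C_2+C_3)$ --- and verifying that the section introduces no additional moduli or deformations beyond the rigid local models, so that the factorisation $\widehat{Z}_{\sigma+(0,\bullet,\bullet)}=\widehat{Z}_{(0,\bullet,\bullet)}\cdot F$ is exact rather than merely leading-order.
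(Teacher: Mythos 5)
Your plan for the second formula is fine: extracting the $Q_1^0$ part of Bryan's partition function via $\|\bm{d}\|=-(d_2-d_3)^2$ and $c(a,k)=0$ for $a<-1$ is exactly how that closed form arises, and your identification of $\tfrac{p}{(1-p)^2}$ as the embedded-point series of the rigid section is correct. The two steps carrying the real content, however, both have genuine gaps. First, the factorisation step fails as stated. If you split $Z=Z_\sigma\sqcup Z_0$ into components meeting and not meeting $\sigma$, then $Z_0$ is \emph{constrained}: the thickened vertical fibres $\mathrm{pr}_2^{-1}(y)$ occurring in $Z_0$ must lie over points $y\in S\setminus\zeta(\P^1)$, whereas $\widehat{Z}_{(0,\bullet,\bullet)}$ counts configurations of fibres over \emph{all} of $S$. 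Since $e(\zeta(\P^1))=2\neq 0$, this constraint survives passage to Euler characteristics: replacing $\Sym^\bullet$ of a stratum by $\Sym^\bullet$ of that stratum minus $\zeta$ changes the answer. Concretely, in the section-free theory the vertical-fibre contributions are invisible only because $e(\mathtt{Sm}_2)=e(\mathtt{N}_2)=0$, so they enter with exponent $0$; once $\sigma$ is present these strata split as $\mathtt{Sm}^\sigma,\mathtt{Sm}^\emptyset,\mathtt{N}^\sigma,\mathtt{N}^\emptyset$ with Euler characteristics $-10$, $10$, $12$, $-12$, and neither the on-section nor the off-section factor is trivial. Hence ``the $Z_0$-series is precisely $\widehat{Z}_{(0,\bullet,\bullet)}$'' is false, and the product form of the theorem is a \emph{result} of computing all four strata and recombining, not an a priori factorisation.

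Second, your proposed source of the interaction factor is geometrically wrong. The section never meets the banana configurations: $\sigma$ passes through each singular fibre of $\mathrm{pr}$ at a point of the open torus $\C^*\times\C^*$ (it is the lift of $(\zeta(x),\zeta(x))$ with $\zeta(x)$ a smooth point of the nodal cubic), so your claim that it meets ``one point of the corresponding banana configuration'' is false, and there are no ``chains of $C_2$ and $C_3$ attached to the section.'' The banana-curve contribution is therefore identical to the section-free case --- it is the factor $\bigl(\prod_{d>0}M(p,Q_2^dQ_3^d)^2/((1-Q_2^dQ_3^d)M(p,-Q_2^{d-1}Q_3^d)M(p,-Q_2^dQ_3^{d-1}))\bigr)^{12}$, which is precisely the part of $\widehat{Z}_{(0,\bullet,\bullet)}$ beyond $M(p)^{24}$. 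The interaction factor comes instead from partition-thickened fibres of $\mathrm{pr}_2$ over points of $\zeta(\P^1)$: per smooth-fibre point the local series is $\sum_\alpha Q^{|\alpha|}\mathsf{V}_{\alpha\square\emptyset}/\mathsf{V}_{\alpha\emptyset\emptyset}=\frac{1}{1-p}\prod_{d>0}(1-Q^d)/((1-pQ^d)(1-p^{-1}Q^d))$, and per nodal-fibre point it is $\frac{1}{1-p}\prod_{d>0}M(p,Q^d)/((1-pQ^d)(1-p^{-1}Q^d))$ (lemma \ref{vertex_trace_formulas}). Combining these with exponents $-10,12$ and the off-section exponents $10,-12$ gives $8=10+10-12$ and $2=12-10$; the split $(8,2,2)$ is not a decomposition of $e(S)=12$, and no refined Gopakumar-Vafa input in classes $m(C_2+C_3)$ is involved. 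Finally, to make any of these local computations rigorous you would still need the machinery your proposal omits: pushforward along the cycle map, the $(\C^*)^2$-action reducing each of its fibres to partition-thickened curves, and the Quot-scheme decomposition and vertex calculus of \cite{BK}, as carried out in section \ref{main_computation_section_A}.
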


In the following corollary, the connected unweighted \textit{Pandharipande-Thomas} version of the above formula is identified as the connected version of the Pandharipande-Thomas theory for a rational elliptic surface \cite[Cor. 2]{BK}. 

\vspace{0.2cm}
\begin{mainCorollary} \label{PT_sigma+(0,bullet,bullet)}
The connected unweighted Pandharipande-Thomas partition function is:

\vspace{-0.5cm}
\begin{align*}
\widehat{Z}^{\mathsf{PT},\mathsf{Con}}_{\sigma + (0,\bullet,\bullet)}  
&:=\log \left( \frac{\widehat{Z}_{\sigma +(0,\bullet,\bullet)}}{\widehat{Z}_{ (0,\bullet,\bullet)}|_{Q_i=0}} \right)\\
&= \frac{-p}{(1-p)^2}  \prod_{m>0} \frac{1}{(1-Q_2^m  Q_3^m)^{8}(1-pQ_2^m  Q_3^m)^{2}(1-p^{-1}Q_2^m  Q_3^m)^{2}}. \\
\end{align*}
\end{mainCorollary}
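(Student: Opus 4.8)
The plan is to obtain Corollary \ref{PT_sigma+(0,bullet,bullet)} as a formal consequence of Theorem \ref{main_DT_calc_theorem_A}, so that the only substantive work is the degree-zero normalisation and the bookkeeping that converts the disconnected DT series into the connected PT free energy. Writing
\[
F := \frac{p}{(1-p)^2}\prod_{m>0}\frac{1}{(1-Q_2^m Q_3^m)^{8}(1-pQ_2^m Q_3^m)^{2}(1-p^{-1}Q_2^m Q_3^m)^{2}},
\]
Theorem \ref{main_DT_calc_theorem_A} says exactly that $\widehat{Z}_{\sigma+(0,\bullet,\bullet)} = \widehat{Z}_{(0,\bullet,\bullet)}\cdot F$, so the banana factor $\widehat{Z}_{(0,\bullet,\bullet)}$ is common to the section-degree-one and the section-degree-zero series.

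First I would evaluate the normalising denominator $\widehat{Z}_{(0,\bullet,\bullet)}|_{Q_i=0}$. Setting $Q_2=Q_3=0$ in the closed form of $\widehat{Z}_{(0,\bullet,\bullet)}$, every factor $M(p,Q_2^d Q_3^d)$, $M(p,-Q_2^{d-1}Q_3^d)$ and $M(p,-Q_2^d Q_3^{d-1})$ specialises to $M(p,0)=1$, and every factor $(1-Q_2^d Q_3^d)^{-12}$ specialises to $1$, leaving only the point contribution $M(p)^{24}$. Thus the denominator is the degree-zero (MacMahon) DT factor, which is the correct normalisation for passing from DT to PT.

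The conceptual step is then to read the corollary's logarithm as the connected PT free energy. Regarding the full unweighted DT series as a series in the section variable, its degree-zero term is $\widehat{Z}_{(0,\bullet,\bullet)}$ and its linear term is $\widehat{Z}_{\sigma+(0,\bullet,\bullet)}$; dividing through by $M(p)^{24}$ produces the genuine PT series, and because the section class is primitive the coefficient of the section variable in the logarithm of this series is simply the quotient of the linear term by the constant term. In that quotient the common factor $\widehat{Z}_{(0,\bullet,\bullet)}$ cancels, leaving exactly $F$; inserting the overall sign fixed by the unweighted convention of the earlier remark ($Q_i\mapsto -Q_i$, $p\mapsto -p$) gives the stated $-F$. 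Finally I would compare $F$ termwise with the connected unweighted PT series of the rational elliptic surface, noting $\tfrac{p}{(1-p)^2}=\tfrac{1}{(p^{1/2}-p^{-1/2})^2}$, to recognise it as the expression in \cite[Cor. 2]{BK}.

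I expect the main obstacle to be precisely this third step: justifying that the connected/logarithmic operation on the disconnected series really does cancel $\widehat{Z}_{(0,\bullet,\bullet)}$ at the primitive section class, and pinning down the single overall sign in the unweighted theory so that the clean factor from Theorem \ref{main_DT_calc_theorem_A} matches \cite[Cor. 2]{BK}. Once the normalisation $M(p)^{24}$ and this sign are settled, the remaining identification is an immediate comparison of product formulas.
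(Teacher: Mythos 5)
Your first two steps coincide with the paper's own proof (the appendix on connected invariants): one normalises by $\widehat{Z}_{(0,\bullet,\bullet)}|_{Q_i=0}=M(p)^{24}$, writes
$\widehat{Z}_{\Gamma}/M(p)^{24}=\frac{\widehat{Z}_{(0,\bullet,\bullet)}}{M(p)^{24}}\bigl(1+Q_\sigma\frac{\widehat{Z}_{\sigma+(0,\bullet,\bullet)}}{\widehat{Z}_{(0,\bullet,\bullet)}}+\cdots\bigr)$,
and observes that the $Q_\sigma$-linear coefficient of the logarithm is the ratio $\widehat{Z}_{\sigma+(0,\bullet,\bullet)}/\widehat{Z}_{(0,\bullet,\bullet)}$, which Theorem \ref{main_DT_calc_theorem_A} identifies with your product $F$. (Primitivity of $\sigma$ is not actually needed for this formal power-series fact; it matters only for the GV interpretation, since multiple covers contribute in degrees $Q_\sigma^{mb}$ with $mb\geq 2$.) Your closing comparison with \cite[Cor. 2]{BK} is context, not part of the proof.

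The genuine error is your account of the sign. The minus sign in the corollary does not come from the remark on passing between unweighted and weighted invariants: that substitution is $Q_i\mapsto -Q_i$ \emph{together with} $p\mapsto -p$, and applied literally to $F$ it turns $\frac{p}{(1-p)^2}$ into $\frac{-p}{(1+p)^2}$ and replaces $(1-pQ_2^mQ_3^m)$ by $(1+pQ_2^mQ_3^m)$, so it does not produce $-F$ at all. The sign actually comes from the convention fixed in the introduction, where $\widehat{Z}^{\mathsf{PT},\mathsf{Con}}_{\Gamma}$ is expanded with curve classes weighted by $(-Q)^{m\beta}$: the class-$\bigl(\sigma+(0,d_2,d_3)\bigr)$ piece of the connected series is the coefficient of $(-Q_\sigma)$, not of $Q_\sigma$ (the signs on $Q_2$ and $Q_3$ are invisible here because they enter $F$ only through $Q_2^mQ_3^m$, which is fixed under $Q_i\mapsto -Q_i$). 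Extracting the coefficient of $(-Q_\sigma)$ from $\log(\cdots)+Q_\sigma F+O(Q_\sigma^2)$ gives exactly $-F$, which is precisely how the paper concludes. With the sign mechanism corrected, your argument is the paper's proof.
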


We will also be computing the unweighted Donaldson-Thomas theory in the classes:
\[
\beta = b \,\sigma +  (0,0, d_3), \hspace{0.5cm}\beta = b\,\sigma +  (0,1, d_3) \hspace{0.5cm}\mbox{and}\hspace{0.5cm} \beta = b\,\sigma +  (1,1, d_3)
\]
and the permutations involving $C_1$, $C_2$. So for $i,j\in\{0,1\}$ we define 
\[
\widehat{Z}_{\bullet\sigma + (i,j,\bullet)}:= \sum_{b,d_3\geq 0}\sum_{n\in \mathbb{Z}} \uDT_{\beta,n}(Y) Q_\sigma^b  Q_3^{d_3} p^n.
\]
The formulas  will be given in terms of the functions which are defined for $g\in \mathbb{Z}$:
\[
\psi_g = \psi_g(p) := \left(p^{\frac{1}{2}} - p^{-\frac{1}{2}}\right)^{2g-2} = \left(\frac{p} {(1-p)^{2}}\right)^{1-g}.\\
\]

\vspace{0.2cm}
\begin{mainTheorem}\label{main_DT_calc_theorem_B}
The above unweighted Donaldson Thomas functions are:
\begin{enumerate}
\item $\widehat{Z}_{\bullet \sigma + (0,0,\bullet)}$ is:
\[
 M(p)^{24}  \prod\limits_{m>0}(1+p^m Q_{\sigma})^{m}(1+p^m Q_3)^{12m}.
\]
\item $\widehat{Z}_{\bullet \sigma + (0,1,\bullet)} = \widehat{Z}_{\bullet \sigma + (1,0,\bullet)}$ is:
\[
 \widehat{Z}_{\bullet \sigma + (0,0,\bullet)}  \cdot\Big(\big(12\psi_0 +Q_3 (24\psi_0+12\psi_1)  +Q_3^2 (12\psi_0) \big) + Q_\sigma Q_3 \big(12\psi_0 +2\psi_1\big) \Big)
\]
\item $\widehat{Z}_{\bullet \sigma + (1,1,\bullet)}$ is:
 \begin{align*}
 &\widehat{Z}_{\bullet \sigma + (0,0,\bullet)}   \\
&\cdot \Big( ~\Big((144\psi_{- 1} + 24\psi_0+12\psi_1) +Q_3 (576\psi_{- 1} +384\psi_{0} +72\psi_1 + 12\psi_2)
\\
&\hspace{0.9cm}
+Q_3^2 (864\psi_{- 1} +720\psi_0 + 264\psi_1 +24 \psi_2) \\
&\hspace{0.9cm}
+Q_3^3(576\psi_{- 1} +384\psi_{0} +72\psi_1 + 12\psi_2) + Q_3^4(144\psi_{- 1} + 24\psi_0+12\psi_1) 
\Big)
\\
&\hspace{0.4cm}
+Q_\sigma\Big(\big( 12\psi_0+2\psi_1\big) + Q_3\big(288 \psi_{- 1}+96 \psi_0+44\psi_1 \big) 
\\
&\hspace{1.7cm}
+ Q_3^2\big(576 \psi_{- 1}+600\psi_0+156\psi_1+24 \psi_2\big)\\
&\hspace{1.7cm}+ Q_3^3\big(288 \psi_{- 1}+96 \psi_0+44\psi_1 \big)  + Q_3^4\big(12\psi_0+2\psi_1\big)  \Big)\\
&\hspace{0.4cm}+Q_\sigma^2Q_3^2\Big(144 \psi_{- 1}+48 \psi_0+4\Big)\Big).\\
\end{align*}
\end{enumerate}
\end{mainTheorem}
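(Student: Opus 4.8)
The plan is to compute every one of these series as a generating function of topological Euler characteristics, using $\uDT_{\beta,n}(X) = e(\Hilb^{\beta,n}(X))$ together with the fact that $e$ is motivic: additive over locally closed strata and multiplicative along fibrations with constant fibrewise Euler characteristic. First I would organise $\Hilb^{\beta,n}(X)$ by the projection $\mathrm{pr}:X\to\P^1$. A subscheme $Z$ in class $b\,\sigma + (i,j,d_3)$ separates into a horizontal part supported on the section $\sigma$ (carrying $b\,\sigma$), vertical parts supported in fibres (carrying $i\,C_1+j\,C_2+d_3\,C_3$), and zero-dimensional floating points. The crucial reduction is that the generic fibres are Abelian surfaces $E\times E$ with $e=0$, and the moduli of vertical curves in them are swept out by the translation action, so they contribute $0$ to Euler characteristic; everything therefore localises onto the $12$ singular (banana) fibres together with the single horizontal curve $\sigma$.

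Next I would isolate the universal building blocks. Floating points on the smooth threefold give $\sum_n e(\Hilb^n(X))\,p^n = M(p)^{e(X)} = M(p)^{24}$ (using $e(X)=24$, e.g.\ from the conifold resolution count $12+12$), which is the common prefactor. Each of the rigid $(-1,-1)$-curves present contributes, over all its multiplicities, the unweighted resolved-conifold series $\prod_{m>0}(1+p^m Q)^m$, whose linear coefficient is exactly $\psi_0 = p/(1-p)^2$; these are the local-curve contributions supplied by the techniques of \cite{BK}. The section yields $\prod_{m>0}(1+p^m Q_\sigma)^m$ and each of the twelve banana $C_3$-curves independently yields $\prod_{m>0}(1+p^m Q_3)^m$, giving the $12$-fold exponent in case (1). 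That case (1) factorises with no $Q_\sigma Q_3$ coupling reflects that when $d_1=d_2=0$ the section and the $C_3$-curves have disjoint support, so their Hilbert schemes simply multiply.

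The substance of cases (2) and (3) is the local analysis at the two nodes $z_1,z_2$ of each banana configuration, where $C_1,C_2,C_3$ meet and where $\sigma$ meets the configuration. Here I would pass to a torus-equivariant count, exploiting the fibrewise toric structure of the banana fibres so that the relevant fixed loci become combinatorial: $e$ of the local Hilbert scheme reduces to a box/partition count, i.e.\ a ``banana vertex'' with a section insertion. Turning on $C_1$ and/or $C_2$ with multiplicity one is precisely what couples the section, the $C_3$-thickenings, and the two nodes, and this coupling produces the cross terms in $Q_\sigma$ and the $Q_3$-polynomials. The integer coefficients $12,24,144,264,576,864,\dots$ should emerge as Euler characteristics of these coupled configurations, assembled from the $\psi_g$ and weighted by the $12$ configurations and by the combinatorics of distributing embedded multiplicity across the two nodes and the two sheets meeting at each node.

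The main obstacle I anticipate is the fully coupled count in case (3), $\widehat{Z}_{\bullet\sigma+(1,1,\bullet)}$, where $C_1$, $C_2$, $C_3$ and the section interact at both nodes at once. I expect to handle this as a transfer-matrix / TQFT computation along the base of the banana fibre, gluing the two node-vertices and inserting the section, and then to read off each coefficient as a finite $\mathbb{Z}$-combination of the $\psi_g$. Since the bookkeeping of how embedded structure propagates between the two nodes and among the three curves is delicate, I would verify the resulting polynomials against direct low-order computations of $\Hilb^{\beta,n}(X)$ for small $b,d_3$, and against the boundary data already fixed by Theorem \ref{main_DT_calc_theorem_A} (the $b\le 1$ slice) and by the $\Gamma_0$ partition function (\ref{Bryan_banana_partition_function_main}) (the $d_1=d_2=0$ slice), which together pin down the specialisations of all three formulas.
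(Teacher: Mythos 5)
Your overall framework (push the computation to the space of underlying $1$-cycles, reduce to torus-fixed partition-thickened curves, and evaluate with vertex/Quot-scheme combinatorics) is in the same spirit as the paper's, and your treatment of case (1) and of the floating-point factor $M(p)^{24}$ is correct. But there is a genuine gap at the heart of your reduction: the claim that smooth fibres of $\mathrm{pr}$ contribute nothing because their vertical curves ``are swept out by the translation action,'' so that everything localises onto the $12$ banana fibres and the section. This is false, and the error propagates into cases (2) and (3). The translation argument would apply only if the constructible function being integrated (the Euler characteristic of the Hilbert-scheme fibre over a given cycle) were constant along translation orbits, but the section $\sigma$ breaks this symmetry. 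Concretely, for $\beta = \sigma + (0,1,1)$, inside a smooth fibre $E\times E$ the vertical curves of class $(0,1,1)$ form a copy of $E$ (so indeed $e=0$), but the unique such curve meeting $\sigma$ has a different Hilbert-scheme fibre than the others: the thickened section and the vertical curve interact at their intersection point, producing a $\mathsf{V}_{\lambda\square\emptyset}$-type vertex instead of $\mathsf{V}_{\lambda\emptyset\emptyset}$. So the weighted Euler characteristic of each smooth fibre's stratum is $1\cdot A + (-1)\cdot B$ with $A\neq B$, and over the base these strata are $\mathtt{Sm}_2^\sigma$ (with $e=-10$) and $\mathtt{Sm}_2^\emptyset$ (with $e=+10$); after dividing out common vertex factors they contribute $-10(1+Q_\sigma)+10=-10\,Q_\sigma$. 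This is exactly the term that corrects the nodal-fibre contribution $12(1+\psi_0)Q_\sigma$ to the true coefficient $Q_\sigma(12\psi_0+2\psi_1)$ in part (2); discarding the smooth fibres as you propose would yield $Q_\sigma(12\psi_0+12\psi_1)$, which is wrong.

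The same defect is more serious in case (3). Besides thickened fibres and banana curves, the class $(1,1,d)$ is represented by irreducible ``diagonal'' curves lying in smooth fibres --- translates of the graphs of $\pm1$ and, over the finitely many fibres with $j$-invariant $0$ or $1728$, of the extra automorphisms (lemma \ref{ExE_curve_class_lemma2}) --- as well as by images of smooth divisors in $|f_1+f_2|$ inside the singular fibres (lemma \ref{FSing_curve_class_lemma2}). None of this is local at the two nodes of a banana configuration, yet it produces, for instance, the summand $\left(-10+8Q_3+12Q_3^2+8Q_3^3-10Q_3^4\right)$ and a substantial portion of the $Q_\sigma$-linear block of part (3); moreover, nodes of $D$ itself (a $\mathrm{pr}_1$-fibre meeting a $\mathrm{pr}_2$-fibre inside a smooth fibre of $\mathrm{pr}$) give $\mathsf{V}_{\square\square\emptyset}$-vertices far from any banana. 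Your proposed consistency checks against Theorem \ref{main_DT_calc_theorem_A} and the $\Gamma_0$ partition function would detect that something is off, but they cannot supply the missing strata: what is needed is the full stratification of the Chow variety by cycle type as in lemma \ref{classes_bs+(i,j,d)_main_chow_main_lemma}, with the smooth-fibre and diagonal loci further stratified by their intersection with $\sigma$, rather than a localisation onto the singular fibres.
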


The connected unweighted \textit{Pandharipande-Thomas} versions of the formulae in theorem \ref{main_DT_calc_theorem_B} contain the same information but are given in a much more compact form. In fact we can present the same information in an even more compact form using the unweighted \textit{Gopakumar-Vafa} invariants $\widehat{n}^g_{\beta}$ via the expansion
{\small\begin{align*}
&\widehat{Z}^{\mathsf{PT}, \mathsf{Con}}_{\Gamma} (X) \\
&:=~
\sum_{\beta\in \Gamma\setminus\{\bm{0}\}}  \sum_{g\geq 0}  \sum_{m>0}~ \widehat{n}^{g}_{\beta}~ \psi_g(p^m) ~(-Q)^{m\beta} \\
&:=~
\hspace{-0.5em} \sum_{\mbox{\tiny $\begin{array}{c}b, d_1,d_2,d_3\geq 0\\ (b,d_1,d_2,d_3)\neq \bm{0}\end{array}$}} \hspace{-0.5em}  \sum_{g\geq 0}  \sum_{m>0} \widehat{n}^{g}_{(b,d_1,d_2,d_2)}~ \psi_g(p^m) ~(- Q_\sigma)^{mb}(- Q_1)^{md_1}( - Q_2)^{md_2}(- Q_3)^{md_3}. 
\end{align*}}

As noted before, these express the same information as the previous generating functions. For $\beta = (d_1,d_2,d_3)$, these invariants are given in \cite[\S A.5]{Bryan_Banana}. We present the new invariants for $\beta = b\sigma + (i,j,d_3)$ where $b>0$. 

\vspace{0.3cm}
\begin{mainCorollary}\label{GV_bsigma+(i,i,bullet)}
Let $i,j\in \{0,1\}$, $b>0$ and $\beta = b\sigma + (i,j,d_3)$. The unweighted Gopakumar-Vafa invariants $\widehat{n}^g_{\beta}$ are given by:
\begin{enumerate}[label={\arabic*)}]
\item If $b>1$ we have $\widehat{n}^g_{\beta} = 0.$ 
\item If $b=1$ then the non-zero invariants are given in the following table:\\
\end{enumerate}
{
\extrarowsep=0.32em
\begin{tabu}{|@{}X[1.5c]@{}|@{}X[c]@{}|@{}X[c]@{}|@{}X[c]@{}|@{}X[c]@{}|@{}X[c]@{}|@{}X[c]@{}|@{}X[c]@{}|@{}X[c]@{}|}
\hline
\multicolumn{9}{|c|}{Table 1: The non-zero $\widehat{n}^g_{\beta}$ for $\beta = \sigma + (i,j,d_3)$ where $i,j\in\{0,1\}$ and $d_3\geq0$.}\\
\hline
 $(d_1,d_2,d_3)$& $(0,0,0)$ & $(0,1,1)$ & $(1,0,1)$ & $(1,1,0)$ & $(1,1,1)$ & $(1,1,2)$ & $(1,1,3)$ & $(1,1,4)$ \\
\hline
$g=0$ & $1$ & $12$ & $12$ & $12$ & $48$ & $216$  & $48$ & $12$\\
\hline
$g=1$ & $0$ & $2$ & $2$ & $2$ & $44$ & $108$  & $44$ & $2$ \\
\hline
$g=2$ & $0$ & $0$ & $0$ & $0$ & $0$ & $24$  & $0$ & $0$ \\
\hline
\end{tabu}
}
\end{mainCorollary}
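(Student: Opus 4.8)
The plan is to descend from the Donaldson--Thomas series of Theorem~\ref{main_DT_calc_theorem_B} to the connected unweighted Pandharipande--Thomas series and then to read the invariants $\widehat{n}^g_\beta$ off the $\{\psi_g(p^m)\}$-expansion that defines them. First I would apply the unweighted PT/DT correspondence, dividing each $\widehat{Z}_{\bullet\sigma+(i,j,\bullet)}$ by the degree-zero factor $M(p)^{24}$; writing $\widehat{Z}^{\mathsf{PT}}_{(i,j)}$ for the quotients, Theorem~\ref{main_DT_calc_theorem_B} gives $\widehat{Z}^{\mathsf{PT}}_{(0,0)}=\prod_{m>0}(1+p^mQ_\sigma)^m(1+p^mQ_3)^{12m}$ together with $\widehat{Z}^{\mathsf{PT}}_{(0,1)}=\widehat{Z}^{\mathsf{PT}}_{(1,0)}=\widehat{Z}^{\mathsf{PT}}_{(0,0)}\,P_{01}$ and $\widehat{Z}^{\mathsf{PT}}_{(1,1)}=\widehat{Z}^{\mathsf{PT}}_{(0,0)}\,P_{11}$, where $P_{01}$ and $P_{11}$ are the explicit polynomials in $\psi_g,Q_\sigma,Q_3$ appearing there. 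Taking $\log$ of $\sum_{i,j}\widehat{Z}^{\mathsf{PT}}_{(i,j)}Q_1^iQ_2^j$ and extracting the coefficient of $Q_1^iQ_2^j$ for $i,j\in\{0,1\}$ then isolates the graded pieces of the connected function: $\log\widehat{Z}^{\mathsf{PT}}_{(0,0)}$ in bidegree $(0,0)$, the linear coefficient $P_{01}$ in bidegrees $(0,1)$ and $(1,0)$, and $P_{11}-P_{01}P_{10}=P_{11}-P_{01}^2$ in bidegree $(1,1)$ (the $-\tfrac12 X^2$ term of $\log(1+X)$ supplying the product).

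For the table (the case $b=1$) the extraction is then immediate. Whenever $(i,j)\neq(0,0)$, or in the pure section class $\beta=\sigma$, the degree of $\beta$ in one of $\sigma, C_1, C_2$ equals $1$, so $(-Q)^{m\beta}$ forces $m=1$ and no multiple covers interfere; the sought invariants $\widehat{n}^g_{\sigma+(i,j,d_3)}$ are therefore the $\psi_g$-coefficients of the $Q_\sigma^1Q_3^{d_3}$-part of the relevant graded piece, read off after accounting for the signs dictated by the $(-Q)^{m\beta}$ convention. Carrying this out for $P_{01}$ and for $P_{11}-P_{01}^2$ and collecting like powers of $Q_3$ reproduces Table~1.

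The vanishing statement for $b>1$ is where the real content lies, and it is handled sector by sector. In bidegree $(0,0)$ the function $\log\widehat{Z}^{\mathsf{PT}}_{(0,0)}=\sum_{m>0}m\log(1+p^mQ_\sigma)+12\sum_{m>0}m\log(1+p^mQ_3)$ splits with no mixed $Q_\sigma^aQ_3^c$ term, so classes $b\sigma+(0,0,d_3)$ with $b,d_3>0$ never occur, while the pure section is a super-rigid $(-1,-1)$-curve whose only invariant is $\widehat{n}^0_\sigma$; hence $b>1$ contributes nothing. In bidegrees $(0,1)$ and $(1,0)$ the polynomial $P_{01}$ has $Q_\sigma$-degree $1$, killing $b\geq2$ outright. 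The crux is bidegree $(1,1)$: here $P_{11}-P_{01}^2$ must have vanishing $Q_\sigma^2$-part. Using the multiplication rule $\psi_g\psi_h=\psi_{g+h-1}$ (immediate from $\psi_g=(p^{1/2}-p^{-1/2})^{2g-2}$), one computes $(12\psi_0+2\psi_1)^2=144\psi_{-1}+48\psi_0+4\psi_1$, which is exactly the $Q_\sigma^2Q_3^2$-coefficient $144\psi_{-1}+48\psi_0+4$ of $P_{11}$; since $P_{01}^2$ contributes precisely $(12\psi_0+2\psi_1)^2Q_\sigma^2Q_3^2$ and $P_{11}$ carries no other $Q_\sigma^2$ terms, the entire $Q_\sigma^2$-part cancels, so $\widehat{n}^g_{2\sigma+(1,1,d_3)}=0$ and, there being no higher powers of $Q_\sigma$ anywhere, $b\geq3$ is automatic.

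I expect the principal difficulty to be bookkeeping rather than conceptual: one must fix a single consistent normalization for the unweighted PT/DT correspondence and for the signs in $(-Q)^{m\beta}$ so that the $b=1$ extraction returns the positive integers of Table~1 and is compatible with the pure-section multiple-cover analysis in the $(0,0)$ sector. Once the conventions are pinned down, the only substantive computation is verifying the $(1,1)$ cancellation and expanding $P_{11}-P_{01}^2$ and $P_{01}$ into the $\psi_g$-basis via $\psi_g\psi_h=\psi_{g+h-1}$, both of which are routine given the explicit formulas.
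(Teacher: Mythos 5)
Your proposal is correct and follows essentially the same route as the paper's own proof in section \ref{connected_section}: both normalize the series of Theorem \ref{main_DT_calc_theorem_B} by the degree-zero part $M(p)^{24}=\widehat{Z}_{(0,\bullet,\bullet)}|_{Q_i=0}$, take the logarithm, identify the graded pieces of the connected function as $\log \widehat{Z}^{\mathsf{PT}}_{(0,0)}$, $P_{01}$, and $P_{11}-P_{01}P_{10}$, and read the $\widehat{n}^g_\beta$ off the $\psi_g$-coefficients, with the $(0,0)$ sector's split logarithm handling the pure section classes. Your explicit check that the $Q_\sigma^2$-part of $P_{11}-P_{01}^2$ cancels via $\psi_g\psi_h=\psi_{g+h-1}$ is precisely the computation the paper leaves implicit when it states the connected $(1,1)$ series, so the two arguments coincide.
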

\mbox{}

\begin{remark}
We note that the values given only depend on the quadratic form $\|\bm{d}\|:= 2d_1d_2+2d_1d_3 +2d_2d_3 - d_1^2-d_2^2-d_3^3$ appearing in the rank 3 Donaldson-Thomas partition function of \cite[Thm. 4]{Bryan_Banana}. However, there is no immediate geometric explanation for this fact. 
\end{remark}

Corollaries \ref{PT_sigma+(0,bullet,bullet)} and \ref{GV_bsigma+(i,i,bullet)} will be proved in section \ref{connected_section}.



\subsection{Notation}
The main notations for this article have been defined above in section \ref{banana_definition_section}. In particular $X$ will always denote the banana threefold as defined in equation (\ref{banana_X_definition}).

\subsection{Future}
The calculation here is for the unweighted Donaldson-Thomas partition function. However, the method of \cite{BK} also provides a route (up to a conjecture \cite[Conj. 21]{BK})  of computing the Donaldson-Thomas partition function. The following are needed in order to convert the given calculation:
\begin{enumerate}[label={\arabic*)}]
\item A proof showing the invariance of the Behrend function under the $(\C^*)^2$-action used on the strata. 
\item A computation of the dimensions of the Zariski tangent spaces for the various strata. 
\end{enumerate}
A comparison of the unweighted and weighted partition functions of the rank 3 lattice of \cite{Bryan_Banana} reveals the likely differences:
\begin{itemize}
\item[] In the variables chosen in this article one can pass from the unweighted to the weighted partition functions by the change of variables $Q_i \mapsto -Q_i$ and $p\mapsto -p$. 
\end{itemize}
Moreover, the conifold transition formula reveals further insight by a comparison with the Donaldson-Thomas partition function of the Schoen threefold with a single section and all fibre classes. The Donaldson-Thomas theory of the Schoen threefold with a section class was shown in \cite{OP} (via the reduced theory of the product of a $K3$ surface with an elliptic curve) to be given by the weight 10 Igusa cusp form.\\ 

As we mentioned previously the Donaldson-Thomas partition function is very hard to compute. So much so that for proper Calabi-Yau threefolds, the only known complete examples are in computationally trivial cases. This is even true conjecturally and even a conjecture for the rank 4 partition function is highly desirable. The work here shows underlying structures that a conjectured partition function must have. 

\subsection{Acknowledgements}I would like to thank both Jim Bryan and Stephen Pietromonaco for very useful discussions. I would also like to thank the anonymous referee for their extremely valuable comments and suggestions. 

\section{Overview of the Computation}
\subsection{Overview of the Method of Calculation}
We will closely follow the method of \cite{BK} developed for studying the Donaldson-Thomas theory of local elliptic surfaces. However, due to some differences in geometry a more subtle approach is required in some areas. In particular, the local elliptic surfaces have a global action which reduces the calculation to considering only the so-called \textit{partition thickened curves}. \\

Our method is based around the following \textit{continuous map}:
\[
\Cyc:  \Hilb^{\beta,n}(X) \rightarrow \Chow^{\beta} (X).
\]
which takes a  one dimensional subscheme to its $1$-cycle. Here $\Chow^{\beta} (X)$ is the Chow \textit{variety} parametrising $1$-cycles in the class $\beta\in H_2(X,\mathbb{Z})$  (as defined in \cite[Thm. I 3.20]{Kollar_Rational}). The fibres of this map are of particular importance and we denote them by $\HilbCyc^{n}\big(X, \cycle{q}\big)$ where $\cycle{q} \in \Chow_1 (X)$. Each $\HilbCyc^{n}\big(X, \cycle{q}\big)$ is a closed subset of $\Hilb^{n,\beta}(X)$ and hence has the natural structure of a reduced subscheme of $\Hilb^{n,\beta}(X)$ (see \cite[Tag 01J3]{stacks_project} for more details).
\begin{remark}
No such morphism exists in the algebraic category. In fact we note from \cite[Thm. I 6.3]{Kollar_Rational} that there is only a morphism from the semi-normalisation $\Hilb^{\beta,n}(X)^{SN} \rightarrow \Chow^n(X)$. However, the semi-normalisation  $\Hilb^{\beta,n}(X)^{SN}$ is homeomorphic to $\Hilb^{\beta,n}(X)$, which gives rise to the above continuous map. 
\end{remark}

\begin{remark}
While no Hilbert-Chow morphism exists for the Chow \textit{variety}, there is a promising theory of relative cycles developed in \cite[Paper IV]{Rydh_Thesis} which allows for the construction of a morphism of functors. These results were used in \cite{Ricolfi_DT/PT} to study the Donaldson-Thomas theory of smooth curves. 
\end{remark}

Broadly, we will be calculating the Euler characteristics $e\big(\Hilb^{\beta,n}(X)\big)$ using the following method:
\begin{enumerate}[label={\arabic*)}]
\item Push forward the calculation to an Euler characteristic on $\Chow^{\beta} (X)$, weighted by the constructible function $(\Cyc_*1)(\mathfrak{q}):= e\big(\HilbCyc^{n}(X, \cycle{q})\big)$. This is further described in sections \ref{Euler_characteristic_review_section} and \ref{pushing_forward_to_chow_section}.
\item Analyse the image of $\Cyc$ and decompose it into combinations of symmetric products where the strata are based on the types of subscheme in the fibres $\HilbCyc^{n}(X, \cycle{q})$. This is done in section \ref{1-cycles_section}. 
\item Compute the Euler characteristic of the fibres $e\big(\HilbCyc^{n}(X, \cycle{q})\big)$ and show that they form a constructible function on the combinations of symmetric products. This is done in section \ref{main_computation_section}.
\item Use the following lemma to give the Euler characteristic partition function. 
\end{enumerate}

\begin{lemma}\label{BK_sym_product_lemma} \cite[Lemma 32]{BK}
Let $Y$ be finite type over $\C$ and let $g: \mathbb{Z}_{\geq 0} \rightarrow \mathbb{Z}(\!(p)\!)$ be any function with $g(0)=1$. Let $G:\Sym^d(Y)\rightarrow \mathbb{Z}(\!(p)\!)$ be the constructible function defined by 
\[
G(\bm{a x}) = \prod_i g(a_i)
\]
where $\bm{ax} = \sum_i a_i x_i \in \Sym^d(Y)$ and $x_i\in Y$ are distinct points. Then 
\[
\sum_{d=0}^\infty e( \Sym^d(Y), G)  q^d = \left( \sum_{a=0}^\infty g(a) q^a \right)^{e(Y)}
\]
where the $G$-weighted Euler characteristic $e( -, G) $ is defined in equation (\ref{weighted_euler_laurent_poly}).
\end{lemma}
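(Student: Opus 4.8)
The plan is to prove the identity by stratifying each symmetric product according to the multiplicity type of its points, reducing the statement to a combinatorial identity that matches the binomial expansion of the right-hand side. Throughout write $E := e(Y)$. A point of $\Sym^d(Y)$ has the form $\bm{ax} = \sum_i a_i x_i$ with the $x_i$ distinct and $a_i > 0$; recording, for each integer $k \geq 1$, the number $m_k$ of indices $i$ with $a_i = k$ produces a finitely supported sequence $\{m_k\}$ of non-negative integers with $\sum_k k\,m_k = d$. First I would check that the locus $S_{\{m_k\}} \subseteq \Sym^d(Y)$ of points of a fixed type is locally closed, that these loci partition $\Sym^d(Y)$ into finitely many pieces, and that $G$ is by construction \emph{constant} on each, with value $\prod_k g(k)^{m_k}$. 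In particular $G$ is constructible and, by the definition of the weighted Euler characteristic,
\[
e\big(\Sym^d(Y), G\big) \;=\; \sum_{\{m_k\}:\,\sum_k k m_k = d} \Big(\prod_k g(k)^{m_k}\Big)\, e\big(S_{\{m_k\}}\big).
\]

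The geometric heart of the argument is the Euler characteristic of a single stratum. Writing $n := \sum_k m_k$ for the number of distinct points, $S_{\{m_k\}}$ is the quotient of the ordered configuration space $\mathrm{Conf}_n(Y) = Y^n \setminus (\text{diagonals})$ by the free action of $\prod_k \mathfrak{S}_{m_k}$ permuting the points within each multiplicity class. Since the compactly supported Euler characteristic agrees with the topological one on complex varieties and is motivic, inclusion--exclusion over the diagonal arrangement gives the falling factorial $e(\mathrm{Conf}_n(Y)) = E(E-1)\cdots(E-n+1)$; and because a free quotient by a finite group is a topological covering, I may divide by the order of the group to obtain
\[
e\big(S_{\{m_k\}}\big) \;=\; \frac{E(E-1)\cdots(E-n+1)}{\prod_k m_k!}.
\]
This already displays $e(\Sym^d(Y), G)$ as an explicit polynomial in $E$ with coefficients in $\mathbb{Z}(\!(p)\!)$.

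It then remains to identify this with the coefficient of $q^d$ on the right-hand side. Setting $h(q) := \sum_{a \geq 1} g(a) q^a$ and using $g(0) = 1$, I would expand $\big(\sum_{a \geq 0} g(a) q^a\big)^E = (1+h)^E = \sum_n \binom{E}{n} h(q)^n$, apply the multinomial theorem to $h(q)^n$, and observe that $\binom{E}{n} \cdot \tfrac{n!}{\prod_k m_k!} = \tfrac{E(E-1)\cdots(E-n+1)}{\prod_k m_k!}$, so that the resulting sum over types $\{m_k\}$ is term-by-term the sum computed above. Comparing coefficients of $q^d$ completes the proof.

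The main obstacle is the stratum computation in the second paragraph: one must justify both the falling-factorial formula for $\mathrm{Conf}_n(Y)$ (the scissor relation applied to the diagonals, equivalently Stirling inversion of $e(Y^n) = E^n$) and the division by $\prod_k m_k!$ under the free finite-group quotient. Everything else is formal bookkeeping. As a cross-check, and as a way to bypass the combinatorics of the last paragraph, one can note that both sides are polynomials in $E$ and so it suffices to verify the identity for every positive integer $E$: taking $Y$ to be a disjoint union of $E$ points, the decomposition $\Sym^d(Y_1 \sqcup Y_2) = \bigsqcup_{d_1 + d_2 = d} \Sym^{d_1}(Y_1) \times \Sym^{d_2}(Y_2)$ together with the multiplicativity of $G$ makes the generating function multiplicative, reducing the claim to the trivial case $Y = \mathrm{pt}$, where it reads $\sum_d g(d) q^d = \sum_a g(a) q^a$.
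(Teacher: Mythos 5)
The paper contains no proof of this lemma; it is imported verbatim with a citation to \cite[Lemma 32]{BK}, and the argument there is essentially the one you give. Your proof is correct and standard: the stratification of $\Sym^d(Y)$ by multiplicity type, the identification of each stratum as a free quotient of a configuration space with Euler characteristic $E(E-1)\cdots(E-n+1)\big/\prod_k m_k!$ (falling factorial via the scissor relation/Stirling inversion, then division by the order of the freely acting group), and the matching with the generalized binomial--multinomial expansion of $\bigl(1+\sum_{k\geq 1}g(k)q^k\bigr)^{E}$ --- which is indeed valid for negative integers $E$ since the series has constant term $1$ --- is exactly the intended route, and your closing reduction to $Y$ a finite set of points via multiplicativity under disjoint union is a legitimate way to bypass the final combinatorial step, given that the stratum computation already shows the left-hand side is a polynomial in $E$.
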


\begin{figure}
\centering
\begin{tabu}{X[C]X[C]}
\includegraphics[width=0.36\textwidth]{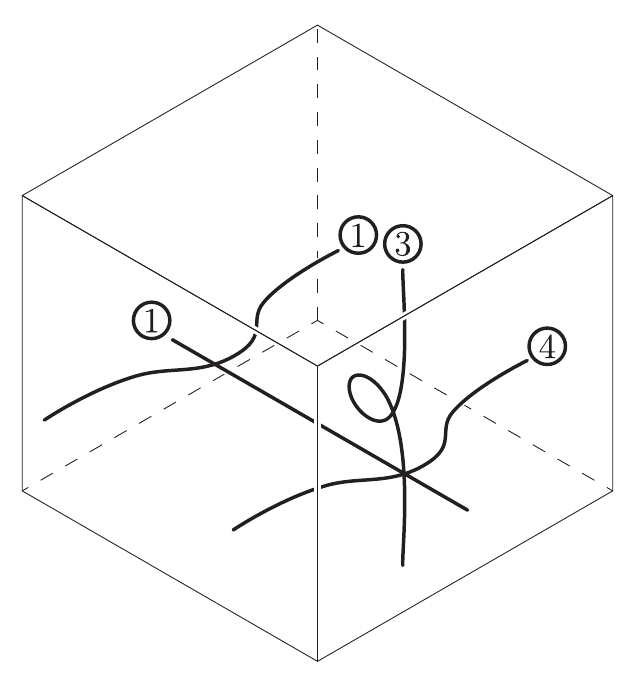}
&
\includegraphics[width=0.36\textwidth]{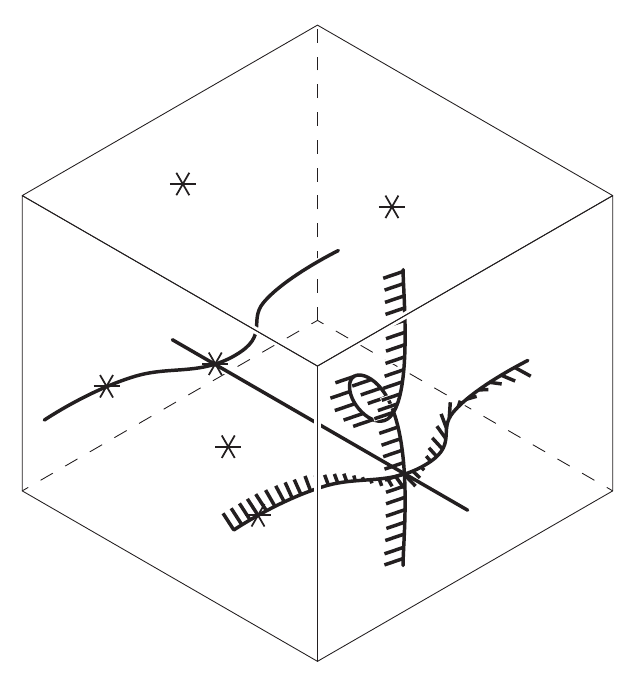}
\\
\includegraphics[width=0.36\textwidth]{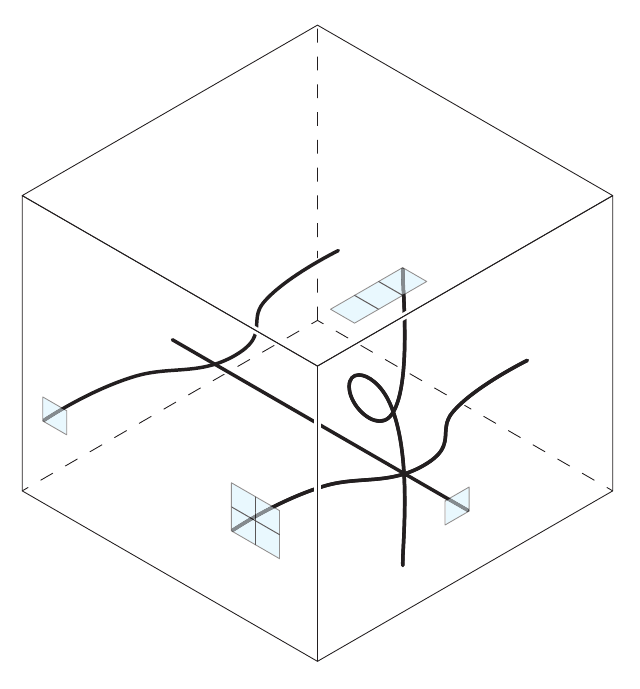}
&
\includegraphics[width=0.36\textwidth]{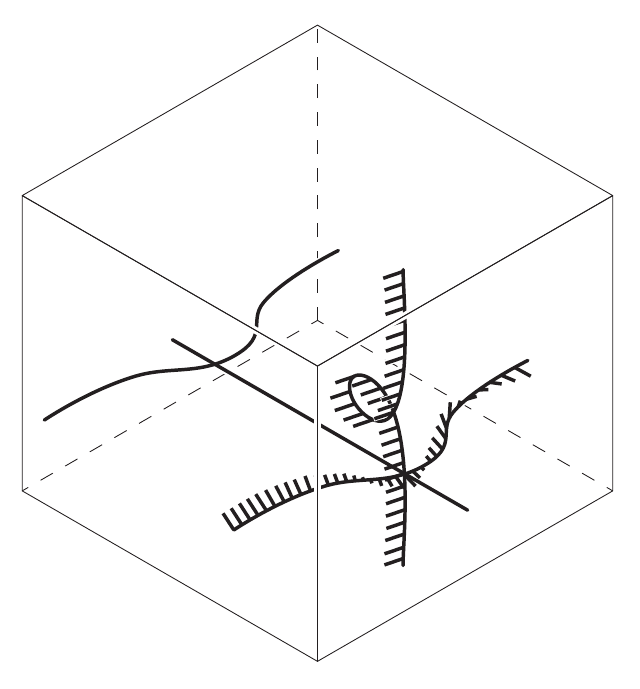}
\end{tabu}
  \caption[The process for reducing to partition thickened curves]{A depiction of the process for reducing to partition thickened curves. Clockwise from the top-left we have: a) Consider a $1$-cycle in the Chow scheme; b) Consider the fibre of the given $1$-cycle; c) Reduce to a computation on the open subset of Cohen-Macaulay subschemes; d) Reduce to a computation on partition thickened schemes. }
\end{figure}

To compute the Euler characteristics of the fibres $(\Cyc_*1)(\cycle{q}):= e\big(\HilbCyc^{n}(X, \cycle{q})\big)$ we use the following method made rigorous in section \ref{theory_computation_section}:
\begin{enumerate}[label={\arabic*)}]
\item Denote the open subset consisting entirely of Cohen-Macaulay subschemes by $\Hilb^{n}_{\mathsf{CM}}(X, \cycle{q}) \subset \HilbCyc^{n}(X, \cycle{q})$, and define the notation $\Hilb^{\blacklozenge}_{\mathsf{CM}}(X, \cycle{q}):=\coprod_{m\in\mathbb{Z}}\Hilb^{m}_{\mathsf{CM}}(X, \cycle{q})$.
\item Consider the constructible map  which takes a subscheme $Z$ to the maximal Cohen-Macaulay subscheme $Z_{\mathsf{CM}}\subset Z$ and denote the constructible map by $\kappa_n: \HilbCyc^{n}(X, \cycle{q}) \rightarrow \Hilb^{\blacklozenge}_{\mathsf{CM}}(X, \cycle{q})$. 
\item Note the equality of the Euler characteristic $e\big(\HilbCyc^{n}(X, \cycle{q})\big)$ and that of the weighted Euler characteristic $e\big(\Hilb^{\blacklozenge}_{\mathsf{CM}}(X, \cycle{q}),(\kappa_n)_*1\big)$ where $(\kappa_n)_*1$ is the constructible function $((\kappa_n)_*1)(\mathsf{p}) := e(\kappa_n^{-1}(\mathsf{p}))$. 
\item Define a $(\C^*)^2$-action on $\Hilb^{\blacklozenge}_{\mathsf{CM}}(X, \cycle{q})$ and show that $\kappa_*1(\mathsf{p}) = \kappa_*1(\alpha\cdot\mathsf{p}) $ meaning $e\big(\HilbCyc^{n}(X, \cycle{q})\big)= e\big(\Hilb^{\blacklozenge}_{\mathsf{CM}}(X, \cycle{q})^{(\C^*)^2},\kappa_*1\big)$. This technique is discussed in section  \ref{action_subsection}. 
\item Identify the $(\C^*)^2$-fixed points $\Hilb^{\blacklozenge}_{\mathsf{CM}}(X, \cycle{q})^{(\C^*)^2}$ as a discrete subset containing partition thickened curves. These neighbourhoods and this action are given explicitly in section \ref{partition_thickened_section}. 
\item Calculate the Euler characteristics $e\big(\Hilb^{\blacklozenge}_{\mathsf{CM}}(X, \cycle{q})^{(\C^*)^2},\kappa_*1\big)$ using the $\Quot$ scheme decomposition and topological vertex method of \cite{BK}. The concept  of this is depicted in figure \ref{vertex_decomp_figure} and described below.  Further technical details are given in  section \ref{Topological_vertex_to_quot_scheme_section}. \\
\end{enumerate}

\begin{figure}
  \centering
      \includegraphics[width=0.95\textwidth]{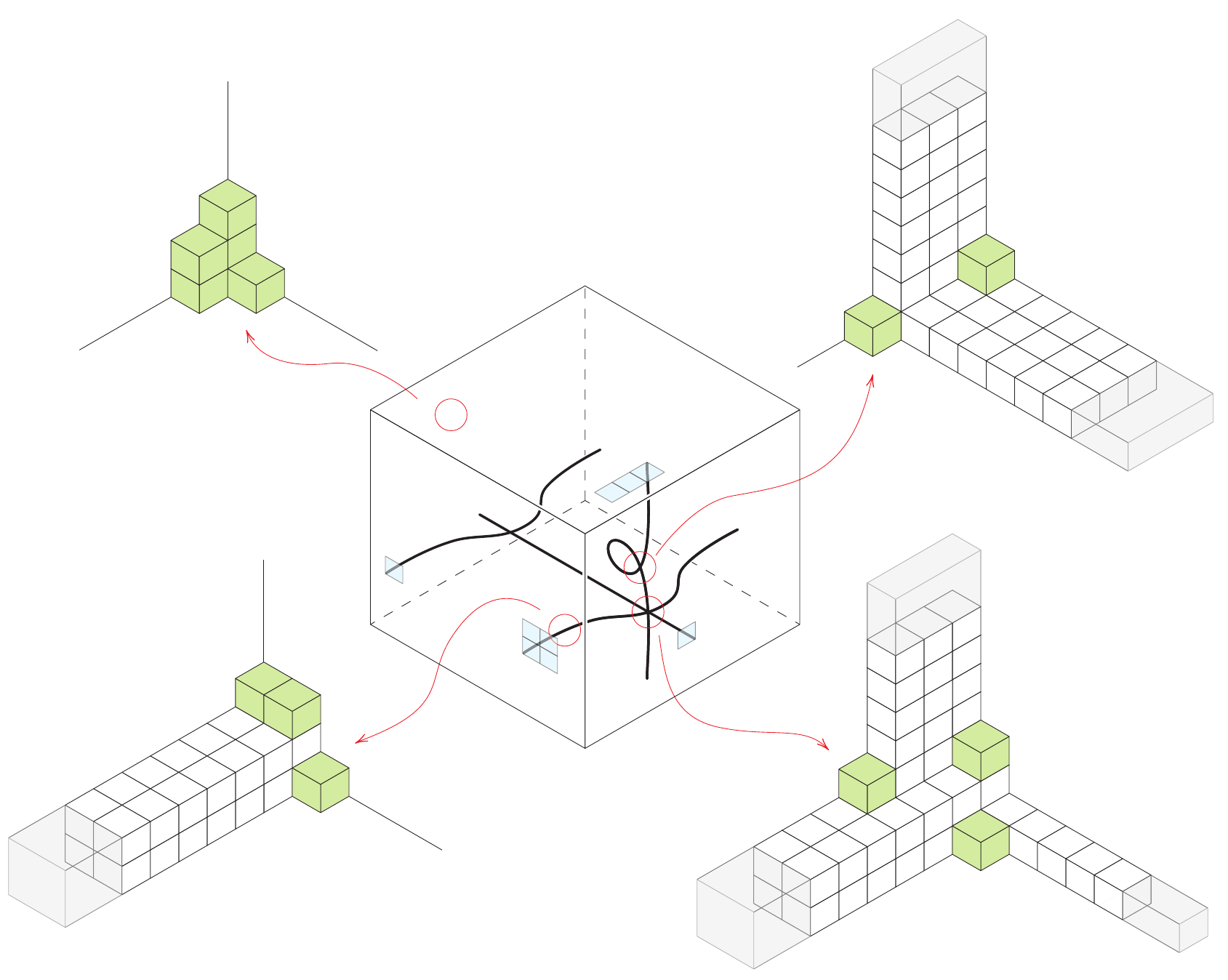}
      \vspace{-1.25em}
  \caption[Euler characteristic from subscheme decompositions]{A depiction of how the topological vertex is applied to calculate Euler characteristic of a given stratum.}\label{vertex_decomp_figure}
\end{figure}

The Euler characteristic calculation of $e\big(\Hilb^{\blacklozenge}_{\mathsf{CM}}(X, \cycle{q})^{(\C^*)^2},\kappa_*1\big)$ for theorems \ref{main_DT_calc_theorem_A}  and \ref{main_DT_calc_theorem_B}  follow similar methods but have different decompositions. 
The calculations are completed by considering the different types of topological vertex that occur for each fixed point in $\Hilb^{m}_{\mathsf{CM}}(X, \cycle{q})^{(\C^*)^2}$ for $m\in\mathbb{Z}$. \\

Since the fixed locus $\Hilb^{m}_{\mathsf{CM}}(X, \cycle{q})^{(\C^*)^2}$ will be a discrete set, we can consider the individual subschemes $C\in \Hilb^{m}_{\mathsf{CM}}(X, \cycle{q})^{(\C^*)^2}$  and their contribution to the Euler characteristic $e\big(\Hilb^{\blacklozenge}_{\mathsf{CM}}(X, \cycle{q})^{(\C^*)^2},\kappa_*1\big)$. To compute the contribution from $C$ we must decompose $X$ as follows:
\begin{enumerate}[label={\arabic*)}]
\item Take the complement $W = X\setminus C$. 
\item Consider, $C^{\diamond}$, the set of singularities of the underlying reduced curve. 
\item Define $C^\circ = C^{\mathsf{red}} \setminus C^\diamond$ to be its complement. 
\end{enumerate}

The curve $C$ will be partition thickened. So each formal neighbourhood of a point $x \in C^{\diamond}$ will give rise to a 3D partition asymptotic to a collection of three partitions (depicted on the righthand side of figure \ref{vertex_decomp_figure}). Similarly, points on $C^\circ $ and $W$ will give rise to 3D partitions asymptotic to collections of three partitions. However, for  $C^\circ $ only one of the three partitions will be non-empty and for $W$ all three partitions will be empty (depicted respectively on the bottom-left and top-left parts of figure \ref{vertex_decomp_figure}). Using techniques from section \ref{Topological_vertex_to_quot_scheme_section} the Euler characteristics can then be determined. \\

This calculation for theorem \ref{main_DT_calc_theorem_A} is finalised in section \ref{main_computation_section_A}. Generalities for the proof of theorem \ref{main_DT_calc_theorem_B} are given in section \ref{main_computation_section_B_prelim} and the individual calculations are given in sections \ref{bs+(0,0,d)_calc_section}, \ref{bs+(0,1,d)_calc_section} and \ref{bs+(1,1,d)_calc_section}.

\subsection{Review of Euler characteristic}\label{Euler_characteristic_review_section}
We begin by recalling some facts about the (topological) Euler characteristic. For a scheme $Y$ over $\C$ we denote by $e(Y)$ the topological Euler characteristic in the complex analytic topology on $Y$. This is independent of any non-reduced structure of $Y$, is additive under decompositions of $Y$ into open sets and their complements, and is multiplicative on Cartesian products. In this way we see that the Euler characteristic defines a ring homomorphism from the Grothendieck ring of varieties to the integers: 
\[
e: K_0(\mathrm{Var}_{\C}) \longrightarrow \mathbb{Z}.
\]
If $Y$ has a $\C^*$-action with fixed locus $Y^{\C^*} \subset Y$ the Euler characteristic also has the property $e(Y^{\C^*}) = e(Y)$ \cite[Cor. 2]{Bia_Fixed}. \\

The interaction of Euler characteristic with constructible functions plays a key role in this article. Recall that a function $\mu: Y\rightarrow \mathbb{Z}(\!(p)\!)$ is constructible if $\mu(Y)$ is finite and $\mu^{-1}(c)$ is the union of finitely many locally closed sets for all non-zero $c\in \mu(Y)$. The $\mu$-weight Euler characteristic is a ring homomorphism
\begin{align}
e(-,\mu): K_0(\mathrm{Var}_{\C}) \longrightarrow \mathbb{Z}(\!(p)\!) \label{weighted_euler_laurent_poly}
\end{align}
defined by $e(Y,\mu) = \sum_{k\in\mathbb{Z}(\!(p)\!)} k\cdot e(\mu^{-1}(k))$. The constant function $1$ and the Behrend function $\nu$ are two canonical examples of constructible functions with images in $\mathbb{Z}\subset \mathbb{Z}(\!(p)\!)$.  Moreover, the usual Euler characteristic is $e(Y) = e(Y, 1)$ where $1$ is the constant function.  \\

For a scheme  $Z$ over $\C$, a constructible map $f:Y\rightarrow Z$ is a finite collection of continuous functions $f_i: Y_i\rightarrow  Z_i$ where $Y=\coprod_{i} Y_i$ is a decomposition into locally closed subsets and $Z_i \subseteq Z$. A constructible homeomorphism is a constructible map such that each $f_i$ is a homeomorphism and $Z=\coprod_{i} Z_i$ is a decomposition into locally closed subsets.  When $f:Y\rightarrow Z$ is a constructible map we define the constructible function $f_*\mu : Z \rightarrow \mathbb{Z}(\!(p)\!)$ by
\[
(f_*\mu)(x) := e( f^{-1}(x),\mu). 
\]
This has the important property $e(Z,f_* \mu)= e(Y,\mu)$. If $\omega:Z\rightarrow \mathbb{Z}(\!(p)\!)$ is another constructible function, then $\mu\cdot \omega$ is a constructible function on $Y\times Z$ and $e(Y\times Z , \mu\cdot \omega) = e(Y , \mu)\cdot e(Z , \omega)$. \\

It will be useful to consider the rings of formal power series in $Q_i$ and formal Laurent series in $p$ with coefficients in $K_0(\mathrm{Var}_{\C})$. An element $P \in K_0(\mathrm{Var}_{\C})[\![Q_i]\!](\!(p)\!)$ is an \textit{indexed} disjoint union of varieties where the indexing is given by monomials. A constructible function $\mu: (\sum_{d_i}\sum_{n} Q_i^{d_1} p^{n} Y_{d_i,n}) \longrightarrow  \mathbb{Z}(\!(p)\!)$ is an indexed collection of constructible functions $\mu_{d_i,n}: Y_{d_i,n} \longrightarrow  \mathbb{Z}(\!(p)\!)$. Moreover, we extend Euler characteristic to a ring homomorphism 
\[
e(-,\mu): K_0(\mathrm{Var}_{\C})[\![Q_i]\!](\!(p)\!) \longrightarrow \mathbb{Z}[\![Q_i]\!](\!(p)\!) 
\]
preserving the indexing $e (\sum_{d_i}\sum_{n} Q_i^{d_1} p^{n} Y_{d_i,n},\mu) := \sum_{d_i}\sum_{n} Q_i^{d_1} p^{n}e( Y_{d_i,n},\mu_{d_i,n})$. \\

Lastly, we extend the definition of constructible map to formal series of varieties in two different ways. Firstly, a constructible map 
$
f: \sum_{n} p^{n} Y_{n} \longrightarrow Z
$
is an indexed collection of constructible maps $f_n: Y_n\rightarrow Z$, and secondly, a constructible map
$
g: \sum_{d_i}\sum_{n} p^{n} Y_{d_i, n} \longrightarrow \sum_{d_i} Z_{d_i}
$
is an indexed collection of constructible maps $g_{d_i}: \sum_{n} Y_{d_i,n}\rightarrow Z_{d_i}$.  We can now also define the push-fowards of constructible functions as before.

\subsection{Pushing Forward to the Chow Variety}\label{pushing_forward_to_chow_section}

Recall that the Chow variety $\Chow^\beta(X)$ is a space parametrising the one dimensional cycles of $X$ in the class $\beta\in H_2(X,\mathbb{Z})$. We then have a constructible map 
\[
\rho_\beta:\sum_n p^n \Hilb^{\beta,n}(X) \rightarrow \Chow^\beta (X).
\]
The strategy for calculating the partition functions is to analyse $\Chow^\beta(X)$ and the fibres of the map $\rho_\beta$. These will often involve the symmetric product, and where possible we will apply lemma \ref{BK_sym_product_lemma}.\\

It will be convenient to employ the following $\bullet$-notations for the Hilbert schemes
\begin{align*}
\Hilb^{\sigma + (0,\bullet, \bullet),\bullet}(X) &:=  \sum_{d_2,d_3\geq0} \sum_{n\in\mathbb{Z}}  Q_\sigma Q_2^{d_2} Q_3^{d_3} p^n \, \Hilb^{\sigma + (0,d_2, d_3),n}(X)\\
\Hilb^{\bullet \sigma + (i,j, \bullet),\bullet}(X) &:=  \sum_{b,d_3\geq0} \sum_{n\in\mathbb{Z}} Q_\sigma^{b}Q_1^i Q_2^j Q_3^{d_3} p^n \, \Hilb^{b \sigma + (i,j, d_3),n}(X)
\end{align*}
 and for the Chow varieties
\begin{align*}
\Chow^{\sigma + (0,\bullet, \bullet)}(X) &:= \sum_{d_2,d_3\geq 0} Q_\sigma Q_2^{d_2} Q_3^{d_3} \Chow^{\sigma + (0,d_2, d_3)}(X)\\
\Chow^{\bullet \sigma + (i,j, \bullet)}(X) &:= \sum_{b,d_3\geq 0}Q_\sigma^{b}Q_1^i Q_2^j Q_3^{d_3} \Chow^{b \sigma + (i,j, d_3)}(X)
\end{align*}
where we have viewed the Hilbert scheme and Chow variety as elements in the Grothendieck ring of varieties, and $i,j \in \{0,1\}$.  The notation $\cycle{q}\in\Chow^{\sigma + (0,\bullet, \bullet)}(X) $ and $\cycle{q}\in\Chow^{\bullet \sigma + (i,j, \bullet)}(X)$ will denote  $\cycle{q}\in\Chow^{\beta}(X)$ for some $\beta\in H_2(X,\mathbb{Z})$. This is what we will mean by the ``points'' of $\Chow^{\sigma + (0,\bullet, \bullet)}(X) $ and $\Chow^{\bullet \sigma + (i,j, \bullet)}(X)$. $\cycle{q}$ will often be given without any associated monomial since that is usually implicitly understood. The $\bullet$-notation is extended to symmetric products by

\vspace{-0.3cm}
\[
\Sym_Q^\bullet( Y) := \sum_{n\in \mathbb{Z}_{\geq 0}} Q^n \Sym^n(Y),
\]
and we use the following notation for elements of the symmetric product
\[
\bm{a}\bm{y} := \sum_i a_i y_i \in \Sym^n( Y) 
\]
where $y_i$ are distinct points on $Y$ and $a_i\in \mathbb{Z}_{\geq0}$. We also denote a tuple of partitions $\bm{\alpha}$ of a tuple of non-negative integers $\bm{a}$ by $\bm{\alpha}\vdash \bm{a}$. As was the case with the Chow variety, we think of  $\bm{a}\bm{y}$ as being a point in $\Sym_Q^\bullet( Y)$.  \\

Using the $\bullet$-notation for the maps $\rho_\beta$ we create the following constructible maps:

\vspace{-0.5cm}
\begin{gather*}
\rho_\bullet: \Hilb^{\sigma + (0,\bullet, \bullet),\bullet}(X) \longrightarrow \Chow^{\sigma + (0,\bullet, \bullet)}(X)\\
\eta^{ij}_\bullet: \Hilb^{\bullet \sigma + (i,j, \bullet),\bullet}(X) \longrightarrow  \Chow^{\bullet \sigma + (i,j, \bullet)}(X)
\end{gather*}
and we also use the notation $\eta_\bullet = \eta^{00}_\bullet + \eta^{01}_\bullet + \eta^{11}_\bullet$. 
The fibres of these maps will be formal sums of subsets of the Hilbert schemes parametrising one dimensional subschemes with a fixed $1$-cycle. Specifically, let $C\subset X$ be a one dimensional subscheme in the class $\beta\in H_2(X)$ with $1$-cycle $\Cyc(C)$. Define $\Hilb^{n}(X,\Cyc(C))\subset \Hilb^{\beta,n}(X)$ to be the closed subset 
\[
\Hilb^{n}(X,\Cyc(C))  = \big\{ Z\in \Hilb^{\beta,n}(X) ~\big|~  \Cyc(Z)=\Cyc(C) \big\}.
\]
The maps $\rho_\bullet$ and $\eta_\bullet$ are explicitly described in lemmas \ref{(1,0,d1,d2)_Chow_lemma}  and \ref{classes_bs+(i,j,d)_main_chow_main_lemma} respectively.

\section{Parametrising Underlying \texorpdfstring{$1$}{1}-cycles} \label{1-cycles_section}

\subsection{Related Linear Systems in Rational Elliptic Surfaces}
In this section we consider some basic results about linear systems on a rational elliptic surface. Some of these result can be found in \cite[\S A.1]{BK}. \\

Recall our notation that $\pi: S \rightarrow \P^1$ is a generic rational elliptic surface with a canonical section $\zeta: \P^1 \rightarrow S$. Consider the following classical results for rational elliptic surfaces from \cite[II.3]{Miranda_Rational}:
\[
\pi_* \O_S \cong \pi_* \O_S(\zeta) \cong \O_{\P^1}
,\hspace{0.7cm}
R^1\pi_* \O_S \cong  \O_{\P^1}(-1)
\hspace{0.7cm}\mbox{and}\hspace{0.7cm}
R^1\pi_* \O_S(\zeta) \cong  0.
\]
After applying the projection formula we have the following:
\begin{align}
\pi_* \O_{S}(dF) \cong \pi_* \O_{S}(\zeta + dF)  \cong \O_{\P^1}(d)  \label{projection_formula_results}
\end{align}
as well as
\begin{align}
R^1\pi_* \O_{S}(dF) \cong \O_{\P^1}(d-1)
\hspace{0.7cm}\mbox{and}\hspace{0.7cm}
R^1\pi_* \O_{S}(\zeta +dF) \cong 0. \label{projection_formula_results_2}
\end{align}

\begin{lemma}\label{H1_lemma}
We have the following isomorphisms:
\[
H^1(S,\O_{S}(dF)) \cong H^0( \P^1,  \O_{\P^1}(d-1))
\hspace{0.7cm}\mbox{and}\hspace{0.7cm}
H^1(S,\O_{S}(\zeta+ dF)) \cong 0.
\]
\end{lemma}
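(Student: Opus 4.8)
The plan is to compute both cohomology groups via the Leray spectral sequence for the fibration $\pi\colon S \to \P^1$, feeding in the pushforward computations already recorded in (\ref{projection_formula_results}) and (\ref{projection_formula_results_2}). Since the base $\P^1$ has cohomological dimension $1$, the groups $H^p(\P^1,-)$ vanish for $p\geq 2$ and the spectral sequence degenerates into its five-term exact sequence. Applied to a sheaf $\mathcal{F}$ on $S$ this reads
\[
0 \to H^1(\P^1, \pi_*\mathcal{F}) \to H^1(S, \mathcal{F}) \to H^0(\P^1, R^1\pi_*\mathcal{F}) \to H^2(\P^1,\pi_*\mathcal{F}),
\]
where the final term vanishes by the dimension of $\P^1$.

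First I would take $\mathcal{F} = \O_S(dF)$. By (\ref{projection_formula_results}) the left-hand term is $H^1(\P^1, \O_{\P^1}(d))$, which vanishes for $d \geq -1$ since $H^1(\P^1,\O_{\P^1}(d)) \cong H^0(\P^1,\O_{\P^1}(-d-2))^\vee$ by Serre duality on $\P^1$. The right-hand term is $H^0(\P^1, R^1\pi_*\O_S(dF)) = H^0(\P^1,\O_{\P^1}(d-1))$ by (\ref{projection_formula_results_2}). Hence the central map is an isomorphism, yielding the first claim. Second, for $\mathcal{F} = \O_S(\zeta + dF)$ the same sequence again has left-hand term $H^1(\P^1,\O_{\P^1}(d)) = 0$, while the right-hand term is now $H^0(\P^1, R^1\pi_*\O_S(\zeta+dF)) = H^0(\P^1,0) = 0$ by (\ref{projection_formula_results_2}); the squeeze forces $H^1(S,\O_S(\zeta+dF)) = 0$.

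There is no serious obstacle here: once the direct image sheaves are in hand, the argument is purely a bookkeeping exercise with the edge maps of the spectral sequence. The only point requiring a moment's care is the vanishing $H^1(\P^1,\O_{\P^1}(d))=0$, valid precisely in the range $d \geq -1$ relevant to the fibre classes under consideration; as a consistency check, the case $d = 0$ recovers the expected $H^1(S,\O_S) = 0$ for the rational surface $S$. Since both isomorphisms arise as edge maps of the Leray spectral sequence, the naturality needed for their use in later sections is automatic.
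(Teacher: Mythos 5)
Your proof is correct and follows essentially the same route as the paper: both use the Leray five-term exact sequence together with the pushforward identifications (\ref{projection_formula_results}) and (\ref{projection_formula_results_2}), with the vanishing $H^1(\P^1,\O_{\P^1}(d))=0$ for $d\geq -1$ doing the work. The only cosmetic difference is that for the second isomorphism the paper invokes the degenerate Leray spectral sequence (via the vanishing of all higher direct images, citing Hartshorne III Ex.\ 8.1) rather than squeezing in the five-term sequence, which amounts to the same thing.
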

\begin{proof}
The second isomorphism is immediate from the vanishing of $R^i\pi_* \O_{S}(\zeta +dF)$ for $i>0$ (see for example \cite[III Ex. 8.1]{Hartshorne}) and $H^0(\P^1, \O_{\P^1}(d) ) \cong 0$. \\

To show the first isomorphism we consider the following exact sequence arising from the  Leray spectral sequence:
\[
H^1( \P^1, \pi_* \O_{S}(dF) ) \rightarrow H^1(S,\O_{S}(dF)) \rightarrow H^0( \P^1, R^1 \pi_*\O_{S}(dF) ) \rightarrow 0
\]
We have from (\ref{projection_formula_results}) that $H^1( \P^1, \pi_* \O_{S}(dF) ) \cong 0$ and we have the desired isomorphism after considering (\ref{projection_formula_results_2}). 
\end{proof}

\begin{lemma} \label{S_linear_system_lemma}
Consider a fibre $F$ of a point $z\in\P^1$ by the map $S\rightarrow \P^1$ and the image of a section $\zeta:\P^1 \rightarrow S$. Then there are isomorphisms of the linear systems 
\[
| d\,F |_{S} \cong |\zeta + d\,F |_{S} \cong | d\,z |_{\P^1} \hspace{0.7cm}\mbox{and}\hspace{0.7cm} |b\,\zeta + F |_{S} \cong |  z |_{\P^1}. 
\] 
\end{lemma}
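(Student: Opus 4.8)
The plan is to read off all three isomorphisms from the sheaf computations of equation (\ref{projection_formula_results}), viewing a complete linear system $|D|_S$ as the projectivisation $\P\big(H^0(S,\O_S(D))\big)$ and using $H^0(S,\mathcal{F}) \cong H^0(\P^1, \pi_*\mathcal{F})$. The first two isomorphisms are then essentially immediate and the genuine content lies in the last one.

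First I would treat $|dF|_S \cong |dz|_{\P^1}$. Since $F = \pi^{-1}(z)$ as a divisor we have $\O_S(dF) \cong \pi^*\O_{\P^1}(d)$, and the projection formula together with $\pi_*\O_S \cong \O_{\P^1}$ shows that the unit of adjunction makes pullback $\pi^*$ an isomorphism $H^0(\P^1,\O_{\P^1}(d)) \to H^0(S,\O_S(dF))$; geometrically this says $|dF|_S = \pi^*|dz|_{\P^1}$, i.e.\ every divisor in $|dF|_S$ is a sum of $d$ fibres. For $|\zeta + dF|_S$ I would let $s_\zeta \in H^0(S,\O_S(\zeta))$ be the section cutting out $\zeta$; multiplication by $s_\zeta$ gives an injection $H^0(S,\O_S(dF)) \hookrightarrow H^0(S,\O_S(\zeta+dF))$, and by (\ref{projection_formula_results}) both spaces have dimension $d+1$, so it is an isomorphism. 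Hence $|\zeta + dF|_S = \zeta + |dF|_S \cong |dz|_{\P^1}$.

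For the last isomorphism I would show that $h^0(S,\O_S(b\zeta + F)) = 2$ for every $b \geq 1$, independently of $b$. The key input is that the section $\zeta$ of the rational elliptic surface is a $(-1)$-curve, so $\zeta^2 = -1$ and $\zeta\cdot F = 1$. Restricting to $\zeta \cong \P^1$ produces the exact sequence
\[
0 \to \O_S\big((b-1)\zeta + F\big) \to \O_S(b\zeta + F) \to \O_\zeta\big((b\zeta + F)|_\zeta\big) \to 0,
\]
whose quotient is $\O_{\P^1}(-b+1)$ of degree $(b\zeta + F)\cdot \zeta = -b+1$. For $b \geq 2$ this has no global sections, so $H^0(S,\O_S((b-1)\zeta + F)) \cong H^0(S,\O_S(b\zeta + F))$; iterating down to $b=1$ gives $h^0(S,\O_S(b\zeta + F)) = h^0(S,\O_S(\zeta + F)) = 2$. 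Equivalently, $(b-1)\zeta$ is a fixed component of $|b\zeta + F|_S$ and multiplication by $s_\zeta^{\,b-1}$ identifies $|\zeta + F|_S$ with $|b\zeta + F|_S$. Combined with the $d=1$ case already established, this yields $|b\zeta + F|_S \cong |\zeta + F|_S \cong |z|_{\P^1}$.

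I expect the last isomorphism to be the main obstacle: unlike the first two it is not read off directly from (\ref{projection_formula_results}), and one must exploit the negativity of the section to see that the high multiplicity of $\zeta$ is both forced and rigid. The only external fact required is $\zeta^2 = -1$, after which the restriction sequence does all the work.
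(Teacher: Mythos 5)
Your proof is correct, and for the main isomorphism $|b\,\zeta + F|_{S} \cong |z|_{\P^1}$ it is essentially the paper's argument: both induct on the divisor sequence for $\zeta$ twisted by $\O_S(k\zeta+F)$, using $\zeta^2=-1$, $\zeta\cdot F=1$ to see that the restriction to $\zeta$ has negative degree and hence no sections. Where you differ is in the first two isomorphisms. The paper first gets $|\zeta+dF|_S \cong |dz|_{\P^1}$ from the vanishing of $R^1\pi_*\O_S(\zeta+dF)$, and then proves $|dF|_S \cong |\zeta+dF|_S$ via the four-term exact sequence coming from the divisor sequence for $\zeta$, using Lemma \ref{H1_lemma} to identify $H^1(S,\O_S(dF))$ with $H^0(\P^1,\O_{\P^1}(d-1))$ and concluding that the connecting map $g$, hence the inclusion $f$, is an isomorphism. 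You instead get $|dF|_S \cong |dz|_{\P^1}$ directly by pullback and then prove $|\zeta+dF|_S \cong |dF|_S$ by the injectivity of multiplication by $s_\zeta$ together with the equality of dimensions $h^0 = d+1$ read off from (\ref{projection_formula_results}); this bypasses the $H^1$ analysis of Lemma \ref{H1_lemma} entirely and is a genuine, if modest, simplification. One further point in your favour: the paper's induction is stated so that the vanishing claim ("degree $-k$, hence no sections") would also be invoked at $k=0$, where the restricted bundle actually has degree $0$; the step from $F$ to $\zeta+F$ really needs the separately established isomorphism $|F|_S \cong |\zeta+F|_S$. Your version handles this cleanly by stopping the induction at $b=1$ and quoting the $d=1$ case.
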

\begin{proof}
The isomorphism $ |\zeta + d\,F |_{S} \cong | d\,z |_{\P^1}$  is immediate from the vanishing of $R^i\pi_* \O_{S}(\zeta +dF)$ for $i>0$  and (\ref{projection_formula_results}) (see for example \cite[III Ex. 8.1]{Hartshorne}).\\

We continue by showing $| d\,F |_{S} \cong |\zeta + d\,F |_{S} $. Consider the long exact sequence arising from the divisor sequence for $\zeta$ twisted by $\O_{S}(\zeta +dF)$:
\begin{align*}
0 &\rightarrow H^0(S, \O_{S}(dF)) \overset{f}{\rightarrow}  H^0(S,\O_{S}(\zeta +dF))\overset{}{\rightarrow} H^0(S,\zeta_* \O_{\P^1}(\zeta +dF))\\
&\overset{g}{\rightarrow}  H^0( \P^1,  \O_{\P^1}(d-1)) \rightarrow  0
\end{align*}
where we have applied the results from lemma \ref{H1_lemma}.  From intersection theory we have that $\zeta_* \O_{\P^1}(\zeta +dF) \cong \zeta_* \O_{\P^1}(d-1)$. So $g$ and hence $f$ are isomorphisms.\\

The isomorphism $ |b\,\zeta + F |_{S} \cong |  z |_{\P^1}$ will follow inductively from the divisor sequence for $\zeta$ on $S$:
\[
0\longrightarrow \O_S (k\zeta +F) \longrightarrow \O_S \big( (k+1)\zeta + F\big) \longrightarrow \O_{\zeta}\big( (k+1)\zeta + F\big)\longrightarrow 0.
\]
Intersection theory shows us that $\O_{\zeta}\big( (k+1)\zeta + F\big)$ is a degree $-k$ line bundle on $\P^1$ which shows that its $0$th cohomology vanishes. Hence, we have isomorphisms:
\[
H^0\big(S, \O_S (F) \big)\cong \cdots \cong H^0\big(S, \O_S (b \zeta + F) \big).
\]
\end{proof}

\subsection{Curve Classes and \texorpdfstring{$1$}{1}-cycles in the Threefold}

Recall from definition \ref{banana_definition} that the banana curves $C_i$ are labelled by their unique intersections with the rational elliptic surfaces
\[
S_{1}, \hspace{0.5cm} S_{2} \hspace{0.5cm}\mbox{and}\hspace{0.5cm} S_{\mathsf{op}}. 
\]
These are smooth effective divisors on $X$. Hence a curve $C$ in the class $(d_1,d_2,d_3)$ will have the following intersections with these divisors:
\[
C \cdot S_{1}=d_1, \hspace{0.5cm} C \cdot S_{2}=d_2 \hspace{0.5cm}\mbox{and}\hspace{0.5cm} C \cdot S_{\mathsf{op}}=d_3.
\]

The full lattice $H_2(X,\mathbb{Z})$ is generated by
\[
C_1,~ C_2,~ C_3,~ \sigma_{11}, ~\sigma_{12},~\ldots~,\sigma_{19},~\sigma_{21},~\ldots~,~ \sigma_{99}
\]
where the $\sigma_{ij}$ are the 81 canonical sections of $\mathrm{pr}:X\rightarrow \P^1$ arising from the 9 canonical sections of $\pi:S\rightarrow \P^1$. However, there are 64 relations between the $\sigma_{ij}$'s giving the lattice rank of 20 (see \cite[Prop. 28 and Prop. 29]{Bryan_Banana}).

\begin{lemma}
There are no relations in $H_2(X,\mathbb{Z})$ of the form:
\[
n\cdot \sigma_{i,j} +  d_1C_1 + d_2C_2 + d_3 C_3 = \sum_{(k,l)\neq (i,j)}  a_{k,l}\cdot \sigma_{k,l} +d'_1C_1 + d'_2C_2 + d'_3 C_3
\]
where $n, a_{k,l}, d_t,d'_t \in \mathbb{Z}_{\geq 0}$ for all $k,l\in\{1,\ldots,9\}$ and $t\in\{1,2,3\}$. 
\end{lemma}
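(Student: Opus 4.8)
The plan is to push the putative relation forward along the two projections $\mathrm{pr}_1,\mathrm{pr}_2\colon X\to S$ and reduce everything to an identity among the classes of the nine sections inside $H_2(S,\mathbb{Z})$, where non-negativity becomes impossible to satisfy nontrivially. Recall that $\sigma_{k,l}$ is the section of $\mathrm{pr}\colon X\to\P^1$ coming from the pair $(\zeta_k,\zeta_l)$ of sections of $\pi\colon S\to\P^1$, so that $\mathrm{pr}_1$ restricts to a degree-one map from $\sigma_{k,l}$ onto $\zeta_k$ and $\mathrm{pr}_2$ to a degree-one map onto $\zeta_l$. Writing $E_1,\dots,E_9$ for these nine sections (the exceptional curves of the blow-up $S$ of $\P^2$ at the nine base points, whose classes are part of the standard basis of $H_2(S,\mathbb{Z})$ and are thus linearly independent), this gives $(\mathrm{pr}_1)_*[\sigma_{k,l}]=[E_k]$ and $(\mathrm{pr}_2)_*[\sigma_{k,l}]=[E_l]$.

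First I would record what happens to the banana classes. Each $C_t$ is contained in a singular fibre of $\mathrm{pr}$, and since $\mathrm{pr}=\pi\circ\mathrm{pr}_1=\pi\circ\mathrm{pr}_2$, the image $\mathrm{pr}_s(C_t)$ lies inside the corresponding fibre of $\pi$, which is an irreducible nodal cubic of class $F$. Hence $(\mathrm{pr}_s)_*[C_t]=m_{s,t}\,[F]$ for some $m_{s,t}\ge 0$, for $s=1,2$. Applying $(\mathrm{pr}_1)_*$ to the relation therefore yields, in $H_2(S,\mathbb{Z})$,
\[
n[E_i]-\sum_{(k,l)\neq(i,j)}a_{k,l}[E_k]=c\,[F],\qquad c:=\sum_t (d'_t-d_t)\,m_{1,t}\in\mathbb{Z}.
\]
Now I would use the basis $H,E_1,\dots,E_9$ of $H_2(S,\mathbb{Z})$: the left-hand side has vanishing $H$-coefficient, while $F=3H-E_1-\cdots-E_9$ contributes $3c$ to the $H$-coefficient, forcing $c=0$. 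Consequently the fibre contribution drops out entirely and we are left with the purely sectional identity $n[E_i]=\sum_{(k,l)\neq(i,j)}a_{k,l}[E_k]$.

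At this point the argument is finished by linear independence of $[E_1],\dots,[E_9]$ together with non-negativity, applied to both projections. Comparing coefficients of $[E_k]$ in the $\mathrm{pr}_1$-identity gives $\sum_l a_{k,l}=0$ for every $k\neq i$, hence $a_{k,l}=0$ whenever $k\neq i$; symmetrically, the $\mathrm{pr}_2$-identity forces $a_{k,l}=0$ whenever $l\neq j$. Thus the only index that could support a non-zero $a_{k,l}$ is $(i,j)$, which is excluded from the sum, so all $a_{k,l}=0$, and the coefficient of $[E_i]$ then reads $n=0$. Feeding $n=0$ and $a_{k,l}\equiv 0$ back into the original equation leaves $\sum_t d_tC_t=\sum_t d'_tC_t$, and intersecting with $S_{1},S_{2},S_{\mathsf{op}}$ forces $d_t=d'_t$, so the relation is the trivial one. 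The main obstacle is not the lattice bookkeeping but the two geometric inputs feeding it, namely identifying $(\mathrm{pr}_s)_*[\sigma_{k,l}]$ and confirming that every banana class pushes forward into $\mathbb{Z}_{\ge 0}[F]$; once the verticality of the $C_t$ over $\P^1$ and the section structure of the $\sigma_{k,l}$ are pinned down, the remainder is forced.
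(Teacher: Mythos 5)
Your proof is correct and follows essentially the same route as the paper: push the relation forward under both projections $\mathrm{pr}_1,\mathrm{pr}_2\colon X\to S$ and exploit the basis $H,E_1,\dots,E_9$ of $H_2(S,\mathbb{Z})\cong\mathbb{Z}^{10}$. The paper's own proof is far terser (it only cites $\mathrm{Pic}\,S\cong\mathbb{Z}^{10}$), and your write-up supplies the details it glosses over --- that the banana classes push forward to non-negative multiples of the fibre class $F=3H-E_1-\cdots-E_9$, that the $H$-coefficient kills this contribution, and that non-negativity then forces all $a_{k,l}$, $n$, and $d_t-d_t'$ to vanish.
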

\begin{proof}
Any such relation must push forward to relations on $S$ via the projections $\mathrm{pr}_i:X\rightarrow S_i$. However, $S$ is isomorphic to $\P^2$ blown up at 9 points. The exceptional divisors of these blow-ups correspond to the sections $\zeta_i : \P^1 \rightarrow S$. Hence
\[
\mathrm{Pic} \,S \cong \mathrm{Pic}\, \P^2 \times \zeta_1 \times \cdots \times \zeta_9 \cong \mathbb{Z}^{10}
\]
and there are are no relations of this form. 
\end{proof}

The next lemma allows us to consider the curves in our desired classes by decomposing them. 

\begin{lemma}\label{curve_decomposition_lemma} Let $d_1,d_2,d_3, b \in \mathbb{Z}_{\geq 0}$ and $i,j\in \{0,1\}$. 
\begin{enumerate}[label={\arabic*)}]
\item  \label{curve_decomposition_lemma_no_sigma}
Let $C$ be a Cohen-Macaulay curve in the class $ (d_1,d_2, d_3)$. Then the support of $C$ is contained in fibres of the projection map $\mathrm{pr}:X \rightarrow \P^1$. 

\item \label{curve_decomposition_lemma_sigma}
A curve $C$ in the class $\sigma + (d_1,d_2, d_3)$ is of the form 
\[
C = \sigma \cup C_0
\]
where $C_0$ is a curve in the class $(d_1,d_2, d_3)$. 
\item \label{curve_decomposition_lemma_bsigma}
 A curve in the class $b\,\sigma + (i,j,d_3)$ is of the form
\[
C = C_\sigma \cup C_0
\]
where $C_\sigma$ is a curve in the class $b\,\sigma$ and $C_0$ is a curve in the class $(i,j,d_3)$. The same result holds for permutations of $b\,\sigma + (i,j,d_3)$. 
\end{enumerate}
\end{lemma}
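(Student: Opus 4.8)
The plan is to use the abelian-surface fibration $\mathrm{pr}\colon X\to\P^1$ as the organising structure, together with two facts: that $\mathrm{pr}$ kills the banana classes while detecting the section, and the non-existence of relations established in the previous lemma. Concretely, each banana curve $C_i$ lies in a fibre of $\mathrm{pr}$, so $\mathrm{pr}_*C_i=0$ in $H_2(\P^1,\mathbb{Z})\cong\mathbb{Z}$, whereas $\mathrm{pr}_*\sigma=[\P^1]$ because $\sigma$ is a section. Hence $\mathrm{pr}_*(d_1,d_2,d_3)=0$ and $\mathrm{pr}_*\big(b\,\sigma+(i,j,d_3)\big)=b\,[\P^1]$.

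For part \ref{curve_decomposition_lemma_no_sigma} I would decompose the Cohen-Macaulay curve $C$ into irreducible components. Cohen-Macaulayness forces $C$ to be pure of dimension one, so each component is an integral curve whose image under $\mathrm{pr}$ is either a point (a \emph{vertical} component) or all of $\P^1$ (a \emph{horizontal} component). A horizontal component pushes forward to a strictly positive multiple of $[\P^1]$ and a vertical one to $0$, so $\mathrm{pr}_*[C]=0$ leaves no room for a horizontal component. Therefore every component is vertical and $\Supp(C)$ meets only finitely many fibres, i.e.\ is contained in fibres of $\mathrm{pr}$.

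For parts \ref{curve_decomposition_lemma_sigma} and \ref{curve_decomposition_lemma_bsigma} I would split $C$ into its horizontal part $C_\sigma$ and vertical part $C_0$. Applying $\mathrm{pr}_*$ shows that the total degree of $C_\sigma$ over $\P^1$ is $b$ (with $b=1$ in part \ref{curve_decomposition_lemma_sigma}). The key input is a description of the admissible component classes: every vertical effective class is a non-negative combination of $C_1,C_2,C_3$ (intersection with $S_1$, $S_2$, $S_{\mathsf{op}}$ identifies the vertical classes with $\mathbb{Z}^3$, in which $C_1,C_2,C_3$ is the standard basis since $C_i\cdot S_j=\delta_{ij}$), while each horizontal component carries the class of one of the canonical sections $\sigma_{kl}$ up to a vertical correction. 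Substituting these into $[C]=[C_\sigma]+[C_0]$ produces an equality of exactly the shape forbidden by the previous lemma, with $\sigma=\sigma_{i_0j_0}$ on the left. That lemma then forces every horizontal component to lie in the class $\sigma$, so that $[C_\sigma]=b\,\sigma$, and forces the remaining vertical class to be precisely $(d_1,d_2,d_3)$, respectively $(i,j,d_3)$; the permutation cases follow because the argument is symmetric in $C_1,C_2,C_3$.

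Finally, in part \ref{curve_decomposition_lemma_sigma} I would promote the class identity $[C_\sigma]=\sigma$ to the scheme-theoretic statement $C_\sigma=\sigma$ using the rigidity of $\sigma$, which is the unique effective curve in its class as the zero-section determined by $\zeta$. I expect the main obstacle to be the control of effective classes in the previous paragraph — verifying that vertical effective curves carry only banana classes and that no horizontal section can contribute an unexpected class — since this is the step where the concrete geometry of the abelian-surface fibration and its banana fibres is genuinely needed, rather than a formal pushforward; once it is secured, the no-relations lemma merely performs the bookkeeping.
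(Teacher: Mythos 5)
Your argument for part \ref{curve_decomposition_lemma_no_sigma} is correct, and it is actually more elementary than the paper's: the paper handles part \ref{curve_decomposition_lemma_no_sigma} the same way as the other parts, by pushing forward along the two projections $\mathrm{pr}_i\colon X\to S_i$ and invoking the linear-system computation of Lemma~\ref{S_linear_system_lemma}, whereas your pushforward to $H_2(\P^1,\mathbb{Z})$ plus purity of $C$ already suffices there.

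For parts \ref{curve_decomposition_lemma_sigma} and \ref{curve_decomposition_lemma_bsigma}, however, there is a genuine gap, located exactly in your two ``key inputs''. First, the claim that every effective vertical class is a non-negative combination of $C_1,C_2,C_3$ does not follow from $C_i\cdot S_j=\delta_{ij}$: that identity only shows the intersection map is an isomorphism on $\Gamma_0$, while to place an arbitrary vertical class inside $\Gamma_0$ you need injectivity of the intersection map on the whole subgroup generated by vertical classes, which is equivalent to the claim itself (in the paper this is the content of the fibre-by-fibre analysis in Lemmas \ref{ExE_curve_class_lemma1}--\ref{FSing_curve_class_lemma2}). Second, and fatally, the claim that every horizontal component has class $\sigma_{kl}$ plus a vertical correction is false. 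The surface $\pi\colon S\to\P^1$ is a generic rational elliptic surface, so its Mordell--Weil group is infinite and $\pi$ has many sections besides the nine exceptional curves: for instance the proper transform $\zeta'$ of a line in $\P^2$ through two of the nine base points is a section with class $h-e_i-e_j$ in $H_2(S,\mathbb{Z})$, and this class is not of the form $e_k+m[F]$ for any $k,m$ (compare coefficients of $h$ against $F=3h-\sum_t e_t$). The induced section $(\zeta',\zeta')$ of $\mathrm{pr}\colon X\to\P^1$ is then a horizontal curve whose class is not any $\sigma_{kl}$ plus a vertical class --- apply $(\mathrm{pr}_1)_*$ to see this. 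Consequently the no-relations lemma, which only forbids relations with non-negative coefficients among the \emph{canonical} sections and banana classes, cannot rule out horizontal components of this kind, and your bookkeeping step collapses: ruling such components out of the class $b\,\sigma+(i,j,d_3)$ is precisely the content of the lemma, not something the class lattice alone can deliver. The paper's proof supplies the missing geometric input: pushing $C$ forward along $\mathrm{pr}_1$ and $\mathrm{pr}_2$ produces effective divisors on $S$ in the classes $\zeta+dF$, respectively $b\zeta+F$, and Lemma~\ref{S_linear_system_lemma} identifies those linear systems with $|d\,z|_{\P^1}$ and $|z|_{\P^1}$, so that \emph{every} effective divisor in them is $\zeta$ (resp.\ $b\zeta$) plus fibres. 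This kills all horizontal components other than $\sigma$ at once, irrespective of their homology classes, and it also proves the rigidity of $\sigma$ (uniqueness of the effective curve in its class) that you assert without justification at the end of part \ref{curve_decomposition_lemma_sigma}.
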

\begin{proof}
Consider a curve in one of the given classes and its image under the two projections $\mathrm{pr}_i:X\rightarrow S_i$. For (1) these must be in the classes $|d_1 f_1|$ and $|d_1 f_1|$, for (2) the classes $|\zeta + d_1 F_1|$ and $|\zeta + d_2 F_2|$, and for (3) the classes $|i f_1|$ and $|j f_1|$. Lemma \ref{S_linear_system_lemma} now shows that the curve must have the given form. 
\end{proof}

\subsection{Analysis of \texorpdfstring{$1$}{1}-cycles in Smooth Fibres of \texorpdfstring{$\mathrm{pr}$}{pr}}

Consider a fibre $F_x = \mathrm{pr}^{-1}(x)$ which is smooth. Then there is an elliptic curve $E$ such that $F_x \cong E\times E$. Consider a curve $C$ with underlying 1-cycle contained in $E\times E$, then this gives rise to a divisor $D$ in $E\times E$. Hence we must analyse divisors in $E\times E$ and their classes in $X$. The class of such a curve is determined uniquely by its intersection with the surfaces $S_{1} ,S_{2}$ and $S_{\mathsf{op}}$.

\begin{lemma}\label{ExE_curve_class_lemma1}
Let $C \subset X$ correspond to a divisor $D$ in $E\times E$.
\begin{enumerate}[label={\arabic*)}]
\item \label{ExE_curve_class_lemma1_(0,d_2,d_3)} If $C$ is in the class  $(0,d_2,d_3)$  then $d_2 = d_3$ and $D$ is the pullback of a degree $d_2$ divisor on $E$ via the projection to the second factor. 
\item The result in \ref{ExE_curve_class_lemma1_(0,d_2,d_3)} is true for $(d_1,0,d_3)$ and projection to the first factor. 
\end{enumerate}
\end{lemma}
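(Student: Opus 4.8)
The plan is to transfer the computation of the intersection numbers $d_i = C\cdot S_i$ onto the smooth fibre $F_x = E\times E$ and then argue by elementary intersection theory there. Write $q_1, q_2 : E\times E \to E$ for the projections to the first and second factors, and let $p_0 = \zeta(x)\in E$ be the identity of the group law on $E$. The underlying $1$-cycle of $C$ is effective and gives the effective divisor $D$, a fact I will use crucially below.

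First I would identify the restrictions of the three relevant surfaces to $F_x$. Since $S_1$ and $S_2$ are the images of the sections of $\mathrm{pr}_1,\mathrm{pr}_2:X\to S$ induced by $\zeta$, their restrictions to $F_x$ are, respectively, the fibre $E\times\{p_0\}$ of $q_2$ and the fibre $\{p_0\}\times E$ of $q_1$; and since $S_{\mathsf{op}}$ is the graph of the inversion $\iota$, its restriction is the anti-diagonal $A := \{(e,-e) : e\in E\}$. Because the $1$-cycle underlying $C$ is supported in the single fibre $F_x$ and each $S_i$ meets $F_x$ in a divisor, the threefold intersection numbers localise to $F_x$, giving $C\cdot S_i = D\cdot (S_i|_{F_x})$ computed on $E\times E$ (as $\O_X(S_i)|_C = \O_{F_x}(S_i|_{F_x})|_C$).

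For part (1) I would then read the hypothesis $d_1=0$ as $D\cdot(E\times\{p_0\})=0$. As $E\times\{p_0\}$ is a fibre of $q_2$ and $D$ is effective, $D$ cannot dominate $E$ under $q_2$ (otherwise a general fibre would meet $D$ in a positive number of points, contradicting the vanishing). Hence $q_2(\Supp D)$ is finite and $D = q_2^*\Sigma$ for an effective divisor $\Sigma$ on $E$; that is, $D$ is pulled back via the projection to the second factor. To extract the degree and prove $d_2=d_3$, I would observe that a fibre $E\times\{v\}$ of $q_2$ meets the fibre $\{p_0\}\times E$ of $q_1$ in one point and meets $A$ in the single point $(-v,v)$, so that
\[
d_2 = D\cdot(\{p_0\}\times E) = \deg\Sigma = D\cdot A = d_3 .
\]
Thus $\Sigma$ has degree $d_2=d_3$, which is the assertion. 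Part (2) follows verbatim after interchanging $q_1$ and $q_2$.

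The hard part will be purely the bookkeeping of the first step: correctly matching each surface to the factor it restricts to, so that the statement's ``second factor'' comes out right, and justifying that the intersection numbers may be computed inside the fibre. Once the restrictions of $S_1,S_2,S_{\mathsf{op}}$ to $F_x$ are pinned down, the remainder is just the one-line positivity observation that an effective divisor with zero intersection against a fibre class is a sum of fibres, together with two elementary incidence counts; in particular no genericity hypothesis on $E$ is needed.
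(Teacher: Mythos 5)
Your proof is correct and follows essentially the same route as the paper's: identify $S_1|_{F_x}$, $S_2|_{F_x}$ and $S_{\mathsf{op}}|_{F_x}$ with a fibre of the second projection, a fibre of the first projection and the anti-diagonal, deduce from $d_1=0$ that the effective divisor $D$ must be a pullback via the second projection, and then count intersections with the other two curves to get $d_2=d_3=\deg\Sigma$. Your write-up simply makes explicit two points the paper leaves implicit, namely the localisation of the intersection numbers $C\cdot S_i$ to the fibre $E\times E$ and the positivity argument showing a divisor with zero intersection against a fibre class is a sum of fibres.
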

\begin{proof}
If $C$ is in the class $(0,d_2,d_3)$ then it doesn't intersect with the surface $S_{1}$. When we restrict to $E\times E$ this is the same condition as not intersecting with a fibre of the projection to the second factor. The only divisors that this is true for are those pulled back from $E$ via the projection to the second factor. A divisor of this form will have intersection with $S_{2}$ of $d_2$ and intersection with $S_{\mathsf{op}}$ of $d_2$. Hence we have that $d_2=d_3$. The proof for part 2) is completely analogous. 
\end{proof}

\begin{lemma}\label{ExE_curve_class_lemma2}
Let $C \subset X$ be in the class $(1,1,d)$ and correspond to a divisor $D$ in $E\times E$. Then $d \in \{0,\ldots,4\}$ and occurs in the following situations:
\begin{enumerate}[label={\arabic*)}]
\item If $E$ has $j(E) \neq 0, 1728$ then:
\begin{enumerate}[label={\alph*)}]
\item $d=0$ occurs when $D$ is a translation of the graph $\{(x,-x)\}$. 
\item $d=4$ occurs when $D$ is a translation of the graph $\{(x,x)\}$.
\item $d=2$ occurs when $D$ is the union of a fibre from the projection to the first factor and a fibre from the projection to the second factor.
\end{enumerate}
\item If $j(E)=1728$ and $E \cong \C/i$ we have the cases a) to c) as well as:
\begin{enumerate}
\item[d)] $d=2$ occurs when $D$ is a translation of the graph $\{(x,\pm i x)\}$. 
\end{enumerate}
\item If $j(E)=0$ and $E \cong \C/\tau$ with $\tau=\frac{1}{2}(1+i \sqrt{3})$ we have the cases a) to c) as well as:
\begin{enumerate}
\item[e)] $d=1$ occurs when $D$ is a translation of the graph $\{(x,-\tau x)\}$ or the graph $\{(x, (\tau-1) x)\}$. 
\item[f)] $d=3$ occurs when $D$ is a translation of the graph $\{(x,\tau x)\}$ or the graph $\{(x, (-\tau+1) x)\}$. 
\end{enumerate}
\end{enumerate}
\end{lemma}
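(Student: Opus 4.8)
The plan is to transport the whole question to intersection theory on the smooth fibre $E\times E$. Write $p_1,p_2\colon E\times E\to E$ for the two projections, let $f_1,f_2$ be the classes of their fibres $\{e\}\times E$ and $E\times\{e\}$, and let $a$ be the class of the anti-diagonal $\{(x,-x)\}$. Lemma~\ref{ExE_curve_class_lemma1} pins down how the three surfaces restrict: since a class $(0,d_2,d_3)$ is exactly a pullback under $p_2$, the restriction $S_{1}|_{E\times E}$ is the fibre class pairing to zero with $f_2$, namely $f_2$ itself, and symmetrically $S_{2}|_{E\times E}=f_1$; by its definition as the graph of the inversion $\iota$, the restriction $S_{\mathsf{op}}|_{E\times E}$ is $a$ (the assignment of $f_1,f_2$ is immaterial by the symmetry below). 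These three classes satisfy $f_1\cdot f_2=f_1\cdot a=f_2\cdot a=1$ and $f_1^2=f_2^2=a^2=0$. Hence the condition that $C$ lie in the class $(1,1,d)$ becomes the statement that $D$ is an effective divisor on $E\times E$ with $D\cdot f_1=D\cdot f_2=1$, and $d=D\cdot a$ is the quantity to compute.

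The main step, and where I expect the real work to be, is a structural classification of such $D$. Writing $D=\sum_i n_iC_i$ as a sum of distinct irreducible curves, I would use that $f_1,f_2$ are nef (fibre classes are base-point free under translation) together with $\sum_i n_i(C_i\cdot f_j)=1$, so that for each $j$ exactly one component meets $f_j$ once and the rest meet it trivially. An irreducible curve $C$ with $C\cdot f_1=0$ has $p_1(C)$ a point, so it is a whole fibre $\{pt\}\times E$ of class $f_1$, and symmetrically for $f_2$. Combining the two balance conditions forces $D$ to be either $f_1+f_2$ or a single irreducible curve of bidegree $(1,1)$. In the latter case $p_1|_C$ and $p_2|_C$ are both degree one, so $C$ is the graph of an isomorphism $E\to E$; up to the translation that leaves its class unchanged, $C$ is a translate of a graph $\Gamma_\phi=\{(x,\phi(x))\}$ of a group automorphism $\phi\in\mathrm{Aut}(E,e)$. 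The delicate point is the CM cases: there $\mathrm{NS}(E\times E)$ has rank four rather than three, so a purely lattice-theoretic count would admit spurious classes, and it is precisely the component argument above, rather than a N\'eron--Severi computation, that rules them out.

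Finally I would compute $d=D\cdot a$ in each case. For $D=f_1+f_2$ one gets $d=2$ for every $E$, which is case~c). For a graph I would use $\Gamma_\phi\cdot\Gamma_\psi=\deg(\phi-\psi)$, so that $d=\Gamma_\phi\cdot\Gamma_{-1}=\deg(\phi+1)=N(\phi+1)$, the norm in $\mathrm{End}(E)$. It then remains to enumerate $\mathrm{Aut}(E,e)$ by $j$-invariant, namely $\mu_2=\{\pm1\}$ generically, $\mu_4=\{\pm1,\pm i\}$ when $j=1728$, and $\mu_6$ when $j=0$, and to evaluate $N(\phi+1)$: the values $0$ and $4$ (from $\phi=-1,1$) recover the anti-diagonal and diagonal cases a), b); the extra units give $N(1\pm i)=2$ when $j=1728$ (case d)), and the norm values $1$ and $3$ when $j=0$ (cases e), f)), where $\tau=\tfrac12(1+i\sqrt3)$ is a primitive sixth root of unity generating $\mathrm{End}(E)$. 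The bound $d\le 4$ also follows independently from $D^2\ge0$, valid for any effective divisor on an abelian surface since every irreducible curve there has non-negative self-intersection.
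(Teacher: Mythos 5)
Your proof is correct and follows essentially the same route as the paper's: both classify $D$ as either the union of one fibre from each projection or a translate of the graph of an automorphism of $E$ (your nef/component argument is a self-contained variant of the paper's appeal to Lemma \ref{ExE_curve_class_lemma1} in the reducible case), enumerate $\mathrm{Aut}(E)$ by $j$-invariant, and compute the intersection with the anti-diagonal. The only substantive difference is in that last step, where the paper counts the solutions of $\xi(x)=-x$ explicitly as torsion points while you invoke $\Gamma_\phi\cdot\Gamma_{-1}=\deg(\phi+1)=N(\phi+1)$, which packages the same kernel count algebraically; your closing observation that $D^2\ge 0$ already forces $0\le d\le 4$ is correct but not needed once the classification is in hand.
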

\begin{proof}
Denote the projection maps by $\mathsf{p}_i:E\times E \rightarrow E$ and let $C \subset X$ be in the class $(1,1,d)$ and correspond to a divisor $D$ in $E\times E$. Suppose $D$ is reducible. Then from lemma \ref{ExE_curve_class_lemma1} we see that $D$ must be the union $\mathsf{p}_1^{-1}(x_1) \cup \mathsf{p}_2^{-1}(x_2)$ where $x_1,x_2\in E$ are generic points. We also have that $D$ is in the class $(1,1,2)$.\\

Suppose $D$ is irreducible. The surfaces $S_{1}$ and $S_{2}$ intersect $D$ exactly once. So the restrictions $\mathsf{p}_i|_D:D \rightarrow E$ are degree 1 and hence isomorphisms. Thus $D$ is the translation of the graph of an automorphism of $E$. \\

All elliptic curves have the automorphisms $x\mapsto \pm x$. Also we have:
\begin{itemize}
\item if $E \cong \C/i$ ($j$-invariant $j(E)=1728$) the $E$ also has the automorphisms $x\mapsto \pm i x$, and
\item if $E = \C/\tau$ with $\tau=\frac{1}{2}(1+i \sqrt{3})$ ($j$-invariant $j(E)=0$) then $E$ also has the automorphisms $x\mapsto \pm \tau x$ and $x\mapsto \pm (\tau-1) x$. 
\end{itemize}

So to complete the proof we have to calculate the intersections $\#(\Gamma_{\xi} \cap S_{\mathsf{op}})$ where $\Gamma_\xi$ is the graph of an automorphism $\xi$. Also, $S_{\mathsf{op}}|_{F_x} \cong \Gamma_{-1}$ hence we calculate $\#(\Gamma_{\xi} \cap \Gamma_{-1})=\#\{(x,\xi(x)) = (x,-x)\}$ in the surface $F_x$. For all the elliptic curves we have:
\begin{itemize}
\item[(a)] $\#(\Gamma_{1} \cap \Gamma_{-1})$ is given by the four 2-torsion points $\{0,\tfrac{1}{2},\tfrac{1}{2}\tau, \tfrac{1}{2}(1+\tau)\}$.
\item[(b)] $\#(\Gamma_{-1} \cap \Gamma_{-1}) = 0$ since one copy can be translated away from the other. 
\end{itemize}
For $E \cong \C/i$ ($j$-invariant $j(E)=1728$) we have:
\begin{itemize}
\item[(d)] $\#(\Gamma_{\pm i} \cap \Gamma_{-1})$ is given by the two points $\{0,\tfrac{1}{2}(1+\tau)\}$.
\end{itemize}
For $E = \C/\tau$ with $\tau=\frac{1}{2}(1+i \sqrt{3})$ ($j$-invariant $j(E)=0$) we have:
\begin{itemize}
\item[(e)] $\#(\Gamma_{\tau} \cap \Gamma_{-1})$ and $\#(\Gamma_{(1-\tau) i} \cap \Gamma_{-1})$ are both determined by the three points $\{0,\tfrac{1}{3}(1+\tau),\tfrac{2}{3}(1+\tau)\}$.
\item[(f)] $\#(\Gamma_{-\tau} \cap \Gamma_{-1})$ and $\#(\Gamma_{(\tau-1) i} \cap \Gamma_{-1})$ are both given by the single point $\{0\}$.
\end{itemize}
\end{proof}

\subsection{Analysis of \texorpdfstring{$1$}{1}-cycles in Singular Fibres of \texorpdfstring{$\mathrm{pr}$}{pr}}

We denote the fibres of the projection $\mathrm{pr}$ by $F_x := \mathrm{pr}^{-1}(x)$. The singular fibres are all isomorphic so we denote a singular fibre by $F_{\mathsf{ban}}$ and its normalisation by $\nu: \widetilde{F}_{\mathsf{ban}}\rightarrow {F}_{\mathsf{ban}}$. From \cite[Prop. 24]{Bryan_Banana} we have that $\widetilde{F}_{\mathsf{ban}} \cong \mathrm{Bl}_{2\,\mathrm{pt}}(\P^1\times \P^1)$ and if we choose the coordinates on each $\P^1$ so that the $0$ and $\infty$ map to a nodal singularity, then the two points blown-up are $z_1= (0,\infty)$ and $z_2= (\infty,0)$.
\begin{align*}
\xymatrix{
\mathrm{Bl}_{z_1,z_2}(\P^1\times \P^1) \ar[d]_{\mathsf{bl}}\ar[r]^{\hspace{2em}\nu} & F_{\mathsf{ban}} \\
\P^1\times \P^1 &
}
\end{align*}
We also let $N:= \pi^{-1}(x)$ be a nodal elliptic fibre in $S$, and denote the natural projections by $\mathsf{q}_i : F_{\mathsf{ban}} \rightarrow N$ (these are the morphisms $\mathrm{pr}_i:X\rightarrow S$ with restricted domain and codomain). 

\begin{figure}
  \centering
      \includegraphics[width=0.7\textwidth]{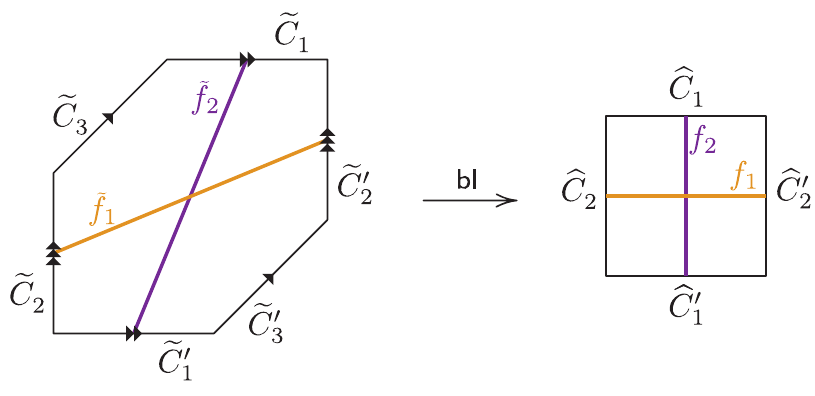}
      \vspace{-1.25em}
  \caption[Normalisation of the singular fibres]{On the left is a depiction of the normalisation $\widetilde{F}_{\mathsf{ban}}$ and on the right is a depiction of $\P^1\times\P^1$. Here $\mathsf{bl}$ is the map blowing up $(0,\infty)$ and $(\infty,0)$. On the right $f_1$ and $f_2$ are generic fibres of the projection maps $\P^1\times \P^1 \rightarrow \P^1$ and on the left $\tilde{f}_1$ and $\tilde{f}_2$ are their proper transforms.}
\end{figure}

\begin{re} Denote the divisors in $\widetilde{F}_{\mathsf{ban}}$ corresponding to the banana curve $C_i$  by $\widetilde{C}_i$ and  $\widetilde{C}'_i$. They are identified in $F_{\mathsf{ban}}$ by
\[
\nu(\widetilde{C}_i) = \nu(\widetilde{C}'_i) = C_i. 
\]
For $i=1,2$ we also denote $\widehat{C}_i = \mathsf{bl}(\widetilde{C}_i)$ and $\widehat{C}'_i = \mathsf{bl}(\widetilde{C}'_i)$  inside $\P^1\times \P^1$. The curve classes in $\widetilde{F}_{\mathsf{ban}}$ are generated by the collection of $\widetilde{C}_i$ and  $\widetilde{C}'_i$'s with the relations:
\[
\widetilde{C}_1 +  \widetilde{C}_3 \sim \widetilde{C}'_1 +  \widetilde{C}'_3 
\hspace{0.5cm}\mbox{and}\hspace{0.5cm}
\widetilde{C}_2 +  \widetilde{C}_3 \sim \widetilde{C}'_2 +  \widetilde{C}'_3.
\]
\end{re}
\begin{re}\label{classes_P1xP1_re}Let $f_1$ and $f_2$ be fibres of the projections $\P^1\times\P^1\rightarrow\P^1$ not equal to any $\widehat{C}_i$ or $\widehat{C}'_i$ and let $\tilde{f}_{1}$ and $\tilde{f}_{2}$ be their proper transforms. Then we also have the relations:
\[
\tilde{f}_1 \sim \widetilde{C}_1 +  \widetilde{C}_3
\hspace{0.5cm}\mbox{and}\hspace{0.5cm}
\tilde{f}_2 \sim \widetilde{C}_2 +  \widetilde{C}_3.
\]
Moreover, if $\widetilde{D}$ is a divisor in $\widetilde{F}_{\mathsf{ban}}$ such that $\nu(\widetilde{D})$ is in the class $(d_1,d_2,d_3)$ then, for $a_i+a'_i = d_i$, we have $D$ is in a class:
\[
a_1\widetilde{C}_1+ a'_1\widetilde{C}'_1 +  a_2\widetilde{C}_2+ a'_2\widetilde{C}'_2+  a_3\widetilde{C}_3+ a'_3\widetilde{C}'_3.
\]
\end{re}

\begin{lemma}\label{FSing_curve_class_lemma1}
Let $C \subset X$ correspond to a divisor $D$ in $F_{\mathsf{ban}}$.
\begin{enumerate}[label={\arabic*)}]
\item $C$ is in the class  $(0,0,d_3)$  if and only if $D$ has $1$-cycle $d_3 C_3$.
\item \label{FSing_curve_class_lemma1_(0,d_2,d_3)} $C$ is in the class  $(0,d_2,d_3)$  if and only if $D$ has $1$-cycle $\widetilde{D} + a_2C_2^{(j)} + a_3 C_3^{(j)}$ where $\tilde{D}$ is the pullback of a degree $a_f$ divisor from the smooth part of $N$ via the projection $\mathsf{q}_i : F_{\mathsf{ban}} \rightarrow N$ such that $a_f+a_2 = D_2$ and $a_f+a_3 = D_3$. Moreover, $\widetilde{D}$ is in the class $(0,a_f,a_f)$. 
\end{enumerate}
\end{lemma}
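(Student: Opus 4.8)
The plan is to run the fibration argument of Lemma \ref{ExE_curve_class_lemma1} again, now adapting it to the two projections $\mathsf q_1, \mathsf q_2 : F_{\mathsf{ban}}\to N$ and the degenerate geometry of the singular fibre. First I would record the dictionary between classes and intersection numbers. Since the labelling in Definition \ref{banana_definition} gives $C_i\cdot S_j = \delta_{ij}$, the banana curves $C_1,C_2,C_3$ sit in the classes $(1,0,0),(0,1,0),(0,0,1)$; by Remark \ref{classes_P1xP1_re} a generic fibre $\nu(\tilde f_2)$ of $\mathsf q_2$ lies in class $C_2+C_3=(0,1,1)$; and, exactly as in the proof of Lemma \ref{ExE_curve_class_lemma1} where $S_1|_{E\times E}$ was a fibre of the second projection, $S_1$ restricts on $F_{\mathsf{ban}}$ to a fibre of $\mathsf q_2$ (and $S_2$ to a fibre of $\mathsf q_1$). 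The qualitative facts I will use are that $C_1$ is a section of $\mathsf q_2$, while $C_2$ lies in the node fibre of $\mathsf q_2$ and $C_3$ lies over the nodes of both projections.

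Both ``if'' directions are immediate additivity computations: $d_3C_3$ has class $(0,0,d_3)$, and $\widetilde D + a_2C_2 + a_3C_3$ has class $(0,a_f,a_f)+(0,a_2,0)+(0,0,a_3)=(0,d_2,d_3)$ once we know $\widetilde D$ lies in class $(0,a_f,a_f)$.

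For the ``only if'' direction of (2), the hypothesis $d_1=C\cdot S_1=0$ says $D$ is disjoint from the $\mathsf q_2$-fibre $S_1$; since any $\mathsf q_2$-horizontal component would be a multisection meeting every fibre (in particular $S_1$) with positive, hence uncancellable, intersection, every component of $D$ must be $\mathsf q_2$-vertical. Over a smooth point of $N$ the fibre is irreducible, so these vertical components assemble into a genuine pullback $\widetilde D = \mathsf q_2^{*}(\text{a degree } a_f \text{ divisor on } N^{\mathrm{sm}})$, lying in class $a_f(C_2+C_3)=(0,a_f,a_f)$; the components over the node are supported on $C_2\cup C_3$ and supply the remaining boundary multiplicities $a_2C_2+a_3C_3$ (no $C_1$ can occur, as $C_1\cdot S_1 = 1\neq 0$). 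Intersecting with $S_2$ and $S_{\mathsf{op}}$ then gives $a_f+a_2=d_2$ and $a_f+a_3=d_3$. Statement (1) is the specialisation in which additionally $d_2=C\cdot S_2=0$ forces $D$ to be $\mathsf q_1$-vertical as well; the only curve contracted by both projections is the corner $C_3$, so $D$ has $1$-cycle $d_3C_3$.

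The step I expect to be the main obstacle is the clean separation in (2) of the vertical components over smooth points from those over the node, together with excluding spurious decompositions created by the reducibility of the node fibre. Because $\tilde f_2\sim \widetilde C_2+\widetilde C_3\sim \widetilde C_2'+\widetilde C_3'$ in the normalisation (Remark \ref{classes_P1xP1_re}), a fibre can break into boundary curves in two different ways, so the decomposition cannot be pinned down from rational-equivalence classes alone. I would therefore argue at the level of supports rather than classes, using that $\widetilde C_3,\widetilde C_3'$ are the rigid exceptional $(-1)$-curves of $\mathsf{bl}$ in $\mathrm{Bl}_{z_1,z_2}(\P^1\times\P^1)$: an effective divisor in their span is literally supported on them, which is what justifies recording the node contributions as independent multiplicities $a_2,a_3$ of $C_2,C_3$.
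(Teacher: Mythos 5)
Your argument is correct, but it is not the argument the paper gives, so it is worth comparing the two. You prove the lemma by re-running the fibration argument of Lemma \ref{ExE_curve_class_lemma1} directly on the non-normal surface $F_{\mathsf{ban}}$: since $S_1\cap F_{\mathsf{ban}}$ is the $\mathsf{q}_2$-fibre over the smooth point $\zeta(y)\in N$, the vanishing $d_1=D\cdot S_1=0$ excludes $\mathsf{q}_2$-horizontal components; vertical components over smooth points of $N$ are whole irreducible fibres (each of class $(0,1,1)$) and hence assemble into a pullback, while vertical components over the node are supported on $\mathsf{q}_2^{-1}(n)=C_2\cup C_3$; pairing with $S_2$ and $S_{\mathsf{op}}$ then gives $a_f+a_2=d_2$ and $a_f+a_3=d_3$. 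The paper instead works in the normalisation: it lifts $D$ to $\widetilde{D}$ on $\widetilde{F}_{\mathsf{ban}}\cong\mathrm{Bl}_{z_1,z_2}(\P^1\times\P^1)$, uses Remark \ref{classes_P1xP1_re} and $d_1=0$ to conclude that $\mathsf{bl}(\widetilde{D})$ has class $d_2f_2$ and therefore lies in the linear system $|d_2f_2|$ (a union of fibres of the second projection), and reconstructs $\widetilde{D}$ as the proper transform of this configuration together with curves supported on $\widetilde{C}_3$ and $\widetilde{C}'_3$. Your route has the virtue of being uniform with the smooth-fibre case and of avoiding normalisation bookkeeping; the paper's linear-system mechanism is the one that scales to Lemma \ref{FSing_curve_class_lemma2}, where the class $(1,1,d)$ forces genuinely horizontal components and the vertical/horizontal dichotomy alone cannot classify them.

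Two points in your write-up should be tightened. First, ``positive, hence uncancellable'' requires the complementary check that no component contributes \emph{negatively} to $D\cdot S_1$: the only irreducible curve of $F_{\mathsf{ban}}$ contained in $S_1$ is the fibre $S_1\cap F_{\mathsf{ban}}$ itself, and its contribution is $0$ (it is homologous to a $\mathsf{q}_2$-fibre over any other smooth point, which is disjoint from $S_1$; equivalently it has class $(0,1,1)$), so $0=\sum_i m_i\,D_i\cdot S_1$ is a sum of non-negative terms and horizontal components are indeed excluded. Second, your final worry about ``spurious decompositions'' coming from $\tilde f_2\sim\widetilde{C}_2+\widetilde{C}_3\sim\widetilde{C}'_2+\widetilde{C}'_3$ is not actually an issue in your setup: you are decomposing a $1$-cycle, and a $1$-cycle has a unique decomposition into irreducible components with multiplicities, so once the node-supported part is known to lie in $C_2\cup C_3$ the coefficients $a_2,a_3$ are well defined with no appeal to rigidity of the exceptional curves. (A minor correction: $\mathsf{q}_2|_{C_1}\colon C_1\to N$ is the normalisation of the nodal cubic rather than a section of $\mathsf{q}_2$; what your argument actually uses, namely that $C_1$ dominates $N$, is correct.)
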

\begin{proof}
Let $C \subset X$ be a curve in the class $(0,d_2,d_3)$ and correspond to a divisor $D$ in $F_{\mathsf{ban}}$. There exists a divisor $\widetilde{D}$ in $\widetilde{F}_{\mathsf{ban}} \cong \mathrm{Bl}_{z_1,z_2}(\P^1\times \P^1)$ with $\nu(\widetilde{D})=D$. \\

\vspace{-0.1cm}
From the discussion in \ref{classes_P1xP1_re} we have that $\mathsf{bl}(\widetilde{D})$ is in the class of $d_2 f_2$ and is hence in its corresponding linear system. So, $\widetilde{D}$ is the union of the the proper transform of $\mathsf{bl}(\widetilde{D})$ and curves supported at $\widetilde{C}_3$ and $\widetilde{C}'_3$. The result now follows. 
\end{proof}

\begin{lemma}\label{FSing_curve_class_lemma2}
Let $C \subset X$ be an irreducible curve in the class $(1,1,d)$ and correspond to a divisor $D$ in $F_{\mathsf{ban}}$. Then $D$ is the image under $\nu$ of the proper transform under $\mathsf{bl}$ of a smooth divisor in $|f_1 + f_2|$ on $\P^1 \times \P^1$. Moreover, the value of $d$ is determined the intersection of $D$ with points in $\mathsf{P} =\{(0,0),(0,\infty),(\infty,0),(\infty,\infty)\}$. That is, if $D$ intersects
\begin{enumerate}[label={\arabic*)}]
\item $(0,0)$ and $(\infty,\infty)$ only, then $d=2$. 
\item $(0,\infty)$ and $(\infty,0)$ only, then $d=0$. 
\item $(0,0)$ only or $(\infty,\infty)$ only, then $d=2$. 
\item $(0,\infty)$ only or $(\infty,0)$ only, then $d=1$. 
\item no points of $\mathsf{P}$, then $d=2$. 
\end{enumerate}
Moreover, there are no smooth divisors in $|f_1 + f_2|$ on $\P^1 \times \P^1$ that intersect other combinations of these points. 
\end{lemma}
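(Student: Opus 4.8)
The plan is to push the entire question onto the normalisation $\widetilde{F}_{\mathsf{ban}}\cong\mathrm{Bl}_{z_1,z_2}(\P^1\times\P^1)$, where the curves involved become explicit $(1,1)$-divisors and $d=C\cdot S_{\mathsf{op}}$ becomes an intersection number. First I would reduce the first assertion exactly as in the proof of Lemma \ref{ExE_curve_class_lemma2}: the conditions $C\cdot S_1 = C\cdot S_2 = 1$ say that the two projections $\mathsf{q}_i|_D:D\to N$ have degree one, so the curve $\widehat{D}:=\mathsf{bl}(\widetilde{D})$ meets a general fibre of each projection $\P^1\times\P^1\to\P^1$ once, forcing $\widehat{D}\in|f_1+f_2|$. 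A member of $|f_1+f_2|$ is cut out by a bilinear form and is either the graph of an automorphism $\phi$ of $\P^1$ (nondegenerate case) or a union of one fibre from each ruling (degenerate case); the latter is reducible and would make $D$, hence $C$, reducible. Thus $\widehat{D}=\{(x,\phi(x))\}$ is a smooth $(1,1)$-curve and $D=\nu(\widetilde{D})$ is the image of its proper transform, proving the first claim.

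Next comes the computation of $d$, which is the heart of the argument. As in Lemma \ref{ExE_curve_class_lemma2}, the anti-diagonal restricts on $F_{\mathsf{ban}}$ to the graph of the inversion $x\mapsto x^{-1}$ of the group law on the smooth locus $\C^*$ of $N$; its closure on $\P^1\times\P^1$ is the smooth $(1,1)$-curve $\widehat{\Gamma}=\{st=1\}$, which passes through exactly the two blown-up corners $z_1=(0,\infty)$ and $z_2=(\infty,0)$. Writing $A,B$ for the two ruling classes and $E_1,E_2$ for the exceptional curves of $\mathsf{bl}$, and setting $\epsilon_i=1$ if $\widehat{D}$ passes through $z_i$ and $\epsilon_i=0$ otherwise, the proper transforms are $\widetilde{D}=A+B-\epsilon_1E_1-\epsilon_2E_2$ and $\widetilde{\Gamma}=A+B-E_1-E_2$, so that $\widetilde{D}\cdot\widetilde{\Gamma}=2-\epsilon_1-\epsilon_2$. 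I would then argue that this equals $C\cdot S_{\mathsf{op}}$ in $X$: since $\widehat{D}$ and $\widehat{\Gamma}$ have distinct tangent directions at any common blown-up corner, that intersection point is separated after blowing up, so all surviving intersection points lie off the glued hexagon and correspond bijectively to points of $C\cap S_{\mathsf{op}}$. Hence $d=2-\epsilon_1-\epsilon_2$.

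Reading off the table is then immediate. The corners $(0,0)$ and $(\infty,\infty)$ are \emph{not} blown up and do not lie on $\widehat{\Gamma}$, so they leave $\epsilon_1=\epsilon_2=0$ and contribute nothing; only $z_1,z_2$ affect $d$. Therefore $d=2$ when $\widehat{D}$ avoids both $z_1,z_2$ (cases (1), (3), (5)), $d=1$ when it meets exactly one (case (4)), and $d=0$ when it meets both (case (2)). For the final claim that no other combinations arise, I would use that $\widehat{D}=\{(x,\phi(x))\}$ with $\phi$ an automorphism: the corner met with first coordinate $0$ is governed by the single value $\phi(0)$ and the one with first coordinate $\infty$ by $\phi(\infty)$, while injectivity gives $\phi(0)\neq\phi(\infty)$. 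Thus $\widehat{D}$ meets at most one corner in each column and cannot meet two corners sharing a second coordinate, and enumerating $(\phi(0),\phi(\infty))$ over $\{0,\infty,\ast\}$ with distinct entries reproduces exactly the listed combinations.

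The main obstacle is the middle step: pinning down $S_{\mathsf{op}}|_{F_{\mathsf{ban}}}$ as the anti-diagonal $(1,1)$-curve through the blown-up corners, and justifying that the class-level number $\widetilde{D}\cdot\widetilde{\Gamma}$ on the \emph{smooth} surface $\widetilde{F}_{\mathsf{ban}}$ genuinely computes the intersection $C\cdot S_{\mathsf{op}}$ taken in $X$ through the \emph{non-normal} surface $F_{\mathsf{ban}}$. Everything else is a short linear-systems and Möbius-map bookkeeping, but this identification — and the fact that the blow-up precisely absorbs the drop in $d$ as $\widehat{D}$ acquires the corners $z_1,z_2$ — is where the geometry must be handled with care.
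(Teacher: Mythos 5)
Your route to the formula $d = 2-\epsilon_1-\epsilon_2$ is genuinely different from the paper's. The paper never intersects anything with $S_{\mathsf{op}}$: it works entirely with curve classes in the normalisation, using the relations $\tilde f_1\sim\widetilde C_1+\widetilde C_3$, $\tilde f_2\sim\widetilde C_2+\widetilde C_3$ of Remark \ref{classes_P1xP1_re} together with the fact that the exceptional curves push forward to $C_3$; thus the total transform of any member of $|f_1+f_2|$ pushes forward to the class $C_1+C_2+2C_3$, and the proper transform loses one copy of $C_3$ for each of $z_1=(0,\infty)$, $z_2=(\infty,0)$ it passes through. Your replacement, computing $d=C\cdot S_{\mathsf{op}}$ as the intersection number $\widetilde D\cdot\widetilde\Gamma$ on the smooth surface $\widetilde F_{\mathsf{ban}}$ (with $\widetilde\Gamma = A+B-E_1-E_2$ the proper transform of the closure $\{st=1\}$ of the graph of inversion), is a legitimate alternative, and your identification of $S_{\mathsf{op}}|_{F_{\mathsf{ban}}}$ and of the blown-up corners is correct. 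Likewise your enumeration of the possible corner configurations via the pair $(\phi(0),\phi(\infty))$ with $\phi(0)\neq\phi(\infty)$ is a clean substitute for the paper's explicit coordinate bookkeeping in the appendix stratification \ref{f1+f2_linear_system_decomp}; both arguments yield exactly the listed combinations.

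However, the step you yourself flag as delicate contains a genuine error as written. You justify $d=\widetilde D\cdot\widetilde\Gamma$ by claiming that $\widehat D$ and $\widehat\Gamma$ have distinct tangent directions at any common blown-up corner, so that the blow-up separates them and every surviving intersection point lies off the glued hexagon. This is false: the graph of $\phi(x)=1+1/x$, i.e.\ the curve $\{xy=x+1\}$, is a smooth member of $|f_1+f_2|$ passing through $(0,\infty)$ only, and it is tangent to $\{xy=1\}$ there (in the local coordinates $(x,w)$ with $w=1/y$ both curves have tangent line $w=x$); its proper transform therefore still meets $\widetilde\Gamma$ at a point of the exceptional curve $E_1$. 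So the proposed point-by-point correspondence between $\widetilde D\cap\widetilde\Gamma$ and $C\cap S_{\mathsf{op}}$ breaks down for curves the lemma must cover. The repair is to argue with Cartier divisors rather than points: $d=\deg\mathcal{O}_X(S_{\mathsf{op}})|_D$, and pulling back along the finite birational map $\nu|_{\widetilde D}\colon\widetilde D\to D$ gives $d=\deg\bigl(\nu^*\mathcal{O}_X(S_{\mathsf{op}})|_{F_{\mathsf{ban}}}\bigr)|_{\widetilde D}$. Since $S_{\mathsf{op}}\cap F_{\mathsf{ban}}$ is the irreducible nodal fibre of $S_{\mathsf{op}}\to\P^1$, whose only point on the banana configuration is its node on $C_3$, its set-theoretic preimage under $\nu$ is exactly $\widetilde\Gamma$, and the multiplicity at the generic point is one; hence $\nu^*(S_{\mathsf{op}}|_{F_{\mathsf{ban}}})=\widetilde\Gamma$ as divisors and $d=\widetilde D\cdot\widetilde\Gamma=2-\epsilon_1-\epsilon_2$ holds regardless of any tangency. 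With that substitution your argument is complete; note that the paper's class-pushforward computation sidesteps this issue entirely, which is what it buys in exchange for invoking the class relations in $\widetilde F_{\mathsf{ban}}$.
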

\begin{proof}
Let $C \subset X$ be an irreducible curve in the class $(1,1,d)$ and correspond to a divisor $D$ in $F_{\mathsf{ban}}$. There exists an irreducible divisor $\widetilde{D}$ in $\widetilde{F}_{\mathsf{ban}} \cong \mathrm{Bl}_{z_1,z_2}(\P^1\times \P^1)$ with $\nu(\widetilde{D})=D$. 
$\widetilde{D}$ does not contain either of the exceptional divisor $\widetilde{C}_3$ and $\widetilde{C}'_3$. Hence, it must be the proper transform of a curve in $\P^1 \times \P^1$. \\

From the discussion in \ref{classes_P1xP1_re} we have that $\mathsf{bl}(\widetilde{D})$ is in the class of $f_1+f_2$ and is hence in its corresponding linear system. The only irreducible divisors in $|f_1+f_2|$ are smooth and can only pass through the combinations of points in $\mathsf{P}$ that are given. We refer to the appendix \ref{f1+f2_linear_system_decomp} for the proof of this. The total transform in any divisor in $|f_1+f_2|$ will correspond to a curve in the class $C_1+C_2+2C_2$. Hence the classes of the proper transforms depend on the number of intersections with the set $\{(0,\infty), (\infty,0)\}$. The values are immediately calculated to be those given.
\end{proof}

\subsection{Parametrising \texorpdfstring{$1$}{1}-cycles} For $i\in \{1,2, \mathsf{op}\}$ we use the notation: 
\begin{enumerate}[label={\arabic*)}]
\item  $B_{i} = \{b_{i}^1,\ldots,b_{i}^{12}\}$ is the set of the 12 points in $S_{i}$ that correspond to nodes in the fibres of the projection $\pi_i:=\mathrm{pr}|_{S_i}:S_{i} \rightarrow \P^1$.
\item $ S_{i}^\circ =  S_{i} \setminus  B_{i}$ is the complement of $ B_{i}$ in $S_{i}$
\end{enumerate}

\begin{lemma} \label{(1,0,d1,d2)_Chow_lemma}
In the case $\beta = \sigma + (0,d_2,d_3)\in H_2(X,\mathbb{Z})$ there is the following constructible homeomorphism in $K_0(\mathrm{Var}_{\C})[[Q_2,Q_3]]$:
\[
\Chow^{\sigma + (0,\bullet,\bullet)}(X) \cong Q_\sigma\, \Sym_{Q_2 Q_3}^\bullet (S_{2}^\circ) \times \Sym_{Q_2}^{\bullet}(B_{2})\times \Sym_{Q_3}^{\bullet}(B_{\mathsf{op}}).
\]
Moreover, if the points of $\Chow^{\sigma + (0,\bullet,\bullet)}(X)$ are identified using this constructible homeomorphism, then for $x = (\bm{a}\bm{y}, \bm{m}\bm{b_{2}}, \bm{n}\bm{b_{\mathsf{op}}}) \in \Chow^{\sigma + (0,\bullet,\bullet)}(X)$ the fibre of the cycle map is $\rho_\bullet^{-1}(x) = \HilbCyc^{\bullet}(X,\cycle{q})$ where 
\[
\cycle{q} = \sigma +  \sum_i a_i \mathrm{pr}_2^{-1}(y_i) + \sum_i m_i C_2^{(i)} +\sum_i n_i C_3^{(i)}.
\]
\end{lemma}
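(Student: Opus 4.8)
The plan is to construct the claimed identification cycle-by-cycle, reducing everything to the classification results already established. First I would use part 2) of Lemma \ref{curve_decomposition_lemma} to split off the section: any cycle $\cycle{q}$ in a class $\sigma + (0,d_2,d_3)$ is $\sigma + \cycle{q}_0$ for a unique cycle $\cycle{q}_0$ in class $(0,d_2,d_3)$, and since $\sigma$ is the unique section it is rigid. This accounts for the overall factor $Q_\sigma$ and reduces the problem to describing the cycles $\cycle{q}_0$. By part 1) of Lemma \ref{curve_decomposition_lemma} the support of $\cycle{q}_0$ lies in fibres of $\mathrm{pr}$, so I would decompose $\cycle{q}_0$ according to whether its components sit in smooth fibres or in the twelve singular fibres.

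Next I would classify the irreducible components that can occur. In a smooth fibre $E\times E$, Lemma \ref{ExE_curve_class_lemma1} forces every component of $\cycle{q}_0$ to be a fibre $\mathrm{pr}_2^{-1}(y)$ of the second projection, of class $(0,1,1)$. In a singular fibre, Lemma \ref{FSing_curve_class_lemma1} splits the contribution into a horizontal part $\mathsf{q}_2^{-1}(y)$ over smooth points $y$ of the nodal curve (again of class $(0,1,1)$ and again of the form $\mathrm{pr}_2^{-1}(y)$) together with banana curves $C_2^{(i)}$ of class $(0,1,0)$ and $C_3^{(i)}$ of class $(0,0,1)$; no $C_1$ occurs because $d_1=0$. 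Thus the irreducible pieces of $\cycle{q}_0$ are of exactly three mutually distinct types: horizontal curves $\mathrm{pr}_2^{-1}(y)$ with $y$ ranging over the smooth locus $S_2^\circ = S_2\setminus B_2$, the curves $C_2^{(i)}$ indexed by the nodes $B_2\subset S_2$, and the curves $C_3^{(i)}$ indexed by the nodes $B_{\mathsf{op}}\subset S_{\mathsf{op}}$. Recording the multiplicities of each type yields the three symmetric factors, and the class contributions $(0,1,1)$, $(0,1,0)$, $(0,0,1)$ show they are weighted by $Q_2Q_3$, $Q_2$, $Q_3$ respectively, matching the stated right-hand side.

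The substance of the argument is to upgrade this bijection of points to a constructible homeomorphism. I would define the map $\Chow^{(0,\bullet,\bullet)}(X)\to \Sym^\bullet_{Q_2Q_3}(S_2^\circ)\times\Sym^\bullet_{Q_2}(B_2)\times\Sym^\bullet_{Q_3}(B_{\mathsf{op}})$ by sending a cycle to the triple of its three types of components with multiplicities; the family $y\mapsto \mathrm{pr}_2^{-1}(y)$ over $S_2^\circ$ and the finite families $B_2,B_{\mathsf{op}}$ give the inverse through the cycle-addition map on $\Chow(X)$. The main obstacle is the degeneration recorded in Remark \ref{classes_P1xP1_re}: as $y$ approaches a node the class-$(0,1,1)$ horizontal curve $\mathrm{pr}_2^{-1}(y)$ limits to the banana configuration $C_2^{(i)}+C_3^{(i)}$, so the locus of horizontal cycles is not closed and the map is only a homeomorphism after stratifying. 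I would therefore stratify both sides by the distribution of multiplicities among $S_2^\circ$, $B_2$ and $B_{\mathsf{op}}$ and check that on each locally closed stratum the map restricts to a homeomorphism (an isomorphism on the open stratum where all components are distinct horizontal curves over smooth points). The crucial consistency point is that, as cycles, a horizontal curve over a smooth point, a $C_2^{(i)}$ and a $C_3^{(i)}$ are genuinely different curves, so the decomposition of any given cycle is unambiguous even though $C_2^{(i)}+C_3^{(i)}$ happens to be a degeneration of a horizontal class; excluding the nodes from $S_2^\circ$ is exactly what makes the three symmetric products independent.

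Finally, the ``moreover'' statement is immediate from the construction. By definition $\rho_\bullet$ sends a subscheme to its $1$-cycle, so its fibre over a point is the reduced locus $\HilbCyc^\bullet(X,\cycle{q})$ of subschemes with that cycle; it remains only to read off $\cycle{q}$ from the identification, which by the construction above is $\sigma + \sum_i a_i\,\mathrm{pr}_2^{-1}(y_i) + \sum_i m_i C_2^{(i)} + \sum_i n_i C_3^{(i)}$ for the point $x=(\bm{a}\bm{y},\bm{m}\bm{b_2},\bm{n}\bm{b_{\mathsf{op}}})$. A final bookkeeping check confirms $d_2=\sum_i a_i+\sum_i m_i$ and $d_3=\sum_i a_i+\sum_i n_i$, so the total weight is $Q_2^{d_2}Q_3^{d_3}$ as required.
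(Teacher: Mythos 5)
Your proposal is correct and follows essentially the same route as the paper's proof: split off $\sigma$ via part 2) of Lemma \ref{curve_decomposition_lemma}, confine the remainder to fibres of $\mathrm{pr}$ via part 1), and then classify components in smooth and singular fibres using Lemmas \ref{ExE_curve_class_lemma1} and \ref{FSing_curve_class_lemma1} respectively. The only difference is that you spell out the stratification argument behind the constructible homeomorphism (handling the degeneration of $\mathrm{pr}_2^{-1}(y)$ to $C_2^{(i)}+C_3^{(i)}$ as $y$ approaches a node), a point the paper compresses into ``The result now follows.''
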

\begin{proof}
From lemma \ref{curve_decomposition_lemma} part \ref{curve_decomposition_lemma_sigma} it is enough to consider curves in the class $(0,d_2,d_3)$. Also from \ref{curve_decomposition_lemma} part \ref{curve_decomposition_lemma_no_sigma} we know that the curves are supported on fibres of the map $\mathrm{pr}:X\rightarrow \P^1$. From lemma \ref{ExE_curve_class_lemma1} part \ref{ExE_curve_class_lemma1_(0,d_2,d_3)} we know that the curves supported on smooth fibres of $\mathrm{pr}$ must be thicken fibres of the projection $\mathrm{pr}_2:X \rightarrow S$.  Similarly we know from lemma \ref{FSing_curve_class_lemma1} part \ref{FSing_curve_class_lemma1_(0,d_2,d_3)}  that the curves supported on singular fibres of $\mathrm{pr}$ must be the union of thicken fibres of $\mathrm{pr}_2$ and curves supported on the $C_2$ and $C_3$ banana curves. The result now follows. 
\end{proof}

We also use the notation:
\begin{enumerate}[label={\arabic*)}]
\item $\mathtt{N}_i\subset S_{i}$ are the 12 nodal fibres of $\pi_i:S_{i} \rightarrow \P^1$ with the nodes removed and: 
\begin{itemize}
\item[] $\mathtt{N}_i = \mathtt{N}^\sigma_i ~\amalg~ \mathtt{N}^\emptyset_i$ where   $\mathtt{N}^\sigma_i := \mathtt{N}_i\cap \sigma$ and $\mathtt{N}^\emptyset_i := \mathtt{N}_i\setminus \sigma$.
\end{itemize}
\item $ \mathtt{Sm}_i =  S_{i}^\circ \setminus  \mathtt{N}_i$ is the complement of $\mathtt{N}_i$ in $S_{i}^\circ$ and: 
\begin{itemize}
\item[] $\mathtt{Sm}_i = \mathtt{Sm}^\sigma_i ~\amalg~ \mathtt{Sm}^\emptyset_i$ where $\mathtt{Sm}^\sigma_i := \mathtt{Sm}_i\cap \sigma$ and $\mathtt{Sm}^\emptyset_i := \mathtt{Sm}_i\setminus \sigma$. 
\end{itemize}
\item $\mathtt{J}^{0}$ and $\mathtt{J}^{1728}$ to be the subsets of points $x\in \P^1$ such that $\pi^{-1}(x)$ has $j$-invariant $0$ or $1728$ respectively and $\mathtt{J} = \mathtt{J}^{0}~\amalg ~\mathtt{J}^{1728}$.
\item $\mathtt{L}$ to be the linear system $|f_1+f_2|$ on $\P^1\times \P^1$ with the singular divisors removed where $f_1$ and $f_2$ are fibres of the two projection maps. 
\item $ \widetilde{\mathrm{Aut}}(E):= \mathrm{Aut}(E) \setminus\{\pm1\}$. \\
\end{enumerate}

\begin{remark}
The following lemma should be parsed in the following way. For $i,j\in\{0,1\}$ and $b,d_3\in \mathbb{Z}_{\geq 0}$, a subscheme in the class $\beta = b \sigma + (i,j,d_3)\in H_2(X,\mathbb{Z})$ will have $1$-cycle of the following form:
\[
\cycle{q} = b \sigma + D + \sum_i n_i C_3^{(i)}
\]
where $D$ is reduced and does not contain $\sigma$ or  $C_3^{(i)}$. Then $D$ is in the class $(i,j,n)\in H_2(X,\mathbb{Z})$ for some $n\in \mathbb{Z}_{\geq 0}$.\\

The Chow groups parameterise the different possible $1$-cycles that $D$ can have. Moreover, these possibilities depend on $i$ and $j$:
\begin{itemize}
\item If $i=j=0$ then $D$ is the empty curve. If 
\item If $i=0$ and $j=1$ then $D$ can be either a fibre of the projection $\mathrm{pr}_2$ or  $C_2^{(i)}$. 
\item If $i=j=i$ then $D$ can be reducible or irreducible. If $D$ is reducible then it is some combination of fibres and banana curves. We call the collection where $D$ is irreducible the \textit{diagonals}. \\
\end{itemize}
\end{remark}

\begin{lemma}\label{classes_bs+(i,j,d)_main_chow_main_lemma}
In the cases $\beta = d \sigma + (i,j,d_3)\in H_2(X,\mathbb{Z})$ there is the following constructible homeomorphism in $K_0(\mathrm{Var}_{\C})[[Q_\sigma,Q_3]]$:
\[
\Chow^{\bullet \sigma + (i,j,\bullet)}(X) \cong \sum_{b\in \mathbb{Z}_{\geq 0}} Q_\sigma^b\,\Chow^{ (i,j,\bullet)}(X).
\]
Moreover, using the identification $Q_\sigma^b\,\Chow^{ (i,j,\bullet)}(X)  =  \{b\} \times \Chow^{ (i,j,\bullet)}(X)$ the points of $(b,x)\in \Chow^{\bullet \sigma + (i,j,\bullet)}(X)$ give the fibres $(\eta^{ij}_\bullet)^{-1}(b,x) = \HilbCyc^{\bullet}(X,\cycle{q})$ with $\cycle{q} = b\sigma + \cycle{q}'$ where $\cycle{q}' \in  \Chow^{ (i,j,\bullet)}(X)$. \\

We also have the following decompositions of $\Chow^{ (i,j,\bullet)}(X)$, by constructible homeomorphisms in $K_0(\mathrm{Var}_{\C})[[Q_\sigma,Q_3]]$:
\begin{enumerate}[label={\arabic*)}]
\item\label{classes_bs+(0,0,d)_main_chow_main_lemma} For $i=j=0$ we have the decomposition of $\Chow^{(0,0,\bullet)}(X)$ with parts:
\begin{enumerate}[label={\alph*)}]
\item $Sym_{Q3}^{\bullet}(B_{\mathsf{op}})$.
\end{enumerate}
The corresponding fibres are then $(\eta^{00}_\bullet)^{-1}(x) = \HilbCyc^{\bullet}(X,\cycle{q})$ where:
\begin{enumerate}[label={\alph*)}]
\item If $ x = \bm{n}\bm{b_{\mathsf{op}}}$ then $\cycle{q} = \sum_i n_i C_3^{(i)}$.\\
\end{enumerate}
\item\label{classes_bs+(0,1,d)_main_chow_main_lemma} For $i=0$ and $j=1$ we have a decomposition of $\Chow^{(0,1,\bullet)}(X)$ with parts:
\begin{enumerate}[label={\alph*)}]
\item $Q_2 \,S_{2}^\circ \times \Sym_{Q_3}^{\bullet}(B_{\mathsf{op}})$
\item $Q_2 \,\underset{k=1}{\overset{12}{\amalg}}  \Sym_{Q_3}^\bullet(\{ b_{\mathsf{op}}^{k}\}) \times \Sym_{Q_3}^\bullet(B_{\mathsf{op}}\setminus \{b_{\mathsf{op}}^{k}\})$.
\end{enumerate}
The corresponding fibres are then $(\eta^{01}_\bullet)^{-1}(x) = \HilbCyc^{\bullet}(X,\cycle{q})$ where:
\begin{enumerate}[label={\alph*)}]
\item If $x = (y, \bm{n} \bm{b_{\mathsf{op}}})$ then $\cycle{q} = \mathrm{pr}_2^{-1}(y) + \sum_i n_i C_3^{(i)}$.
\item If $x = (a_k b^k_{\mathsf{op}}, \bm{n} \bm{b_{\mathsf{op}}})$ then $\cycle{q} = C_2^{(k)}+  a_k C_3^{(k)} + \sum_i n_i C_3^{(i)}$.\\
\end{enumerate}

\item\label{classes_bs+(1,1,d)_main_chow_main_lemma} For $i=j=1$ we have a decomposition of $\Chow^{(1,1,\bullet)}(X)$ with parts:
\begin{enumerate}[label={\alph*)}]
\item  $~Q_2Q_3 \,\, S_{1}^\circ \times S_{2}^\circ \times \Sym_{Q3}^{\bullet}(B_{\mathsf{op}}) $
\item  $~Q_2Q_3 \,\underset{k=1}{\overset{12}{\amalg}}  S_{1}^\circ  \times\Sym_{Q3}^\bullet(\{ b_{\mathsf{op}}^{k}\}) \times \Sym_{Q3}^\bullet(B_{\mathsf{op}}\setminus \{b_{\mathsf{op}}^{k}\}) $
\item  $~Q_2Q_3 \,\underset{k=1}{\overset{12}{\amalg}}  S_{2}^\circ  \times\Sym_{Q3}^\bullet(\{ b_{\mathsf{op}}^{k}\}) \times \Sym_{Q3}^\bullet(B_{\mathsf{op}}\setminus \{b_{\mathsf{op}}^{k}\}) $
\item  $Q_2Q_3 \,\hspace{-0.58em}\underset{\mbox{\tiny$\begin{array}{c}k,l=1\\k\neq l\end{array}$}}{\overset{12}{\amalg}} \hspace{-0.58em} \Sym_{Q3}^\bullet(\{ b_{\mathsf{op}}^{k}\})   \times\Sym_{Q3}^\bullet(\{ b_{\mathsf{op}}^{l}\}) \times \Sym_{Q3}^\bullet(B_{\mathsf{op}}\setminus \{b_{\mathsf{op}}^{k},b_{\mathsf{op}}^{l}\})$
\item  $~Q_2Q_3 \,\underset{k=1}{\overset{12}{\amalg}}  \Sym_{Q3}^\bullet(\{ b_{\mathsf{op}}^{k}\}) \times \Sym_{Q3}^\bullet(B_{\mathsf{op}}\setminus \{b_{\mathsf{op}}^{k}\})$
\item  $~ Q_2Q_3 \,\,\mathtt{Diag}^{\bullet}$
\end{enumerate}
where $\mathtt{Diag}^{\bullet}$ will be defined by a further decomposition. The corresponding fibres of a)\,-\,e) are $(\eta^{11}_\bullet)^{-1}(x) = \HilbCyc^{\bullet}(X,\cycle{q})$ where:
\begin{enumerate}[label={\alph*)}]
\item If $x = (y_1,y_2, \bm{n} \bm{b_{\mathsf{op}}})$ then $\cycle{q} = \mathrm{pr}_1^{-1}(y_1)+\mathrm{pr}_2^{-1}(y_2) + \sum_i n_i C_3^{(i)}$.
\item If $x = (y_1, a_k b^k_{\mathsf{op}}, \bm{n} \bm{b_{\mathsf{op}}})$ then $\cycle{q} =  \mathrm{pr}_1^{-1}(y_1) + C_2^{(k)}+ a_k C_3^{(k)} +\sum_i n_i C_3^{(i)}$.
\item If $x = ( y_2, a_k b^k_{\mathsf{op}},\bm{n} \bm{b_{\mathsf{op}}})$ then $\cycle{q} =   \mathrm{pr}_2^{-1}(y_2) + C_1^{(k)}+ a_k C_3^{(k)} +\sum_i n_i C_3^{(i)}$.
\item {\small If $x = ( a_k b^k_{\mathsf{op}}, a_l b^l_{\mathsf{op}},\bm{n} \bm{b_{\mathsf{op}}})$ then  $\cycle{q} =    C_1^{(k)} + C_2^{(l)}+ a_k C_3^{(k)}+a_l C_3^{(l)} +\sum_i n_i C_3^{(i)}$}.
\item If $x = ( a_k b^k_{\mathsf{op}}, \bm{n} \bm{b_{\mathsf{op}}})$ then $\cycle{q} =    C_1^{(k)} + C_2^{(k)}+ a_k C_3^{(k)} +\sum_i n_i C_3^{(i)}$.
\end{enumerate}
For part f), $\mathtt{Diag}^{\bullet}$ is defined by the further decomposition:
\begin{enumerate}
\item[g)] ~$\mathtt{Sm}_{1} \times \Sym_{Q3}^\bullet (B_{\mathsf{op}})$
\item[h)] ~$\mathtt{Sm}_{2} \times \Sym_{Q3}^\bullet (B_{\mathsf{op}}) $
\item[i)] ~$\hspace{-0.3em}\underset{y\in\mathtt{J}}{\amalg} ~E_{\pi(y)} \times \widetilde{\mathrm{Aut}}(E_{\pi(y)}) \times \Sym_{Q3}^\bullet (B_{\mathsf{op}}) $
\item[j)] ~$\hspace{-0.35em}\underset{k=1}{\overset{12}{\amalg}}~ \mathtt{L} \times \Sym_{Q3}^\bullet(\{ b_{\mathsf{op}}^{k}\}) \times \Sym_{Q3}^\bullet(B_{\mathsf{op}}\setminus \{b_{\mathsf{op}}^{k}\})$.
\end{enumerate}
The corresponding fibres of g)\,-\,j) are $(\eta^{11}_\bullet)^{-1}(x) = \Hilb^{n}(X,\cycle{q})$ where:
\begin{enumerate}
\item[g)] If $x = (y ,\bm{n} \bm{b_{\mathsf{op}}})$ then $\cycle{q} =    D_y +\sum_i n_i C_3^{(i)}$ where $D_y$ is the graph of the map $f(z) = z + y|_{E_{\pi(y)}}$ in the fibre $F_{\pi(y)}=E_{\pi(y)}\times E_{\pi(y)}$.
\item[h)] If $x = (y ,\bm{n} \bm{b_{\mathsf{op}}})$ then $\cycle{q} =    D_y +\sum_i n_i C_3^{(i)}$ where $D_y$ is the graph of the map $f(z) = -z + y|_{E_{\pi(y)}}$ in the fibre $F_{\pi(y)}=E_{\pi(y)}\times E_{\pi(y)}$.
\item[i)] If $x = (y ,\bm{n} \bm{b_{\mathsf{op}}})$ then $\cycle{q} =    D_y +\sum_i n_i C_3^{(i)}$ where $D_y$ is the graph of the map $f(z) = A(z) + y|_{E_{\pi(y)}}$ for some $A\in \mathrm{Aut}(E_{\pi(y)})\setminus\{\pm1\}$.
\item[j)] If $x = (z ,a_k b^k_{\mathsf{op}}, \bm{n} \bm{b_{\mathsf{op}}})$ then $\cycle{q} =    \nu(\widetilde{L}_z) + a_k C_3^{(k)}  +\sum_i n_i C_3^{(i)}$ where $\widetilde{L}_z$ is the proper transform of the divisor $L_z$ in $\P^1\times \P^1$ and $\nu$ is the normalisation of the $k$th singular fibre.

\end{enumerate}

\end{enumerate}

\end{lemma}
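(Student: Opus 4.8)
The plan is to establish the result in two stages: first to factor out the rigid section class, reducing $\Chow^{\bullet\sigma+(i,j,\bullet)}(X)$ to $\Chow^{(i,j,\bullet)}(X)$, and then to stratify the latter according to the three cases for $(i,j)$.

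For the section reduction I would apply lemma \ref{curve_decomposition_lemma} part \ref{curve_decomposition_lemma_bsigma}: the support of any $1$-cycle $\cycle{q}$ in the class $b\sigma+(i,j,d_3)$ splits as $C_\sigma\cup C_0$ with $C_\sigma$ in the class $b\sigma$ and $C_0$ in the class $(i,j,d_3)$. Since $\mathrm{pr}$ has a unique section, $C_\sigma$ is supported on $\sigma$ and its $1$-cycle is forced to be $b\sigma$; hence $\cycle{q}=b\sigma+\cycle{q}'$ with $\cycle{q}'\in\Chow^{(i,j,d_3)}(X)$. The assignment $\cycle{q}\mapsto\cycle{q}'$ is a bijection in each bidegree whose inverse adds the fixed cycle $b\sigma$, and both directions are continuous for the Chow topology, giving the stated homeomorphism $\Chow^{\bullet\sigma+(i,j,\bullet)}(X)\cong\sum_b Q_\sigma^b\,\Chow^{(i,j,\bullet)}(X)$. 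The fibre identification $(\eta^{ij}_\bullet)^{-1}(b,x)=\HilbCyc^\bullet(X,\cycle{q})$ with $\cycle{q}=b\sigma+\cycle{q}'$ is then immediate from the definition of the cycle map.

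To decompose $\Chow^{(i,j,\bullet)}(X)$ I would use lemma \ref{curve_decomposition_lemma} part \ref{curve_decomposition_lemma_no_sigma} to reduce to $1$-cycles supported on fibres of $\mathrm{pr}:X\to\P^1$, and stratify $\P^1$ into the smooth-fibre locus and the twelve singular fibres. Over a smooth fibre $E\times E$ the cycles are classified by lemmas \ref{ExE_curve_class_lemma1} and \ref{ExE_curve_class_lemma2}, and over a singular fibre by lemmas \ref{FSing_curve_class_lemma1} and \ref{FSing_curve_class_lemma2}. The case $i=j=0$ is immediate: lemma \ref{ExE_curve_class_lemma1} part \ref{ExE_curve_class_lemma1_(0,d_2,d_3)} forbids any cycle on a smooth fibre, and lemma \ref{FSing_curve_class_lemma1} part 1 shows every cycle is a combination $\sum_i n_i C_3^{(i)}$ of banana curves, parametrised by $\Sym_{Q_3}^\bullet(B_{\mathsf{op}})$. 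For $i=0,j=1$, lemmas \ref{ExE_curve_class_lemma1} part \ref{ExE_curve_class_lemma1_(0,d_2,d_3)} and \ref{FSing_curve_class_lemma1} part \ref{FSing_curve_class_lemma1_(0,d_2,d_3)} show the degree-one part is either a fibre $\mathrm{pr}_2^{-1}(y)$ over a point of $S_2^\circ$ (stratum (a)) or the banana curve $C_2^{(k)}$ over the $k$-th node with an attached chain $a_k C_3^{(k)}$ (stratum (b)); in both cases the remaining freedom is a symmetric product of $C_3$-curves over $B_{\mathsf{op}}$, and reading off the cycle in each stratum gives the stated $\cycle{q}$.

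The case $i=j=1$ is where I expect the main difficulty, and it is essentially one of organising the degenerations into disjoint locally closed strata. Lemma \ref{ExE_curve_class_lemma2} separates the reducible cycles $\mathrm{pr}_1^{-1}(y_1)+\mathrm{pr}_2^{-1}(y_2)$ from the irreducible ``diagonal'' cycles; the reducible ones must then be sorted by whether each degree-one factor is a genuine fibre over $S_i^\circ$ or has degenerated into a banana curve $C_1^{(k)}$ or $C_2^{(l)}$ at a node, and by whether the two nodes coincide, producing the five strata (a)--(e) with the attached $C_3^{(k)}$ multiplicities recorded by one-point symmetric products. The diagonal locus $\mathtt{Diag}^\bullet$ is assembled from the remaining cases of lemma \ref{ExE_curve_class_lemma2}: the generic translated diagonal $z\mapsto z+y$ and anti-diagonal $z\mapsto -z+y$ live only in smooth fibres and are parametrised by $\mathtt{Sm}_1$ and $\mathtt{Sm}_2$ (strata (g),(h)), the extra automorphism graphs occur only over the special $j$-invariant points $\mathtt{J}$ and give stratum (i), while lemma \ref{FSing_curve_class_lemma2} supplies the singular-fibre diagonals as proper transforms of members of the linear system $\mathtt{L}=|f_1+f_2|$, giving stratum (j). The delicate points will be verifying that these pieces are genuinely locally closed, pairwise disjoint, and exhaustive --- in particular that a fibre degenerating into a banana curve as $y$ leaves $S_i^\circ$, and a diagonal degenerating as its fibre becomes singular, are each matched to the correct stratum, and that the $j$-invariant loci in (i) are separated from the generic diagonals --- after which each fibre $\HilbCyc^\bullet(X,\cycle{q})$ is read off by combining the local cycle descriptions from the fibre-analysis lemmas.
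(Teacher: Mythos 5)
Your proposal follows essentially the same route as the paper's (very terse) proof: factor off the section class via lemma \ref{curve_decomposition_lemma} part \ref{curve_decomposition_lemma_bsigma}, reduce to fibre-supported cycles via part \ref{curve_decomposition_lemma_no_sigma}, and then read off the strata and their fibres from lemmas \ref{ExE_curve_class_lemma1}, \ref{ExE_curve_class_lemma2}, \ref{FSing_curve_class_lemma1} and \ref{FSing_curve_class_lemma2}, exactly as the paper does. One small caveat: your aside that ``$\mathrm{pr}$ has a unique section'' is false (it has at least the $81$ canonical sections $\sigma_{ij}$), but this is harmless, since the correct reason a cycle in the class $b\sigma$ must be exactly $b\sigma$ is the linear-system statement of lemma \ref{S_linear_system_lemma} (equivalently $h^0(\O_S(b\zeta))=1$), which is what lemma \ref{curve_decomposition_lemma} already encodes.
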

\begin{proof}
The decomposition $\Chow^{\bullet \sigma + (i,j,\bullet)}(X)  \cong \sum_{b\in \mathbb{Z}_{\geq 0}} Q_\sigma^b\,\Chow^{ (i,j,\bullet)}(X)$ is immediate from lemma \ref{curve_decomposition_lemma} part \ref{curve_decomposition_lemma_bsigma}. Hence it is enough to parametrise the curves in the class $\beta = (i,j,\bullet)$. Also from \ref{curve_decomposition_lemma} part \ref{curve_decomposition_lemma_no_sigma} we know that the curves are supported on fibres of the map $\mathrm{pr}:X\rightarrow \P^1$. We must have that
\begin{align*}
\Cyc(C) =~& a \sigma + D +\sum_{i=1}^{12} m_i C_3^{(i)}. 
\end{align*}
for some minimal reduces curve $D$ in the class $(1,1,n)$ for $n\geq0$ minimal. The possible $D$ curves are described in lemmas \ref{ExE_curve_class_lemma1}, \ref{ExE_curve_class_lemma2}, \ref{FSing_curve_class_lemma1} and  \ref{FSing_curve_class_lemma2}. The result now follows. 
\end{proof}

\begin{remark}
Using lemma \ref{classes_bs+(i,j,d)_main_chow_main_lemma} and the identification  $Q_\sigma^b\,\Chow^{ (i,j,\bullet)}(X)  =  \{b\} \times \Chow^{ (i,j,\bullet)}(X)$ we make the following identification for notational convenience in discussing the points in section \ref{main_computation_section_A}:
\[
\Chow^{\bullet \sigma + (i,j,\bullet)}(X) \cong \mathbb{Z}_{\geq 0}\times \Chow^{ (i,j,\bullet)}(X).
\]
\end{remark}

\section{Techniques for Calculating Euler Characteristic}\label{theory_computation_section}

\subsection{Quot Schemes and their Decomposition}
This section is a summary of required results from \cite{BK}. First we consider the following subset of the Hilbert scheme. 
\begin{definition}\label{Hilbert_subscheme_with_CM_curve_definition}
Let $C\subset X$ be a Cohen-Macaulay subscheme of dimension 1. Consider the Hilbert scheme parameterising one dimensional subschemes $Z\subset X$ with class $[Z]=[C]\in H_2(X,\mathbb{Z})$ and $\chi(\O_Z) = \chi (\O_C) + n$ for some $n\in \mathbb{Z}_{\geq 0}$. This contains the following closed subset:
\[
\Hilb^n(X,C) := \big\{  Z \subset X\mbox{ such that $C\subset Z$ and $I_C/I_Z$ has finite length $n$}   \big\}.
\]
\end{definition}

It is convenient to replace the Hilbert scheme here with a Quot scheme. Recall the Quot scheme $\Quot_X^n(\mathcal{F})$ parametrising quotients $\mathcal{F}\twoheadrightarrow \mathcal{G}$ on $X$, where $\mathcal{G}$ is zero-dimensional of length $n$. It is related to the above Hilbert scheme in the following way.
\begin{lemma}\label{hilb_to_quot_lemma}\cite[Lemma 5]{BK},\cite[Lemma 5.1]{Ricolfi_Local}. 
The following equality holds in $K_0(\mathrm{Var}_{\C})(\!(p)\!)$:
\[
\Hilb^\bullet (X,C):=\sum_{n\in\mathbb{Z}_{\geq 0}}\Hilb^n (X,C) =  \Quot^\bullet_X(I_C) := \sum_{n\in\mathbb{Z}_{\geq 0}} \Quot^n_X(I_C).
\]
\end{lemma}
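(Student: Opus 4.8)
The plan is to prove, for each fixed $n$, that $\Hilb^n(X,C)$ and $\Quot^n_X(I_C)$ are isomorphic (as schemes, hence as classes in $K_0(\mathrm{Var}_{\C})$), via the kernel/cokernel correspondence between subschemes $Z\supseteq C$ and quotients of $I_C$, and then to sum over $n$. So the substance is a single natural identification; the generating-function statement follows formally.

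First I would set up the pointwise correspondence. A point of $\Hilb^n(X,C)$ is a subscheme $Z\subset X$ with $C\subset Z$ and $I_C/I_Z$ of finite length $n$; since $I_Z\subseteq I_C\subseteq \O_X$ are ideal sheaves, the surjection $I_C\twoheadrightarrow I_C/I_Z$ exhibits $\mathcal{G}:=I_C/I_Z$ as a zero-dimensional quotient of $I_C$ of length $n$, i.e.\ a point of $\Quot^n_X(I_C)$. Conversely, given a quotient $q\colon I_C\twoheadrightarrow \mathcal{G}$ with $\mathcal{G}$ zero-dimensional of length $n$, set $I_Z:=\ker q$. As the kernel of a morphism of $\O_X$-modules, $I_Z$ is an $\O_X$-submodule of $I_C\subseteq \O_X$, hence an ideal sheaf defining a subscheme $Z\supseteq C$. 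The exact sequence
\[
0 \to \mathcal{G} \to \O_Z \to \O_C \to 0
\]
shows that $Z$ is one-dimensional (as $C$ is, while $\mathcal{G}$ is zero-dimensional), with $I_C/I_Z=\mathcal{G}$ of length $n$ and $\chi(\O_Z)=\chi(\O_C)+n$, so $Z\in\Hilb^n(X,C)$. These two assignments are visibly mutually inverse.

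Next I would promote this to an isomorphism of schemes through the functor of points. Over a test scheme $T$, a family in $\Hilb^n(X,C)$ is a $T$-flat closed subscheme $\mathcal{Z}\subset X\times T$ containing $C\times T$ with $\mathcal{I}_{C\times T}/\mathcal{I}_{\mathcal{Z}}$ of relative length $n$, while a family in $\Quot^n_X(I_C)$ is a $T$-flat quotient $I_C\boxtimes \O_T\twoheadrightarrow \mathcal{G}$ with $\mathcal{G}$ fibrewise zero-dimensional of length $n$. The assignment $\mathcal{Z}\mapsto \mathcal{I}_{C\times T}/\mathcal{I}_{\mathcal{Z}}$ is natural in $T$, so it remains only to match the two flatness conditions. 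The key point is the flatness equivalence: in
\[
0 \to \mathcal{G} \to \O_{\mathcal{Z}} \to \O_{C\times T} \to 0
\]
the term $\O_{C\times T}$ is automatically $T$-flat (as $C$ is fixed), whence $\mathcal{G}$ is $T$-flat if and only if $\O_{\mathcal{Z}}$ is, and the relative lengths agree by additivity of Euler characteristics. This produces a natural isomorphism of functors and hence of the representing schemes.

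The main obstacle is exactly this family-level flatness bookkeeping: one must check that the subfunctors ``$\mathcal{I}_{C\times T}/\mathcal{I}_{\mathcal{Z}}$ flat of relative length $n$'' and ``$\O_{\mathcal{Z}}$ flat with $\chi$ increased by $n$'' coincide, including constancy of the length in families. Once this is in place, which is precisely the content of the cited \cite[Lemma 5]{BK} and \cite[Lemma 5.1]{Ricolfi_Local}, the identity $\Hilb^n(X,C)=\Quot^n_X(I_C)$ holds termwise, and summing over $n\geq 0$ (recording $n$ by the formal variable $p$) yields the asserted equality $\Hilb^\bullet(X,C)=\Quot^\bullet_X(I_C)$ in $K_0(\mathrm{Var}_{\C})(\!(p)\!)$.
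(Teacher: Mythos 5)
Your proof is correct and follows the same route as the sources the paper relies on for this lemma --- the paper itself gives no proof, deferring to \cite[Lemma 5]{BK} and \cite[Lemma 5.1]{Ricolfi_Local}, and those references argue exactly as you do: the kernel/cokernel correspondence $Z \mapsto [I_C \twoheadrightarrow I_C/I_Z]$, promoted to families, with $T$-flatness of $\O_{\mathcal{Z}}$ equivalent to $T$-flatness of $\mathcal{I}_{C\times T}/\mathcal{I}_{\mathcal{Z}}$ because $\O_{C\times T}$ is automatically $T$-flat. One small remark: since the lemma only asserts equality in $K_0(\mathrm{Var}_{\C})(\!(p)\!)$, even less than your scheme-theoretic isomorphism is needed --- a geometric bijection between $\Quot^n_X(I_C)$ and the closed subset $\Hilb^n(X,C)$ already gives equality of classes, which sidesteps any concern about which scheme structure the paper puts on $\Hilb^n(X,C)$.
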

We also consider the following subscheme of these Quot schemes.
\begin{definition}\cite[Def. 12]{BK}
Let  $\mathcal{F}$ be a coherent sheaf on $X$, and $S\subset X$ a locally closed subset.  We define the locally closed subset of $\Quot_X^n(\mathcal{F})$
\[
\Quot_X^n(\mathcal{F},S) := \big\{ [\mathcal{F}\twoheadrightarrow \mathcal{G}] \in \Quot_X^n(\mathcal{F})~\big|~ \Supp_{\mathsf{red}}(\mathcal{G}) \subset S  \big\}.
\]
\end{definition}
This allows us to decompose the Quot schemes in the following way. 
\begin{lemma}\label{quot_decomposition_lemma}\cite[Prop. 13]{BK}
Let  $\mathcal{F}$ be a coherent sheaf on $X$, $S\subset X$ a locally closed subset and $Z\subset X$ a closed subset. Then if $Z\subset S$ and and $n\in\mathbb{Z}_{\geq 0}$ there is a geometrically bijective constructible map:
\[
\Quot_X^{n}(\mathcal{F},S) \longrightarrow \coprod_{n_1+n_2 = n} \Quot_X^{n_1}(\mathcal{F},S\setminus Z) \times \Quot_X^{n_2}(\mathcal{F},Z).
\]
\end{lemma}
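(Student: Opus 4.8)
The plan is to exploit the canonical decomposition of a zero-dimensional sheaf as a direct sum over its support. Given a geometric point $[\mathcal{F} \twoheadrightarrow \mathcal{G}] \in \Quot_X^n(\mathcal{F}, S)$, the sheaf $\mathcal{G}$ has finite support inside $S$, and since $Z$ is closed in $X$ (hence closed in $S$) while $S \setminus Z$ is open in $S$, the finite set $\Supp(\mathcal{G})$ splits as the disjoint union of its intersections with $Z$ and with $S \setminus Z$. A zero-dimensional sheaf breaks up as the direct sum of the extensions-by-zero of its stalks at the distinct points of its support, so this clopen splitting of the support induces a canonical decomposition $\mathcal{G} = \mathcal{G}_1 \oplus \mathcal{G}_2$ with $\Supp(\mathcal{G}_1) \subset S \setminus Z$ and $\Supp(\mathcal{G}_2) \subset Z$. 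Composing the surjection with the two projections yields quotients $\mathcal{F} \twoheadrightarrow \mathcal{G}_i$, and I would define the map on points by $[\mathcal{F} \twoheadrightarrow \mathcal{G}] \mapsto ([\mathcal{F} \twoheadrightarrow \mathcal{G}_1], [\mathcal{F} \twoheadrightarrow \mathcal{G}_2])$, landing in the component indexed by $(n_1, n_2) = (\mathrm{len}\,\mathcal{G}_1, \mathrm{len}\,\mathcal{G}_2)$.

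Next I would verify bijectivity on geometric points by exhibiting the inverse. Given a pair $([\mathcal{F} \twoheadrightarrow \mathcal{G}_1], [\mathcal{F} \twoheadrightarrow \mathcal{G}_2])$ with $\Supp(\mathcal{G}_1) \subset S \setminus Z$ and $\Supp(\mathcal{G}_2) \subset Z$, the supports are disjoint, so the diagonal map $\mathcal{F} \to \mathcal{G}_1 \oplus \mathcal{G}_2$ is surjective: at each point of $X$ at most one summand has a nonzero stalk, where surjectivity is inherited from the corresponding factor. This produces a quotient in $\Quot_X^{n}(\mathcal{F}, S)$, and the two assignments are mutually inverse because the decomposition $\mathcal{G} = \mathcal{G}_1 \oplus \mathcal{G}_2$ is canonical. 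The hypothesis $Z \subset S$ is precisely what keeps this bookkeeping consistent, since every point of $\Supp(\mathcal{G})$ then lies in $S$ and is sorted into exactly one of the two loci.

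To upgrade the pointwise bijection to a geometrically bijective constructible map, I would stratify the source. Writing $\ell_Z([\mathcal{G}]) := \mathrm{len}(\mathcal{G}_2)$ for the length of the part supported on $Z$, I expect $\ell_Z$ to be upper semicontinuous in flat families, since a support point drifting from the open set $S \setminus Z$ into the closed set $Z$ can only raise the $Z$-length; hence each level set $\{\ell_Z = n_2\} = \{\ell_Z \geq n_2\} \cap \{\ell_Z \geq n_2 + 1\}^{c}$ is locally closed, and these cut $\Quot_X^n(\mathcal{F}, S)$ into finitely many strata. On the stratum where $\ell_Z \equiv n_2$ is constant, the summand $\mathcal{G}_2$ is cut out functorially as the $I_Z$-power-torsion subsheaf $\{s : I_Z^N s = 0 \text{ for } N \gg 0\}$ with quotient $\mathcal{G}_1 = \mathcal{G}/\mathcal{G}_2$, and constancy of the length forces this family to be flat, so the assignment $\mathcal{G} \mapsto (\mathcal{G}_1, \mathcal{G}_2)$ is a morphism into $\Quot_X^{n-n_2}(\mathcal{F}, S\setminus Z) \times \Quot_X^{n_2}(\mathcal{F}, Z)$, matching the paper's definition of a constructible map.

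The main obstacle is this last point: establishing that the fibrewise direct-sum splitting propagates through flat families to give a morphism on each stratum, rather than merely a pointwise bijection. The crux is the flatness of the $I_Z$-torsion subsheaf $\mathcal{G}_2$ over the base, which is exactly where the constant-length hypothesis on the stratum does the real work; once flatness of both $\mathcal{G}_1$ and $\mathcal{G}_2$ is secured, the universal properties of the two Quot schemes supply the desired morphisms, and the strata assemble into the claimed geometrically bijective constructible map. Since all that is ultimately needed downstream is a bijection on geometric points (Euler characteristic being insensitive to the finer scheme structure), this suffices.
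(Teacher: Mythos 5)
Your proposal is correct and takes essentially the same route as the proof this paper relies on: the lemma is quoted from \cite[Prop.~13]{BK}, whose argument is exactly your canonical splitting $\mathcal{G}\cong\mathcal{G}_1\oplus\mathcal{G}_2$ along the disjoint supports in $S\setminus Z$ and $Z$ (giving geometric bijectivity), followed by stratifying $\Quot_X^n(\mathcal{F},S)$ by the length supported on $Z$, with semicontinuity making the strata locally closed and constancy of that length making the splitting a morphism of flat families on each stratum. You also correctly identify the crux --- flatness of the $Z$-supported summand over a constant-length stratum, which upgrades the pointwise splitting to a morphism via the universal property of Quot --- so nothing essential is missing.
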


\subsection{An Action on the Formal Neighbourhoods}\label{action_subsection}

Let $C\subset X$ be a one dimensional subscheme in the class $\beta\in H_2(X)$ with $1$-cycle $\cycle{q}=\Cyc(C)$. We recall the notation defining $\HilbCyc^{n}(X,\cycle{q})\subset \Hilb^{\beta,n}(X)$ to be the following reduced subscheme
\[
\HilbCyc^{n}(X,\cycle{q}) := \Big\{~[Z]\in \Hilb^{\beta,n}(X) ~|~  \Cyc(Z) = \cycle{q} ~\Big\} .
\]
Furthermore, we define 
\[
\Hilb^{n}_{\mathsf{CM}}(X,\cycle{q}) \subset \HilbCyc^{n}(X,\cycle{q})
\] 
to be the open subset containing Cohen-Macaulay subschemes of $Z$. 
\begin{lemma}\label{action_on_CM_lemma}
Suppose $Z\subset X$ is a one dimensional Cohen-Macaulay subscheme such that:
\begin{enumerate}[label={\arabic*)}]
\item $Z$ has the decomposition $Z = C \cup \bigcup_i Z_i$ where $C$ is reduced, $Z_i \cap Z_j =\emptyset $ for $i\neq j$ and for each $i$ we have $C\cap Z_i$ is finite. 
\item There are formal neighbourhoods $V_i$ of $Z_i$ in $X$ such that $(\C^*)^2$ acts on each and fixes $Z_i^{\mathsf{red}}$. 
\item $C \cap (\bigcup_i V_i)$ is invariant under the $(\C^*)^2$-action on $V_i$. 
\end{enumerate}
Then there is a $(\C^*)^2$-action on $\Hilb^{n}_{\mathsf{CM}}(X,\Cyc(Z))$ such that if $\alpha \in (\C^*)^2$ and $Y\in \Hilb^{n}_{\mathsf{CM}}(X,\Cyc(Z))$ then:
\[
\alpha\cdot Y ~= ~\widetilde{C}  ~\cup ~ \alpha \cdot (Y|_{\bigcup_i V_i}).
\]
\end{lemma}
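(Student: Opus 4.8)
The plan is to define the action on closed points via the stated formula, verify the group-action axioms set-theoretically, and finally promote the construction to a morphism of schemes by carrying it out over the universal family.

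First I would fix the canonical decomposition of an arbitrary $Y \in \Hilb^n_{\mathsf{CM}}(X,\Cyc(Z))$. Since $Y$ is Cohen-Macaulay of pure dimension one and $\Cyc(Y)=\Cyc(Z)$, its support is exactly $\Supp(\Cyc(Z)) = C \cup \bigcup_i Z_i^{\mathsf{red}}$ and $Y$ has no embedded points. Using that the $Z_i$ are pairwise disjoint with disjoint formal neighbourhoods $V_i$, and that each $C\cap Z_i$ is finite, I would write $Y = \widetilde C \cup (Y|_{\bigcup_i V_i})$, where $Y|_{V_i}$ is the restriction of $Y$ to the formal neighbourhood $V_i$ and $\widetilde C$ records the behaviour of $Y$ away from the $V_i$. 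Purity forces this to be canonical: the two pieces overlap only along the finite gluing loci $C\cap V_i$, and $Y$ is recovered by gluing them there. The piece $\widetilde C$ is precisely what the action will leave fixed.

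Since $(\C^*)^2$ acts on each $V_i$ it acts on the closed subschemes of $V_i$, so $\alpha\cdot(Y|_{\bigcup_i V_i})$ is defined, and I set
\[
\alpha\cdot Y := \widetilde C \,\cup\, \alpha\cdot\big(Y|_{\textstyle\bigcup_i V_i}\big).
\]
The content is that this reglues to a point of the same Hilbert scheme. Compatibility of the gluing is exactly hypothesis (3): the action preserves $C\cap(\bigcup_i V_i)$, so the gluing datum defining $Y$ along the loci $C\cap V_i$ is $\alpha$-invariant and $\alpha\cdot(Y|_{V_i})$ still attaches to $\widetilde C$ by the same datum. Because $\alpha$ acts through an automorphism of each $V_i$, the reglued scheme $\alpha\cdot Y$ is locally isomorphic to $Y$; hence it is again Cohen-Macaulay with $\chi(\O_{\alpha\cdot Y})=\chi(\O_Y)$, so it lies in the same $\Hilb^n$. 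Finally, since the action fixes $Z_i^{\mathsf{red}}$ and is multiplicity preserving, $\Cyc(\alpha\cdot(Y|_{V_i}))=\Cyc(Y|_{V_i})$, whence $\Cyc(\alpha\cdot Y)=\Cyc(Y)=\Cyc(Z)$ and $\alpha\cdot Y\in\Hilb^n_{\mathsf{CM}}(X,\Cyc(Z))$.

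The axioms $1\cdot Y = Y$ and $(\alpha\beta)\cdot Y = \alpha\cdot(\beta\cdot Y)$ are then immediate, since they hold factorwise on each $Y|_{V_i}$ while $\widetilde C$ is untouched. The remaining, and I expect principal, difficulty is to show that this set-theoretic map underlies an \emph{algebraic} action, i.e.\ that $(\C^*)^2 \times \Hilb^n_{\mathsf{CM}}(X,\Cyc(Z)) \to \Hilb^n_{\mathsf{CM}}(X,\Cyc(Z))$ is a morphism. For this I would run the same decompose--act--reglue procedure over the universal family $\mathcal Z \subset X \times \Hilb^n_{\mathsf{CM}}(X,\Cyc(Z))$: the disjointness of the $Z_i$ and the finiteness of each $C\cap Z_i$ make the restriction of $\mathcal Z$ to $V_i$ flat over the base, the fibrewise $(\C^*)^2$-action on $V_i$ extends to this relative restriction, and regluing along the invariant loci $C\cap V_i$ yields a flat family of Cohen-Macaulay subschemes with constant cycle and Euler characteristic, which the universal property of the Hilbert scheme converts into the desired morphism. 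The care needed here is in verifying flatness of the decomposition and that regluing introduces no embedded points, which is exactly where the Cohen-Macaulay hypothesis and condition (3) do the essential work.
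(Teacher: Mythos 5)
Your set-theoretic formula and your decision to work with the universal family $\mathcal{Z}\subset X\times \Hilb^{n}_{\mathsf{CM}}(X,\Cyc(Z))$ are exactly the paper's starting point, but the two steps you flag as ``care needed'' are precisely where your plan has gaps, and the paper resolves them by a different mechanism than the one you sketch. Your plan is to restrict $\mathcal{Z}$ to the $V_i$, act fibrewise, and \emph{reglue}. For this to define a morphism you need the part of the family away from the $V_i$, and the gluing loci, to be \emph{constant} over the base --- otherwise ``regluing along $C\cap V_i$'' is only a fibre-by-fibre prescription and there is nothing algebraic on which to invoke the universal property. This constancy is a real theorem, and it is the technical heart of the paper's proof: writing $\mathcal{C}$ for the scheme-theoretic closure of $\mathcal{Z}\setminus(\mathcal{Z}\cap(\bigcup_i Z_i^{\mathsf{red}}\times H))$, where $H:=\Hilb^{n}_{\mathsf{CM}}(X,\Cyc(Z))$, the paper shows every closed fibre of $\mathcal{C}\to H$ equals $C$, so the immersion $C\times H\hookrightarrow \mathcal{C}$ hits all closed points, and then uses properness and flatness together with \cite[Tag 01TX, Tag 05XD]{stacks_project} to conclude $\mathcal{C}=C\times H$ on the nose; similarly the thickened part $\mathcal{Z}^{\dagger}$ lies in $\bigcup_i V_i\times H$. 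Having $\mathcal{Z}\subset W\times H$ with $W=C\cup\bigcup_i V_i$, the paper never reglues anything: it applies the automorphism $\beta(g,(x,y))=(g,(g\cdot x,y))$ of $G\times(W\times H)$ to $G\times\mathcal{Z}$, so the transported family is automatically a flat family of closed subschemes of $X$ (it is abstractly the pullback of $\mathcal{Z}$, merely re-embedded), and flatness never has to be re-verified. Your stated worries --- flatness of the restrictions and embedded points --- are not where the difficulty lies; the difficulty is constancy of the complementary part over the base, which you assert fibrewise (``purity forces this to be canonical'') but never establish for the family, and which your regluing step silently requires.

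There is a second gap at the end: the universal property hands you a morphism $\Phi\colon G\times H\to \Hilb^{\beta,n}(X)$, not a morphism to $H$ itself. The condition $\Cyc(\,\cdot\,)=\Cyc(Z)$ is not scheme-theoretic --- there is no Hilbert--Chow morphism in this setting, only a continuous map, and $H$ is defined as a reduced locally closed subscheme --- so ``constant cycle and Euler characteristic'' does not by itself place the image in $H$. The paper closes this by observing that $G\times H$ is reduced, hence the scheme-theoretic image of $\Phi$ is reduced, and all of its closed points lie in $H$, so $\Phi$ factors through the reduced induced structure on $H$. You need this (or an equivalent) argument; without it your construction produces an action morphism into the wrong space.
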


\begin{proof}
Let $\beta=[Z]\in H_2(X,\mathbb{Z})$ and use the simplifying notation $\mathcal{H}= \Hilb^{\beta,n}(X)$ and $H:=\Hilb^{n}_{\mathsf{CM}}(X,\Cyc(Z))$. The composition $H\hookrightarrow \Hilb^{n}_{\Cyc}(X,\Cyc(Z)) \hookrightarrow \mathcal{H}$ defines an immersion $H \hookrightarrow \mathcal{H}$ expressing $H$ as a locally closed (reduced) subscheme of $\mathcal{H}$. Moreover, the immersion also defines the following flat family over $H$:
\[
\begin{array}{c}\xymatrix@R=1em{
\mathcal{Z} \ar@{^(->}[rr]_{\iota}\ar[rd]_{h}&& X \times H \ar[ld]^{p_2}\\
& H &
}\end{array}
\]

We consider $\mathcal{Z} \cap (\bigcup_i Z_i^{\mathsf{red}}\times H)$ and define $\mathcal{C} := \overline{\mathcal{Z} \setminus (\mathcal{Z} \cap (\bigcup_i Z_i^{\mathsf{red}}\times H))}$, the scheme-theoretic closure of the scheme-theoretic complement. Also, we denote by $\mathcal{Z}^\dagger := \widehat{\mathcal{Z}}_{\overline{\mathcal{Z}\setminus \mathcal{C}}}$ the formal completion of $\overline{\mathcal{Z}\setminus \mathcal{C}}$ in $\mathcal{Z}$. This gives a decomposition of $\mathcal{Z}$ by
\[
\mathcal{Z} = \mathcal{C} \cup \mathcal{Z}^{\dagger}.
\]

For all closed points $x\in H$, the fibres of the composition $\mathcal{C} \hookrightarrow \mathcal{Z} \rightarrow H$ have property $(\mathcal{C})_x = C$ as subschemes of $X$. Hence, $\mathcal{C} \cap( C\times H)$ contains all of the closed points of $C\times H$. Thus, $C\times H = \mathcal{C} \cap( C\times H)$ and we have the following immersion over $H$
\[
\begin{array}{c}\xymatrix@R=1em{
C \times H \ar@{^(->}[rr]_{\alpha}\ar[rd]_{f_1}&&\mathcal{C}  \ar[ld]^{f_2}\\
& H &
}\end{array}
\]
where both of $f_1$ and $f_2$ are proper with $f_1$ being flat. Also, since $H$ is Noetherian, \cite[Tag 01TX, Tag 05XD]{stacks_project} shows that there is an open set $U\subset H$ containing all the closed points such that $\alpha_U$ is an isomorphism. Hence, $C \times H = \mathcal{C}$ as subschemes of $X\times H$. \\

Now, using a similar argument to the previous paragraph, we have that the underlying reduced schemes of $\mathcal{Z}^\dagger$ and $\bigcup_i Z_i^{\mathsf{red}}\times H$ are equal as subschemes in $X\times H$. This means that they both have the same formal completion in $X\times H$.\\

The formal completion of $\bigcup_i Z_i^{\mathsf{red}}\times H$ in $X\times H$ is given by $\bigcup_i V_i\times H$, so we have  inclusion $\mathcal{Z}^\dagger \hookrightarrow \bigcup_i V_i\times H$. Furthermore, we have an inclusion 
\[
\mathcal{Z} \hookrightarrow \mathcal{C} \cup \bigcup_i (V_i\times H) = (C \cup \bigcup_i V_i) \times H.
\]

Now, by letting $G:= (\mathbb{C}^*)^2$ and $W:=C \cup \bigcup_i V_i$  we have the following diagram
\[
\xymatrix@C=3em{
G\times \mathcal{Z}\ar@{^(->}[r]^{\mathrm{id}\oplus \iota\hspace{0.5cm}}\ar[rd]_{\mathrm{id}\oplus h}& G\times ( W \times H )\ar[r]^{\beta}\ar[d]^{\mathrm{id}\oplus  q_2} & G\times (W \times H ) \ar@{^(->}[r]^{\hspace{0.75cm}{}^{\mathrm{id}\oplus (j\oplus \mathrm{id})}\hspace{0.6cm}}\ar[d]^{\mathrm{id}\oplus q_2}& G\times (X \times H) \ar[d]^{\mathrm{id}\oplus p_2}\\
& G\times H \ar@{=}[r]& G\times H\ar@{=}[r]& G\times H
}
\]
where $\beta$ is defined by $(g,(x,y))\mapsto (g,(g\cdot x,y))$, $j: W \hookrightarrow X$ is the natural inclusion and $q_2: W\times H \rightarrow H$ is the projection onto the second factor. Taking the composition of the top row defines the following flat family in $\mathcal{H}$ over $G\times H$:
\[
\zeta:= \left(\begin{array}{c}\xymatrix@R=1em{
G\times \mathcal{Z} \ar@{^(->}[rr]_{}\ar[rd]_{}&& X \times (G\times H) \ar[ld]^{}\\
& G \times H &
}\end{array}\right)
\]
The flat family $\zeta$ defines a morphism $\Phi:G\times H \rightarrow \mathcal{H}$.\\

We have that $G\times H$ is reduced, so the scheme theoretic image $\mathrm{Sch.Im.}(\Phi)$ is also reduced. Moreover, every closed point of the $\mathrm{Sch.Im.}(\Phi)$ is contained in $H$. Hence, we have $\mathrm{Sch.Im.}(\Phi) \subseteq H$. Thus, $\Phi$ defines a morphism $G\times H \rightarrow H$. It is now straightforward to show that this morphism satisfies the identity and compatibility axioms of a group action. 
\end{proof}

\begin{remark}
In the case where $Z$ is smooth, an analysis similar to that in the proof of lemma \ref{action_on_CM_lemma}  was carried out in \cite{Ricolfi_DT/PT}. However, the analysis there is scheme-theoretic. Moreover the equality $\kappa^{-1}(z) = \Quot^{\bullet}_X(I_Z)$, which will appear in the proof of the next lemma (lemma \ref{kappa_constructible_invariant_lemma}), was proven scheme theoretically in the case of $Z$ begin smooth.\\
\end{remark}

Define $\Hilb^{\blacklozenge}_{\mathsf{CM}}(X, \cycle{q}):=\coprod_{m\in\mathbb{Z}}\Hilb^{m}_{\mathsf{CM}}(X, \cycle{q})$ and consider the constructible map
\[
\kappa: \HilbCyc^{\bullet}(X,\cycle{q}) \longrightarrow \Hilb^{\blacklozenge}_{\mathsf{CM}}(X,\cycle{q})
\]
where $Z\subset X$ is mapped to the maximal Cohen-Macaulay subscheme $Z_{\mathsf{CM}}\subset Z$ (also forgeting the indexing variable $p$). Then for $z\in \Hilb^{\blacklozenge}_{\mathsf{CM}}(X,\cycle{q})$ corresponding to $Z\subset X$ we have
\[
\kappa^{-1}(x) 
= \sum_{m\in\mathbb{Z}} p^m\, \Hilb^{m-\chi(\O_Z)}(X,Z)
= p^{\chi(\O_Z)}  \Hilb^{\bullet}(X,Z).
\]
Moreover, we have
\begin{align*}
e\Big(\HilbCyc^{\bullet}(X,\cycle{q})\Big) 
&= e\Big(\Hilb^{\blacklozenge}_{\mathsf{CM}}(X,\cycle{q}), \kappa_*1\Big)\\
&= e\Big(\Hilb^{\blacklozenge}_{\mathsf{CM}}(X,\cycle{q})^{(\C^*)^2}, \kappa_*1\Big)  \num \label{Invariant_Euler_char_kappa*}
\end{align*}
where $(\kappa_* 1)(z) := e(\kappa^{-1}(z))$ and the last line comes from the following lemma.

\begin{lemma}\label{kappa_constructible_invariant_lemma}
The constructible function $\kappa_* 1$ is invariant under the $(\C^*)^2$-action. That is if $\alpha \in (\C^*)^2$ and $z \in \Hilb^{n}_{\mathsf{CM}}(X,\cycle{q})$ then $(\kappa_* 1)(z) = (\kappa_* 1)(\alpha\cdot z)$. 
\end{lemma}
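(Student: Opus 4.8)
The plan is to reduce the statement to the formal-local nature of Quot schemes of points, and then to exploit that each $\alpha\in(\C^*)^2$ restricts to an \emph{automorphism} of the formal neighbourhoods $V_i$. Recall that for $z\in \Hilb^{\blacklozenge}_{\mathsf{CM}}(X,\cycle{q})$ corresponding to a Cohen-Macaulay subscheme $Z\subset X$, the computation displayed just before the lemma together with Lemma \ref{hilb_to_quot_lemma} gives
\[
\kappa^{-1}(z) = p^{\chi(\O_Z)}\,\Hilb^{\bullet}(X,Z) = p^{\chi(\O_Z)}\,\Quot^{\bullet}_X(I_Z).
\]
Since the $(\C^*)^2$-action of Lemma \ref{action_on_CM_lemma} is an action on each fixed $\Hilb^{n}_{\mathsf{CM}}(X,\cycle{q})$, it preserves $n=\chi(\O_Z)$, so $\chi(\O_{Z})=\chi(\O_{\alpha\cdot Z})$. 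Hence it is enough to prove the equality of generating functions $\sum_n e(\Quot_X^n(I_Z))\,p^n = \sum_n e(\Quot_X^n(I_{\alpha\cdot Z}))\,p^n$.

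First I would factor each generating function along the support stratification. Writing $Z = C\cup\bigcup_i Z_i$ as in Lemma \ref{action_on_CM_lemma}, set $Z^{\circ}:= Z^{\mathsf{red}}\setminus\bigcup_i Z_i^{\mathsf{red}}$ and apply Lemma \ref{quot_decomposition_lemma} iteratively (with $\mathcal F=I_Z$, $S=Z^{\mathsf{red}}$, peeling off the closed sets $Z_i^{\mathsf{red}}$ one at a time). The resulting geometric bijection yields the product factorisation
\[
\sum_n e\big(\Quot_X^n(I_Z)\big)\,p^n = \Big(\sum_{n_0} e\big(\Quot^{n_0}_X(I_Z,Z^{\circ})\big)\,p^{n_0}\Big)\prod_i\Big(\sum_{n_i} e\big(\Quot^{n_i}_X(I_Z,Z_i^{\mathsf{red}})\big)\,p^{n_i}\Big),
\]
and the identical factorisation holds for $\alpha\cdot Z$ over the \emph{same} strata $Z^{\circ}$ and $Z_i^{\mathsf{red}}$, because the action fixes each $Z_i^{\mathsf{red}}$ and the common reduced part $\widetilde{C}=C$ lies in both $Z$ and $\alpha\cdot Z$.

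It then remains to match the factors. Away from the neighbourhoods, the formula $\alpha\cdot Y=\widetilde{C}\cup\alpha\cdot(Y|_{\bigcup_i V_i})$ shows that $Z$ and $\alpha\cdot Z$ coincide on a neighbourhood of $Z^{\circ}$; since a Quot scheme of zero-dimensional quotients with support in a fixed locally closed set depends only on the formal neighbourhood of that set and the restricted sheaf, the $Z^{\circ}$-factors for $Z$ and $\alpha\cdot Z$ agree. For the $Z_i^{\mathsf{red}}$-factors, the chosen $\alpha$ restricts to an automorphism $V_i\to V_i$ carrying $Z|_{V_i}$ to $(\alpha\cdot Z)|_{V_i}$ and preserving $Z_i^{\mathsf{red}}$, so by the same formal-locality principle it induces an isomorphism $\Quot^{n_i}_X(I_Z,Z_i^{\mathsf{red}})\cong\Quot^{n_i}_X(I_{\alpha\cdot Z},Z_i^{\mathsf{red}})$ and hence equality of those factors. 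Multiplying the factors gives equality of the two generating functions, and restoring the common prefactor $p^{\chi(\O_Z)}=p^{\chi(\O_{\alpha\cdot Z})}$ yields $(\kappa_*1)(z)=(\kappa_*1)(\alpha\cdot z)$.

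The main obstacle is the formal-locality principle invoked twice: that $e\big(\Quot_X^n(\mathcal F,W)\big)$ for $W$ locally closed is determined by the pair $(\widehat{X}_W,\mathcal F|_{\widehat{X}_W})$, so that the automorphism $\alpha$ of the formal neighbourhood $V_i$ — which is generally \emph{not} the restriction of any global automorphism of $X$ — nonetheless induces an honest isomorphism of the support-constrained Quot schemes. Making this precise, namely that the Quot functor of zero-dimensional quotients supported on $W$ is formally local and that this locality is compatible with the geometric bijections of Lemma \ref{quot_decomposition_lemma}, is the technical crux; once it is granted, the remaining bookkeeping is routine.
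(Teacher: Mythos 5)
Your strategy is essentially the paper's: reduce to Quot schemes of the ideal sheaf via Lemma \ref{hilb_to_quot_lemma}, decompose by support using Lemma \ref{quot_decomposition_lemma}, and match factors using (a) equality of $I_Z$ and $I_{\alpha\cdot Z}$ away from the formal neighbourhoods and (b) the automorphism $\alpha$ of $\widetilde{V}=\bigcup_i V_i$, with formal locality of the support-constrained Quot schemes as the underlying principle; these are exactly the two isomorphisms the paper invokes. However, your key displayed factorisation is false as written. Applying Lemma \ref{quot_decomposition_lemma} with $S=Z^{\mathsf{red}}$ only decomposes $\Quot^n_X(I_Z,Z^{\mathsf{red}})$, the locus of quotients supported set-theoretically on the curve; it says nothing about the full scheme $\Quot^n_X(I_Z)=\Quot^n_X(I_Z,X)$ appearing on your left-hand side. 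Zero-dimensional quotients of $I_Z$ can be supported anywhere on $X$: for $x\notin Z^{\mathsf{red}}$ the sheaf $I_Z$ is isomorphic to $\O_X$ near $x$, so there are plenty of quotients concentrated at such points. These ``free points'' are precisely what produce the factor $\big(\widetilde{\mathsf{V}}_{\emptyset\emptyset\emptyset}\big)^{e(W)}$, a power of $M(p)$, in \ref{decomposition_method}, Lemma \ref{Quot_to_V_Sym_lemma} and all the later computations. So your right-hand side is missing the factor $\sum_n e\big(\Quot^n_X(I_Z,X\setminus Z^{\mathsf{red}})\big)p^n$, and without it the product of the factors you match does not compute $e(\kappa^{-1}(z))$.

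The gap is easily repaired: first apply Lemma \ref{quot_decomposition_lemma} with $S=X$ and closed subset $Z^{\mathsf{red}}$ (or, as the paper does, with the closed subset $\bigcup_i Z_i^{\mathsf{red}}$ underlying $\widetilde{V}$), which introduces the missing factor; then note that $Z^{\mathsf{red}}=(\alpha\cdot Z)^{\mathsf{red}}$ (both subschemes have $1$-cycle $\cycle{q}$) and that $I_Z$ and $I_{\alpha\cdot Z}$ restrict to equal sheaves on the open complement, so this extra factor is literally identical for $Z$ and $\alpha\cdot Z$. With that correction your argument coincides with the paper's proof, which uses the coarser two-piece decomposition $X=(X\setminus\widetilde{V})\amalg\widetilde{V}$; your finer stratification into $Z^{\circ}$ and the individual $Z_i^{\mathsf{red}}$ is legitimate but unnecessary, since the matching away from $\widetilde{V}$ can be done in one step. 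You are right that the formal-locality isomorphism $\Quot^{n}_X(I_Z,\widetilde{V})\cong\Quot^{n}_{\widetilde{V}}(I_Z|_{\widetilde{V}})$ is the technical crux; the paper uses it in exactly this form (and, like you, does not reprove it, relying on the framework of \cite{BK}).
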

\begin{proof}
Let $\alpha \in (\C^*)^2$ and $z \in \Hilb^{n}_{\mathsf{CM}}(X,\cycle{q})$ correspond to $Z\subset X$. Also let $Z_i$ and $V_i$ be as in lemma \ref{action_on_CM_lemma} with $\widetilde{Z}: = \bigcup_i Z_i$ and $\widetilde{V}:=\bigcup_iV_i$. Then the fibre $\kappa^{-1}(z)$ is 
\[
\kappa^{-1}(z) = p^{\chi(\O_Z)} \Hilb^{\bullet}(X,Z) =p^{\chi(\O_Z)} \Quot^{\bullet}_X(I_Z)
\]
where the last equality is in $K_0(\mathrm{Var}_{\C})(\!(p)\!)$ from lemma \ref{hilb_to_quot_lemma}. Also from lemma \ref{quot_decomposition_lemma} we have a geometrically bijective constructible map:
\[
\Quot_X^{n}(I_Z) \longrightarrow \coprod_{n_1+n_2 = n} \Quot_X^{n_1}(I_Z,X\setminus \widetilde{V}) \times \Quot_X^{n_2}(I_Z,\widetilde{V}).
\]
We have $I_{\alpha \cdot Z}|_{X\setminus\widetilde{V}}  = I_{Z}|_{X\setminus \widetilde{V}}$ so $\Quot_X^{n_1}(I_Z,X\setminus\widetilde{V}) \cong \Quot_X^{n_1}(I_{\alpha\cdot Z},X\setminus\widetilde{V})$. Moreover, we have isomorphisms
\[
\Quot_X^{n_2}(I_Z,\widetilde{V}) \cong \Quot_{\widetilde{V}}^{n_2}(I_Z|_{\widetilde{V}}) 
\]
and $Z_{\widetilde{V}} \cong \alpha\cdot Z_{\widetilde{V}}$ so we have an isomorphism 
\[
\Quot_X^{n_2}(I_Z,\widetilde{V}) \cong \Quot_X^{n_2}(I_{\alpha\cdot Z},\widetilde{V})
\]
Taking Euler characteristic now shows that $e\big(\kappa^{-1}(z)\big) = e\big(\kappa^{-1}(\alpha\cdot z)\big)$.
\end{proof}

\begin{re}\label{decomposition_method}
We will now consider a useful tool in calculating Euler characteristics of the form given in (\ref{Invariant_Euler_char_kappa*}). First let  $z \in \Hilb^{n}_{\mathsf{CM}}(X,\cycle{q})$ correspond to $Z\subset X$ such that $Z$ is locally monomial. In other words, for every geometric point $z\in Z$ the restriction of $Z$ to the formal neighbourhood of $z$ in $X$ is of the form $\Spec\, \C[[x,y,z]]/I_z$ where $I_z$ is an ideal generated by monomials in $\C[[x,y,z]]$. Then the fibre $\kappa^{-1}(x)$ is
\[
\kappa^{-1}(x) =p^{\chi(\O_Z)}  \Hilb^{\bullet}(X,Z) =p^{\chi(\O_Z)}  \Quot^{\bullet}_X(I_Z)
\]
where the last equality is in $K_0(\mathrm{Var}_{\C})(\!(p)\!)$ from lemma \ref{hilb_to_quot_lemma}. To compute this fibre we employ the following method:
\begin{enumerate}[label={\arabic*)}]
\item Decompose $X$ by $X  =Z~\amalg~ W$ where $W:= X\setminus Z$ 
\item Let $Z^\diamond$ be set of singularities of $Z^{\mathsf{red}}$. 
\item Let $ \coprod_i Z_i = Z \setminus Z^\diamond$  be a decomposition into irreducible components. 
\end{enumerate}
Then applying Euler characteristic to lemma \ref{quot_decomposition_lemma} we have:
\[
e\big(\Quot_X^{\bullet}(I_Z) \big) = e\big(\Quot_X^{\bullet}(I_Z,W) \big)  \prod_{z\in Z^\diamond} e\big(\Quot_X^{\bullet}(I_Z,\{z\})\big) \prod_i e\big(\Quot_X^{\bullet}(I_Z,Z_i)\big).
\]
\end{re}

\vspace{0.2cm}

\subsection{Partitions and the topological vertex}

We recall the terminology of 2D partitions, 3D partitions and the topological vertex from \cite{ORV, BCY}. A \textit{2D partition} $\lambda$ is an infinite sequence of weakly decreasing integers that are zero except for a finite number of terms. The size of a 2D partition $|\lambda|$ is the sum of the elements in the sequence and the length $l(\lambda)$ is the number of non-zero elements. We will also think of a 2D partition as a subset of $(\mathbb{Z}_{\geq 0})^2$ in the following way:
\[
\lambda ~ \leftrightsquigarrow
~ \{ (i,j) \in (\mathbb{Z}_{\geq 0})^2 ~|~ \lambda_i\geq j  \geq 0 \mbox{ or } i=0\}
\]

A \textit{3D partition} is a subset $\bm{\eta} \subset (\mathbb{Z}_{\geq 0})^3$ satisfying the following condition:
\begin{enumerate}[label={\arabic*)}]
\item $(i,j,k) \in \bm{\eta}$ if and only if one of $i$, $j$ or $k$ is zero or one of $(i-1,j,k)$, $(i,j-1,k)$ or $(i,j,k-1)$  is also in $\bm{\eta}$.
\end{enumerate}

Given a triple of 2D partitions $(\lambda,\mu,\nu)$ we also define a \textit{3D partition asymptotic to $(\lambda,\mu,\nu)$} is a 3D partition $\bm{\eta}$ that also satisfies the conditions:
\begin{enumerate}[label={\arabic*)}]
\item $(j,k) \in \lambda$ if and only if $(i,j,k) \in \bm{\eta}$ for all $i\gg 0$.
\item $(k,i) \in \mu$ if and only if $(i,j,k) \in \bm{\eta}$ for all $j\gg 0$.
\item $(i,j) \in \nu$ if and only if $(i,j,k) \in \bm{\eta}$ for all $k\gg 0$.
\end{enumerate}

\begin{figure}
  \centering
      \includegraphics[width=0.5\textwidth]{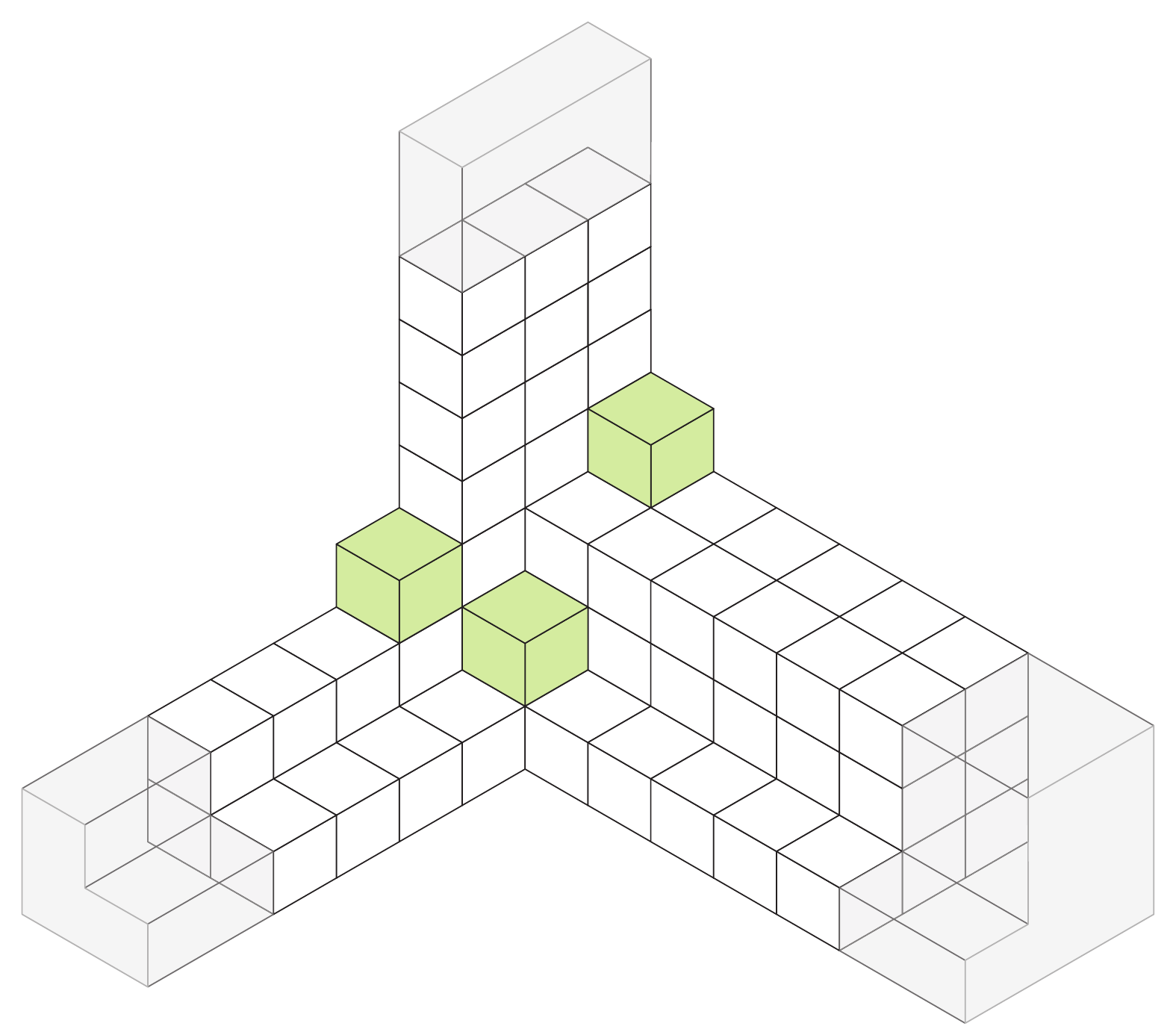}
      \vspace{-1.25em}
  \caption[A 3D partition asymptotic to $\big((2,1), (3,2,2), (1,1,1)\big)$]{A 3D partition asymptotic to $\big((2,1), (3,2,2), (1,1,1)\big)$. The partition containing only the white boxes has renormalised volume $-16$. The partition including the green boxes has renormalised volume $-13$.}
\end{figure}

The \textit{leg of $\bm{\eta}$ in the $i$th direction} is the subset $\{ (i,j,k) \in \bm{\eta} ~|~ (j,k) \in \lambda  \}$. We analogously define the legs of $\bm{\eta}$ in the $j$ and $k$ directions. The \textit{weight} of a point in $\bm{\eta}$ is defined to be
\[
\xi_{\bm{\eta}}(i,j,k) := 1- \#\,\{\mbox{legs of $\bm{\eta}$ containing $(i,j,k)$}\}.
\]
Using this we define the \textit{renormalised volume} of $\bm{\eta}$ by:
\begin{align}
|\bm{\eta}| := \sum_{(i,j,k)\in\bm{\eta}} \xi_{\bm{\eta}}(i,j,k). \label{renormalised_volume_def}
\end{align}

The \textit{topological vertex} is the formal Laurent series:
\[
\mathsf{V}_{\lambda \mu \nu} := \sum_{\bm{\eta}} p^{|\bm{\eta}|} 
\]
where the sum is over all 3D partitions asymptotic to $(\lambda,\mu,\nu)$. An explicit formula for $\mathsf{V}_{\lambda \mu \nu}$ is derived in  \cite[Eq. 3.18]{ORV} to be:
\[
\mathsf{V}_{\lambda\mu\nu} = M(p) p^{-\frac{1}{2}(\|\lambda\|^2+\|\mu^t\|^2+\|\nu\|^2)} S_{\nu^t}(p^{-\rho}) \sum_{\eta} S_{\lambda^t/\eta}(p^{-\nu-\rho})S_{\mu/\eta}(p^{-\nu^t-\rho})
\]

\subsection{Partition Thickened Section, Fibre and Banana Curves} \label{partition_thickened_section}
In this subsection we consider the non-reduced structure of curves in our desired classes. The partition thickened structure will be the fixed points of a $(\C^*)^2$-action. 
\begin{re}\label{canonical_section_coords}
Recall that the section $\zeta \in S$ is the blow-up of a point in $z\in \P^2$. Choose once and for all a formal neighbourhood $\Spec \,\C[\![ s,t]\!]$ of $z\in \P^2$. The blow-up gives the formal neighbourhood of $\zeta\in S$ with 2 coordinate charts:
\[
\C[\![s,t]\!][u]/(t-s u) \cong \C[\![s]\!][u]
\hspace{0.5cm}\mbox{and}\hspace{0.5cm}
\C[\![s,t]\!][v]/(s-t v) \cong \C[\![t]\!][v]
\]
with change of coordinates $s\mapsto tv $ and $u\mapsto v^{-1}$. This gives the formal neighbourhood of $\sigma\in X$ with 2 coordinate charts:
\[
\C[\![s_1,s_2]\!][u]
\hspace{0.5cm}\mbox{and}\hspace{0.5cm}
\C[\![t_1,t_2]\!][v]
\]
with change of coordinates $s_i\mapsto t_i v $ and $u\mapsto v^{-1}$. We call these coordinates the \textbf{canonical formal coordinates} around  $\sigma \in X$. 
\end{re}
\begin{re}\label{canonical_section_coords_relative}
Now consider a reduced curve $D$ in $X$ that intersects $\sigma$ transversely with length 1. When $D$ is restricted to the formal neighbourhood of $\sigma$, it is given by
\[
\C[\![s_1,s_2]\!][u]/(a_0 u-a_1, b_0s_1 -b_1s_2)
\hspace{0.5cm}\mbox{and}\hspace{0.5cm}
\C[\![t_1,t_2]\!][v]/(a_0 -a_1v, b_0t_1 -b_1t_2)
\]
for some $[a_0:a_1],[b_0:b_1] \in \P^1$. We use this to define the change of coordinates:
\[
\tilde{s}_1 \mapsto b_0s_1 -b_1s_2 
\hspace{0.5cm}\mbox{and}\hspace{0.5cm}
\tilde{s}_2 \mapsto b_1s_1 + b_0 s_2 
\]
\[
\tilde{t}_1 \mapsto b_0t_1 -b_1t_2 
\hspace{0.5cm}\mbox{and}\hspace{0.5cm}
\tilde{t}_2 \mapsto b_1t_1 + b_0 t_2 
\]
We call these coordinates the \textbf{canonical formal coordinates relative to $D$}. 
\end{re}

\begin{definition}\label{lambda_thickened_section_defn}
Let $\C[\![s_1,s_2]\!][u]$ and $\C[\![t_1,t_2]\!][v]$ be either the formal canonical coordinates of \ref{canonical_section_coords} or those of \ref{canonical_section_coords_relative}. Then we define
\begin{enumerate}[label={\arabic*)}]
\item The \textbf{canonical $(\C^*)^2$-action} on these coordinates by $(s_1,s_2)\mapsto (g_1s_1, g_2 s_2)$ and $(t_1,t_2)\mapsto (g_1t_1, g_2 t_2)$. 
\item Let $\lambda = (\lambda_1,\ldots,\lambda_l,0,\ldots)$ be a 2D partition. The \textbf{$\lambda$-thickened section} denoted by $\lambda\sigma$ is the subscheme of $X$ defined by the ideal given in the coordinates by
\[
(s_2^{\lambda_1},\ldots, s_1^{l-1} s_2^{\lambda_l},s^{l}) \hspace{0.5cm}\mbox{and}\hspace{0.5cm} (t_2^{\lambda_1},\ldots, t_1^{l-1} t_2^{\lambda_l},t^{l} ). 
\]
\end{enumerate}
\end{definition}

\begin{figure}
  \centering
      \includegraphics[width=0.35\textwidth]{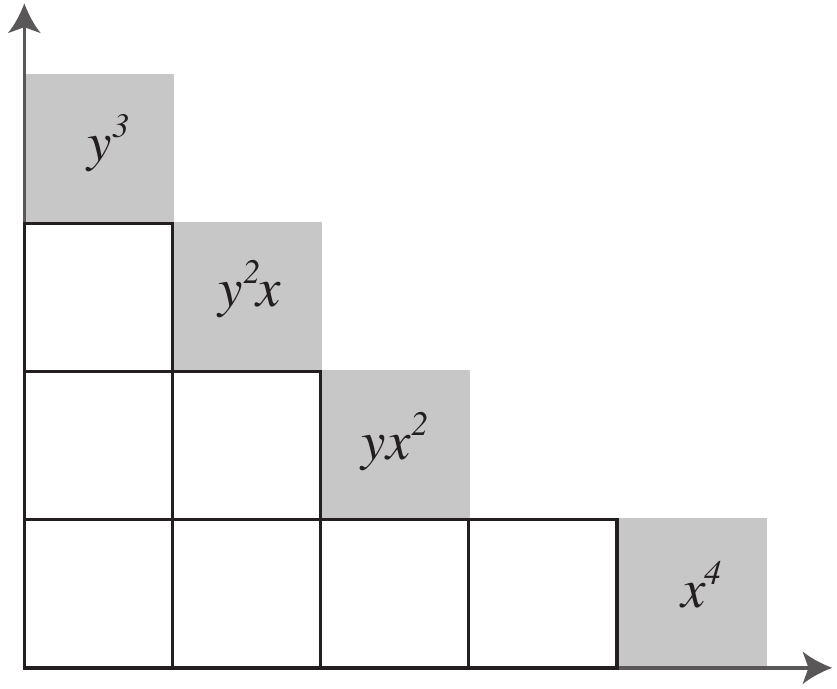}
      \vspace{-1.25em}
  \caption[Subschemes in $\C^2$ and monomial ideals]{Depiction of the subscheme in $\C^2$ given by the monomial ideal $(y^3, y^2 x,y^1x^2,y^1x^3,x^4)$   associated to the partition $(3,2,1,1,0\ldots)$.}
\end{figure}

\begin{re}\label{canonical_C3_coords}
We now consider a canonical formal neighbourhood of the banana curve $C_3$. We follow much of the reasoning from \cite[\S 5.2]{Bryan_Banana}. Let $x\in S$ correspond to a point where $\pi:S\rightarrow \P^1$ is singular. Let formal neighbourhoods in the two isomorphic copies of $S$ be given by 
\[
\Spec \,\C[\![ s_1,t_1]\!]
\hspace{0.5cm}\mbox{and}\hspace{0.5cm}
\Spec \,\C[\![ s_2,t_2]\!]
\]
and the map $S\rightarrow \P^1$ be given by $r\mapsto s_i t_i$. Then the formal neighbourhood of a conifold singularity  in $X$ is given by
\[
\Spec \,\C[\![ s_1,t_1,s_2,t_2]\!]/( s_1t_1 - s_2t_2), 
\]
and the restriction to a fibre of the projection $S\times_{\P^1}S \rightarrow \P^1$ is
\[
\Spec \,\C[\![ s_1,t_1,s_2,t_2]\!]/( s_1t_1, s_2t_2). 
\]
Now, blowing up along $\{s_1 = t_2 = 0\}$ (which is canonically equivalent to blowing up along $\{s_1 -t_1 = s_2-t_2 = 0\}$), we have the two coordinate charts:
\begin{align*}
&\C[\![s_1,t_2, s_2,t_2]\!][u]/(s_1- u t_2,s_2- ut_1) \cong \C[\![t_1,t_2]\!][u],\hspace{0.5cm}\mbox{and}\hspace{0.5cm}\\
&\C[\![s_1,t_2, s_2,t_2]\!][v]/(t_1- v s_2,t_2- vs_1) \cong \C[\![s_1,s_2]\!][v],
\end{align*}
where the change of coordinates is given by $t_1\mapsto v s_2$ , $t_2\mapsto v s_1$  and $u \mapsto v^{-1}$. We call these coordinates the \textbf{canonical formal coordinates} around the banana curve $C_3$. 
\end{re}
\begin{re}
With these coordinates we have:
\begin{enumerate}[label={\arabic*)}]
\item 
The restriction to the fibre of $\mathrm{pr}:X\rightarrow \P^1$ is
\[
\C[\![t_1,t_2]\!][u]/(t_1t_2u) 
\hspace{0.5cm}\mbox{and}\hspace{0.5cm}
\C[\![s_1,s_2]\!][v]/(s_1s_2v).
\]
\item The banana curve $C_3$ is given by
\[
\C[\![t_1,t_2]\!][u]/(t_1,t_2) 
\hspace{0.5cm}\mbox{and}\hspace{0.5cm}
\C[\![s_1,s_2]\!][v]/(s_1,s_2).
\] 
\end{enumerate}

\begin{re}\label{canonical_C3_coords_relative}
Similar to \ref{canonical_section_coords_relative} we also consider canonical relative coordinates for a $C_3$ banana curve. Recall \ref{FSing_curve_class_lemma2} and let $D$ be the image under $\nu:\widetilde{F}_{\mathsf{ban}}\rightarrow F_{\mathsf{ban}}$ of the proper transform under $\mathsf{bl}:\mathrm{Bl}_{(0,\infty),(\infty,0)}(\P^1\times\P^1)\rightarrow \P^1\times \P^1$ of a smooth divisor in $|f_1 + f_2|$ on $\P^1 \times \P^1$. \\

If $D$ intersects $(0,0)$ then the restriction of $D$ to the formal neighbourhood of $C_3$ is given by:
\[
\C[\![s_1,s_2]\!][v] / (s_1 - a s_2, v)
\]
for some $a\in \C^*$. In this case we define \textbf{canonical formal coordinates relative to $D$} around a $C_3$ banana by the following change of coordinates. 
\[
\tilde{s}_1 \mapsto s_1 -as_2 
\hspace{0.5cm}\mbox{and}\hspace{0.5cm}
\tilde{s}_2 \mapsto s_1 + a s_2 
\]
\[
\tilde{t}_1 \mapsto a t_1 +  t_2 
\hspace{0.5cm}\mbox{and}\hspace{0.5cm}
\tilde{t}_2 \mapsto -a t_1 + t_2 
\]
We similarly define the same relative coordinates if for $D$ intersects $(\infty,\infty)$ in the ideal $(-at_1 +  t_2,u)$. Note that these coordinates are compatible if $D$ intersects both $(0,0)$ and $(\infty,\infty)$.  
\end{re}

\begin{definition}\label{lambda_thickened_C3_defn}
Let $\C[\![s_1,s_2]\!][u]$ and $\C[\![t_1,t_2]\!][v]$ be either the canonical coordinates or relative coordinates. 
\begin{enumerate}[label={\arabic*)}]
\item The \textbf{canonical $(\C^*)^2$-action} on these coordinates is defined by 
\[
(s_1,s_2,v)\mapsto (g_1s_1, g_2 s_2,v)
\hspace{0.5cm}\mbox{and}\hspace{0.5cm}
(t_1,t_2, u)\mapsto (g_2 t_1, g_1 t_2,u).
\]
\item Let $\lambda = (\lambda_1,\ldots,\lambda_l,0,\ldots)$ be a 2D partition. The \textbf{$\lambda$-thickened banana curve $C_3$} denoted by $\lambda C_3$ is the subscheme of $X$ defined by the ideal given in the coordinates by
\[
(s_2^{\lambda_1},\ldots, s_1^{l-1} s_2^{\lambda_l},s_1^{l} )
\hspace{0.5cm}\mbox{and}\hspace{0.5cm}
(t_1^{\lambda_1},\ldots, t_2^{l-1} t_1^{\lambda_l},t_2^{l} ). 
\]
(Note the change in coordinates compared to definition \ref{lambda_thickened_section_defn}.)
\end{enumerate}
\end{definition}
\end{re}

\begin{remark}\label{relative_C3_Vertex_remark}
If $D$ intersects both $(0,0)$ and $(\infty,\infty)$  and $\lambda C_3$ is partition thickened in the coordinates relative to $D$. Then ideals for $D\cup \lambda C_3$ at the points $(0,0)$ and $(\infty,\infty)$ are
\[
(s_2^{\lambda_1},\ldots, s_1^{l-1} s_2^{\lambda_l},s_1^{l} ) \cap (s_1,v) 
\hspace{0.35cm}\mbox{and}\hspace{0.35cm}
(t_1^{\lambda_1},\ldots, t_2^{l-1} t_1^{\lambda_l},t_2^{l} )\cap (t_2,u) 
\]
respectively. These both give 3D partitions asymptotic to $(\lambda,\emptyset,\square)$. 
\end{remark}

\begin{lemma} \label{linear_system_divisor_C*_invariant}
Let $D$ be as described in the first paragraph of \ref{canonical_C3_coords_relative}. If let $V$ be the formal neighbourhood of $C_3$ in $X$. If $D$ intersects $(0,0)$ and/or $(\infty,\infty)$ then use the relative coordinates of \ref{canonical_C3_coords_relative}, otherwise use the canonical coordinates of \ref{canonical_C3_coords}. Then $D\cap V$ is invariant under the $(\C^*)^2$-action. 
\end{lemma}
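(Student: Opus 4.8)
The plan is to reduce the statement to a local, weight-homogeneity check at each point of $D\cap C_3$ and to treat the two coordinate regimes separately. First I would record that the canonical $(\C^*)^2$-action of Definition~\ref{lambda_thickened_C3_defn} really is defined on all of $V$: in both the canonical coordinates of \ref{canonical_C3_coords} and the relative coordinates of \ref{canonical_C3_coords_relative} the transition functions take the form $\tilde t_1=v\tilde s_2$, $\tilde t_2=v\tilde s_1$, $u=v^{-1}$, so the rule $(\tilde s_1,\tilde s_2,v)\mapsto(g_1\tilde s_1,g_2\tilde s_2,v)$ agrees with $(\tilde t_1,\tilde t_2,u)\mapsto(g_2\tilde t_1,g_1\tilde t_2,u)$ on the overlap. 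This is exactly the compatibility asserted at the end of \ref{canonical_C3_coords_relative}. Since $V$ is the formal completion of $X$ along $C_3$, the subscheme $D\cap V$ is the formal completion of $D$ along the finite set $D\cap C_3$, so invariance can be tested one point of $D\cap C_3$ at a time: it suffices to show that the local ideal of $D$ at each such point is homogeneous for the weights $\mathrm{wt}(s_1)=(1,0)$, $\mathrm{wt}(s_2)=(0,1)$, $\mathrm{wt}(v)=(0,0)$ (and the dual weights in the $u$-chart).

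Next I would locate $D\cap C_3$ using \ref{FSing_curve_class_lemma2}. Writing $D=\nu(\widetilde{L})$ with $L\in|f_1+f_2|$ the graph of a M\"obius map, the points of $D\cap C_3$ are the images of $\widetilde{L}\cap(\widetilde{C}_3\cup\widetilde{C}'_3)$, and these are governed by which points of $\mathsf{P}$ lie on $L$: the corners $(0,0)$ and $(\infty,\infty)$ produce the endpoints $v=0$ and $u=0$ of $C_3$ (the two triple points), while the blown-up points $(0,\infty)$ and $(\infty,0)$ produce interior points of $C_3$. If $L$ meets neither corner then $D\cap C_3$ contains no endpoint and the relative coordinates are not needed, which is precisely the dichotomy in the statement. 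For the endpoint contributions I would invoke \ref{canonical_C3_coords_relative} directly: there the branch of $D$ is $(s_1-as_2,\,v)$, so after the linear change $\tilde s_1=s_1-as_2$, $\tilde s_2=s_1+as_2$ it becomes the coordinate subscheme $(\tilde s_1,\,v)$, which is patently weight-homogeneous; the same holds at $u=0$ with the ideal $(\tilde t_2,\,u)$. This is the $(\lambda,\emptyset,\square)$ picture of \ref{relative_C3_Vertex_remark}, and it disposes of every point coming from a corner.

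It remains to treat, in the canonical coordinates, the interior points of $C_3$ produced by $L$ passing through $(0,\infty)$ or $(\infty,0)$. Here $D$ lies in one of the two sheets of $F_{\mathsf{ban}}$ through $C_3$, say $\{s_1=0\}$ with fibre coordinate $s_2$ and base coordinate $v$, and I would aim to show that the branch of $D$ is a fibre $\{s_1=0,\ v=v_0\}$; since $v$ has weight $(0,0)$ such a subscheme is automatically invariant. The cleanest route is to push the whole picture down under the conifold contraction $X\to S\times_{\P^1}S$, where the action becomes $(s_1,t_1)\mapsto(g_1s_1,g_2t_1)$ and $(s_2,t_2)\mapsto(g_2s_2,g_1t_2)$ on the two copies of $S$ and the blow-up centre $\{s_1=t_2=0\}$ is invariant; by equivariance of the blow-up it then suffices to see that the image curve near the node is invariant and that its branches lift to the fibres $v=v_0$. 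The main obstacle is precisely this last identification: one must pin down the exceptional coordinate $v$ along $C_3$ against the chosen parametrisation of $\widetilde{L}$ and check that the relevant members of $\mathtt{L}$ meet $C_3$ along the fibre direction, so that no term of mixed $s$-weight survives in the local ideal (this is transparent for the anti-diagonal graph $\{(\lambda,c/\lambda)\}$, where both branches are genuine fibres meeting in a node on $C_3$, and the remaining members must be reduced to this model). Once the interior branches are seen to be weight-homogeneous, every local ideal of $D\cap V$ is homogeneous, $D\cap V$ is invariant, and the hypotheses of Lemma~\ref{action_on_CM_lemma} are verified.
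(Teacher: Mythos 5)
Your proof follows the same route as the paper's: observe that $D\cap V\neq\emptyset$ iff $L$ meets one of the four points of $\mathsf{P}$, dispose of the corner points $(0,0)$, $(\infty,\infty)$ by the construction of the relative coordinates in \ref{canonical_C3_coords_relative}, and handle the interior points of $C_3$ by exhibiting the local ideal of $D$ as an explicitly invariant one. Your chart-compatibility check, the reduction to weight-homogeneity of the local ideals at the points of $D\cap C_3$, and your two-point cases (corners via \ref{canonical_C3_coords_relative}; the anti-diagonal $xy=\mathrm{const}$, whose two branches really are the coordinate fibres $\{s_1=0,v=a\}$ and $\{s_2=0,v=a\}$, i.e.\ the paper's ideal $(v-a,s_1s_2)$) all agree with the paper. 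The gap is exactly the case you flag as ``the main obstacle'': $L$ through precisely one of $(0,\infty)$, $(\infty,0)$. This is the paper's case 2, which it settles in one line by asserting the local ideal is $(v-a,s_1)$ or $(v-a,s_2)$; you leave it as a reduction ``to the anti-diagonal model'' that is never carried out.

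That reduction cannot be done by symmetry, so the step as you describe it would fail. Membership in $\mathtt{L}_{(0,\infty)}$ forces $c=0$ and $a,b,d\neq 0$, so $L$ is the graph of a genuine M\"obius map, not a torus translate of $xy=\mathrm{const}$. If you compute in coordinates matched to the toric structure of the fibre (identifying the sheet of $F_{\mathsf{ban}}$ through $C_3$ with the blow-up of a neighbourhood of $(0,\infty)$, so that $t_1\leftrightarrow x$, $s_2\leftrightarrow 1/y$ and $v=t_1/s_2$), the branch of $D$ is cut out by $\bigl(s_1,\; v(a+bs_2)+d\bigr)$, which is \emph{not} a coordinate fibre and is \emph{not} invariant under $(s_1,s_2,v)\mapsto(g_1s_1,g_2s_2,v)$ when $b\neq0$; so ``checking that the relevant members of $\mathtt{L}$ meet $C_3$ along the fibre direction'' is false in any fixed coordinate system of this kind. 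What makes the paper's assertion correct is the residual freedom in the canonical coordinates of \ref{canonical_C3_coords}: the constraint $\pi=s_it_i$ determines $(s_i,t_i)$ only up to $(s_i,t_i)\mapsto(s_ih,t_ih^{-1})$ for units $h$, which allows an arbitrary reparametrisation of each branch coordinate. Choosing this ($D$-dependent, in general non-linear) reparametrisation so that the finitely many branches of $D$ along $C_3$ become straight is the missing step; it is the non-linear analogue of the linear ``relative coordinates'' of \ref{canonical_C3_coords_relative}, and it is harmless for the rest of the argument because such reparametrisations fix every monomial (partition-thickened) ideal, so the action it produces still satisfies the hypotheses of Lemma \ref{action_on_CM_lemma}. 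Until you supply this normalisation, your interior-point case (and with it the lemma for the strata $\mathtt{L}_{(0,\infty)}$, $\mathtt{L}_{(\infty,0)}$ used in the part j computation) remains unproved.
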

\begin{proof}
We have $D\cap V \neq 0$ if and only if it intersects at least one of $(0,0)$, $(0,\infty)$, $(\infty,0)$, $(\infty,\infty)$. The possible combinations are:
\begin{enumerate}[label={\arabic*)}]
\item \textit{$(0,0)$ and/or $(\infty,\infty)$}: This is by construction of the relative coordinates. 
\item \textit{Exactly one of $(0,\infty)$ or $(\infty,0)$}: Then $D$ is given by the ideal $(v-a, s_1)$ or $(v-a,s_2)$ for some $a\in \C^*$, which are $(\C^*)^2$-invariant. 
\item \textit{ $(0,\infty)$ and $(\infty,0)$}: Then $D$ is given by the ideal $(v-a, s_1s_2)$ for some $a\in \C^*$ which is $(\C^*)^2$-invariant. 
\end{enumerate}
\end{proof}

\begin{re}
It is also shown in \cite[\S 5.2]{Bryan_Banana} that there are the following formal coordinates on $C_2$ compatible with the canonical formal coordinates around $C_3$: 
\[
\C[\![s_1,v]\!] [s_2] 
\hspace{0.5cm}\mbox{and}\hspace{0.5cm}
\C[\![t_1,u]\!] [t_2] 
\]
where the change on coordinates is given by $s_2\mapsto t_2$, $s_1\mapsto t_1t_2$ and $v\mapsto t_2 u$. We can define partition thickenings and a compatible $(\C^*)^2$-action in these coordinates. 
\end{re}

\begin{definition}\label{lambda_thickened_C2_defn}
Let $\C[\![s_1,v]\!] [s_2] $ and $\C[\![t_1,u]\!] [t_2] $ be the above canonical coordinates. 
\begin{enumerate}[label={\arabic*)}]
\item The \textbf{canonical $(\C^*)^2$-action} on these coordinates is defined by: 
\[
(s_1,v,s_2)\mapsto (g_1s_1,v,g_2 s_2)
\hspace{0.5cm}\mbox{and}\hspace{0.5cm}
(t_1,u,t_2)\mapsto (g_2 t_1, u, g_1 t_2).
\] 
\item Let $\mu = (\mu_1,\ldots,\mu_k,0,\ldots)$ be a 2D partition. The \textbf{$\mu$-thickened banana curve $C_2$} denoted by $\mu C_2$ is the subscheme of $X$ defined by the ideal given in the coordinates by
\[
(s_1^{\mu_1},\ldots, v^{k-1} s_1^{\mu_k},v^{k} )
\hspace{0.5cm}\mbox{and}\hspace{0.5cm}
(t_1^{\mu_1},\ldots, u^{k-1} t_1^{\mu_k},u^{k} ). 
\]
(Note the change in coordinates compared to definition \ref{lambda_thickened_C3_defn}.)
\item Let $\lambda = (\lambda_1,\ldots,\lambda_l,0,\ldots)$ be another 2D partition. The \textbf{$(\mu,\lambda)$-thickened banana curve} denoted is the union $\mu C_2+ \lambda C_3$. 
\end{enumerate}
\end{definition}

\begin{remark}
The $C_2$ and $C_3$ banana curves meet in exactly 2 points. At these two points a $(\mu,\lambda)$-thickened banana curve will define define two 3D partitions. One will be asymptotic to $(\mu,\lambda,\emptyset)$ the other will be asymptotic to $(\mu^t,\lambda^t,\emptyset)$ (or equivalently $(\lambda,\mu,\emptyset)$). 
\end{remark}

We will now consider fibres of the projection map $\mathrm{pr}_2:X\rightarrow S$. 

\begin{definition}\label{lambda_thickened_smooth_fibre_defn}
Recall the definition of $\mathrm{pr}_2: X \rightarrow S$ from section \ref{banana_definition_section}. Let $x\in S$ be such that $f_x :=\mathrm{pr}_2^{-1}(x)$ is smooth. Then we define
\begin{enumerate}[label={\arabic*)}]
\item \textbf{Canonical coordinates} on a formal neighbourhood $V_x$ of $f_x$ are formal coordinates $\C[\![s,t]\!]$ of $S$ at $x$ such that  $V_x := f_x \times \Spec\, \C[\![s,t]\!]$ and for $\{x\}\cap \sigma \neq \emptyset$ we have that $\sigma$ restricted to $\C[\![s,t]\!]$ is given by the ideal $(s)$.
\item The \textbf{canonical $(\C^*)^2$-action} on these coordinates by $(s,t)\mapsto (g_1s, g_2 t)$.
\item Let $\lambda = (\lambda_1,\ldots,\lambda_l,0,\ldots)$ be a 2D partition. The \textbf{$\lambda$-thickened smooth fibre} at $x$ denoted by $\lambda f_x$ is the subscheme of $X$ given by the ideal:
\[
(t^{\lambda_1},\ldots, s^{l-1} t^{\lambda_l},s^{l} )
\]
\end{enumerate}
\end{definition}

\begin{re}\label{nodal_fibre_coords_re}
Let $N$ be a nodal fibre of $\pi:S\rightarrow \P^1$, $x\in N\setminus \{\mathrm{node}\}$ and $C=\mathrm{pr}_2^{-1}(x)$. Let $F_{\mathsf{ban}} =\mathrm{pr}_2^{-1}(N)$ and $\widehat{F}_{\mathsf{ban}}$ be its formal completion in $X$. The formal completion of $C$ in $X$ is the same as the formal completion of $C$ in $\widehat{F}_{\mathsf{ban}}$. Let $\widehat{\P^1\times \P^1}$ be the formal neighbourhood of $\P^1\times \P^1$ in the total space of its canonical bundle. It is shown in \cite[Prop. 25]{Bryan_Banana} that there is a natural \'{e}tale map $\widehat{\tau}:\mathrm{Bl}(\widehat{\P^1\times \P^1}) \rightarrow \widehat{F}_{\mathsf{ban}}$ whose restriction to the underlying reduced subschemes is the normalisation map for $F_{\mathsf{ban}}$. \\

Using this and the results of \cite[\S 5.2]{Bryan_Banana} we have charts for the formal completion of the normalisation of $C$ in $\mathrm{Bl}(\widehat{\P^1\times \P^1})$ given by
\[
\C[\![y_0, z_0]\!] [x_0] 
\hspace{0.5cm}\mbox{and}\hspace{0.5cm}
\C[\![y_1, z_1]\!] [x_1] 
\]
with an isomorphism on the complements of $(x_0)$ and $(x_1)$ given by
\[
x_0 \mapsto \frac{1}{x_1},
\hspace{0.5cm}\mbox{and}\hspace{0.5cm}
y_0 \mapsto y_1 x_2^2
\hspace{0.5cm}\mbox{and}\hspace{0.5cm}
z_0 \mapsto z_1
\]
The identification at the node of $C$ is given by the restriction of the morphism $\widehat{\tau}:\mathrm{Bl}(\widehat{\P^1\times \P^1}) \rightarrow \widehat{F}_{\mathsf{ban}}$ is
\[
z_0 = z_1,
\hspace{0.5cm}\mbox{and}\hspace{0.5cm}
x_0 = y_1
\hspace{0.5cm}\mbox{and}\hspace{0.5cm}
y_0 = x_1.
\]
Moreover, these coordinates can be chosen such that $\sigma$ restricted to $\C[\![y_0, z_0]\!] [x_0]$  is given by the ideal $(x_0-1, z_0)$.
\end{re}

\begin{definition}\label{lambda_thickened_nodal_fibre_defn}
Using \ref{nodal_fibre_coords_re} we define
\begin{enumerate}[label={\arabic*)}]
\item \textbf{Canonical coordinates} on a formal neighbourhood $V_C$ of $C$ are formal coordinates given in \ref{nodal_fibre_coords_re}.
\item The \textbf{canonical $(\C^*)^2$-action} on given respectively on $\C[\![y_0, z_0]\!] [x_0] $ and $\C[\![y_1, z_1]\!] [x_1] $ by 
\[
(x_0,y_0,z_0) \mapsto (g_1 x_0,  \tfrac{1}{g_1} y_0, g_2 z_0,)
\hspace{0.5cm}\mbox{and}\hspace{0.5cm}
(x_1,y_1,z_1) \mapsto (\tfrac{1}{g_1} x_1, g_1 y_1, g_2 z_1).
\]

\item Let $\lambda = (\lambda_1,\ldots,\lambda_l,0,\ldots)$ be a 2D partition. The \textbf{$\lambda$-thickened fibre with one node} at $x$ denoted by $\lambda C$ is the subscheme of $X$ given by the ideal which restricted to $\C[\![y_0, z_0]\!] [x_0] $ is
\[
(z_0^{\lambda_1},\ldots, x_0^{l-1} z_0^{\lambda_l},x_0^{l} ) \cap (z_0^{\lambda_1},\ldots, y_0^{l-1} z_0^{\lambda_l},y_0^{l} )
\]
and when restricted to $\C[\![y_1, z_1]\!] [x_1] $ is
\[
 (z_1^{\lambda_1},\ldots, y_1^{l-1} z_1^{\lambda_l},y_1^{l} )  \cap (z_1^{\lambda_1},\ldots, x_1^{l-1} z_1^{\lambda_l},x_1^{l} ).
\]

\end{enumerate}
\end{definition}

\begin{remark}
The partition thickened curves described in this section are easily shown to be the only Cohen-Macaulay subschemes supported in these neighbourhoods that are invariant under the $(\C^*)^2$-action. This is because the invariant Cohen-Macaulay subschemes must be generated by monomial ideals. 
\end{remark}

\begin{lemma}\label{holomorphic_euler_char_of_PTC}
Let $\lambda = (\lambda_1,\ldots,\lambda_l,0,\ldots)$ and $\mu = (\mu_1,\ldots,\mu_k,0,\ldots)$ be 2D partitions, and let $x\in S$ such that $f_x:= \mathrm{pr_2}^{-1}(x)$ contains no banana curves. Then we have the holomorphic Euler characteristics :
\begin{enumerate}[label={\arabic*)}]
\item $\chi(\O_{\lambda \sigma}) = \frac{1}{2} \big( \| \lambda\|^2 + \| \lambda^t\|^2\big)$,
\item $\chi(\O_{\lambda f_x}) = 0$,
\item $\chi(\O_{\sigma \cap \lambda f_x}) = \lambda_1$,
\item $\chi(\O_{\mu C_2~\cup~ \lambda C_3}) = |\bm{\eta}_1| + |\bm{\eta}_2|+  \frac{1}{2} \big( \| \mu \|^2 + \| \mu^t\|^2 + \| \lambda\|^2 + \| \lambda^t\|^2\big)$ where $|\bm{\eta}_i|$ are the renormalised volumes of the minimal 3D partitions associated to $(\mu,\lambda,\emptyset)$ and $(\mu^t,\lambda^t,\emptyset)$. 
\end{enumerate}
\end{lemma}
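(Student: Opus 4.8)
The common engine for all four parts is to filter the structure sheaf of a partition--thickened curve by the powers of the ideal of its reduced support and to read off the associated graded pieces as line bundles on that support. For (1), write $\sigma\cong\P^1$ and recall from \ref{canonical_section_coords} that in the canonical coordinates the conormal sheaf is generated by $s_1,s_2$ with transition $s_i=v\,t_i$. Tracking a monomial through the transition gives $s_1^is_2^j=v^{\,i+j}t_1^it_2^j$, so the graded piece of $\O_{\lambda\sigma}$ indexed by the box $(i,j)\in\lambda$ is the line bundle $\O_{\P^1}(i+j)$ (in particular $N_{\sigma/X}\cong\O(-1)\oplus\O(-1)$, consistent with the Calabi--Yau condition). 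Summing over the diagram yields $\chi(\O_{\lambda\sigma})=\sum_{(i,j)\in\lambda}(i+j+1)$, and the plan is then to invoke the elementary identity
\[
\sum_{(i,j)\in\lambda}(i+j+1)=n(\lambda)+n(\lambda^t)+|\lambda|=\tfrac12\big(\|\lambda\|^2+\|\lambda^t\|^2\big),
\]
which follows from $\|\lambda\|^2=\sum_i\lambda_i^2=2n(\lambda^t)+|\lambda|$ and its transpose.

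For (2) and (3) the geometry is simpler. By \ref{lambda_thickened_smooth_fibre_defn} the formal neighbourhood of $f_x$ is the product $f_x\times\Spec\C[\![s,t]\!]$, so $\lambda f_x=f_x\times Z_\lambda$ for the length-$|\lambda|$ monomial fat point $Z_\lambda$; multiplicativity of Euler characteristic over this product gives $\chi(\O_{\lambda f_x})=\chi(\O_{f_x})\cdot|\lambda|=0$ since $f_x\cong E$ is an elliptic curve. For (3) the coordinates are chosen so that $\sigma$ meets the neighbourhood in $\{s=0\}$ with parameter $t$, that is $\sigma\cong\Spec\C[\![t]\!]$; since the ideal of $\lambda f_x$ is pulled back from the base $\C[\![s,t]\!]$, its restriction to $\sigma$ is obtained by setting $s=0$, which kills every generator of $(t^{\lambda_1},\ldots,s^{l-1}t^{\lambda_l},s^l)$ except the bottom one $t^{\lambda_1}$. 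Hence $\O_{\sigma\cap\lambda f_x}\cong\C[\![t]\!]/(t^{\lambda_1})$ has length $\lambda_1$.

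For (4), the substantive case, the plan is to exploit the short exact sequence $0\to\O_W\to\O_{\mu C_2}\oplus\O_{\lambda C_3}\to\O_{\mu C_2\cap\lambda C_3}\to0$ for the scheme-theoretic union $W=\mu C_2\cup\lambda C_3$, which gives
\[
\chi(\O_W)=\chi(\O_{\mu C_2})+\chi(\O_{\lambda C_3})-\operatorname{length}(\mu C_2\cap\lambda C_3).
\]
Both banana curves have normal bundle $\O(-1)\oplus\O(-1)$ (Definition \ref{banana_definition}), so the computation of (1) applies verbatim to produce the edge terms $\tfrac12(\|\mu\|^2+\|\mu^t\|^2)$ and $\tfrac12(\|\lambda\|^2+\|\lambda^t\|^2)$. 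It then remains to identify the intersection length with $-(|\bm{\eta}_1|+|\bm{\eta}_2|)$. Because $\mu C_2$ and $\lambda C_3$ are supported on $C_2$ and $C_3$, which meet only at $z_1,z_2$, the length splits as $\operatorname{length}_{z_1}+\operatorname{length}_{z_2}$. At $z_1$ I will write both monomial ideals in the common coordinates $\C[\![s_1,s_2,v]\!]$ of \ref{canonical_C3_coords} and Definition \ref{lambda_thickened_C2_defn}, where $I_{\lambda C_3}$ involves only $s_1,s_2$ and $I_{\mu C_2}$ only $s_1,v$; a monomial $s_1^is_2^jv^k$ survives in the quotient by $I_{\mu C_2}+I_{\lambda C_3}$ exactly when $(i,j)\in\lambda$ and $(i,k)\in\mu$. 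This finite set is precisely the collection of boxes lying in \emph{both} legs of the minimal $3$D partition $\bm{\eta}_1$ asymptotic to $(\mu,\lambda,\emptyset)$ (the union of the $\lambda$-leg along $v$ and the $\mu$-leg along $s_2$), on which the weight is $\xi=1-2=-1$ while $\xi=0$ elsewhere on $\bm{\eta}_1$; hence $\operatorname{length}_{z_1}=-|\bm{\eta}_1|$. The identical computation at $z_2$, where the coordinates transpose the two partitions, gives $\operatorname{length}_{z_2}=-|\bm{\eta}_2|$ with $\bm{\eta}_2$ asymptotic to $(\mu^t,\lambda^t,\emptyset)$, and substituting yields the stated formula.

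I expect the main obstacle to be the bookkeeping in (4): matching the monomial basis of the scheme-theoretic intersection at each node with the overlap region of the corresponding two-legged minimal $3$D partition, and in particular correctly tracking the transposition $(\mu,\lambda)\mapsto(\mu^t,\lambda^t)$ between $z_1$ and $z_2$. By contrast the edge computation and the combinatorial identity in (1), together with the product and restriction arguments in (2)--(3), are routine.
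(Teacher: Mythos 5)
Your route is genuinely different from the paper's, whose entire proof of this lemma is a citation: parts (2)--(3) to \cite[Lemma 11]{BK} and parts (1), (4) to \cite[Prop.~23]{Bryan_Banana}. Parts (1) and (4) of your argument are correct. In (1) the graded pieces of $\O_{\lambda\sigma}$ are indeed $\O(i+j)$ and your combinatorial identity holds. In (4) the union exact sequence, together with your identification of the intersection length at each of $z_1,z_2$ with minus the renormalised volume of the corresponding two-legged minimal 3D partition (boxes lying in both legs have weight $-1$, all other boxes weight $0$), is exactly the right mechanism, and the edge terms do come out as $\tfrac12(\|\mu\|^2+\|\mu^t\|^2)$ and $\tfrac12(\|\lambda\|^2+\|\lambda^t\|^2)$. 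The one thing ``applies verbatim'' conceals is that for $C_2$ and $C_3$ the transition functions swap the two normal coordinates (e.g.\ $t_1\mapsto vs_2$, $t_2\mapsto vs_1$ in \ref{canonical_C3_coords}) while Definitions \ref{lambda_thickened_C3_defn} and \ref{lambda_thickened_C2_defn} swap the roles of rows and columns correspondingly; one must check that these two swaps cancel, so that the monomial indexed by a box $(i,j)$ really glues to a line bundle of degree $i+j$. They do, but it is a check, not a quotation of (1).

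The genuine gap is in (2) and (3). The hypothesis is that $f_x$ contains no banana curves, \emph{not} that $f_x$ is smooth: it deliberately includes every $x$ on one of the twelve nodal fibres $\mathtt{N}_2$ away from the node, and these cases are needed later --- in lemma \ref{hol_euler_char_of_fibres} the sheaves $\mathcal{F}^{\sigma}$ and $\mathcal{F}^{\emptyset}$ contain thickened \emph{nodal} fibres $\gamma^{(i)}f_{y_i}$ and $\lambda^{(i)}f_{w_i}$ with $y_i\in\mathtt{N}^\sigma_2$, $w_i\in\mathtt{N}^\emptyset_2$, and both the vanishing of $\chi$ and the intersection length with $\sigma$ are invoked for them. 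For such $x$ the fibre is a nodal cubic and $\lambda f_x$ is the thickening of Definition \ref{lambda_thickened_nodal_fibre_defn}: near the node it is the union of the two branch thickenings, in particular it is \emph{not} a product $f_x\times Z_\lambda$, so your K\"unneth argument fails there. The statement still holds, but by a cancellation rather than by multiplicativity: pushing forward from the thickened normalization $\lambda\tilde f_x$ gives $\chi(\O_{\lambda f_x})=\chi(\O_{\lambda\tilde f_x})-\ell$, where $\ell$ is the length at the node of the scheme-theoretic intersection of the two branch thickenings. Here the normal bundle of $\tilde f_x\cong\P^1$ is $\O(-2)\oplus\O$ (not $\O(-1)^{\oplus 2}$), so the graded pieces are $\O(2i)$ and $\chi(\O_{\lambda\tilde f_x})=\sum_{(i,j)\in\lambda}(2i+1)=\|\lambda^t\|^2$, while counting the monomials killed by neither branch ideal gives $\ell=\sum_{j\geq 1}(\lambda^t_j)^2=\|\lambda^t\|^2$; the two cancel. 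Part (3) for nodal fibres can be salvaged by your restriction argument at the point $\sigma\cap f_x$ (which is a smooth point of $f_x$) using the coordinates of \ref{nodal_fibre_coords_re}, but this must be said explicitly, and with the paper's stated nodal conventions the roles of rows and columns of $\lambda$ come out transposed relative to the smooth case --- harmless in the end, since every formula sums over all partitions, but it should be flagged.
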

\begin{proof}
2) and 3) are proved in \cite[Lemma 11]{BK}, the rest are from \cite[Prop. 23]{Bryan_Banana}.
\end{proof}

\subsection[Quot Schemes on \texorpdfstring{$\C^3$}{C3} and the Topological Vertex]{Relation between Quot Schemes on \texorpdfstring{$\C^3$}{C3} and the Topological Vertex}\label{Topological_vertex_to_quot_scheme_section}
This section is predominately a summary of required results from \cite{BK}.  For 2D partitions $\lambda$, $\mu$ and $\nu$ we define the following subscheme of $\C^3$: 
\[
\mathsf{C}_{\lambda,\mu,\nu} = \mathsf{C}_{\lambda,\emptyset,\emptyset} \cup \mathsf{C}_{\emptyset,\mu,\emptyset} \cup \mathsf{C}_{\emptyset,\emptyset,\nu } \subset \Spec \C[r,s,t]
\]
where $\mathsf{C}_{\lambda,\emptyset,\emptyset}$ is defined by the ideal $I_{\lambda,\emptyset,\emptyset} := (t^{\lambda_1},\ldots, t^{l-1} s^{\lambda_l},s^{l} )$, with $\mathsf{C}_{\emptyset,\mu,\emptyset}$ and $\mathsf{C}_{\emptyset,\emptyset, \nu}$ being cyclic permutations of this. In general, we define the ideal $I_{\lambda\mu\nu} = I_{\lambda\emptyset\emptyset}\cap I_{\emptyset\mu\emptyset}\cap I_{\emptyset\emptyset\nu}$.  \\

Now we consider the Quot scheme of length $n$ quotients of $I_{\lambda\mu\nu}$ that are set-theoretically supported at the origin and we employ the following simplifying notation:
\[
\Quot^n(\lambda, \mu, \nu) := \Quot^{n}_{\C^3}(I_{\lambda\mu\nu},\{0\})
\]
A quotient parametrised here will have kernel that is the ideal sheaf of a one-dimensional scheme Z with underlying Cohen-Macaulay (formal) curve $\mathsf{C}_{\lambda,\mu,\nu}$. The embedded points of this scheme are all supported at the origin, but $Z$ doesn't have to be locally monomial. We use the following variation of the notation for the topological vertex:
\[
\widetilde{\mathsf{V}}_{\lambda\mu\nu} := e\big( \Quot^\bullet(\lambda, \mu, \nu)\big) \in \mathbb{Z}[\![p]\!]. 
\]

\begin{lemma} \label{Quot_to_V_lemma}
Let $C$ be a partition thickened section, fibre or $C_3$-banana curve thickened by $\lambda$. Then
\begin{enumerate}[label={\arabic*)}]
\item If $x\in C$ is a smooth point then $e\big(\Quot^{n}_X(I_C, \{x\})\big) = \widetilde{\mathsf{V}}_{\lambda\emptyset\emptyset}$.
\item If $C$ is a thickened nodal fibre then $e\big(\Quot^{n}_X(I_C, \{x\})\big) = \widetilde{\mathsf{V}}_{\lambda \lambda^t\emptyset}$. 
\end{enumerate}
Let $C'$ be a reduced curve intersecting $C$ at $y\in C$ such that $I_{C'} \cap I_{C}$ is locally monomial and there are formal local coordinates $\C[\![r,s,t]\!]$ at $y$ such that:
\begin{enumerate}[label={\arabic*)}]
\item $I_{C'} \cap I_{C} = (t^{\lambda_1},\ldots s^{l-1}t^{\lambda_l},s^l)\cap (r,s)$  then $e\big(\Quot^{n}_X(I_C, \{x\})\big) = \widetilde{\mathsf{V}}_{\lambda \emptyset \square}$.
\item $I_{C'} \cap I_{C} = (t^{\lambda_1},\ldots s^{l-1}t^{\lambda_l},s^l)\cap (r,s) \cap (r,t)$ then $e\big(\Quot^{n}_X(I_C, \{x\})\big) = \widetilde{\mathsf{V}}_{\lambda \square\square}$.
\end{enumerate} 
\end{lemma}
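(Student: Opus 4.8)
The plan is to reduce every case to the standard $\C^3$ computation that \emph{defines} $\widetilde{\mathsf{V}}_{\lambda\mu\nu}$. The guiding principle is that $\Quot^{n}_X(I_C,\{x\})$ parametrises quotients of $I_C$ whose cokernel has finite length $n$ and is set-theoretically supported at the single point $x$; such a quotient factors through an infinitesimal neighbourhood of $x$ and is therefore determined by the restriction of $I_C$ to any formal (or \'etale) neighbourhood of $x$ in $X$. Consequently $\Quot^{\bullet}_X(I_C,\{x\})$ is unchanged if we replace the pair $(X,I_C)$ by $(\Spec\C[\![r,s,t]\!],\,\widehat{(I_C)}_x)$, so its Euler characteristic depends only on the completed ideal $\widehat{(I_C)}_x$. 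I would record this localisation statement first (it is established in \cite{BK} via lemmas \ref{hilb_to_quot_lemma} and \ref{quot_decomposition_lemma}), so that each remaining case becomes a matter of identifying $\widehat{(I_C)}_x$ with one of the model ideals $I_{\lambda\mu\nu}=I_{\lambda\emptyset\emptyset}\cap I_{\emptyset\mu\emptyset}\cap I_{\emptyset\emptyset\nu}$ and then quoting the definition $\widetilde{\mathsf{V}}_{\lambda\mu\nu}=e\big(\Quot^{\bullet}_{\C^3}(I_{\lambda\mu\nu},\{0\})\big)$.

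For the smooth-point case I would invoke the defining ideals of the partition-thickened section, fibre and $C_3$-banana curve from Definitions \ref{lambda_thickened_section_defn}, \ref{lambda_thickened_smooth_fibre_defn} and \ref{lambda_thickened_C3_defn}. In each, the ideal in the chosen formal coordinates is, after the evident relabelling of the two thickening variables and the curve variable, exactly the model ideal $I_{\lambda\emptyset\emptyset}$. Hence $e\big(\Quot^{\bullet}_X(I_C,\{x\})\big)=e\big(\Quot^{\bullet}_{\C^3}(I_{\lambda\emptyset\emptyset},\{0\})\big)=\widetilde{\mathsf{V}}_{\lambda\emptyset\emptyset}$ directly. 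The section and fibre cases are precisely those of \cite{BK}; the $C_3$ case is identical since its defining ideal has the same monomial shape.

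The only genuinely new local model is the node of a thickened nodal fibre, and this is the step I expect to require the most care. Using the charts and gluing of Remark \ref{nodal_fibre_coords_re} together with the ideal of Definition \ref{lambda_thickened_nodal_fibre_defn}, the completed ideal at the node is the intersection of two monomial branches, one supported along each axis through the node and each thickened by $\lambda$ in the plane spanned by the remaining branch direction and the transverse surface coordinate. Setting $r,s$ equal to the two branch directions and $t$ to the transverse coordinate, one branch is visibly $I_{\lambda\emptyset\emptyset}$. The subtle point is the \emph{second} branch: matching its monomial ideal against the cyclically-permuted model ideal $I_{\emptyset\mu\emptyset}$ forces the roles of the two non-curve coordinates to be interchanged, so that the partition read off in the asymptotic plane for that leg is the \emph{transpose} $\mu=\lambda^{t}$ rather than $\lambda$. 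I would confirm this by directly comparing the staircases of generators (the interchange is exactly the $s\leftrightarrow t$ swap hidden in the chart identification $x_0=y_1,\ y_0=x_1,\ z_0=z_1$). Once the transposition is verified the combined ideal is $I_{\lambda\lambda^{t}\emptyset}$ and the contribution is $\widetilde{\mathsf{V}}_{\lambda\lambda^{t}\emptyset}$.

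Finally, for the two cases involving a reduced curve $C'$ meeting $C$ at $y$, the strategy is simply to recognise the extra factors in the combined ideal $I_{C\cup C'}=I_C\cap I_{C'}$, which the hypothesis supplies explicitly at $y$. Since $(r,s)$ and $(r,t)$ are precisely the ideals $I_{\emptyset\emptyset\square}$ and $I_{\emptyset\square\emptyset}$ of the reduced $t$- and $s$-axes (the partition $\square=(1)$ contributing a single box in the corresponding leg), the stated ideals equal $I_{\lambda\emptyset\square}=I_{\lambda\emptyset\emptyset}\cap I_{\emptyset\emptyset\square}$ and $I_{\lambda\square\square}=I_{\lambda\emptyset\emptyset}\cap I_{\emptyset\square\emptyset}\cap I_{\emptyset\emptyset\square}$ respectively. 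Applying the localisation principle and the definition of $\widetilde{\mathsf{V}}$ then yields $\widetilde{\mathsf{V}}_{\lambda\emptyset\square}$ and $\widetilde{\mathsf{V}}_{\lambda\square\square}$, completing the proof. The main obstacle throughout is thus not the Euler-characteristic computation itself (which is packaged into $\widetilde{\mathsf{V}}$) but the coordinate bookkeeping that identifies each completed ideal with the correct model, the transposition in the nodal case being the one place where it is easy to slip.
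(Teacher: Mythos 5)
Your proposal is correct and takes essentially the same approach as the paper: the paper's proof is a one-line citation to \cite[Lemma 15]{BK}, whose content is exactly the localisation principle (quotients supported at a point depend only on the completed ideal there) plus the identification of that completed ideal with the model monomial ideal $I_{\lambda\mu\nu}$, which you spell out case by case. In particular, your careful check that the node of a thickened nodal fibre forces the transpose on the second leg, so that the local model is $I_{\lambda\lambda^t\emptyset}$ and the contribution is $\widetilde{\mathsf{V}}_{\lambda\lambda^t\emptyset}$, is the one genuinely delicate point and you have it right.
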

\begin{proof}
The proof is the same as \cite{BK} Lemma 15. 
\end{proof}

\begin{lemma} \label{Quot_to_V_Sym_lemma}
Let $D$ be a one dimensional Cohen-Macaulay subscheme of $X$. 
\begin{enumerate}[label={\arabic*)}]
\item We have:

\vspace{-0.4cm}
\[
e\big(\Quot^{n}_X(I_D, X\setminus D)\big) = \Big(\widetilde{\mathsf{V}}_{\emptyset\emptyset\emptyset}\Big)^{e(X)-e(C)}. 
\]
\item 
Let $\lambda$ be a 2D partition and  $\lambda C \subset D$ be either a partition thickened section, fibre or $C_3$ banana and let $T$ be finite set of points on $C$ such that $C\setminus T$ is smooth. Then
\[
e\big(\Quot^{n}_X(I_D, C\setminus T)\big) = \Big(\widetilde{\mathsf{V}}_{\lambda\emptyset\emptyset}\Big)^{e(C)-e(T)}. 
\]
\end{enumerate}
\end{lemma}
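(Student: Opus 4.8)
The plan is to deduce both identities from a single \emph{power structure} argument: the Quot schemes in question fibre, via a support map, over symmetric products of the stratum on which the quotients may be supported, the fibre over a $0$-cycle factors as a product of \emph{punctual} Quot schemes (one per support point), and since $X$ is a smooth threefold each such punctual factor is, in formal coordinates, an instance of $\Quot^\bullet(\lambda,\mu,\nu)$ whose Euler-characteristic series is by definition the modified vertex $\widetilde{\sfV}_{\lambda\mu\nu}$. Lemma \ref{BK_sym_product_lemma} then packages these local contributions into the asserted exponential. I would treat (1) and (2) uniformly, with the stratum being $Y=X\setminus D$ in (1) and $Y=C\setminus T$ in (2).

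First I would introduce, for each $n$, the support map
\[
\Quot^{n}_X(I_D, Y) \longrightarrow \Sym^{n}(Y), \qquad [I_D\twoheadrightarrow \mathcal{G}] \longmapsto \textstyle\sum_i a_i x_i ,
\]
sending a length-$n$ quotient supported at distinct points $x_i$ with local lengths $a_i$ to the $0$-cycle of degree $n=\sum_i a_i$. This is a constructible map, and applying Lemma \ref{quot_decomposition_lemma} inductively to peel off the support points one at a time identifies its fibre over $\bm{a}\bm{x}=\sum_i a_i x_i$ with $\prod_i \Quot^{a_i}_X(I_D,\{x_i\})$. Pushing forward the constant function $1$ therefore yields the multiplicative constructible function $G(\bm{a}\bm{x})=\prod_i g(a_i)$ of Lemma \ref{BK_sym_product_lemma}, where $g(a):=e\big(\Quot^{a}_X(I_D,\{x\})\big)$, \emph{provided} this value is independent of $x\in Y$. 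Granting this, the pushforward property gives $e\big(\Quot^{n}_X(I_D,Y)\big)=e\big(\Sym^n(Y),G\big)$, and summing against $p^n$ (so that $q=p$ in Lemma \ref{BK_sym_product_lemma}) produces
\[
e\big(\Quot^{\bullet}_X(I_D,Y)\big)=\Big(\textstyle\sum_{a\geq 0} g(a)\,p^a\Big)^{e(Y)} .
\]

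It then remains to identify the local series $\sum_a g(a)p^a$ and the exponent $e(Y)$ in each case. For (1), every $x\in X\setminus D$ lies off the support of $D$, so $I_D$ is locally $\O_X$ and, $X$ being a smooth threefold, formal coordinates give $\Quot^{a}_X(I_D,\{x\})\cong \Quot^{a}(\emptyset,\emptyset,\emptyset)$; hence $\sum_a g(a)p^a=\widetilde{\sfV}_{\emptyset\emptyset\emptyset}$, and additivity of $e$ (which ignores nonreduced structure, so $e(D)=e(C)$) gives $e(Y)=e(X)-e(C)$. For (2), along $C\setminus T$ the subscheme $D$ coincides with the partition-thickened curve $\lambda C$, whose local ideal in the coordinates of Section \ref{partition_thickened_section} is $I_{\lambda\emptyset\emptyset}$; Lemma \ref{Quot_to_V_lemma}(1) then gives $\sum_a g(a)p^a=\widetilde{\sfV}_{\lambda\emptyset\emptyset}$ at each such smooth point, and $e(Y)=e(C\setminus T)=e(C)-e(T)$. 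Substituting these into the displayed identity yields the two formulas.

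The main obstacle is establishing the uniformity that makes $g$ a well-defined function of $a$ alone. In case (2) this rests on two facts from Section \ref{partition_thickened_section}: that the thickening partition $\lambda$ is constant along the smooth locus $C\setminus T$, and that $D$ genuinely restricts to $\lambda C$ there (no stray components or embedded points, so that the local ideal is exactly $I_{\lambda\emptyset\emptyset}$ rather than some larger monomial ideal). A secondary, more bookkeeping-level point is verifying that the support map is constructible with the claimed product fibres and that the induced pushforward is the multiplicative $G$ demanded by Lemma \ref{BK_sym_product_lemma}; here the work is done by the geometrically bijective decomposition of Lemma \ref{quot_decomposition_lemma}, which separates the punctual contributions exactly as needed. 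Apart from these structural checks, the argument is formal, running parallel to the corresponding statement in \cite{BK}.
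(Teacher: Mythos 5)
Your proposal is correct and takes essentially the same approach as the paper: the paper's proof is simply a citation to the argument for equation (9) of \cite{BK}, which is precisely the support-map-to-symmetric-product stratification, the factorization of fibres into punctual Quot schemes via Lemma \ref{quot_decomposition_lemma}, and the packaging by Lemma \ref{BK_sym_product_lemma} that you spell out. Your two flagged checks --- uniformity of the local ideal $I_{\lambda\emptyset\emptyset}$ along $C\setminus T$ and reading $e(C)$ as the Euler characteristic of the reduced support --- are exactly the points implicit in the cited argument, so your write-up is a faithful (and more explicit) rendering of the paper's proof.
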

\begin{proof}
The argument is the same as that given for equation (9) in \cite{BK}. 
\end{proof}

The standard $(\C^*)^3$-action on $\C^3$ induces an action on the Quot schemes. The invariant ideals $I \subset \C[r,s,t]$ are precisely those generated by monomials. Also, since there is a bijection between locally monomial ideals and 3D partitions we see that 
\begin{align*}
\widetilde{\mathsf{V}}_{\lambda\mu\nu} 
&= e\big( \Quot^\bullet(\lambda, \mu, \nu)^{(\C^*)^3}\big)\\
&= \sum_{\bm{\eta}} p^{n(\bm{\eta})}
\end{align*}
where we are summing over 3D partitions asymptotic to $(\lambda,\mu,\nu)$ and $n(\bm{\eta})$ is the number of boxes not contained in any legs. Note that that the lowest order term in $\widetilde{\mathsf{V}}_{\lambda\mu\nu}$ is one, which is not true about $\mathsf{V}_{\lambda\mu\nu} $ in general. In fact we have the relationship:
\[
\mathsf{V}_{\lambda\mu\nu}  = p^{|\bm{\eta}_{min}|} \widetilde{\mathsf{V}}_{\lambda\mu\nu} 
\]
where $\bm{\eta}_{min}$ is the 3D partition associated to $\mathsf{C}_{\lambda\mu\nu}$, and $|\cdot|$ is the renormalised volume defined in eqn (\ref{renormalised_volume_def}). 

\begin{lemma} \label{Vtilde_to_V_lemma}
If $\lambda$ is a 2D partition then we have the following equalities:
\begin{enumerate}[label={\arabic*)}]
\item $\mathsf{V}_{\lambda\emptyset\emptyset}  = \widetilde{\mathsf{V}}_{\lambda\emptyset\emptyset} $
\item $\mathsf{V}_{\lambda\square\emptyset}  = p^{-\lambda_1} \widetilde{\mathsf{V}}_{\lambda\square\emptyset}  $
\item $\mathsf{V}_{\lambda\square\square}  = p^{-\lambda_1 -\lambda^t_1} \widetilde{\mathsf{V}}_{\lambda\square\square}  $
\item $\mathsf{V}_{\lambda\lambda^t\emptyset}  = p^{-\|\lambda\|^2} \widetilde{\mathsf{V}}_{\lambda\lambda^t\emptyset}  $
\end{enumerate}
\end{lemma}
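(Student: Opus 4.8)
The plan is to deduce all four identities directly from the relation $\mathsf{V}_{\lambda\mu\nu} = p^{|\bm{\eta}_{min}|}\widetilde{\mathsf{V}}_{\lambda\mu\nu}$ established immediately above the statement, where $\bm{\eta}_{min}$ is the minimal $3$D partition asymptotic to $(\lambda,\mu,\nu)$, namely the one associated to the pure curve $\mathsf{C}_{\lambda\mu\nu}$. Thus the whole content reduces to computing the renormalised volume $|\bm{\eta}_{min}|$ of (\ref{renormalised_volume_def}) in each of the four cases and checking that it equals $0$, $-\lambda_1$, $-\lambda_1-\lambda^t_1$ and $-\|\lambda\|^2$ respectively.

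First I would observe that $\bm{\eta}_{min}$ is exactly the union of its three legs $L_1 = \{(i,j,k) : (j,k)\in\lambda\}$, $L_2 = \{(i,j,k):(k,i)\in\mu\}$ and $L_3 = \{(i,j,k):(i,j)\in\nu\}$: each $L_a$ is downward closed because $\lambda,\mu,\nu$ are Young diagrams, so their union is a valid $3$D partition, and it is plainly the minimal one with the prescribed asymptotics. Since $\xi_{\bm{\eta}}(i,j,k) = 1 - \#\{\textnormal{legs containing }(i,j,k)\}$, a point lying in exactly one leg contributes $0$, a point in exactly two legs contributes $-1$, and a point in all three contributes $-2$. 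Summing these contributions and reorganising by inclusion--exclusion, the (infinite) single-leg counts cancel against each other and one is left with
\[
|\bm{\eta}_{min}| = -|L_1\cap L_2| - |L_1\cap L_3| - |L_2\cap L_3| + |L_1\cap L_2\cap L_3|.
\]
The finiteness that makes this meaningful is that each pairwise intersection is finite: for instance the conditions $(j,k)\in\lambda$ and $(k,i)\in\mu$ together bound all of $i,j,k$ since $\lambda$ and $\mu$ are finite.

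It then remains to evaluate the intersection cardinalities, which is a short count in each case using the orientation fixed by Definitions \ref{lambda_thickened_section_defn}--\ref{lambda_thickened_C3_defn}. For $(\lambda,\emptyset,\emptyset)$ all intersections are empty and $|\bm{\eta}_{min}|=0$. For $(\lambda,\square,\emptyset)$ only $L_1\cap L_2$ survives; the $\square$-leg forces $i=k=0$, so the intersection is the set of $j$ with $(j,0)\in\lambda$, i.e. the first row of $\lambda$, of size $\lambda_1$, giving $-\lambda_1$. For $(\lambda,\square,\square)$ I would compute $|L_1\cap L_2|=\lambda_1$, $|L_1\cap L_3|=\lambda^t_1$, while $L_2\cap L_3$ and $L_1\cap L_2\cap L_3$ both equal the origin and cancel (for $\lambda\neq\emptyset$), yielding $-\lambda_1-\lambda^t_1$. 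For $(\lambda,\lambda^t,\emptyset)$ only $L_1\cap L_2$ survives, and using $(k,i)\in\lambda^t\Leftrightarrow(i,k)\in\lambda$ one sees that for each fixed $k$ the admissible pairs $(i,j)$ fill a $\lambda_{k+1}\times\lambda_{k+1}$ box, so $|L_1\cap L_2| = \sum_k\lambda_{k+1}^2 = \|\lambda\|^2$ and $|\bm{\eta}_{min}|=-\|\lambda\|^2$.

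The main obstacle is purely bookkeeping: one must track precisely which of the two transverse coordinates of $\lambda$ is selected by each $\square$-leg in order to land on $\lambda_1$ versus $\lambda^t_1$, and one must argue the cancellation of the renormalised volume through the intersection formula rather than manipulating the divergent single-leg counts directly. Once the legs are set up consistently with the thickening conventions, each of the four evaluations is immediate.
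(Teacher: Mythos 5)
Your proposal is correct and takes essentially the same route as the paper: both proofs reduce the lemma to evaluating the renormalised volume $|\bm{\eta}_{min}|$ of the minimal 3D partition via the relation $\mathsf{V}_{\lambda\mu\nu} = p^{|\bm{\eta}_{min}|}\widetilde{\mathsf{V}}_{\lambda\mu\nu}$, and then count boxes lying in two or more legs with weights $-1$ and $-2$ (your inclusion--exclusion formula is precisely the paper's weighted count, which it carries out explicitly for part 3). The only difference is organisational: the paper cites \cite[Lemma 17]{BK} for parts 1), 2) and 4) while you derive all four cases uniformly, and you are in fact slightly more careful in part 3, where you note that the cancellation of $|L_2\cap L_3|$ against $|L_1\cap L_2\cap L_3|$ requires $\lambda\neq\emptyset$.
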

\begin{proof}
Parts 1), 2) and 4) are directly from \cite{BK} lemma 17. For part 3), there are $\lambda_1$ boxes that are in the $\lambda$-leg and one of the $\square$-legs. There are $\lambda^t_1$ boxes that are in the $\lambda$-leg and the other $\square$-leg. There is one box that is contained in all three so the renormalised volume is calculated to be
\[
(\lambda_i -1)(1-2) + (\lambda_i^t -1)(1-2) + (1)(1-3) = - \lambda_i -  \lambda^t_i
\]
\end{proof}

\section[Euler Characteristic of the Fibres of the Chow Map]{Calculating the Euler Characteristic from the Fibres of the Chow Map}\label{main_computation_section}
\subsection{Calculation for the class \texorpdfstring{$\sigma + (0,\bullet,\bullet)$}{s + (0,*,*)}}  \label{main_computation_section_A}

We now recall some previously introduced notation: 
\begin{enumerate}[label={\arabic*)}]
\item  $B_{i} = \{b_{i}^1,\ldots,b_{i}^{12}\}$ is the set of the 12 points in $S_{i}$ that correspond to nodes in the fibres of the projection $\pi_i:=\mathrm{pr}|_{S_i}:S_{i} \rightarrow \P^1$.
\item $ S_{i}^\circ =  S_{i} \setminus  B_{i}$ is the complement of $ B_{i}$ in $S_{i}$
\item $\mathtt{N}_i\subset S_{i}$ are the 12 nodal fibres of $\pi_i:S_{i} \rightarrow \P^1$ with the nodes removed and: 
\begin{itemize}
\item[] $\mathtt{N}_i = \mathtt{N}^\sigma_i ~\amalg~ \mathtt{N}^\emptyset_i$ where   $\mathtt{N}^\sigma_i := \mathtt{N}_i\cap \sigma$ and $\mathtt{N}^\emptyset_i := \mathtt{N}_i\setminus \sigma$.
\end{itemize}
\item $ \mathtt{Sm}_i =  S_{i}^\circ \setminus  \mathtt{N}_i$ is the complement of $\mathtt{N}_i$ in $S_{i}^\circ$ and: 
\begin{itemize}
\item[] $\mathtt{Sm}_i = \mathtt{Sm}^\sigma_i ~\amalg~ \mathtt{Sm}^\emptyset_i$ where $\mathtt{Sm}^\sigma_i := \mathtt{Sm}_i\cap \sigma$ and $\mathtt{Sm}^\emptyset_i := \mathtt{Sm}_i\setminus \sigma$. 
\end{itemize}
\end{enumerate}

\begin{figure}
  \centering
      \includegraphics[width=1\textwidth]{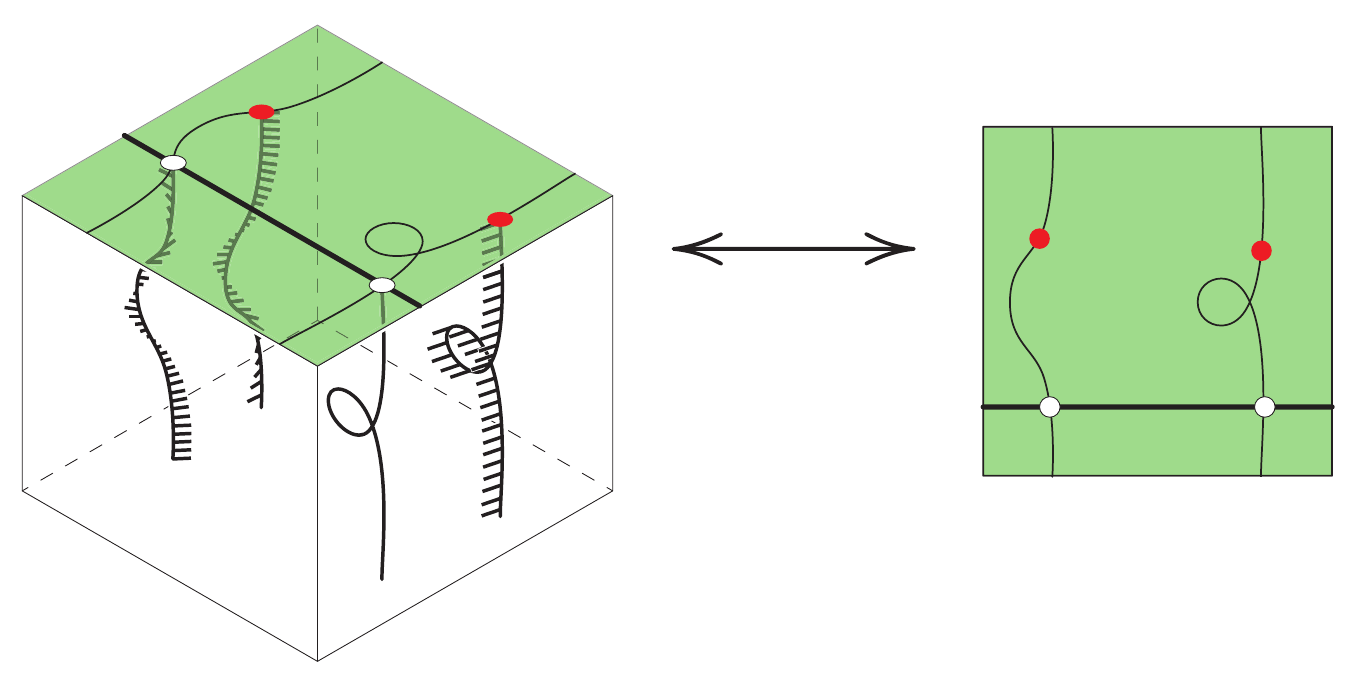}
      \vspace{-1.25em}
  \caption[Decomposition of the Chow sub-scheme for vertical fibres]{Depiction of the decomposition of the Chow sub-scheme that  parametrises the vertical fibres of $\mathrm{pr}_2$. The red dots indicate when the fibres don't intersect the section i.e. $\mathtt{Sm}_2^\emptyset$ and $\mathtt{N}_2^\emptyset$.   The white dots indicate  when the fibres do intersect the section  i.e. $\mathtt{Sm}_2^\sigma$ and $\mathtt{N}_2^\sigma$.}
\end{figure}

Now from lemma \ref{(1,0,d1,d2)_Chow_lemma} we can further decompose $\Chow^{\sigma + (0,\bullet,\bullet)}(X)$ as: 
\begin{align*}
& \Sym_{Q_2Q_3}^\bullet (\mathtt{Sm}^\sigma_2 )\times  \Sym_{Q_2Q_3}^\bullet (\mathtt{N}^\sigma_2 ) \times \Sym_{Q_2Q_3}^\bullet (\mathtt{Sm}^{\emptyset}_2 )  \times  \Sym_{Q_2Q_3}^\bullet (\mathtt{N}^{\emptyset}_2 ) \\
& \times \Sym_{Q_2}^{\bullet}(B_{2})\times \Sym_{Q_3}^{\bullet}(B_{\mathsf{op}}).
\end{align*}

Moreover, if $\cycle{q} = (\bm{a}\bm{x},\bm{c}\bm{y},\bm{d}\bm{z},\bm{l}\bm{w}, \bm{m}\bm{b_{2}}, \bm{n}\bm{b_{\mathsf{op}}}) \in \Chow^{\sigma + (0,\bullet,\bullet)}(X)$ then the fibre is  given by $\rho_\bullet^{-1}(\cycle{q}) \cong \HilbCyc^{\bullet}(X,\cycle{q})$ where 
\begin{align*}
\cycle{q}=~& \sigma +  \sum_i a_i \mathrm{pr}_2^{-1}(x_i)  + \sum_i c_i \mathrm{pr}_2^{-1}(y_i) \\
&+  \sum_i d_i \mathrm{pr}_2^{-1}(z_i) +  \sum_i l_i \mathrm{pr}_2^{-1}(w_i) + \sum_i m_i C_2^{(i)} +\sum_i n_i C_3^{(i)}.
\end{align*}

\begin{re} 
Suppose $C$ is Cohen-Macaulay with the cycle given above. Note that $C$ can be decomposed into a part supported on $C_2 \cup C_3$ and a part supported away from the banana configuration.  This gives the following formal neighbourhoods and  $(\C^*)^2$-actions:
\begin{enumerate}[label={\arabic*)}]
\item Let $U_i$ be the formal neighbourhood of $C_2^{(i)} \cup C_3^{(i)}$ in $X$. These have a canonical $(\C^*)^2$-action described in \ref{lambda_thickened_C3_defn} and \ref{lambda_thickened_C2_defn}. 
\item Let $V_i$ be the formal neighbourhood of $ \mathrm{pr}_2^{-1}(y_i)$ in $X$. These have a canonical $(\C^*)^2$-action described in definition \ref{lambda_thickened_smooth_fibre_defn} and $\sigma \cap V_i$ is either empty of invariant under this action. 
\end{enumerate}
\begin{figure}
  \centering
      \includegraphics[width=0.4\textwidth]{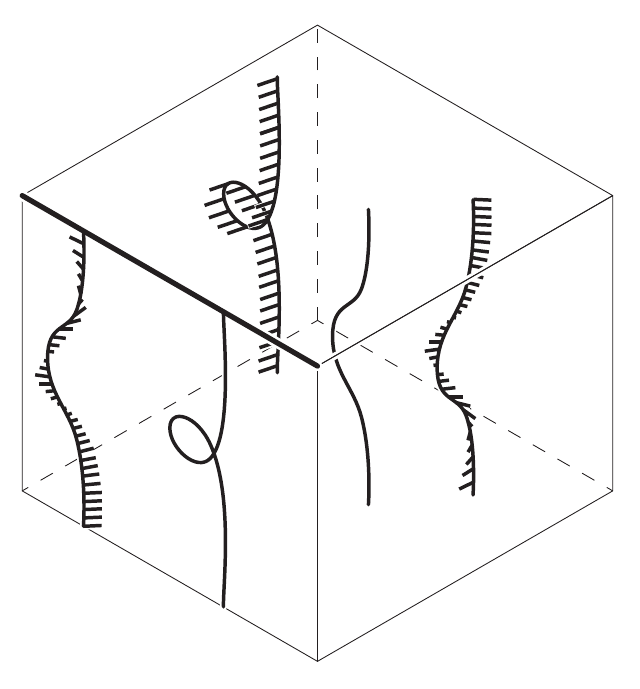}\hspace{1cm}
      \includegraphics[width=0.4\textwidth]{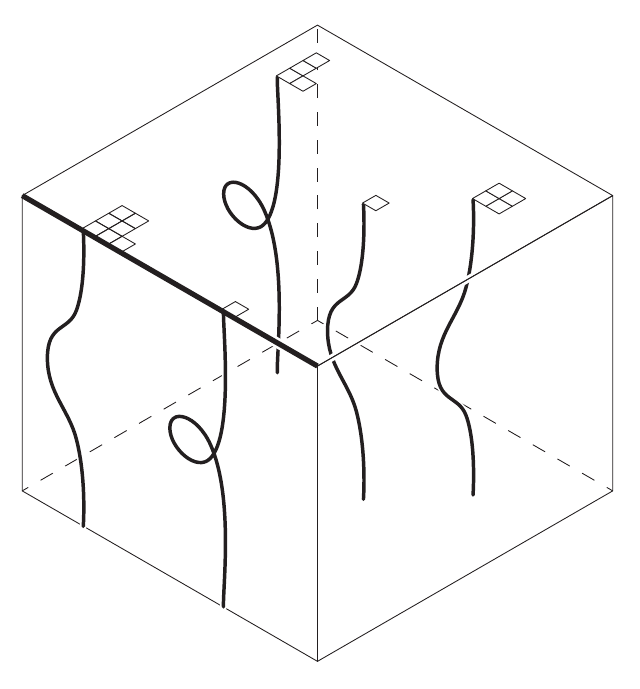}
      \vspace{-1.25em}
  \caption[Cohen-Macaulay subschemes in the fibres of $\rho_\bullet$]{Visual depictions of Cohen-Macaulay subschemes in the fibres of $\rho_\bullet$ that are supported away from the banana configurations. On the left is a general subscheme and on the right is a partition thickened curve which is a fixed point of the $(\C^*)^2$-action.}
\end{figure}
Hence the conditions of lemma \ref{action_on_CM_lemma} are satisfied and there is a $(\C^*)^2$-action defined on $\Hilb^{n}_{\mathsf{CM}}(X,\cycle{q})$. Using  the partition thickened notation introduced in section \ref{partition_thickened_section} we introduce the subschemes:
\[
C_{\bm{\alpha},\bm{\gamma},\bm{\delta},\bm{\lambda},\bm{\mu},\bm{\nu}}:= 
\begin{array}{l}
\sigma 
\cup \bigcup_i \big(\alpha^{(i)}f_{x_i} \big)
\cup \bigcup_i \big(\gamma^{(i)}f_{y_i} \big)
\cup \bigcup_i \big(\delta^{(i)}f_{z_i} \big)\vspace{0.1cm}\\
\cup \bigcup_i \big(\lambda^{(i)}f_{w_i} \big) 
\cup \bigcup_i \big(\mu^{(i)}C_2^{(i)} \big) 
\cup \bigcup_i \big(\nu^{(i)}C_3^{(i)} \big)
\end{array}
\]
and their ideals $I_{\bm{\alpha},\bm{\gamma},\bm{\delta},\bm{\lambda},\bm{\mu},\bm{\nu}}$ in $X$ where $\bm{\alpha}$, $\bm{\gamma}$, $\bm{\delta}$, $\bm{\lambda}$, $\bm{\mu}$ and  $\bm{\nu}$ are tuples of partitions of $\bm{a}$, $\bm{c}$, $\bm{d}$, $\bm{l}$, $\bm{m}$ and  $\bm{n}$ respectively. Then using this notation we  can identify the fixed points of the action as the following formal sum of discrete sets:
\begin{align*}
\Hilb^{\blacklozenge}_{\mathsf{CM}}\big(X,\cycle{q}\big)^{(\C^*)^2} 
=
\hspace{-0.25em}\raisebox{0.4em}{$ \coprod\limits_{\mbox{\tiny$\begin{array}{c} \bm{\alpha}\vdash \bm{a}, ~\bm{\gamma}\vdash \bm{c},~ \bm{\delta}\vdash \bm{d},\\ \bm{\lambda}\vdash \bm{l},~ \bm{\mu}\vdash \bm{m},  ~\bm{\nu}\vdash \bm{n}\end{array}$}}$} 
\hspace{-0.25em}
\Big\{ C_{\bm{\alpha},\bm{\gamma},\bm{\delta},\bm{\lambda},\bm{\mu},\bm{\nu}}\Big\} 
\end{align*}
\end{re}
Using the result of \ref{kappa_constructible_invariant_lemma} we have
\begin{align*}
e\Big(\HilbCyc^{\bullet}(X,\cycle{q})\Big) 
&=
e\Big(\Hilb^{\blacklozenge}_{\mathsf{CM}}(X,\cycle{q})^{(\C^*)^2}, \kappa_*1\Big) \\
&=
\raisebox{0.4em}{$ \sum\limits_{\mbox{\tiny$\begin{array}{c} \bm{\alpha}\vdash \bm{a}, ~\bm{\gamma}\vdash \bm{c},~ \bm{\delta}\vdash \bm{d},\\ \bm{\lambda}\vdash \bm{l},~ \bm{\mu}\vdash \bm{m},  ~\bm{\nu}\vdash \bm{n}\end{array}$}}$} 
e\Big((\Hilb^{\bullet}(X,C_{\bm{\alpha},\bm{\gamma},\bm{\delta},\bm{\lambda},\bm{\mu},\bm{\nu}})\Big)
p^{\chi(\O_{C_{\bm{\alpha},\bm{\gamma},\bm{\delta},\bm{\lambda},\bm{\mu},\bm{\nu}}})}\\
&=
\raisebox{0.4em}{$ \sum\limits_{\mbox{\tiny$\begin{array}{c} \bm{\alpha}\vdash \bm{a}, ~\bm{\gamma}\vdash \bm{c},~ \bm{\delta}\vdash \bm{d},\\ \bm{\lambda}\vdash \bm{l},~ \bm{\mu}\vdash \bm{m},  ~\bm{\nu}\vdash \bm{n}\end{array}$}}$} 
e\Big(\Quot_X^\bullet (I_{\bm{\alpha},\bm{\gamma},\bm{\delta},\bm{\lambda},\bm{\mu},\bm{\nu}})\Big)
p^{\chi(\O_{C_{\bm{\alpha},\bm{\gamma},\bm{\delta},\bm{\lambda},\bm{\mu},\bm{\nu}}})}.
\end{align*}

\vspace{0.2cm}
\begin{re}
Using the decomposition method of \ref{decomposition_method}
following method:
\begin{enumerate}[label={\arabic*)}]
\item Decompose $X$ by $X  =W ~\amalg~ C_{\bm{\alpha},\bm{\gamma},\bm{\delta},\bm{\lambda},\bm{\mu},\bm{\nu}}$ where $W:= X\setminus C_{\bm{\alpha},\bm{\gamma},\bm{\delta},\bm{\lambda},\bm{\mu},\bm{\nu}}$. 
\item Let $C_{\bm{\alpha},\bm{\gamma},\bm{\delta},\bm{\lambda},\bm{\mu},\bm{\nu}}^\diamond$ be set points  given by the following disjoint sets:
\begin{enumerate}[label={\alph*)}]
\item $\sigma^\diamond_\alpha := \sigma \cap C^{\mathsf{red}}_{\bm{\alpha}}$
\item $\sigma^\diamond_\gamma := \sigma \cap C^{\mathsf{red}}_{\bm{\gamma}}$
\item $C^\diamond_{\bm{\gamma}}$ the set of nodes of $C_{\bm{\gamma}}$
\item $C^\diamond_{\bm{\lambda}}$ the set of nodes of $C_{\bm{\lambda}}$
\item $B^\diamond =  \bigcup_i (C^{(i)}_2\cap C^{(i)}_3)$.  
\end{enumerate}
\item Denote the components supported on smooth reduced sub-curves by:
\begin{enumerate}[label={\alph*)}]
\item $\sigma^\circ := \sigma \setminus \sigma^\diamond$
\item $C_{\bm{\alpha}}^\circ :=C_{\bm{\alpha}} \setminus \sigma_\alpha^\diamond$
\item $C_{\bm{\gamma}}^\circ :=C_{\bm{\gamma}} \setminus (\sigma_\gamma^\diamond \cup C_{\bm{\lambda}}^\diamond)$
\item $C_{\bm{\lambda}}^\circ:=C_{\bm{\lambda}} \setminus   C_{\bm{\lambda}}^\diamond$
\item $C_{\bm{\mu}}^\circ:=C_{\bm{\mu}} \setminus B^\diamond$
\item $C_{\bm{\nu}}^\circ:=C_{\bm{\nu}} \setminus B^\diamond$
\end{enumerate}
\end{enumerate}

\end{re}

\vspace{0.2cm}
\begin{re}
Then applying Euler characteristic to lemma \ref{quot_decomposition_lemma} we have:
\begin{align*}
e\big(&\Quot_X^{\bullet}(I_{\bm{\alpha},\bm{\gamma},\bm{\delta},\bm{\lambda},\bm{\mu},\bm{\nu}}) \big)\\ 
=~&
 e\big(\Quot_X^{\bullet}(I_{\bm{\alpha},\bm{\gamma},\bm{\delta},\bm{\lambda},\bm{\mu},\bm{\nu}},W) \big)
 e\big(\Quot_X^{\bullet}(I_{\bm{\alpha},\bm{\gamma},\bm{\delta},\bm{\lambda},\bm{\mu},\bm{\nu}},\sigma^\circ )\big)\\
& e\big(\Quot_X^{\bullet}(I_{\bm{\alpha},\bm{\gamma},\bm{\delta},\bm{\lambda},\bm{\mu},\bm{\nu}},\sigma^\diamond_{\bm{\alpha}})\big) 
e\big(\Quot_X^{\bullet}(I_{\bm{\alpha},\bm{\gamma},\bm{\delta},\bm{\lambda},\bm{\mu},\bm{\nu}},C^\circ_{\bm{\alpha}})\big) \\
& e\big(\Quot_X^{\bullet}(I_{\bm{\alpha},\bm{\gamma},\bm{\delta},\bm{\lambda},\bm{\mu},\bm{\nu}},\sigma^\diamond_{\bm{\gamma}})\big) 
e\big(\Quot_X^{\bullet}(I_{\bm{\alpha},\bm{\gamma},\bm{\delta},\bm{\lambda},\bm{\mu},\bm{\nu}},C^\diamond_{\bm{\gamma}})\big)
e\big(\Quot_X^{\bullet}(I_{\bm{\alpha},\bm{\gamma},\bm{\delta},\bm{\lambda},\bm{\mu},\bm{\nu}},C^\circ_{\bm{\gamma}})\big)\\
& e\big(\Quot_X^{\bullet}(I_{\bm{\alpha},\bm{\gamma},\bm{\delta},\bm{\lambda},\bm{\mu},\bm{\nu}},C_{\bm{\delta}})\big)\\
&e\big(\Quot_X^{\bullet}(I_{\bm{\alpha},\bm{\gamma},\bm{\delta},\bm{\lambda},\bm{\mu},\bm{\nu}},C^\diamond_{\bm{\lambda}})\big)
e\big(\Quot_X^{\bullet}(I_{\bm{\alpha},\bm{\gamma},\bm{\delta},\bm{\lambda},\bm{\mu},\bm{\nu}},C^\circ_{\bm{\lambda}})\big)\\
& e\big(\Quot_X^{\bullet}(I_{\bm{\alpha},\bm{\gamma},\bm{\delta},\bm{\lambda},\bm{\mu},\bm{\nu}},B^\diamond)\big)
 e\big(\Quot_X^{\bullet}(I_{\bm{\alpha},\bm{\gamma},\bm{\delta},\bm{\lambda},\bm{\mu},\bm{\nu}},C^\circ_{\bm{\mu}})\big)
  e\big(\Quot_X^{\bullet}(I_{\bm{\alpha},\bm{\gamma},\bm{\delta},\bm{\lambda},\bm{\mu},\bm{\nu}},C^\circ_{\bm{\nu}})\big)\\
\end{align*}

\end{re}

\vspace{0.2cm}
\begin{lemma}\label{hol_euler_char_of_fibres}
The holomorphic Euler characteristic of $C_{\bm{\alpha},\bm{\gamma},\bm{\delta},\bm{\lambda},\bm{\mu},\bm{\nu}}$ is:
\begin{align*}
\chi(\O_{C_{\bm{\alpha},\bm{\gamma},\bm{\delta},\bm{\lambda},\bm{\mu},\bm{\nu}}})
=~ 1 +  \sum_i \chi\big(\O_{\mu^{(i)} C_2^{(i)} \cup \,\nu^{(i)}C_3^{(i)}}\big) - \sum_i \alpha^{(i)}- \sum_i \gamma^{(i)}
\end{align*}
and we have
\begin{align*}
&p^{\chi\big(\O_{\mu^{(i)} C_2^{(i)} \cup \,\nu^{(i)}C_3^{(i)}}\big)}\widetilde{\mathsf{V}}_{\mu^{(i)}\nu^{(i)}\emptyset}\widetilde{\mathsf{V}}_{(\mu^{(i)})^t(\nu^{(i)})^t\emptyset}  \\
&=
 p^{\frac{1}{2}(\|\mu^{(i)}\|^2+\|(\mu^{(i)})^t\|^2+\|\nu^{(i)}\|^2+\|(\nu^{(i)})^t\|^2)} \mathsf{V}_{\mu^{(i)}\nu^{(i)}\emptyset}\mathsf{V}_{(\mu^{(i)})^t(\nu^{(i)})^t\emptyset}. \\
\end{align*}

\end{lemma}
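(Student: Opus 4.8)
The plan is to reduce the first identity to Lemma \ref{holomorphic_euler_char_of_PTC} by repeated use of the Mayer--Vietoris sequence, and then to obtain the second identity by a direct bookkeeping of renormalised volumes. First I would regard $C = C_{\bm{\alpha},\bm{\gamma},\bm{\delta},\bm{\lambda},\bm{\mu},\bm{\nu}}$ as the scheme-theoretic union of its components: the section $\sigma$, the thickened smooth fibres $\alpha^{(i)}f_{x_i}$ and $\delta^{(i)}f_{z_i}$, the thickened nodal fibres $\gamma^{(i)}f_{y_i}$ and $\lambda^{(i)}f_{w_i}$, and the thickened banana pieces $\mu^{(i)}C_2^{(i)}\cup\nu^{(i)}C_3^{(i)}$. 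For closed subschemes $A,B\subset X$ the exact sequence $0\to\O_{A\cup B}\to\O_A\oplus\O_B\to\O_{A\cap B}\to 0$ gives $\chi(\O_{A\cup B})=\chi(\O_A)+\chi(\O_B)-\chi(\O_{A\cap B})$, so the whole computation is governed by the pairwise intersection pattern. The key geometric input I would establish is that the \emph{only} non-empty intersections among distinct components are $\sigma\cap\alpha^{(i)}f_{x_i}$ and $\sigma\cap\gamma^{(i)}f_{y_i}$. Fibres of $\mathrm{pr}_2$ over distinct points of $S$ are disjoint, and banana pieces attached to distinct nodes lie in distinct singular fibres of $\mathrm{pr}$; most importantly, both $C_2^{(i)}$ and $C_3^{(i)}$ are contracted by $\mathrm{pr}_2$ to the node $b_2^{(i)}\in B_2$, while every thickened fibre sits over a point of $S_2^\circ=S_2\setminus B_2$, so no thickened fibre meets a banana curve. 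Finally $\sigma$ misses the fibres over $\mathtt{Sm}_2^\emptyset$ and $\mathtt{N}_2^\emptyset$ by the definition of the superscript $\emptyset$.

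With this pattern in hand, iterating Mayer--Vietoris (each newly adjoined fibre meets at most $\sigma$ among the components already assembled) yields
\begin{align*}
\chi(\O_C) &= \chi(\O_\sigma) + \sum_i\big(\chi(\O_{\alpha^{(i)}f_{x_i}})-\chi(\O_{\sigma\cap\alpha^{(i)}f_{x_i}})\big) \\
&\quad + \sum_i\big(\chi(\O_{\gamma^{(i)}f_{y_i}})-\chi(\O_{\sigma\cap\gamma^{(i)}f_{y_i}})\big) + \sum_i\chi\big(\O_{\mu^{(i)}C_2^{(i)}\cup\nu^{(i)}C_3^{(i)}}\big),
\end{align*}
the $\delta$- and $\lambda$-fibres contributing nothing since they are disjoint from everything and have vanishing holomorphic Euler characteristic. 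I then substitute the building blocks of Lemma \ref{holomorphic_euler_char_of_PTC}: $\chi(\O_\sigma)=1$ (the section is a $\P^1$), $\chi(\O_{\lambda f})=0$ for every thickened fibre, and $\chi(\O_{\sigma\cap\lambda f})$ equal to the first part of $\lambda$. For the smooth $\sigma$-fibres this is part (3) directly; for the nodal $\sigma$-fibres I would observe that by \ref{nodal_fibre_coords_re} the section meets the fibre at the point $x_0=1$, away from the node, so the intersection is formally identical to the smooth case and the same count applies. This gives exactly $\chi(\O_C)=1+\sum_i\chi(\O_{\mu^{(i)}C_2^{(i)}\cup\nu^{(i)}C_3^{(i)}})-\sum_i\alpha^{(i)}-\sum_i\gamma^{(i)}$, where $\alpha^{(i)},\gamma^{(i)}$ abbreviate the first parts of the respective partitions.

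The second identity is then immediate from the normalisation relating the two vertices. Writing $\mu=\mu^{(i)}$ and $\nu=\nu^{(i)}$, Lemma \ref{holomorphic_euler_char_of_PTC}(4) gives $\chi(\O_{\mu C_2\cup\nu C_3})=|\bm{\eta}_1|+|\bm{\eta}_2|+\tfrac12(\|\mu\|^2+\|\mu^t\|^2+\|\nu\|^2+\|\nu^t\|^2)$, where $\bm{\eta}_1,\bm{\eta}_2$ are the minimal $3$D partitions asymptotic to $(\mu,\nu,\emptyset)$ and $(\mu^t,\nu^t,\emptyset)$. Applying the relation $\mathsf{V}_{\lambda\mu\nu}=p^{|\bm{\eta}_{min}|}\widetilde{\mathsf{V}}_{\lambda\mu\nu}$ to each factor gives $\mathsf{V}_{\mu\nu\emptyset}\mathsf{V}_{\mu^t\nu^t\emptyset}=p^{|\bm{\eta}_1|+|\bm{\eta}_2|}\widetilde{\mathsf{V}}_{\mu\nu\emptyset}\widetilde{\mathsf{V}}_{\mu^t\nu^t\emptyset}$. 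Multiplying the tilde-vertices by $p^{\chi(\O_{\mu C_2\cup\nu C_3})}$ and cancelling the common factor $p^{|\bm{\eta}_1|+|\bm{\eta}_2|}$ leaves precisely $p^{\frac12(\|\mu\|^2+\|\mu^t\|^2+\|\nu\|^2+\|\nu^t\|^2)}$ in front of $\mathsf{V}_{\mu\nu\emptyset}\mathsf{V}_{\mu^t\nu^t\emptyset}$, which is the asserted equality.

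I expect the main obstacle to be the disjointness bookkeeping in the first step — specifically, verifying rigorously that the nodal fibres of $\mathrm{pr}_2$ do not meet the thickened banana curves $C_2^{(i)},C_3^{(i)}$. This is the one point where the global geometry of the banana threefold (rather than a purely formal-local computation) intervenes: it rests on identifying the $\mathrm{pr}_2$-images of both $C_2^{(i)}$ and $C_3^{(i)}$ with the node $b_2^{(i)}$, and on locating the node of each nodal fibre on the \emph{excluded} banana curve $C_1^{(i)}$ at a point distinct from $z_1,z_2$. Everything else is a mechanical assembly of Lemma \ref{holomorphic_euler_char_of_PTC} and the vertex normalisation.
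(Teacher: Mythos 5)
Your proof is correct and follows essentially the same route as the paper: the paper packages your iterated Mayer--Vietoris argument into a single exact sequence $0 \to \O_{C} \to \mathcal{F}^{\sigma}\oplus\mathcal{F}^{\emptyset}\oplus\mathcal{B} \to \mathcal{F}^{\cap}\to 0$ (whose intersection term records exactly the pattern you verify, namely that only $\sigma\cap\alpha^{(i)}f_{x_i}$ and $\sigma\cap\gamma^{(i)}f_{y_i}$ are non-empty) and then invokes Lemma \ref{holomorphic_euler_char_of_PTC} together with $\chi(\O_\sigma)=1$, while the second identity is, as you say, immediate from part 4 of that lemma and the relation $\mathsf{V}_{\lambda\mu\nu}=p^{|\bm{\eta}_{min}|}\widetilde{\mathsf{V}}_{\lambda\mu\nu}$. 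Your explicit disjointness bookkeeping (the banana curves $C_2^{(i)},C_3^{(i)}$ being contracted by $\mathrm{pr}_2$ to $b_2^{(i)}\notin S_2^\circ$, and the node of each nodal fibre lying on the excluded curve $C_1^{(i)}$) is left implicit in the paper but is exactly the geometric content underpinning its exact sequence.
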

\begin{proof}
Define the sheaves:
\begin{itemize}
\item[] $\mathcal{F}^{\sigma} := \O_\sigma \oplus \big(\bigoplus_i \O_{\alpha^{(i)} f_{x_i}} \big)\oplus  \big(\bigoplus_i \O_{\beta^{(i)} f_{y_i}}\big)$
\item[] $\mathcal{F}^{\emptyset} := \big(\bigoplus_i \O_{\delta^{(i)} f_{z_i}} \big)\oplus  \big(\bigoplus_i \O_{\lambda^{(i)} f_{w_i}}\big)$
\item[] $\mathcal{F}^{\cap} := \big(\bigoplus_i \O_{\sigma\cap \alpha^{(i)} f_{x_i}} \big)\oplus  \big(\bigoplus_i \O_{\sigma\cap \beta^{(i)} f_{y_i}}\big)$
\item[] $\mathcal{B} := \bigoplus_i \O_{\mu^{(i)} C_2^{(i)} \cup \,\nu^{(i)}C_3^{(i)}}$
\end{itemize}
   
The exact sequence decomposing $C_{\bm{\alpha},\bm{\gamma},\bm{\delta},\bm{\lambda},\bm{\mu},\bm{\nu}}$ is then
\[
0 \longrightarrow \O_{C_{\bm{\alpha},\bm{\gamma},\bm{\delta},\bm{\lambda},\bm{\mu},\bm{\nu}}} 
\longrightarrow 
\mathcal{F}^{\sigma} \oplus \mathcal{F}^{\emptyset} \oplus \mathcal{B} 
\longrightarrow 
  \mathcal{F}^{\cap} 
\longrightarrow 0 
\]
The result now follows from \ref{holomorphic_euler_char_of_PTC} and  the fact that $\chi(\O_{\sigma})=1$.
\end{proof}

\begin{re}

\vspace{0.5cm}
Applying lemmas \ref{hol_euler_char_of_fibres}, \ref{Quot_to_V_lemma} and \ref{Quot_to_V_Sym_lemma} we have:
\begin{align*}
e\big(\Quot&_X^{\bullet}(I_{\bm{\alpha},\bm{\gamma},\bm{\delta},\bm{\lambda},\bm{\mu},\bm{\nu}}) \big)p^{\chi(\O_{C_{\bm{\alpha},\bm{\gamma},\bm{\delta},\bm{\lambda},\bm{\mu},\bm{\nu}}})}\\
=~
 p&\cdot \Big(\widetilde{\mathsf{V}}_{\emptyset\emptyset\emptyset}\Big)^{e(W)}
\Big(\widetilde{\mathsf{V}}_{\square\emptyset\emptyset}\Big)^{e(\sigma^\circ)}\\
&\cdot\prod_{i}\Big(p^{-\alpha_i}\widetilde{\mathsf{V}}_{\alpha^{(i)}\square\emptyset}\Big)
\prod_{i}\Big(\widetilde{\mathsf{V}}_{\alpha^{(i)}\emptyset\emptyset}\Big)^{-1} \\
&\cdot \prod_{i}\Big(p^{-\gamma_i}\widetilde{\mathsf{V}}_{\gamma^{(i)}\square\emptyset}\Big)
\prod_{i}\Big(\widetilde{\mathsf{V}}_{\gamma^{(i)}(\gamma^{(i)})^t\emptyset}\Big)
\prod_{i}\Big(\widetilde{\mathsf{V}}_{\gamma^{(i)}\emptyset\emptyset}\Big)^{-1} \\
&\cdot \prod_{i}\Big(\widetilde{\mathsf{V}}_{\delta^{(i)}\emptyset\emptyset}\Big)^{0} \\
&\cdot \prod_{i}\Big(\widetilde{\mathsf{V}}_{\lambda^{(i)}(\lambda^{(i)})^t\emptyset}\Big)
\prod_{i}\Big(\widetilde{\mathsf{V}}_{\lambda^{(i)}\emptyset\emptyset}\Big)^{0} \\
&\cdot \prod_{i}\Big(p^{\chi\big(\O_{\mu^{(i)} C_2^{(i)} \cup \,\nu^{(i)}C_3^{(i)}}\big)}\widetilde{\mathsf{V}}_{\mu^{(i)}\nu^{(i)}\emptyset}\widetilde{\mathsf{V}}_{(\mu^{(i)})^t(\nu^{(i)})^t\emptyset}\Big) \prod_{i}\Big(\widetilde{\mathsf{V}}_{\mu^{(i)}\emptyset\emptyset}\Big)^{0} \prod_{i}\Big(\widetilde{\mathsf{V}}_{\nu^{(i)}\emptyset\emptyset}\Big)^{0}.\\
\end{align*}

We note that $e(X) = 24$ and $e(\sigma)=2$, so from lemma \ref{Vtilde_to_V_lemma} we now have
 we have 
\begin{align*}
e\big(\Quot&_X^{\bullet}(I_{\bm{\alpha},\bm{\gamma},\bm{\delta},\bm{\lambda},\bm{\mu},\bm{\nu}}) \big)p^{\chi(\O_{C_{\bm{\alpha},\bm{\gamma},\bm{\delta},\bm{\lambda},\bm{\mu},\bm{\nu}}})}\\
=~p&\cdot\Big(\mathsf{V}_{\emptyset\emptyset\emptyset}\Big)^{24}
\Big(\frac{\mathsf{V}_{\square\emptyset\emptyset}}{\mathsf{V}_{\emptyset\emptyset\emptyset}}\Big)^{2}
\prod_{i}\Big(\frac{\mathsf{V}_{\emptyset\emptyset\emptyset}}{\mathsf{V}_{\square\emptyset\emptyset}}\frac{\mathsf{V}_{\alpha^{(i)}\square\emptyset}}{\mathsf{V}_{\alpha^{(i)}\emptyset\emptyset}}\Big)\\
& \cdot\prod_{i}\Big(\frac{\mathsf{V}_{\emptyset\emptyset\emptyset}}{\mathsf{V}_{\square\emptyset\emptyset}}p^{\|\gamma^{(i)}\|^2}\frac{\mathsf{V}_{\gamma^{(i)}\square\emptyset}\mathsf{V}_{\gamma^{(i)}(\gamma^{(i)})^t\emptyset}}{\mathsf{V}_{\emptyset\emptyset\emptyset}\mathsf{V}_{\gamma^{(i)}\emptyset\emptyset}}\Big)
\prod_{i}\Big(p^{\|\lambda^{(i)}\|^2}\frac{\mathsf{V}_{\lambda^{(i)}(\lambda^{(i)})^t\emptyset}}{\mathsf{V}_{\emptyset\emptyset\emptyset}}\Big)\\
&\cdot\prod_{i}\Big( p^{\frac{1}{2}(\|\mu^{(i)}\|^2+\|(\mu^{(i)})^t\|^2+\|\nu^{(i)}\|^2+\|(\nu^{(i)})^t\|^2)}\frac{\mathsf{V}_{\mu^{(i)}\nu^{(i)}\emptyset}\mathsf{V}_{(\mu^{(i)})^t(\nu^{(i)})^t\emptyset}}{\mathsf{V}_{\emptyset\emptyset\emptyset}\mathsf{V}_{\emptyset\emptyset\emptyset}}\Big)\\
\end{align*}

We now define the functions:
\begin{enumerate}[label={\arabic*)}]
\item
$g_{\mathtt{Sm}^\sigma} :  \mathbb{Z}_{\geq 0}\longrightarrow Z(\!(p)\!)$ is defined by  $g_{\mathtt{Sm}^\sigma} (a) =  \frac{\mathsf{V}_{\emptyset\emptyset\emptyset}}{\mathsf{V}_{\square\emptyset\emptyset}}\sum\limits_{\alpha \vdash a}\frac{\mathsf{V}_{\alpha\square\emptyset}}{\mathsf{V}_{\alpha^{(i)}\emptyset\emptyset}}$,
\item
$g_{\mathtt{N}^\sigma}: \mathbb{Z}_{\geq 0} \longrightarrow Z(\!(p)\!)$ is defined by  $g_{\mathtt{N}^\sigma}(c) =  \frac{\mathsf{V}_{\emptyset\emptyset\emptyset}}{\mathsf{V}_{\square\emptyset\emptyset}}\sum\limits_{\gamma\vdash c} p^{\|\gamma^{(i)}\|^2} \frac{\mathsf{V}_{\gamma\square\emptyset}\mathsf{V}_{\gamma\gamma^t\emptyset}}{\mathsf{V}_{\emptyset\emptyset\emptyset}\mathsf{V}_{\gamma\emptyset\emptyset}}$,
\item
$g_{\mathtt{Sm}^\emptyset}:\mathbb{Z}_{\geq 0} \longrightarrow Z(\!(p)\!)$ is defined by  $ g_{\mathtt{Sm}^\emptyset}(d)=  \sum\limits_{\delta \vdash d} 1$,
\item
$g_{\mathtt{N}^{\emptyset}} : \mathbb{Z}_{\geq 0}\longrightarrow Z(\!(p)\!)$ is defined by $g_{\mathtt{N}^{\emptyset}} (l) =\sum\limits_{\lambda \vdash l} p^{\|\lambda^{(i)}\|^2}\frac{\mathsf{V}_{\lambda  \lambda^t\emptyset}}{\mathsf{V}_{\emptyset\emptyset\emptyset}}$,
\item
$g_B:\mathbb{Z}_{\geq 0} \times \mathbb{Z}_{\geq 0} \longrightarrow Z(\!(p)\!)$ is defined by the equation \\
$ g_B(m,n) =\hspace{-0.75em} 
\sum\limits_{\mbox{\tiny $\begin{array}{c}\mu \vdash m \\ \nu \vdash n\end{array}$}} \hspace{-0.75em} 
p^{\frac{1}{2}(\|\mu^{(i)}\|^2+\|\mu^t\|^2+\|\nu\|^2+\|\nu^t\|^2)}\frac{\mathsf{V}_{\mu\nu\emptyset}\mathsf{V}_{\mu^t\nu^t\emptyset}}{\mathsf{V}_{\emptyset\emptyset\emptyset}\mathsf{V}_{\emptyset\emptyset\emptyset}}$.
\end{enumerate}

So the constructible function $(\rho_\bullet)_*1: \Chow^{\sigma + (0,\bullet,\bullet)}(X) \rightarrow Z(\!(p)\!)$ is calculated for $\cycle{q} = (\bm{a}\bm{x},\bm{c}\bm{y},\bm{d}\bm{z},\bm{l}\bm{w}, \bm{m}\bm{b_{2}}, \bm{n}\bm{b_{\mathsf{op}}})$ by:
\begin{align*}
\big((\rho_\bullet)&_*1\big)(\cycle{q}) \\
=~& e\Big(\rho_\bullet^{-1}(\cycle{q} )\Big) \\
=~&
\raisebox{0.4em}{$ \sum\limits_{\mbox{\tiny$\begin{array}{c} \bm{\alpha}\vdash \bm{a}, ~\bm{\gamma}\vdash \bm{c},~ \bm{\delta}\vdash \bm{d},\\ \bm{\lambda}\vdash \bm{l},~ \bm{\mu}\vdash \bm{m},  ~\bm{\nu}\vdash \bm{n}\end{array}$}}$} 
e\Big(\Quot_X^\bullet (I_{\bm{\alpha},\bm{\gamma},\bm{\delta},\bm{\lambda},\bm{\mu},\bm{\nu}})\Big)
p^{\chi(\O_{C_{\bm{\alpha},\bm{\gamma},\bm{\delta},\bm{\lambda},\bm{\mu},\bm{\nu}}})}\\
=~&p\cdot\Big(\mathsf{V}_{\emptyset\emptyset\emptyset}\Big)^{24}
\Big(\frac{\mathsf{V}_{\square\emptyset\emptyset}}{\mathsf{V}_{\emptyset\emptyset\emptyset}}\Big)^{2}
\prod_{i} g_{\mathtt{Sm}^\sigma} (a_i) \prod_{i} g_{\mathtt{N}^\sigma} (c_i) \prod_{i}  g_{\mathtt{Sm}^\emptyset} (d_i) \prod_{i} g_{\mathtt{N}^\emptyset} (l_i) \prod_{i} g_{B} (m_i,n_i). 
\end{align*}
\end{re}

So we can now apply lemma \ref{BK_sym_product_lemma} and reintroduce the formal variables $Q_2$ and $Q_3$ to obtain: 
\begin{align*}
e\Big(\Chow&^{\sigma + (0,\bullet,\bullet)}(X),  (\rho_\bullet)_*1) \Big)\\
=~
  p&\cdot\Big(\mathsf{V}_{\emptyset\emptyset\emptyset}\Big)^{24}
\Big(\frac{\mathsf{V}_{\square\emptyset\emptyset}}{\mathsf{V} _{\emptyset\emptyset\emptyset}}\Big)^{2} 
\Big(\frac{\mathsf{V}_{\emptyset\emptyset\emptyset}}{\mathsf{V}_{\square\emptyset\emptyset}}\sum\limits_{\alpha} (Q_2Q_3)^{|\alpha|} \frac{\mathsf{V}_{\alpha\square\emptyset}}{\mathsf{V}_{\alpha\emptyset\emptyset}}\Big)^{e(\mathtt{Sm}^\sigma)}\\
&\cdot\Big(\frac{\mathsf{V}_{\emptyset\emptyset\emptyset}}{\mathsf{V}_{\square\emptyset\emptyset}}\sum\limits_{\gamma}(Q_2Q_3)^{|\gamma|} p^{\|\gamma\|^2}\frac{\mathsf{V}_{\gamma\square\emptyset}\mathsf{V}_{\gamma\gamma^t\emptyset}}{\mathsf{V}_{\emptyset\emptyset\emptyset}\mathsf{V}_{\gamma\emptyset\emptyset}} \Big)^{e(\mathtt{N}^\sigma)}\\
&\cdot\Big( \sum\limits_{\delta} (Q_2Q_3)^{|\delta|}\Big)^{e(\mathtt{Sm}^\emptyset)}
\Big( \sum\limits_{\lambda}  (Q_2Q_3)^{|\lambda|}p^{\|\lambda\|^2}\frac{\mathsf{V}_{\lambda  \lambda^t\emptyset}}{\mathsf{V}_{\emptyset\emptyset\emptyset}} \Big)^{e(\mathtt{N}^\emptyset)}\\
&\cdot e\Big(\Sym_{Q_2}^{\bullet}(B_{2})\times \Sym_{Q_3}^{\bullet}(B_{\mathsf{op}}), G_B \Big) 
\end{align*}
where $G_B$ is the constructible function
\[
G_B:  \Sym_{Q_2}^{\bullet}(B_{2})\times \Sym_{Q_3}^{\bullet}(B_{\mathsf{op}}) \rightarrow \mathbb{Z}(\!( p)\!)
\]
defined by $G_B(\bm{m b_{2}},\bm{n b_{\mathsf{op}}}):= \prod\limits_{i=1}^{12} g_B(m_i,n_i)$. However, since $B_{2} = \{b_{2}^1,\ldots,b_{2}^{12}\}$ and $B_{\mathsf{op}} = \{b_{\mathsf{op}}^1,\ldots,b_{\mathsf{op}}^{12}\}$ we have:
\[
\Sym_{Q_2}^{\bullet}(B_{2})\times \Sym_{Q_3}^{\bullet}(B_{\mathsf{op}}) \cong \prod_{i=1}^{12} \Sym_{Q_2}^{\bullet}\Big(\{b_{2}^{(i)}\}\Big)\times \Sym_{Q_3}^{\bullet}\Big(\{b_{\mathsf{op}}^{(i)}\}\Big).
\]
Defining $G^{(i)}_B:= G_B|_{ \Sym_{Q_2}^{\bullet}(\{b_{2}^{(i)}\})\times \Sym_{Q_3}^{\bullet}(\{b_{\mathsf{op}}^{(i)}\})}$ gives us:
\begin{align*}
e\Big(\Sym&_{Q_2}^{\bullet}(B_{2})\times \Sym_{Q_3}^{\bullet}(B_{\mathsf{op}}), G_B \Big)  \\
=~& \prod_{i=1}^{12}e\Big( \Sym_{Q_2}^{\bullet}\Big(\{b_{2}^{(i)}\}\Big)\times \Sym_{Q_3}^{\bullet}\Big(\{b_{\mathsf{op}}^{(i)}\}\Big), G^{(i)}_{B} \Big) \\
=~& \left(  \sum_{\mu,\nu} Q_2^{|\mu|}Q_3^{|\nu|} p^{\frac{1}{2}(\|\mu\|^2+\|\mu^t\|^2+\|\nu\|^2+\|\nu^t\|^2)} \frac{\mathsf{V}_{\mu \nu\emptyset}\mathsf{V}_{\mu^t\nu^t\emptyset}}{\mathsf{V}_{\emptyset\emptyset\emptyset}\mathsf{V}_{\emptyset\emptyset\emptyset}}\right)^{12}
\end{align*}

\begin{re}
Applying the vertex formulas of lemmas \ref{vertex_trace_formulas}, \ref{vertex_squares_lemma} and  \ref{banana_subpartition_functions} we have
\begin{align*}
e\Big(\Chow&^{\sigma + (0,\bullet,\bullet)}(X),  (\rho_\bullet)_*1) \Big)\\
=~&
  M(p)^{24}
\frac{p}{\left(1-p\right)^{2} }
\left(\prod\limits_{d>0}\dfrac{(1-Q_2^dQ_3^d)}{(1-p Q_2^dQ_3^d)(1-p^{-1}Q_2^dQ_3^d)}\right)^{-10}\\
&\cdot\left(\prod\limits_{d>0}\dfrac{M(p,Q_2^dQ_3^d)}{(1-p Q_2^dQ_3^d)(1-p^{-1}Q_2^dQ_3^d)} \right)^{12}\\
&\cdot\left( \prod\limits_{d>0}\dfrac{1}{(1-Q_2^dQ_3^d)}\right)^{10}
\left( \prod\limits_{d>0}\dfrac{M(p,Q_2^dQ_3^d)}{(1-Q_2^dQ_3^d)}\right)^{-12}\\
&\cdot\left(\prod\limits_{d>0} \dfrac{M(p,Q_2^d Q_3^d)^{2}}{(1-Q_2^d  Q_3^d) M(p,-Q_2^{d-1}Q_3^d) M(p,-Q_2^d Q_3^{d-1})}\right)^{12} \\
=~&
\frac{  M(p)^{24}}{(1-p)^2}\prod_{d>0} \dfrac{1}{(1-Q_2^dQ_3^d)^8(1-p Q_2^dQ_3^d)^2(1-p^{-1}Q_2^dQ_3^d)^2}\\
&\cdot\left(\prod\limits_{d>0} \dfrac{M(p,Q_2^d Q_3^d)^{2}}{(1-Q_2^d  Q_3^d) M(p,-Q_2^{d-1}Q_3^d) M(p,-Q_2^d Q_3^{d-1})}\right)^{12} \\
\end{align*}
\end{re}

Which completes the proof of theorem \ref{main_DT_calc_theorem_A}. \\

\subsection{Preliminaries for classes of the form \texorpdfstring{$\bullet\sigma + (i,j,\bullet)$}{*s + (i,j,*)}}\label{bulletsigma+(i,j,bullet)_cycle_preliminaries}\label{main_computation_section_B_prelim}

We recall from lemma \ref{classes_bs+(i,j,d)_main_chow_main_lemma} that there is a decomposition of  $\Chow^{\bullet\sigma+(i,j,\bullet)}(X)$ such that for any point $\cycle{q}\in \Chow^{\bullet\sigma+(i,j,\bullet)}(X)$ the fibre is 
\[
(\eta_\bullet)^{-1}(\cycle{q}) \cong \HilbCyc^{\bullet}(X,\Cyc(C))
\]
for some one dimensional subscheme $C$ of $X$ with 
\begin{align}
\Cyc(C)= \cycle{q} =~& a \sigma + D +\sum_{i=1}^{12} m_i C_3^{(i)}  \label{bulletsigma+(i,j,bullet)_cycle_notation}
\end{align}
where $D$ is a one dimensional \textit{reduced} subscheme of $X$. We see from lemma  \ref{classes_bs+(i,j,d)_main_chow_main_lemma} that the intersection of $D$ with $\sigma$ has length $0$, $1$ or $2$. We consider the following formal neighbourhoods around components of $C$: 
\begin{figure}
  \centering
      \includegraphics[width=1\textwidth]{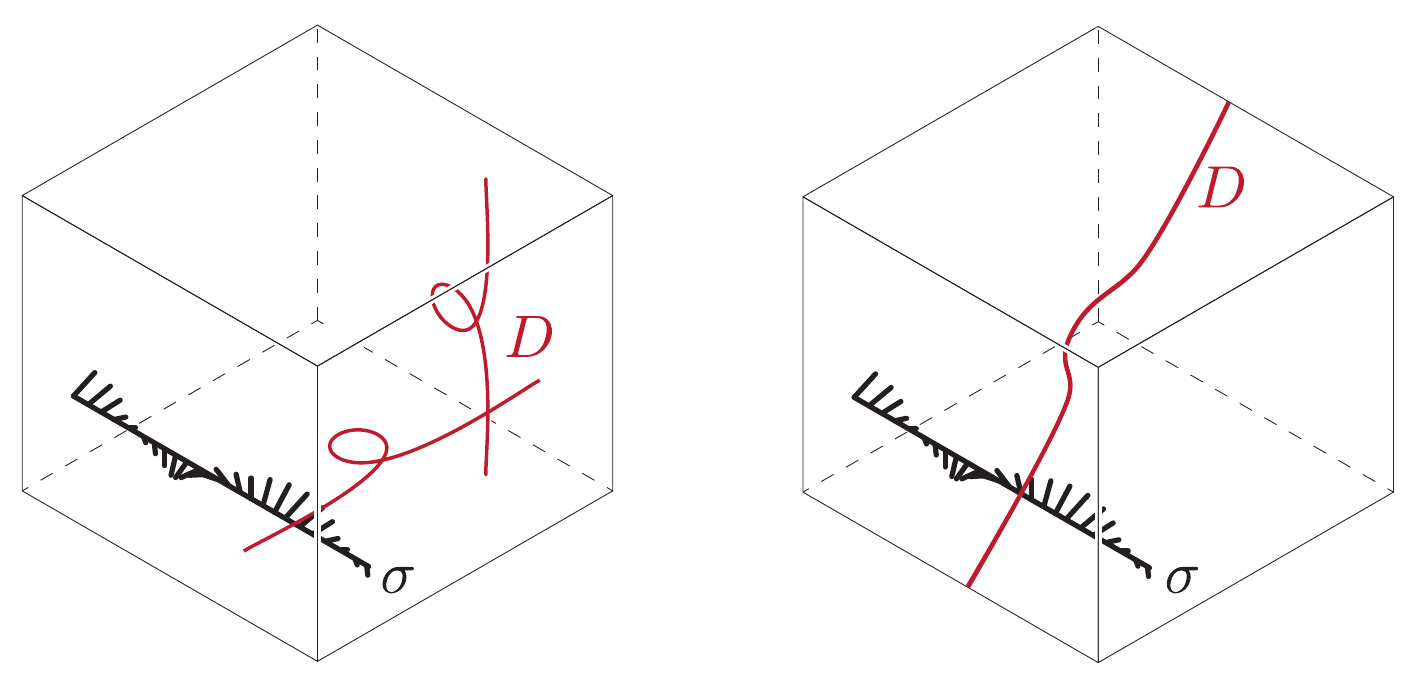}
      \vspace{-1.25em}
  \caption[Cohen-Macaulay subschemes in the fibres of $\eta_\bullet$]{Depiction of two typical curves (away from $C_3$) in the class $b\sigma +(1,1,d)$. }
\end{figure}
\begin{enumerate}[label={\arabic*)}]
\item Let $U_i$ be the formal neighbourhood of $C_3^{(i)}$ in $X$. These have a canonical $(\C^*)^2$-action described in \ref{lambda_thickened_C3_defn} and the $(\C^*)^2$-invariance of $D\cap U_i$ is shown in lemma \ref{linear_system_divisor_C*_invariant}. 
\item Let $V$ be the formal neighbourhood of $\sigma$ in $X$ with the coordinates:
\begin{enumerate}[label={\alph*)}]
\item If $\#(D\cap\sigma) = 0,2$ the let $V$ have the canonical coordinates of \ref{canonical_section_coords} of and $(\C^*)^2$-action described in \ref{lambda_thickened_section_defn}.
\item If $\#(D\cap\sigma) = 1$ the let $V$ have the canonical coordinates of \ref{canonical_section_coords_relative} of and $(\C^*)^2$-action described in \ref{lambda_thickened_section_defn}.
\end{enumerate}
\end{enumerate}
By construction the restrictions of $D$ to these neighbourhoods are invariant under these actions. Hence the conditions of lemma \ref{action_on_CM_lemma} are satisfied and there is a $(\C^*)^2$-action defined on $\Hilb^{n}_{\mathsf{CM}}(X,\Cyc(C))$. We introduce the notation for subschemes of $X$: 
\[
C_{\alpha,\bm{\mu}} = C_{\alpha,\mu^{(1)},\ldots,\mu^{(12)}} =~ \alpha \sigma ~\cup~ D~ \cup \bigcup^{12}_{i=1}\mu_i C_3^{(i)}
\]
and their ideals $I_{\alpha,\bm{\mu}}$. Then using this notation we  can identify the fixed points of the action as the following discrete set:
\begin{align*}
\Hilb^{\blacklozenge}_{\mathsf{CM}}\big(X,\cycle{q}\big)^{(\C^*)^2} 
=
\coprod_{\alpha\vdash a,~\bm{\mu}\vdash\bm{m}}
\Big\{~ C_{\alpha,\bm{\mu}}~\Big\}.
\end{align*}

Using the result of \ref{kappa_constructible_invariant_lemma} we have
\begin{align*}
e\Big(\HilbCyc^{\bullet}(X,\cycle{q})\Big) 
&=
e\Big(\Hilb^{\blacklozenge}_{\mathsf{CM}}(X,\cycle{q})^{(\C^*)^2}, \kappa_*1\Big) \\
&=
\sum_{\alpha\vdash a,~\bm{\mu}\vdash\bm{m}}
 e\Big((\Hilb^{\bullet}(X,C_{\alpha,\bm{\mu}}\Big)p^{\chi(\O_{C_{\alpha,\bm{\mu}}})}\\
&=
\sum_{\alpha\vdash a,~\bm{\mu}\vdash\bm{m}}
e\Big(\Quot_X^\bullet (I_{\alpha,\bm{\mu}}\Big)p^{\chi(\O_{C_{\alpha,\bm{\mu}}})}.
\end{align*}

Where the holomorphic Euler characteristic $\chi(\O_{C_{\alpha,\bm{\mu}}})$ is given by the following lemma. 

\begin{lemma}
The holomorphic Euler characteristic of $C_{\alpha,\bm{\mu}}$ is:
\begin{align*}
\chi(\O_{C_{\alpha,\bm{\mu}}}) 
=~  \chi(\O_D) &+ \Big(\chi(\O_{\alpha \sigma})- |D\cap  {\alpha \sigma}| \Big)\\
&+ \Big(\sum_{i=1}^{12} \chi(\O_{\mu^{(i)} C_3^{(i)}}) - \sum_{i=1}^{12} |D \cap \mu^{(i)} C_3^{(i)}| \Big).
\end{align*}
\end{lemma}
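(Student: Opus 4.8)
The plan is to compute $\chi(\O_{C_{\alpha,\bm{\mu}}})$ by the same Mayer--Vietoris (inclusion--exclusion) device used in Lemma \ref{hol_euler_char_of_fibres}, exploiting the additivity of the holomorphic Euler characteristic on short exact sequences of coherent sheaves. The subscheme $C_{\alpha,\bm{\mu}}$ is the scheme-theoretic union of the Cohen--Macaulay curves $\alpha\sigma$, $D$ and the twelve thickened banana curves $\mu^{(i)}C_3^{(i)}$, so the first thing I would do is record its pairwise intersection pattern and confirm that no triple intersection survives.

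The key geometric input I would establish first is that the section $\sigma$ is disjoint from every banana curve $C_3^{(i)}$: over each singular fibre the section meets the torus part $\C^*\times\C^*$ of the banana configuration, away from the three components, since the identity point of the degenerating elliptic curve lies at a smooth point distinct from the node. Together with the fact that the twelve $C_3^{(i)}$ lie in distinct fibres of $\mathrm{pr}$, this gives, after passing to the (arbitrarily small) formal neighbourhoods in which the thickenings live,
\begin{align*}
\alpha\sigma \cap \mu^{(i)}C_3^{(i)} = \emptyset \quad\text{and}\quad \mu^{(i)}C_3^{(i)} \cap \mu^{(j)}C_3^{(j)} = \emptyset \quad (i\neq j).
\end{align*}
Since $D$ is reduced and contains neither $\sigma$ nor any $C_3^{(i)}$, the only remaining pairwise intersections, $D\cap\alpha\sigma$ and $D\cap\mu^{(i)}C_3^{(i)}$, are zero-dimensional, and every triple intersection is contained in one of the empty intersections above.

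With this in hand, the second step is to write down the short exact sequence of structure sheaves
\begin{align*}
0 \longrightarrow \O_{C_{\alpha,\bm{\mu}}} \longrightarrow \O_{\alpha\sigma}\oplus\O_D\oplus\bigoplus_{i=1}^{12}\O_{\mu^{(i)}C_3^{(i)}} \longrightarrow \mathcal{G} \longrightarrow 0,
\end{align*}
where the right-hand map is the difference of restrictions and $\mathcal{G}$ is the structure sheaf of the disjoint union of the intersection schemes $D\cap\alpha\sigma$ and $D\cap\mu^{(i)}C_3^{(i)}$; this is exactly the sequence of Lemma \ref{hol_euler_char_of_fibres} adapted to the present configuration, and its exactness on the right relies on the absence of triple overlaps just verified. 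Taking $\chi$ and using additivity then yields
\begin{align*}
\chi(\O_{C_{\alpha,\bm{\mu}}}) = \chi(\O_D) + \chi(\O_{\alpha\sigma}) + \sum_{i=1}^{12}\chi(\O_{\mu^{(i)}C_3^{(i)}}) - \chi(\mathcal{G}).
\end{align*}

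The final step is to identify $\chi(\mathcal{G})$ with the total intersection length: since $\mathcal{G}$ is supported on the zero-dimensional schemes $D\cap\alpha\sigma$ and $D\cap\mu^{(i)}C_3^{(i)}$, one has $\chi(\mathcal{G}) = |D\cap\alpha\sigma| + \sum_i |D\cap\mu^{(i)}C_3^{(i)}|$, and rearranging gives the stated formula. I expect the only real obstacle to be the verification of the intersection pattern in the first step --- specifically confirming that $\sigma$ avoids the $C_3^{(i)}$, so that no $\alpha\sigma\cap\mu^{(i)}C_3^{(i)}$ term appears, and that the thickenings preserve this disjointness. Once the configuration is pinned down, the sheaf-theoretic bookkeeping and the Euler characteristic computation are routine and entirely parallel to Lemma \ref{hol_euler_char_of_fibres}.
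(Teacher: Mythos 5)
Your proposal is correct and follows essentially the same route as the paper: the paper's proof consists precisely of the short exact sequence
\[
0 \rightarrow \O_{C_{\alpha,\bm{\mu}}} \rightarrow \O_D\oplus \O_{\alpha \sigma}\oplus  \bigoplus_i \O_{\mu^{(i)} C_3^{(i)}}  \rightarrow \O_{D\,\cap\, \alpha \sigma} \oplus  \bigoplus_i  \O_{D\,\cap\, \mu^{(i)} C_3^{(i)}}   \rightarrow 0
\]
followed by additivity of $\chi$, exactly as in your second and third steps. Your preliminary verification that $\sigma$ is disjoint from the $C_3^{(i)}$ (and the banana curves from each other), which the paper leaves implicit in calling the result ``immediate,'' is a sound and worthwhile addition rather than a deviation.
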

\begin{proof}
This is immediate from the exact sequence decomposing $C_{\alpha,\bm{\mu}}$ into irreducible components:
\[
0 \rightarrow \O_{C_{\alpha,\bm{\mu}}} \rightarrow \O_D\oplus \O_{\alpha \sigma}\oplus  \bigoplus_i \O_{\mu^{(i)} C_3^{(i)}}  \rightarrow \O_{D\,\cap\, \alpha \sigma} \oplus  \bigoplus_i  \O_{D\,\cap\, \mu^{(i)} C_3^{(i)}}   \rightarrow 0 
\]
\end{proof}

\begin{re}
Using the decomposition method of \ref{decomposition_method}
we take the following steps:
\begin{enumerate}[label={\arabic*)}]
\item Decompose $X$ by $X  =W ~\amalg~ C_{\alpha,\bm{\mu}}$ where $W:= X\setminus C_{\alpha,\bm{\mu}}$. 
\item Let $C_{\alpha,\bm{\mu}}^\diamond$ be set points  given by the following disjoint sets:
\begin{enumerate}[label={\alph*)}]
\item $D^\diamond$ is the set of nodes of $D\setminus ( \sigma \cup_i C_3^{(i)})$.
\item $D^*$ is the set singularities of $D\setminus ( \sigma \cup_i C_3^{(i)})$ that are locally isomorphic it the coordinate axes in $\C^3$.  
\item $\sigma^\diamond := \sigma \cap D$,
\item $B_i^\diamond =  (C^{(i)}_3\cap D)$ for $i\in\{1,\ldots,12\}$,  
\end{enumerate}
Note that $D^\diamond \cup D^*$ is the set of singularities of $D\setminus ( \sigma \cup_i C_3^{(i)})$. 
\item Denote the components supported on smooth reduced sub-curves by:
\begin{enumerate}[label={\alph*)}]
\item $D^\circ = D  \setminus (D^\diamond \cup D^*)$,
\item $\sigma^\circ := \sigma \setminus \sigma^\diamond$,
\item $B_i^\circ = \C_3^{(i)}  \setminus B_i^\diamond$ for $i\in\{1,\ldots,12\}$.\\
\end{enumerate}
\end{enumerate}
\end{re}

\begin{re}\label{quot_decomposition_(i,j,bullet)}
Then applying Euler characteristic to lemma \ref{quot_decomposition_lemma} we have:
\begin{align*}
 e\Big(\Quot&_X^\bullet (I_{\alpha,\bm{\mu}}\Big) p^{\chi(\O_{C_{\alpha,\bm{\mu}}}) } \\
=~&  e\Big(\Quot_X^\bullet (I_{\alpha,\bm{\mu}},W\Big) \num\label{euler_decomp_1}\\
&p^{\chi(\O_{D}) } e\Big(\Quot_X^\bullet (I_{\alpha,\bm{\mu}},D^\circ \Big) e\Big(\Quot_X^\bullet (I_{\alpha,\bm{\mu}},D^\diamond \Big)e\Big(\Quot_X^\bullet (I_{\alpha,\bm{\mu}},D^* \Big)  \num\label{euler_decomp_2}\\
& p^{\chi(\O_{\alpha \sigma})- |D\cap  {\alpha \sigma}| }e\Big(\Quot_X^\bullet (I_{\alpha,\bm{\mu}},\sigma^\circ \Big) e\Big(\Quot_X^\bullet (I_{\alpha,\bm{\mu}},\sigma^\diamond \Big) \num \label{euler_decomp_3}\\
&\prod_{i=1}^{12}p^{ \chi(\O_{\mu^{(i)} C_3^{(i)}}) - |D \cap \mu^{(i)} C_3^{(i)}| } e\Big(\Quot_X^\bullet (I_{\alpha,\bm{\mu}},B_i^\circ \Big)  e\Big(\Quot_X^\bullet (I_{\alpha,\bm{\mu}},B_i^\diamond \Big) \num\label{euler_decomp_4}
\end{align*}\\
\end{re}

\begin{re} \label{quot_decomposition_(i,j,bullet)_V_tilde}
We have that $e(X)=24$ and $e(\sigma)=e(C_3^{(i)})=2$. So the Euler characteristic of $W$ is:
\begin{align*}
e(W)
&= e(X) - e(\sigma) -\sum_{i=1}^{12} e(C_3^{(i)}) - e(D^\circ) - e(D^\diamond)- e(D^*)\\
&= -2 - e(D^\circ) - e(D^\diamond)- e(D^*)
\end{align*}

Hence now have from lemma \ref{Quot_to_V_Sym_lemma} that lines (\ref{euler_decomp_1})-(\ref{euler_decomp_2}) from above will be:
\begin{align*}
\Psi(D):=&~p^{\chi(\O_{D}) } 
\Big(\widetilde{\mathsf{V}}_{\emptyset\emptyset\emptyset}\Big)^{e(W)}
\Big(\widetilde{\mathsf{V}}_{\square\emptyset\emptyset}\Big)^{e(D^\circ)}
\Big(\widetilde{\mathsf{V}}_{\square\square\emptyset}\Big)^{e(D^\diamond)}
\Big(\widetilde{\mathsf{V}}_{\square\square\square}\Big)^{e(D^*)}\\
=&~p^{\chi(\O_{D}) } 
\Big(\mathsf{V}_{\emptyset\emptyset\emptyset}\Big)^{-2}
\left(\frac{\mathsf{V}_{\square\emptyset\emptyset}}{\mathsf{V}_{\emptyset\emptyset\emptyset}}\right)^{e(D^\circ)}
\left(p\frac{\mathsf{V}_{\square\square\emptyset}}{\mathsf{V}_{\emptyset\emptyset\emptyset}}\right)^{e(D^\diamond)}
\left(p^2\frac{\mathsf{V}_{\square\square\square}}{\mathsf{V}_{\emptyset\emptyset\emptyset}}\right)^{e(D^*)}
\end{align*}
The intersection of $D$ and $\alpha \sigma$ will determine line (\ref{euler_decomp_3}) from above. From lemma \ref{Quot_to_V_Sym_lemma} and lemma \ref{Vtilde_to_V_lemma} it will be one of:
\begin{enumerate}[label={\arabic*)}]

\item $p^{\frac{1}{2}(\|\alpha\|^2+\| \alpha^t\|^2)} \Big(\widetilde{\mathsf{V}}_{ \emptyset \emptyset \alpha}\widetilde{\mathsf{V}}_{ \emptyset \emptyset \alpha}\Big) = p^{\frac{1}{2}(\|\alpha\|^2+\| \alpha^t\|^2)} \Big(\mathsf{V}_{ \emptyset \emptyset \alpha}\mathsf{V}_{ \emptyset \emptyset \alpha^t}\Big)$

\item $p^{\frac{1}{2}(\|\alpha\|^2+\| \alpha^t\|^2)-l(\alpha^t)} \Big(\widetilde{\mathsf{V}}_{   \square \emptyset  \alpha}\widetilde{\mathsf{V}}_{ \emptyset\emptyset\alpha}\Big) = p^{\frac{1}{2}(\|\alpha\|^2+\| \alpha^t\|^2)} \Big(\mathsf{V}_{   \square \emptyset  \alpha}\mathsf{V}_{ \emptyset\emptyset\alpha^t}\Big)$

\item $p^{\frac{1}{2}(\|\alpha\|^2+\| \alpha^t\|^2)-l(\alpha^t)-l(\alpha)} \Big(\widetilde{\mathsf{V}}_{  \square \emptyset  \alpha}\widetilde{\mathsf{V}}_{  \emptyset \square \alpha}\Big)= p^{\frac{1}{2}(\|\alpha\|^2+\| \alpha^t\|^2)} \Big(\mathsf{V}_{   \square \emptyset  \alpha}\mathsf{V}_{ \square\emptyset\alpha^t}\Big)$

\item $p^{\frac{1}{2}(\|\alpha\|^2+\| \alpha^t\|^2)-(l(\alpha)+l(\alpha^t)-1)} \Big(\widetilde{\mathsf{V}}_{   \square  \square  \alpha}\widetilde{\mathsf{V}}_{ \emptyset\emptyset\alpha}\Big)= p^{\frac{1}{2}(\|\alpha\|^2+\| \alpha^t\|^2)+1} \Big(\mathsf{V}_{   \square\square \alpha}\mathsf{V}_{ \emptyset \emptyset\alpha^t}\Big)$

\end{enumerate}
Similarly the factors of line (\ref{euler_decomp_4}) from above are determined by the intersections $D \cap C_3^{(i)}$ to be (the fourth comes from \ref{relative_C3_Vertex_remark}):
\begin{enumerate}[label={\arabic*)}]
\item $p^{\frac{1}{2}(\|\alpha\|^2+\| \alpha^t\|^2)} \Big(\widetilde{\mathsf{V}}_{ \emptyset \emptyset \alpha}\widetilde{\mathsf{V}}_{ \emptyset \emptyset \alpha^t}\Big) = p^{\frac{1}{2}(\|\alpha\|^2+\| \alpha^t\|^2)} \Big(\mathsf{V}_{ \emptyset \emptyset \alpha}\mathsf{V}_{ \emptyset \emptyset \alpha^t}\Big)$

\item $p^{\frac{1}{2}(\|\alpha\|^2+\| \alpha^t\|^2)-(l(\alpha^t)+l(\alpha))} \Big(\widetilde{\mathsf{V}}_{  \square \emptyset  \alpha}\widetilde{\mathsf{V}}_{ \square \emptyset  \alpha^t}\Big)= p^{\frac{1}{2}(\|\alpha\|^2+\| \alpha^t\|^2)} \Big(\mathsf{V}_{   \square \emptyset  \alpha}\mathsf{V}_{\square \emptyset\alpha^t}\Big)$

\item $p^{\frac{1}{2}(\|\alpha\|^2+\| \alpha^t\|^2)-(l(\alpha)+l(\alpha^t))} \Big(\widetilde{\mathsf{V}}_{  \emptyset   \square \alpha}\widetilde{\mathsf{V}}_{\emptyset   \square \alpha^t}\Big) = p^{\frac{1}{2}(\|\alpha\|^2+\| \alpha^t\|^2)} \Big(\mathsf{V}_{   \square \emptyset  \alpha}\mathsf{V}_{ \square \emptyset\alpha^t}\Big)$

\item $p^{\frac{1}{2}(\|\alpha\|^2+\| \alpha^t\|^2)-2l(\alpha^t)} \Big(\widetilde{\mathsf{V}}_{  \emptyset   \square \alpha}\Big)^2 = p^{\frac{1}{2}(\|\alpha\|^2+\| \alpha^t\|^2)} \Big(\mathsf{V}_{    \emptyset \square \alpha}\Big)^2$

\item $p^{\frac{1}{2}(\|\alpha\|^2+\| \alpha^t\|^2)-2(l(\alpha)+l(\alpha^t)-1)} \Big(\widetilde{\mathsf{V}}_{   \square  \square  \alpha}\widetilde{\mathsf{V}}_{ \square  \square\alpha^t}\Big)= p^{\frac{1}{2}(\|\alpha\|^2+\| \alpha^t\|^2)+2} \Big(\mathsf{V}_{   \square\square \alpha}\mathsf{V}_{ \square  \square\alpha^t}\Big)$
\end{enumerate}
\end{re}

\begin{re} \label{fibre_general_(i,j,bullet)}
We can calculate $e\big(\HilbCyc^{\bullet}(X,\cycle{q})\big)$ using the above results and notation from \ref{quot_decomposition_(i,j,bullet)_V_tilde}: 
\begin{align*}
e\Big(\HilbCyc^{\bullet}(X,\cycle{q})\Big) 
&=
\sum_{\alpha\vdash a,~\bm{\mu}\vdash\bm{m}}
p^{\chi(\O_{C_{\alpha,\bm{\mu}}}) }
e\Big(\Quot_X^\bullet (I_{\alpha,\bm{\mu}}\Big)\\
&= 
\Psi(D) \Phi_\sigma(a) \prod_{i=1}^{12} \Phi_i (m_i).
\end{align*}
where $\Phi_\sigma$ and $\Phi_i$ are determined by the intersections of $\sigma$ and $C_3^{(i)}$ respectively to be one of the following functions:
\begin{enumerate}[label={\arabic*)}]

\item $\Phi^{\emptyset,\emptyset}(a) 
:= \sum \limits_{\alpha \vdash a}p^{\frac{1}{2}(\|\alpha\|^2+\| \alpha^t\|^2)} (\mathsf{V}_{ \emptyset \emptyset \alpha}\mathsf{V}_{ \emptyset \emptyset \alpha^t})$\\

\item $\Phi^{-,\emptyset}(a) 
:=\sum \limits_{\alpha \vdash a}p^{\frac{1}{2}(\|\alpha\|^2+\| \alpha^t\|^2)} (\mathsf{V}_{ \square \emptyset \alpha}\mathsf{V}_{  \emptyset \emptyset \alpha^t})$ \\

\item $\Phi^{-,-}(a) 
:=\sum \limits_{\alpha \vdash a}p^{\frac{1}{2}(\|\alpha\|^2+\| \alpha^t\|^2)} (\mathsf{V}_{ \square \emptyset \alpha}\mathsf{V}_{ \square \emptyset \alpha^t})$ \\

\item $\Phi^{-,\,\mid\,}(a) 
:=\sum \limits_{\alpha \vdash a}p^{\frac{1}{2}(\|\alpha\|^2+\| \alpha^t\|^2)} (\mathsf{V}_{  \emptyset \square \alpha})^2$ \\

\item $\Phi^{+,\emptyset}(a)
:=  \sum \limits_{\alpha \vdash a}p^{\frac{1}{2}(\|\alpha\|^2+\| \alpha^t\|^2)+1} (\mathsf{V}_{   \square  \square \alpha}\mathsf{V}_{ \emptyset\emptyset\alpha^t})$ \\

\item $\Phi^{+,+}(a)
:=  \sum \limits_{\alpha \vdash a}p^{\frac{1}{2}(\|\alpha\|^2+\| \alpha^t\|^2)+2} (\mathsf{V}_{   \square  \square \alpha}\mathsf{V}_{  \square   \square \alpha^t}) $

\end{enumerate}

\end{re}

\subsection{Calculation for the class \texorpdfstring{$\bullet\sigma + (0,0,\bullet)$}{*s + (0,0,*)}} \label{bs+(0,0,d)_calc_section}
From lemma  \ref{classes_bs+(i,j,d)_main_chow_main_lemma} have the decomposition of $\Chow^{\bullet\sigma+(0,0,\bullet)}(X)$ into: 
\[
\mathbb{Z}_{\geq 0}  \times \Sym_{Q_3}^{\bullet}(B_{\mathsf{op}})
\]
Recall equation (\ref{bulletsigma+(i,j,bullet)_cycle_notation}) from section \ref{bulletsigma+(i,j,bullet)_cycle_preliminaries} and the notation:
\begin{align*}
\Cyc(C) =~& a \sigma + D +\sum_{i=1}^{12} m_i C_3^{(i)}. 
\end{align*}
In this class we have $D=\emptyset$. Hence we have the following summary of results from \ref{quot_decomposition_(i,j,bullet)_V_tilde} and \ref{fibre_general_(i,j,bullet)}.\\

\begin{tabu}{|@{}X[c]@{}|}
\hline
\extrarowsep=0.2em
\begin{tabu}{X[c]}
  $\chi(\O_D) =  0  $ \\
\end{tabu}\\
\hline
\begin{tabu}{@{}X[5.2cm] @{}|@{}X[cm]@{}}
$ e(\eta_\bullet^{-1}(a,\bm{m})) =
 \frac{1}{(\mathsf{V}_{\emptyset \emptyset \emptyset})^2}\cdot   Q_\sigma^a \Phi^{\emptyset,\emptyset}(a) \cdot  \prod \limits_{i=1}^{12}Q_3^{m_i}\Phi^{\emptyset,\emptyset}(m_i) $  &  \hfil \includegraphics[width=2cm]{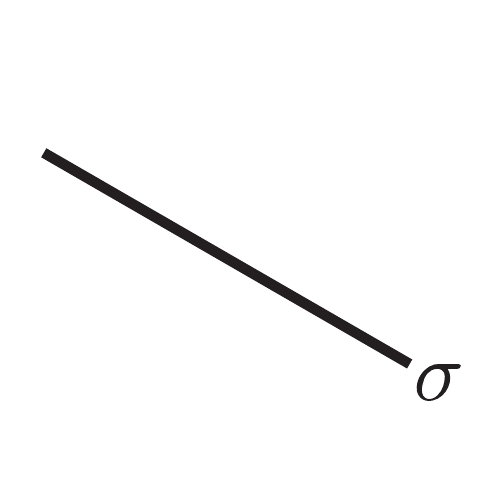} \hfil  \\
\end{tabu}\\
\hline
\end{tabu}

\vspace{0.25cm}
Now we have:
\begin{align*}
e\Big(~\mathbb{Z}_{\geq 0}  \times \Sym_{Q_3}^{\bullet}(B_{\mathsf{op}}), ~(\eta_\bullet)_*1~\Big) 
&=
  \frac{1}{(\mathsf{V}_{\emptyset \emptyset \emptyset})^2} \left(\sum_{a}  Q_\sigma^{a} \Phi^{\emptyset,\emptyset}(a) \right) \left(\sum_{m}Q_3^{m}\Phi^{\emptyset,\emptyset}(m)\right)^{12}\\
  &=
M(p)^{24} \prod\limits_{m>0}(1+p^m Q_{\sigma})^{m}(1+p^m Q_3)^{12m}.
\end{align*}\\

Where the last equality is from lemma \ref{banana_subpartition_functions} part \ref{banana_subpartition_functions_single_empty_empty} and lemma \ref{vertex_squares_lemma} part \ref{vertex_squares_lemma_eee}.\\

\subsection{Calculation for the class \texorpdfstring{$\bullet\sigma + (0,1,\bullet)$}{*s + (0,1,*)}} \label{bs+(0,1,d)_calc_section}
Recall the previously introduced notation: 
\begin{enumerate}[label={\arabic*)}]
\item  $B_{i} = \{b_{i}^1,\ldots,b_{i}^{12}\}$ is the set of the 12 points in $S_{i}$ that correspond to nodes in the fibres of the projection $\pi:S_{i} \rightarrow \P^1$.
\item $ S_{i}^\circ =  S_{i} \setminus  B_{i}$ is the complement of $ B_{i}$ in $S_{i}$
\item $\mathtt{N}_i\subset S_{i}$ are the 12 nodal fibres of $\pi:S_{i} \rightarrow \P^1$ with the nodes removed and: 
\begin{itemize}
\item[] $\mathtt{N}_i = \mathtt{N}^\sigma_i ~\amalg~ \mathtt{N}^\emptyset_i$ where   $\mathtt{N}^\sigma_i := \mathtt{N}_i\cap \sigma$ and $\mathtt{N}^\emptyset_i := \mathtt{N}_i\setminus \sigma$.
\end{itemize}
\item $ \mathtt{Sm}_i =  S_{i}^\circ \setminus  \mathtt{N}_i$ is the complement of $\mathtt{N}_i$ in $S_{i}^\circ$ and: 
\begin{itemize}
\item[] $\mathtt{Sm}_i = \mathtt{Sm}^\sigma_i ~\amalg~ \mathtt{Sm}^\emptyset_i$ where $\mathtt{Sm}^\sigma_i := \mathtt{Sm}_i\cap \sigma$ and $\mathtt{Sm}^\emptyset_i := \mathtt{Sm}_i\setminus \sigma$. \\
\end{itemize}
\end{enumerate}

Now from lemma  \ref{classes_bs+(i,j,d)_main_chow_main_lemma} we can further decompose $\Chow^{\bullet\sigma+(0,1,\bullet)}(X)$ into the four parts: 
\begin{enumerate}[label={\arabic*)}]
\item $\mathbb{Z}_{\geq 0} \times  \mathtt{Sm}^\sigma_{2}  \times \Sym_{Q_3}^{\bullet}(B_{\mathsf{op}})$
\item $\mathbb{Z}_{\geq 0} \times  \mathtt{Sm}^\emptyset_{2}  \times \Sym_{Q_3}^{\bullet}(B_{\mathsf{op}})$
\item $\mathbb{Z}_{\geq 0} \times  \mathtt{N}^\sigma_{2}  \times \Sym_{Q_3}^{\bullet}(B_{\mathsf{op}})$
\item $\mathbb{Z}_{\geq 0} \times  \mathtt{N}^\emptyset_{2}  \times \Sym_{Q_3}^{\bullet}(B_{\mathsf{op}})$
\item $\underset{k=1}{\overset{12}{\amalg}}~ \mathbb{Z}_{\geq 0} \times \Sym_{Q_3}^\bullet(\{ b_{\mathsf{op}}^{k}\}) \times \Sym_{Q_3}^\bullet(B_{\mathsf{op}}\setminus \{b_{\mathsf{op}}^{k}\})$\\
\end{enumerate}

Recall equation (\ref{bulletsigma+(i,j,bullet)_cycle_notation}) from section \ref{bulletsigma+(i,j,bullet)_cycle_preliminaries} and the notation:
\begin{align*}
\Cyc(C) =~& a \sigma + D +\sum_{i=1}^{12} m_i C_3^{(i)}. 
\end{align*}
Each part will be characterised by the type of $D$. We consider parts 1-4 separately to part 5.\\

\begin{re}\textbf{Parts 1-4:} In  parts 1-4 the curve $D$ is a fibre of the projection $\mathrm{pr}_2:X \rightarrow S$. The following table is the summary of results from \ref{quot_decomposition_(i,j,bullet)_V_tilde} and \ref{fibre_general_(i,j,bullet)} when applied to the particular $D$'s arising in each strata:
\[
\mathbb{Z}_{\geq 0} \times U  \times \Sym_{Q_3}^{\bullet}(B_{\mathsf{op}}).\\
\]

\begin{tabu}{|@{}X[c]@{}|}
\hline
\extrarowsep=0.2em
\begin{tabu}{@{}X[0.7c]|X[0.7c]|X[0.7c]@{}}
$U = \mathtt{N}_1^\sigma$ & $e(U) =  12 $ &  $\chi(\O_D) = 0$ \\
\end{tabu}\\
\hline
\begin{tabu}{@{}X[5.2cm] @{}|@{}X[cm]@{}}
$ e(\eta_\bullet^{-1}(a,x,\bm{m})) =
 Q_2 Q_3 p \frac{(\mathsf{V}_{\square \square \emptyset})}{(\mathsf{V}_{\square \emptyset \emptyset})(\mathsf{V}_{\emptyset \emptyset \emptyset})^2} \cdot  Q_\sigma^a \Phi^{-,\emptyset}(a) \cdot \prod \limits_{i=1}^{12}Q_3^{m_i}\Phi^{\emptyset,\emptyset}(m_i) $  &  \hfil \includegraphics[width=2cm]{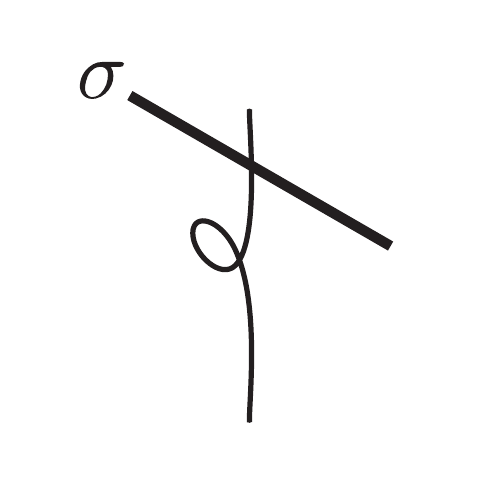} \hfil  \\
\end{tabu}\\
\hline
\extrarowsep=0.2em
\begin{tabu}{@{}X[0.7c]|X[0.7c]|X[0.7c]@{}}
$U = \mathtt{N}_1^\emptyset$ & $e(U) =  -12 $ &  $\chi(\O_D) = 0$ \\
\end{tabu}\\
\hline
\begin{tabu}{@{}X[5.2cm] @{}|@{}X[cm]@{}}
$ e(\eta_\bullet^{-1}(a,x,\bm{m})) =
 Q_2 Q_3p \frac{(\mathsf{V}_{\square \square \emptyset})}{(\mathsf{V}_{\emptyset \emptyset \emptyset})^3}   \cdot Q_\sigma^a\Phi^{\emptyset,\emptyset}(a) \cdot \prod \limits_{i=1}^{12}Q_3^{m_i}\Phi^{\emptyset,\emptyset}(m_i) $  &  \hfil \includegraphics[width=2cm]{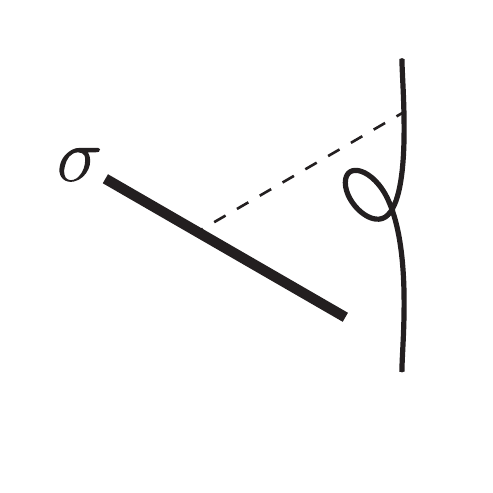} \hfil  \\
\end{tabu}\\
\hline
\extrarowsep=0.2em
\begin{tabu}{@{}X[0.7c]|X[0.7c]|X[0.7c]@{}}
$U = \mathtt{Sm}_1^\sigma$ & $e(U) =  -10 $ &  $\chi(\O_D) = 0$ \\
\end{tabu}\\
\hline
\begin{tabu}{@{}X[5.2cm] @{}|@{}X[cm]@{}}
$ e(\eta_\bullet^{-1}(a,x,\bm{m})) =
 Q_2 Q_3 \frac{1}{(\mathsf{V}_{\square \emptyset \emptyset})(\mathsf{V}_{\emptyset \emptyset \emptyset})} \cdot Q_\sigma^a  \Phi^{-,\emptyset}(a) \cdot  \prod \limits_{i=1}^{12}Q_3^{m_i}\Phi^{\emptyset,\emptyset}(m_i) $   &  \hfil \includegraphics[width=2cm]{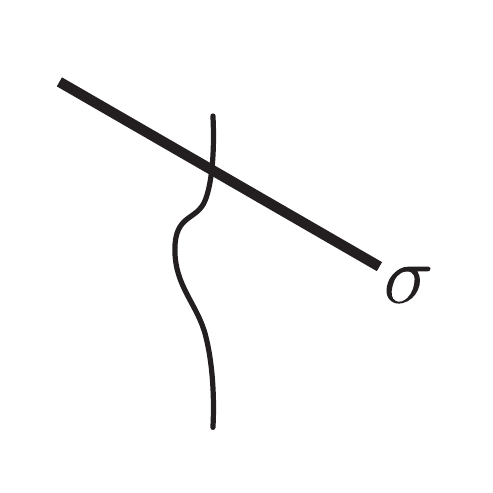} \hfil  \\
\end{tabu}\\
\hline
\extrarowsep=0.2em
\begin{tabu}{@{}X[0.7c]|X[0.7c]|X[0.7c]@{}}
$U = \mathtt{Sm}_1^\emptyset$ & $e(U) =  10 $ &  $\chi(\O_D) = 0$ \\
\end{tabu}\\
\hline
\begin{tabu}{@{}X[5.2cm] @{}|@{}X[cm]@{}}
$ e(\eta_\bullet^{-1}(a,x,\bm{m})) =
 Q_2 Q_3 \frac{1}{(\mathsf{V}_{\emptyset \emptyset \emptyset})^2}  \cdot Q_\sigma^a\Phi^{\emptyset,\emptyset}(a)  \cdot \prod \limits_{i=1}^{12} Q_3^{m_i}\Phi^{\emptyset,\emptyset}(m_i) $  &  \hfil \includegraphics[width=2cm]{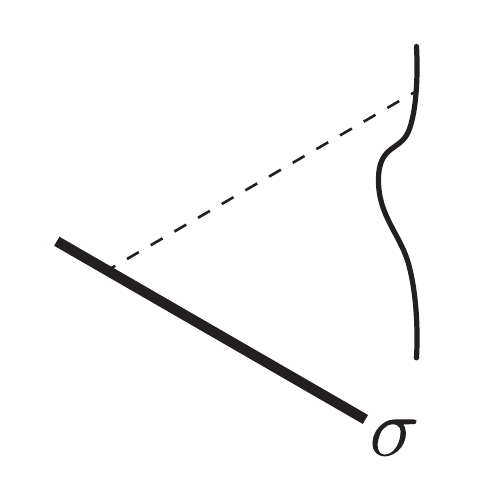} \hfil  \\
\end{tabu}\\
\hline
\end{tabu}

The union of parts 1-4 is $\mathbb{Z}_{\geq0}\times S_{2}^\circ \times \Sym_{Q_3}^{\bullet}(B_{\mathsf{op}})$ so we have:
\begin{align*}
e\Big(~\mathbb{Z}_{\geq0}&\times S_{2}^\circ \times \Sym_{Q_3}^{\bullet}(B_{\mathsf{op}}), ~(\eta_\bullet)_*1~\Big) \\
=~&
e\Big(~\mathbb{Z}_{\geq0}\times \mathtt{Sm}^\sigma_{2} \times \Sym_{Q_3}^{\bullet}(B_{\mathsf{op}}), ~(\eta_\bullet)_*1~\Big) \\
&~+~ e\Big(~\mathbb{Z}_{\geq0}\times \mathtt{Sm}^\emptyset_{2} \times \Sym_{Q_3}^{\bullet}(B_{\mathsf{op}}), ~(\eta_\bullet)_*1~\Big) \\
&~+~ e\Big(~\mathbb{Z}_{\geq0}\times \mathtt{N}^\sigma_{2} \times \Sym_{Q_3}^{\bullet}(B_{\mathsf{op}}), ~(\eta_\bullet)_*1~\Big) \\
&~+~ e\Big(~\mathbb{Z}_{\geq0}\times\mathtt{N}^\emptyset_{2}  \times \Sym_{Q_3}^{\bullet}(B_{\mathsf{op}}), ~(\eta_\bullet)_*1~\Big) 
\end{align*}
Which becomes: 
\begin{align*}
e\Big(~\mathbb{Z}_{\geq0}&\times S_{2}^\circ \times \Sym_{Q_3}^{\bullet}(B_{\mathsf{op}}), ~(\eta_\bullet)_*1~\Big) \\
=~&
e(\mathtt{Sm}^\sigma_{2})  Q_2 Q_3  p \frac{(\mathsf{V}_{\square \square \emptyset})}{(\mathsf{V}_{\square \emptyset \emptyset})(\mathsf{V}_{\emptyset \emptyset \emptyset})^2} \left(\sum_{a}  Q_\sigma^{a} \Phi^{-,\emptyset}(a) \right) \left(\sum_{m}Q_3^{m}\Phi^{\emptyset,\emptyset}(m)\right)^{12} \\
&~+~ e(\mathtt{Sm}^\emptyset_{2})   Q_2 Q_3p \frac{(\mathsf{V}_{\square \square \emptyset})}{(\mathsf{V}_{\emptyset \emptyset \emptyset})^3}   \left(\sum_{a\geq0} Q_\sigma^a\Phi^{\emptyset,\emptyset}(a) \right)\left(\sum_{m\geq 0} Q_3^{m}\Phi^{\emptyset,\emptyset}(m)\right)^{12}  \\
&~+~ e(\mathtt{N}^\sigma_{2} ) Q_2 Q_3 \frac{1}{(\mathsf{V}_{\square \emptyset \emptyset})(\mathsf{V}_{\emptyset \emptyset \emptyset})} \left(\sum_{a\geq0} Q_\sigma^a  \Phi^{-,\emptyset}(a)\right)\left(\sum_{m\geq 0}Q_3^{m}\Phi^{\emptyset,\emptyset}(m) \right)^{12} \\
&~+~ e(\mathtt{N}^\emptyset_{2}) Q_2 Q_3 \frac{1}{(\mathsf{V}_{\emptyset \emptyset \emptyset})^2} \left(\sum_{a\geq0} Q_\sigma^a\Phi^{\emptyset,\emptyset}(a) \right)\left(\sum_{m\geq 0} Q_3^{m}\Phi^{\emptyset,\emptyset}(m)\right)^{12} 
\end{align*}

From lemmas \ref{vertex_squares_lemma}, \ref{banana_subpartition_functions} and \ref{vertex_section_formulas} we have:
\begin{enumerate}[label={\arabic*)}]
\item $\mathsf{V}_{\emptyset\emptyset\emptyset} = M(p)$
\item $\mathsf{V}_{\square\emptyset\emptyset} = M(p) \frac{1}{1-p}$
\item $\mathsf{V}_{\square\square\emptyset} = M(p) \frac{p^2-p+1}{p(1-p)^2}$
\item $\sum_{m\geq 0} Q^{m}\Phi^{\emptyset,\emptyset}(m) = M(p)^2 \prod\limits_{m>0}(1+p^m Q)^{m}$
\item $\sum_{m\geq 0} Q^{m}\Phi^{-,\emptyset}(m) = M(p)^2\dfrac{1+Q}{1-p} \prod\limits_{m>0}(1+p^{m} Q)^m$
\end{enumerate}
So we have:
\begin{align*}
e\Big(~\mathbb{Z}_{\geq0}&\times S_{2}^\circ \times \Sym_{Q_3}^{\bullet}(B_{\mathsf{op}}), ~(\eta_\bullet)_*1~\Big) \\
=~& Q_\sigma Q_2 Q_3 M(p)^{24} \left( \prod\limits_{m>0}(1+p^m Q_{\sigma})^{m}(1+p^m Q_3)^{12m}\right)
\left(2 + 12\frac{p}{(1-p)^2}\right)\\
=~& Q_\sigma Q_2 Q_3 M(p)^{24} \left( \prod\limits_{m>0}(1+p^m Q_{\sigma})^{m}(1+p^m Q_3)^{12m}\right)
\left(2\psi_1 + 12\psi_0\right)
\end{align*}

\end{re}

\vspace{0.4cm}
\begin{re}\textbf{Part 5:} We have 12 separate isomorphic strata:
\[
\mathbb{Z}_{\geq 0} \times \Sym_{Q_3}^\bullet(\{ b_{\mathsf{op}}^{k}\}) \times \Sym_{Q_3}^\bullet(B_{\mathsf{op}}\setminus \{b_{\mathsf{op}}^{k}\}).
\]
These parameterise when $D = C_2^{(k)}$. The following is the summary of results from \ref{quot_decomposition_(i,j,bullet)_V_tilde} and \ref{fibre_general_(i,j,bullet)}. \\

\begin{tabu}{|@{}X[c]@{}|}
\hline
\extrarowsep=0.2em
\begin{tabu}{@{}X[0.7c]|X[0.7c]|X[0.7c]@{}}
$U = \{ k \}$ & $e(U) =  1$ &  $\chi(\O_D) = 1$  \\
\end{tabu}\\
\hline
\begin{tabu}{@{}X[5.2cm] @{}|@{}X[cm]@{}}
$ e(\eta_\bullet^{-1}(a,m_k,\bm{m})) =$
$ Q_2 p \frac{1}{(\mathsf{V}_{\emptyset \emptyset \emptyset})^2} \cdot  Q_\sigma^a  \Phi^{\emptyset,\emptyset}(a) \cdot  Q_3^{m_k} \Phi^{-,-}(m_k) 
 \prod\limits_{
 \genfrac{.}{.}{0pt}{2}{i=1}{i\neq k}
 }^{12}  
Q_3^{m_i}\Phi^{\emptyset,\emptyset}(m_i) $  
&  \hfil \includegraphics[width=2cm]{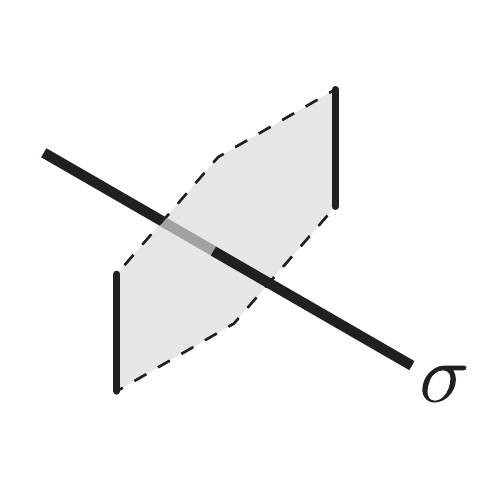} \hfil  \\
\end{tabu}\\
\hline
\end{tabu}
\mbox{}\\

From lemmas \ref{vertex_squares_lemma} and \ref{banana_subpartition_functions} we have:
\begin{enumerate}
\item $\mathsf{V}_{\emptyset\emptyset\emptyset} = M(p)$
\item $\sum_{m\geq 0} Q^{m}\Phi^{\emptyset,\emptyset}(m) = M(p)^2 \prod\limits_{m>0}(1+p^m Q)^{m}$
\item $\sum_{m\geq 0} Q^{m}\Phi^{-,-}(m) = M(p)^2 (\psi_0 +(\psi_1 +2\psi_0) Q +\psi_0 Q^2) \prod\limits_{m>0}(1+p^m Q)^{m}$
\end{enumerate}
Since the strata are isomorphic we have:
\begin{align*}
e\Big(~\underset{k=1}{\overset{12}{\amalg}}&~ \mathbb{Z}_{\geq 0} \times \Sym_{Q_3}^\bullet(\{ b_{\mathsf{op}}^{k}\}) \times \Sym_{Q_3}^\bullet(B_{\mathsf{op}}\setminus \{b_{\mathsf{op}}^{k}\}), ~(\eta_\bullet)_*1~\Big) \\
=~&
12~ e\Big(~ \mathbb{Z}_{\geq 0} \times \Sym_{Q_3}^\bullet( \{ b_{\mathsf{op}}^{k}\}) \times \Sym_{Q_3}^\bullet( B_{\mathsf{op}}\setminus \{b_{\mathsf{op}}^{k}\}), ~(\eta_\bullet)_*1~\Big)\\
=~&
 12 ~Q_2 \frac{1}{(\mathsf{V}_{\emptyset \emptyset \emptyset})^2} \left(\sum_{a\geq0} Q_\sigma^a\Phi^{\emptyset,\emptyset}(a) \right)\left(\sum_{m\geq 0} Q_3^{m}\Phi^{-,-}(m)\right) \left(\sum_{m\geq 0} Q_3^{m}\Phi^{\emptyset,\emptyset}(m)\right)^{11} \\
 =~&
 12 ~ Q_2 M(p)^{24} \left( \prod\limits_{m>0}(1+p^m Q_{\sigma})^{m}(1+p^m Q_3)^{12m}\right)\Big(\psi_0 +(\psi_1 +2\psi_0) Q_3 +\psi_0 Q_3^2\Big)
\end{align*}
\end{re}

\begin{re} 
Thus combining parts 1-5 we have that the overall formula is:
\begin{align*}
e\Big(&\Chow^{\bullet\sigma+(0,1,\bullet)}(X) , (\eta_\bullet)_*1\Big)\\
=&~ Q_2  M(p)^{24} \left( \prod\limits_{m>0}(1+p^m Q_{\sigma})^{m}(1+p^m Q_3)^{12m}\right)\\
&\cdot\Big(12\big(\psi_0 +(2\psi_0+\psi_1) Q_3 +\psi_0 Q_3^2\big) + Q_\sigma Q_3 \big(12 \psi_0 +2\psi_1\big) \Big)\\
\end{align*}

\end{re}

\subsection{Calculation for the class \texorpdfstring{$\bullet\sigma + (1,1,\bullet)$}{*s + (1,1,*)}}\label{bs+(1,1,d)_calc_section}
We have a decomposition from lemma \ref{classes_bs+(i,j,d)_main_chow_main_lemma} of $\Chow^{(1,1,\bullet)}(X)$ into the parts:
\begin{enumerate}[label={\alph*)}]
\item  $~S_{1}^\circ \times S_{2}^\circ \times \Sym_{Q_3}^{\bullet}(B_{\mathsf{op}}) $
\item  $~\underset{k=1}{\overset{12}{\amalg}}  S_{1}^\circ  \times\Sym_{Q_3}^\bullet( \{ b_{\mathsf{op}}^{k}\}) \times \Sym_{Q_3}^\bullet( B_{\mathsf{op}}\setminus \{b_{\mathsf{op}}^{k}\}) $
\item  $~\underset{k=1}{\overset{12}{\amalg}}  S_{2}^\circ  \times\Sym_{Q_3}^\bullet( \{ b_{\mathsf{op}}^{k}\}) \times \Sym_{Q_3}^\bullet( B_{\mathsf{op}}\setminus \{b_{\mathsf{op}}^{k}\}) $
\item  $\hspace{-0.58em}\underset{\mbox{\tiny$\begin{array}{c}k,l=1\\k\neq l\end{array}$}}{\overset{12}{\amalg}} \hspace{-0.58em} \Sym_{Q_3}^\bullet( \{ b_{\mathsf{op}}^{k}\})   \times\Sym_{Q_3}^\bullet( \{ b_{\mathsf{op}}^{l}\}) \times \Sym_{Q_3}^\bullet( B_{\mathsf{op}}\setminus \{b_{\mathsf{op}}^{k},b_{\mathsf{op}}^{l}\})$
\item  $~\underset{k=1}{\overset{12}{\amalg}}  \Sym_{Q_3}^\bullet( \{ b_{\mathsf{op}}^{k}\}) \times \Sym_{Q_3}^\bullet( B_{\mathsf{op}}\setminus \{b_{\mathsf{op}}^{k}\})$
\item  $~\hspace{0.4em}{\amalg} ~ \mathtt{Diag}^{\bullet}$
\end{enumerate}
We also recall the notation from equation (\ref{bulletsigma+(i,j,bullet)_cycle_notation}) from section \ref{bulletsigma+(i,j,bullet)_cycle_preliminaries} and the notation:
\begin{align*}
\Cyc(C) =~& a \sigma + D +\sum_{i=1}^{12} m_i C_3^{(i)}. 
\end{align*}
Each part will be characterised by the type of $D$. We will consider each case a-f separately and will use the following the previously introduced notation throughout: 
\begin{enumerate}[label={\arabic*)}]
\item  $B_{i} = \{b_{i}^1,\ldots,b_{i}^{12}\}$ is the set of the 12 points in $S_{i}$ that correspond to nodes in the fibres of the projection $\pi:S_{i} \rightarrow \P^1$.
\item $ S_{i}^\circ =  S_{i} \setminus  B_{i}$ is the complement of $ B_{i}$ in $S_{i}$
\item $\mathtt{N}_i\subset S_{i}$ are the 12 nodal fibres of $\pi:S_{i} \rightarrow \P^1$ with the nodes removed and: 
\begin{itemize}
\item[] $\mathtt{N}_i = \mathtt{N}^\sigma_i ~\amalg~ \mathtt{N}^\emptyset_i$ where   $\mathtt{N}^\sigma_i := \mathtt{N}_i\cap \sigma$ and $\mathtt{N}^\emptyset_i := \mathtt{N}_i\setminus \sigma$.
\end{itemize}
\item $ \mathtt{Sm}_i =  S_{i}^\circ \setminus  \mathtt{N}_i$ is the complement of $\mathtt{N}_i$ in $S_{i}^\circ$ and: 
\begin{itemize}
\item[] $\mathtt{Sm}_i = \mathtt{Sm}^\sigma_i ~\amalg~ \mathtt{Sm}^\emptyset_i$ where $\mathtt{Sm}^\sigma_i := \mathtt{Sm}_i\cap \sigma$ and $\mathtt{Sm}^\emptyset_i := \mathtt{Sm}_i\setminus \sigma$. 
\end{itemize}
\end{enumerate}
We will also use the new notation:
\[
\mathtt{D} := \big\{ (x,x) \in S_1^\circ  \times S  \big\}.
\]

\begin{re}\textbf{Part a:}
We have the following stratification of  $S^\circ_{1} \times S^\circ_{1}$:
\begin{enumerate}[label={\arabic*)}]
\item $ \Big( (\mathtt{N}_1^\sigma\times \mathtt{N}_2^\sigma)\cap \mathtt{D} ~\amalg~ (\mathtt{Sm}_1^\sigma\times \mathtt{Sm}_2^\sigma) \cap \mathtt{D}  \Big)$

\item $ ~\amalg~ \Big(\mathtt{N}_1^\sigma\times \mathtt{N}_2^\sigma\setminus \mathtt{D} ~\amalg~  \mathtt{N}_1^\sigma\times \mathtt{Sm}_2^\sigma  ~\amalg~  \mathtt{Sm}_1^\sigma \times \mathtt{N}_2^\sigma ~\amalg~  \mathtt{Sm}_1^\sigma \times \mathtt{Sm}_2^\sigma \setminus \mathtt{D}\Big)$

\item $ ~\amalg~ 
\left(\begin{array}{c}\mathtt{N}_1^\sigma\times \mathtt{N}_2^\emptyset \setminus \mathtt{D}~\amalg~ (\mathtt{N}_1^\sigma\times \mathtt{N}_2^\emptyset) \cap \mathtt{D}~\amalg~ \mathtt{N}_1^\sigma\times \mathtt{Sm}_2^\emptyset~\amalg~  \mathtt{Sm}_1^\sigma \times \mathtt{N}_2^\emptyset \\ \hspace{2.8em}
~\amalg~  \mathtt{Sm}_1^\sigma \times \mathtt{Sm}_2^\emptyset \setminus \mathtt{D}~\amalg~   (\mathtt{Sm}_1^\emptyset \times \mathtt{Sm}_2^\sigma)  \cap \mathtt{D}\end{array}\right)$

\item $~\amalg~ 
\left(\begin{array}{c}\mathtt{N}_1^\emptyset\times \mathtt{N}_2^\sigma \setminus \mathtt{D}~\amalg~ (\mathtt{N}_1^\emptyset\times \mathtt{N}_2^\sigma) \cap \mathtt{D}~\amalg~ \mathtt{N}_1^\emptyset\times \mathtt{Sm}_2^\sigma~\amalg~  \mathtt{Sm}_1^\emptyset \times \mathtt{N}_2^\sigma \\ \hspace{2.8em}~\amalg~  \mathtt{Sm}_1^\emptyset \times \mathtt{Sm}_2^\sigma \setminus \mathtt{D}~\amalg~   (\mathtt{Sm}_1^\sigma \times \mathtt{Sm}_2^\emptyset)  \cap \mathtt{D}\end{array}\right)$

\item $~\amalg~
\left(\begin{array}{c}\mathtt{N}_1^\emptyset \times \mathtt{N}_2^\emptyset \setminus \mathtt{D}~\amalg~ (\mathtt{N}_1^\emptyset \times \mathtt{N}_2^\emptyset) \cap \mathtt{D}~\amalg~\mathtt{Sm}_1^\emptyset \times \mathtt{N}_2^\emptyset ~\amalg~\mathtt{N}_1^\emptyset \times \mathtt{Sm}_2^\emptyset\\ \hspace{2.8em}
~\amalg~\mathtt{Sm}_1^\emptyset \times \mathtt{Sm}_2^\emptyset \setminus \mathtt{D}~\amalg~(\mathtt{Sm}_1^\emptyset \times \mathtt{Sm}_2^\emptyset) \cap \mathtt{D}\end{array}\right)$
\end{enumerate}

Here we have grouped by the number and type of intersection with $\sigma$.

\textit{Grouping 1:} The following table is the summary of results from \ref{quot_decomposition_(i,j,bullet)_V_tilde} and \ref{fibre_general_(i,j,bullet)} for the strata in grouping 1:
\[
\mathbb{Z}_{\geq 0} \times U  \times \Sym_{Q_3}^{\bullet}(B_{\mathsf{op}}).\\
\]

\begin{tabu}{|@{}X[c]@{}|}
\hline
\extrarowsep=0.2em
\begin{tabu}{@{}X[0.89c]|X[.55c]|X[1c]@{}}
$U = (\mathtt{N}_1^\sigma\times \mathtt{N}_2^\sigma)\cap \mathtt{D}$ & $e(U) =  12 $ &  $\chi(\O_D) =  -1  $ \\
\end{tabu}\\
\hline
\begin{tabu}{@{}X[5.2cm] @{}|@{}X[cm]@{}}
$ e(\eta_\bullet^{-1}(a,x,\bm{m})) =
 Q_1 Q_2 Q_3^{2}p \frac{(\mathsf{V}_{\square \square \emptyset})^2}{(\mathsf{V}_{\square \emptyset \emptyset})^2(\mathsf{V}_{ \emptyset  \emptyset \emptyset})^2} \cdot Q_\sigma^a \Phi^{+,\emptyset}(a) \cdot \prod \limits_{i=1}^{12}Q_3^{m_i}\Phi^{\emptyset,\emptyset}(m_i) $  &  \hfil \includegraphics[width=2cm]{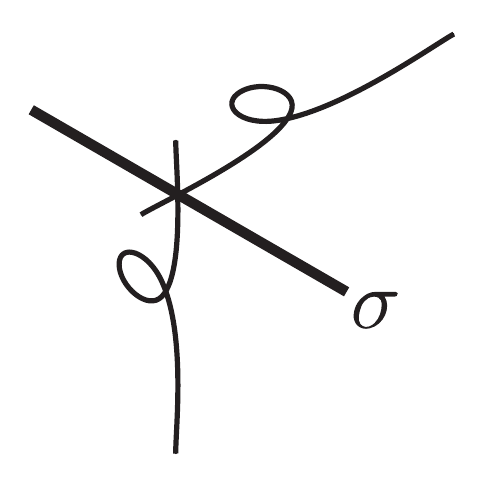} \hfil  \\
\end{tabu}\\
\hline
\extrarowsep=0.2em
\begin{tabu}{@{}X[0.89c]|X[.55c]|X[1.0c]@{}}
$U  \hspace{-0.12cm}= \hspace{-0.08cm} (\mathtt{Sm}_1^\sigma\times \mathtt{Sm}_2^\sigma) \cap \mathtt{D} $ & $e(U) \hspace{-0.05cm}=\hspace{-0.05cm}  -10 $ & $\chi(\O_D) = -1  $ \\
\end{tabu}\\
\hline
\begin{tabu}{@{}X[5.2cm] @{}|@{}X[cm]@{}}
$ e(\eta_\bullet^{-1}(a,x,\bm{m})) =
 Q_1 Q_2 Q_3^{2} p^{-1}\frac{1}{(\mathsf{V}_{\square \emptyset \emptyset})^2} \cdot Q_\sigma^a\Phi^{+,\emptyset}(a) \cdot \prod \limits_{i=1}^{12}Q_3^{m_i}\Phi^{\emptyset,\emptyset}(m_i) $  &  \hfil \includegraphics[width=2cm]{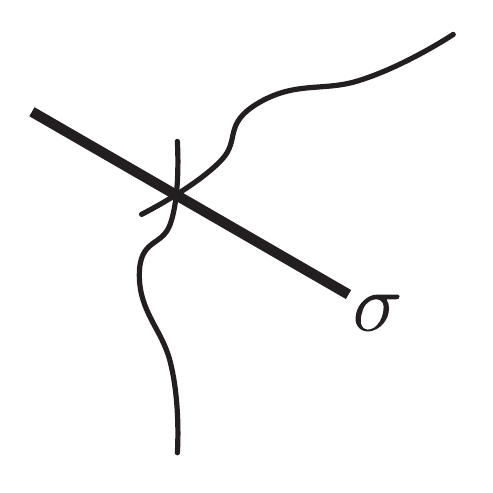} \hfil  \\
\end{tabu}\\
\hline
\end{tabu}
\mbox{}\\

From lemmas \ref{vertex_squares_lemma} and \ref{banana_subpartition_functions} we have:
\begin{enumerate}[label={\arabic*)}]
\item $\mathsf{V}_{\emptyset\emptyset\emptyset} = M(p)$
\item $\mathsf{V}_{\square\emptyset\emptyset} = M(p) \frac{1}{1-p}$
\item $\mathsf{V}_{\square\square\emptyset} = M(p) \frac{p^2-p+1}{p(1-p)^2}$
\item $\sum_{m\geq 0} Q^{m}\Phi^{\emptyset,\emptyset}(m) = M(p)^2 \prod\limits_{m>0}(1+p^m Q)^{m}$
\item $\sum_{m\geq 0} Q^{m}\Phi^{+,\emptyset}(m) = M(p)^2 (1+\psi_0 +(\psi_1 +2\psi_0) Q +\psi_0 Q^2) \prod\limits_{m>0}(1+p^m Q)^{m}$.
\end{enumerate}

So the contribution is:
\begin{align*}
&Q_1 Q_2 Q_3^2 M(p)^{24} \left( \prod\limits_{m>0}(1+ p^m Q_{\sigma})^{m}(1+p^m Q_3)^{12m}\right) (\psi_0 +(\psi_1 +2\psi_0) Q_\sigma +\psi_0 Q_\sigma^2) \\
&\cdot
\left(\frac{2 \left(p^4+8 p^3-12 p^2+8 p+1\right)}{(p-1)^2 p}\right)
\end{align*}

\textit{Grouping 2:} The following table is the summary of results from \ref{quot_decomposition_(i,j,bullet)_V_tilde} and \ref{fibre_general_(i,j,bullet)} for the strata in grouping 2:
\[
\mathbb{Z}_{\geq 0} \times U  \times \Sym_{Q_3}^{\bullet}(B_{\mathsf{op}}).
\]

\begin{tabu}{|@{}X[c]@{}|}
\hline
\extrarowsep=0.2em
\begin{tabu}{@{}X[0.7c]|X[.6c]|X[1c]@{}}
$U = \mathtt{N}_1^\sigma\times \mathtt{N}_2^\sigma\setminus \mathtt{D}$ & $e(U) = 132 $ & $\chi(\O_D) =  0  $ \\
\end{tabu}\\
\hline
\begin{tabu}{@{}X[5.2cm] @{}|@{}X[cm]@{}}
$ e(\eta_\bullet^{-1}(a,x,\bm{m})) =
 Q_1 Q_2 Q_3^{2} p^2 \frac{(\mathsf{V}_{\square \square \emptyset})^2}{(\mathsf{V}_{\square \emptyset \emptyset})^2(\mathsf{V}_{ \emptyset  \emptyset \emptyset})^2} \cdot Q_\sigma^a \Phi^{-,-}(a) \cdot \prod \limits_{i=1}^{12} Q_3^{m_i}\Phi^{\emptyset,\emptyset}(m_i) $ &  \hfil \includegraphics[width=2cm]{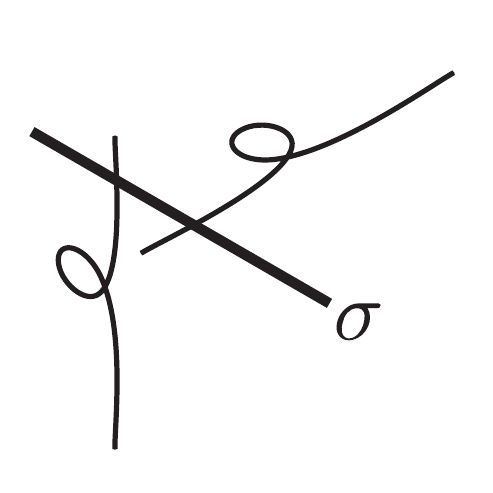} \hfil  \\
\end{tabu}\\
\hline
\extrarowsep=0.2em
\begin{tabu}{@{}X[0.7c]|X[.6c]|X[1c]@{}}
$U = \mathtt{N}_1^\sigma\times \mathtt{Sm}_2^\sigma $ & $e(U) = -120$ &$\chi(\O_D) =  0  $ \\
\end{tabu}\\
\hline
\begin{tabu}{@{}X[5.2cm] @{}|@{}X[cm]@{}}
$ e(\eta_\bullet^{-1}(a,x,\bm{m})) =
 Q_1 Q_2 Q_3^{2} p \frac{(\mathsf{V}_{\square \square \emptyset})}{(\mathsf{V}_{\square \emptyset \emptyset})^2(\mathsf{V}_{ \emptyset  \emptyset \emptyset})} \cdot Q_\sigma^a\Phi^{-,-}(a) \cdot \prod \limits_{i=1}^{12}Q_3^{m_i}\Phi^{\emptyset,\emptyset}(m_i) $  &  \hfil \includegraphics[width=2cm]{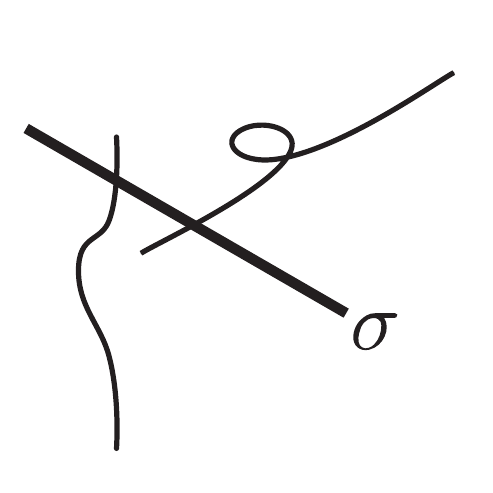} \hfil  \\
\end{tabu}\\
\hline
\extrarowsep=0.2em
\begin{tabu}{@{}X[0.7c]|X[.6c]|X[1c]@{}}
$U = \mathtt{Sm}_1^\sigma \times \mathtt{N}_2^\sigma$ & $e(U) =  -120 $ & $\chi(\O_D) = 0 $ \\
\end{tabu}\\
\hline
\begin{tabu}{@{}X[5.2cm] @{}|@{}X[cm]@{}}
$ e(\eta_\bullet^{-1}(a,x,\bm{m})) =
 Q_1 Q_2 Q_3^{2}  p\frac{(\mathsf{V}_{\square \square \emptyset})}{(\mathsf{V}_{\square \emptyset \emptyset})^2(\mathsf{V}_{ \emptyset  \emptyset \emptyset})} \cdot Q_\sigma^a\Phi^{-,-}(a) \cdot \prod \limits_{i=1}^{12}Q_3^{m_i}\Phi^{\emptyset,\emptyset}(m_i) $  &  \hfil \includegraphics[width=2cm]{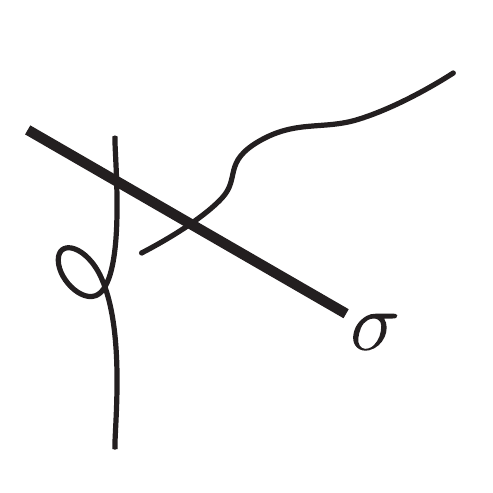} \hfil  \\
\end{tabu}\\
\hline
\extrarowsep=0.2em
\begin{tabu}{@{}X[0.7c]|X[.6c]|X[1c]@{}}
$U \hspace{-0.12cm}= \hspace{-0.06cm} \mathtt{Sm}_1^\sigma \times \mathtt{Sm}_2^\sigma \setminus \mathtt{D}$ & $e(U) = 110  $ & $\chi(\O_D) = 0  $  \\
\end{tabu}\\
\hline
\begin{tabu}{@{}X[5.2cm] @{}|@{}X[cm]@{}}
$ e(\eta_\bullet^{-1}(a,x,\bm{m})) =
 Q_1 Q_2 Q_3^{2}  \frac{1}{(\mathsf{V}_{\square \emptyset \emptyset})^2} \cdot Q_\sigma^a \Phi^{-,-}(a) \cdot \prod \limits_{i=1}^{12}Q_3^{m_i}\Phi^{\emptyset,\emptyset}(m_i) $  &  \hfil \includegraphics[width=2cm]{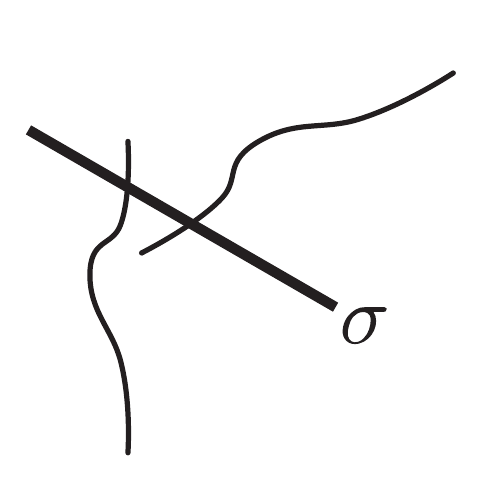} \hfil  \\
\end{tabu}\\
\hline
\end{tabu}
\mbox{}\\

From lemmas \ref{vertex_squares_lemma} and \ref{banana_subpartition_functions} we have:
\begin{enumerate}[label={\arabic*)}]
\item $\mathsf{V}_{\emptyset\emptyset\emptyset} = M(p)$
\item $\mathsf{V}_{\square\emptyset\emptyset} = M(p) \frac{1}{1-p}$
\item $\mathsf{V}_{\square\square\emptyset} = M(p) \frac{p^2-p+1}{p(1-p)^2}$
\item $\sum_{m\geq 0} Q^{m}\Phi^{\emptyset,\emptyset}(m) = M(p)^2 \prod\limits_{m>0}(1+p^m Q)^{m}$
\item $\sum_{m\geq 0} Q^{m}\Phi^{-,-}(m) = M(p)^2\dfrac{1}{p} (\psi_0 +(\psi_1 +2\psi_0) Q +\psi_0 Q^2) \prod\limits_{m>0}(1+p^m Q)^{m}$.
\end{enumerate}

So the contribution is:
\begin{align*}
&Q_1 Q_2 Q_3^2 M(p)^{24} \left( \prod\limits_{m>0}(1+p^m Q_{\sigma})^{m}(1+p^m Q_3)^{12m}\right)\frac{1}{p}(\psi_0 +(\psi_1 +2\psi_0) Q +\psi_0 Q^2) \\
&\cdot
\left(
\frac{2 \left(p^4+8 p^3+48 p^2+8 p+1\right)}{(p-1)^2}
\right)
\end{align*}

\textit{Grouping 3:} The following table is the summary of results from \ref{quot_decomposition_(i,j,bullet)_V_tilde} and \ref{fibre_general_(i,j,bullet)} for the strata in grouping 3:
\[
\mathbb{Z}_{\geq 0} \times U  \times \Sym_{Q_3}^{\bullet}(B_{\mathsf{op}}).\\
\]

\begin{tabu}{|@{}X[c]@{}|}
\hline
\extrarowsep=0.2em
\begin{tabu}{@{}X[0.7c]|X[.6c]|X[0.8c]@{}}
$U = \mathtt{N}_1^\sigma\times \mathtt{N}_2^\emptyset \setminus \mathtt{D}$ & $e(U) = -132$ &$\chi(\O_D) =  0  $ \\
\end{tabu}\\
\hline
\begin{tabu}{@{}X[5.2cm] @{}|@{}X[cm]@{}}
$ e(\eta_\bullet^{-1}(a,x,\bm{m})) =
 Q_1 Q_2 Q_3^{2} p^2 \frac{(\mathsf{V}_{\square \square \emptyset})^2}{(\mathsf{V}_{\square \emptyset \emptyset})(\mathsf{V}_{\emptyset \emptyset \emptyset})^3} \cdot Q_\sigma^a \Phi^{-,\emptyset}(a) \cdot \prod \limits_{i=1}^{12}Q_3^{m_i}\Phi^{\emptyset,\emptyset}(m_i) $  &  \hfil \includegraphics[width=2cm]{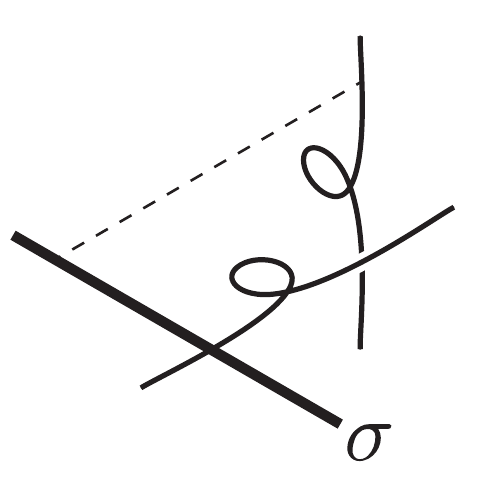} \hfil  \\
\end{tabu}\\
\hline
\extrarowsep=0.2em
\begin{tabu}{@{}X[0.7c]|X[.6c]|X[0.8c]@{}}
$U = (\mathtt{N}_1^\sigma\times \mathtt{N}_2^\emptyset) \cap  \mathtt{D}$ & $e(U) \hspace{-0.05cm}=\hspace{-0.05cm}  -12 $ & $\chi(\O_D) =  -1 $ \\
\end{tabu}\\
\hline
\begin{tabu}{@{}X[5.2cm] @{}|@{}X[cm]@{}}
$ e(\eta_\bullet^{-1}(a,x,\bm{m})) =
 Q_1 Q_2 Q_3^{2} p^{2} \frac{(\mathsf{V}_{\square \square \emptyset})^3}{(\mathsf{V}_{\square \emptyset \emptyset})^3(\mathsf{V}_{\emptyset \emptyset \emptyset})^2} \cdot Q_\sigma^a \Phi^{-,\emptyset}(a) \cdot \prod \limits_{i=1}^{12}Q_3^{m_i}\Phi^{\emptyset,\emptyset}(m_i) $  &  \hfil \includegraphics[width=2cm]{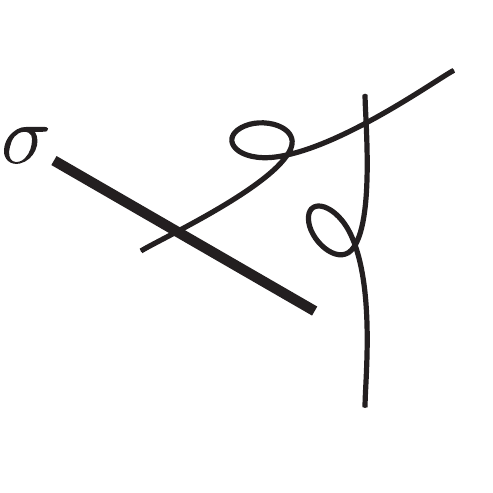} \hfil  \\
\end{tabu}\\
\hline
\extrarowsep=0.2em
\begin{tabu}{@{}X[0.7c]|X[.6c]|X[0.8c]@{}}
$U = \mathtt{N}_1^\sigma\times \mathtt{Sm}_2^\emptyset$ & $e(U) = 120$ & $\chi(\O_D) =  0  $ \\
\end{tabu}\\
\hline
\begin{tabu}{@{}X[5.2cm] @{}|@{}X[cm]@{}}
$ e(\eta_\bullet^{-1}(a,x,\bm{m})) =
 Q_1 Q_2 Q_3^{2} p \frac{(\mathsf{V}_{\square \square \emptyset})}{(\mathsf{V}_{\square \emptyset \emptyset})(\mathsf{V}_{\emptyset \emptyset \emptyset})^2} \cdot Q_\sigma^a \Phi^{-,\emptyset}(a) \cdot \prod \limits_{i=1}^{12}Q_3^{m_i}\Phi^{\emptyset,\emptyset}(m_i) $  &  \hfil \includegraphics[width=2cm]{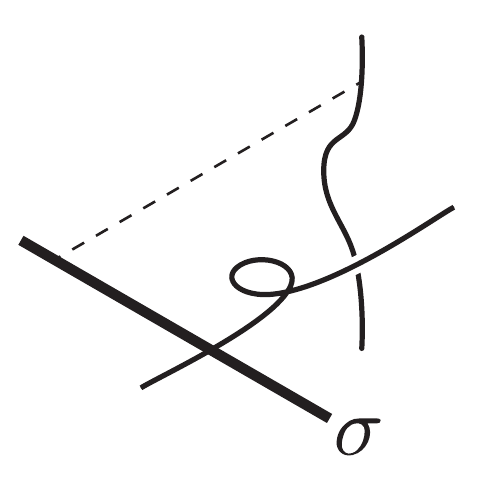} \hfil  \\
\end{tabu}\\
\hline
\extrarowsep=0.2em
\begin{tabu}{@{}X[0.7c]|X[.6c]|X[0.8c]@{}}
$U = \mathtt{Sm}_1^\sigma \times \mathtt{N}_2^\emptyset $ & $e(U) =  120 $ & $\chi(\O_D) =  0 $ \\
\end{tabu}\\
\hline
\begin{tabu}{@{}X[5.2cm] @{}|@{}X[cm]@{}}
$ e(\eta_\bullet^{-1}(a,x,\bm{m})) =
Q_1 Q_2 Q_3^{2} p \frac{(\mathsf{V}_{\square \square \emptyset})}{(\mathsf{V}_{\square \emptyset \emptyset})(\mathsf{V}_{\emptyset \emptyset \emptyset})^2} \cdot Q_\sigma^a  \Phi^{-,\emptyset}(a) \cdot \prod \limits_{i=1}^{12}Q_3^{m_i}\Phi^{\emptyset,\emptyset}(m_i) $  &  \hfil \includegraphics[width=2cm]{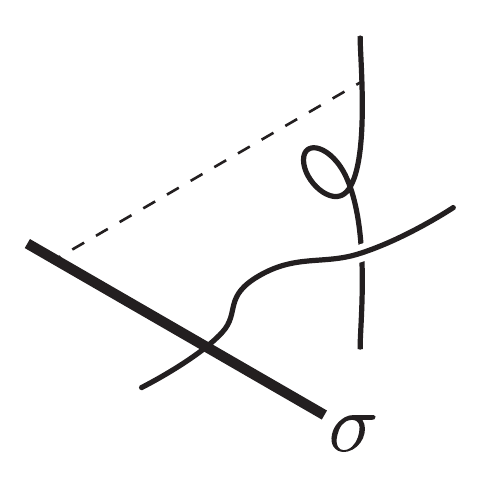} \hfil  \\
\end{tabu}\\
\hline
\extrarowsep=0.2em
\begin{tabu}{@{}X[0.7c]|X[.6c]|X[0.8c]@{}}
$U  \hspace{-0.12cm}= \hspace{-0.06cm} \mathtt{Sm}_1^\sigma \times \mathtt{Sm}_2^\emptyset \setminus  \mathtt{D}$ & $e(U) =  -110 $ & $\chi(\O_D) =  0$ \\
\end{tabu}\\
\hline
\begin{tabu}{@{}X[5.2cm] @{}|@{}X[cm]@{}}
$ e(\eta_\bullet^{-1}(a,x,\bm{m})) =
Q_1 Q_2 Q_3^{2}  \frac{1}{(\mathsf{V}_{\square \emptyset \emptyset})(\mathsf{V}_{\emptyset \emptyset \emptyset})} \cdot Q_\sigma^a  \Phi^{-,\emptyset}(a) \cdot \prod \limits_{i=1}^{12}Q_3^{m_i}\Phi^{\emptyset,\emptyset}(m_i) $  &  \hfil \includegraphics[width=2cm]{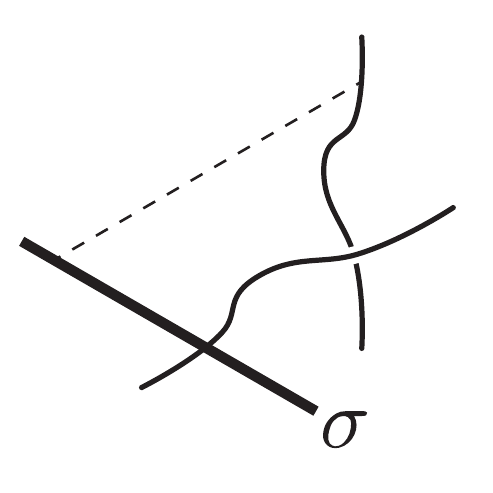} \hfil  \\
\end{tabu}\\
\hline
\extrarowsep=0.2em
\begin{tabu}{@{}X[0.7c]|X[.6c]|X[0.8c]@{}}
$U  \hspace{-0.12cm}= \hspace{-0.08cm}  (\mathtt{Sm}_1^\emptyset \times \mathtt{Sm}_2^\sigma)  \cap  \mathtt{D}$ & $e(U) = 10 $ & $\chi(\O_D) =  -1$ \\
\end{tabu}\\
\hline
\begin{tabu}{@{}X[5.2cm] @{}|@{}X[cm]@{}}
$ e(\eta_\bullet^{-1}(a,x,\bm{m})) =
 Q_1 Q_2 Q_3^{2}  \frac{(\mathsf{V}_{\square \square \emptyset})}{(\mathsf{V}_{\square \emptyset \emptyset})^3} \cdot Q_\sigma^a \Phi^{-,\emptyset}(a) \cdot \prod \limits_{i=1}^{12}Q_3^{m_i}\Phi^{\emptyset,\emptyset}(m_i) $  &  \hfil \includegraphics[width=2cm]{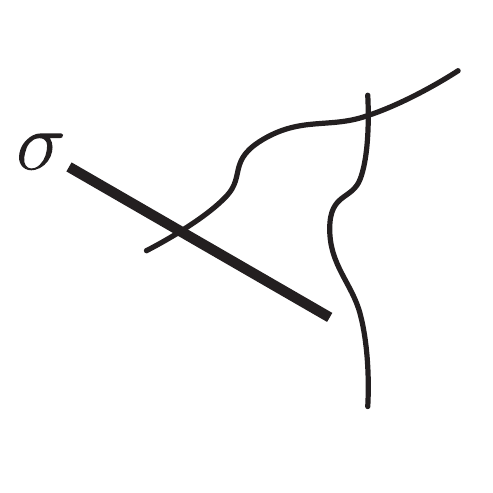} \hfil  \\
\end{tabu}\\
\hline
\end{tabu}
\mbox{}\\

From lemmas \ref{vertex_squares_lemma}, \ref{banana_subpartition_functions} and \ref{vertex_section_formulas} we have:
\begin{enumerate}[label={\arabic*)}]
\item $\mathsf{V}_{\emptyset\emptyset\emptyset} = M(p)$
\item $\mathsf{V}_{\square\emptyset\emptyset} = M(p) \frac{1}{1-p}$
\item $\mathsf{V}_{\square\square\emptyset} = M(p) \frac{p^2-p+1}{p(1-p)^2}$
\item $\sum_{m\geq 0} Q^{m}\Phi^{\emptyset,\emptyset}(m) = M(p)^2 \prod\limits_{m>0}(1+p^m Q)^{m}$
\item $\sum_{m\geq 0} Q^{m}\Phi^{-,\emptyset}(m) = M(p)^2\dfrac{1+Q}{1-p} \prod\limits_{m>0}(1+p^{m} Q)^m$
\end{enumerate}

The contribution from grouping 3 is: 
\begin{align*}
&Q_1 Q_2Q_3^2 M(p)^{24} \left( \prod\limits_{m>0}(1+p^m Q_{\sigma})^{m}(1+p^m Q_3)^{12m}\right)\left(\frac{1+Q_{\sigma}}{1-p}\right) \\
&\cdot
\left(
\frac{2 \left(p^2+10 p+1\right) \left(p^4-2 p^3+8 p^2-2 p+1\right)}{(p-1)^3 p}
\right) 
\end{align*}

\textit{Grouping 4:} The results for grouping 4 are identical to those of grouping 3 under the symmetry of the banana threefold. \\

The contribution from grouping 4 is: 
\begin{align*}
&Q_1 Q_2Q_3^2 M(p)^{24} \left( \prod\limits_{m>0}(1+p^m Q_{\sigma})^{m}(1+p^m Q_3)^{12m}\right)\left(\frac{1+Q_{\sigma}}{1-p}\right) \\
&\cdot
\left(
\frac{2 \left(p^2+10 p+1\right) \left(p^4-2 p^3+8 p^2-2 p+1\right)}{(p-1)^3 p}
\right) 
\end{align*}

\textit{Grouping 5:} The following table is the summary of results from \ref{quot_decomposition_(i,j,bullet)_V_tilde} and \ref{fibre_general_(i,j,bullet)} for the strata in grouping 5:
\[
\mathbb{Z}_{\geq 0} \times U  \times \Sym_{Q_3}^{\bullet}(B_{\mathsf{op}}).\\
\]

\begin{tabu}{|@{}X[c]@{}|}
\hline
\extrarowsep=0.2em
\begin{tabu}{@{}X[0.8c]|X[0.6c]|X[0.7c]@{}}
$U = \mathtt{N}_1^\emptyset \times \mathtt{N}_2^\emptyset \setminus  \mathtt{D}$ & $e(U) =  132 $ & $\chi(\O_D) =  0$ \\
\end{tabu}\\
\hline
\begin{tabu}{@{}X[5.2cm] @{}|@{}X[cm]@{}}
$ e(\eta_\bullet^{-1}(a,x,\bm{m})) =
 Q_1 Q_2 Q_3^{2}p^2 \frac{\mathsf{V}_{\square \square \emptyset})^2}{(\mathsf{V}_{\emptyset\emptyset\emptyset})^4} \cdot Q_\sigma^a\Psi^{\emptyset,\emptyset}(a) \cdot \prod \limits_{i=1}^{12}Q_3^{m_i}\Phi^{\emptyset,\emptyset}(m_i) $  &  \hfil \includegraphics[width=2cm]{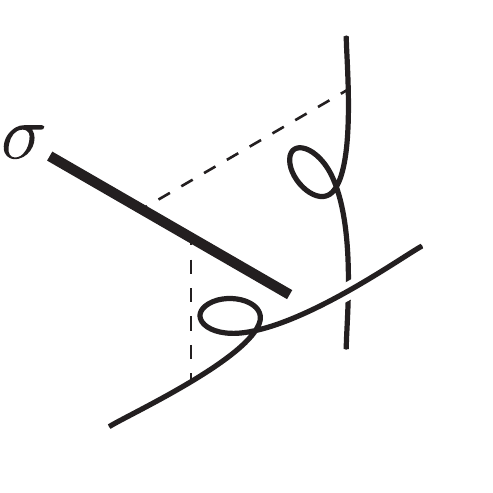}  \hfil  \\
\end{tabu}\\
\hline
\extrarowsep=0.2em
\begin{tabu}{@{}X[0.8c]|X[0.6c]|X[0.7c]@{}}
$U = (\mathtt{N}_1^\emptyset \times \mathtt{N}_2^\emptyset) \cap  \mathtt{D}$ & $e(U) =12 $ & $\chi(\O_D) =  -1$  \\
\end{tabu}\\
\hline
\begin{tabu}{@{}X[5.2cm] @{}|@{}X[cm]@{}}
$ e(\eta_\bullet^{-1}(a,x,\bm{m})) =
 Q_1 Q_2 Q_3^{2} p^{2} \frac{(\mathsf{V}_{\square \square \emptyset})^3}{(\mathsf{V}_{\square \emptyset \emptyset})^2(\mathsf{V}_{\emptyset\emptyset\emptyset})^3} \cdot Q_\sigma^a \Psi^{\emptyset,\emptyset}(a) \cdot \prod \limits_{i=1}^{12}Q_3^{m_i}\Phi^{\emptyset,\emptyset}(m_i) $  &  \hfil \includegraphics[width=2cm]{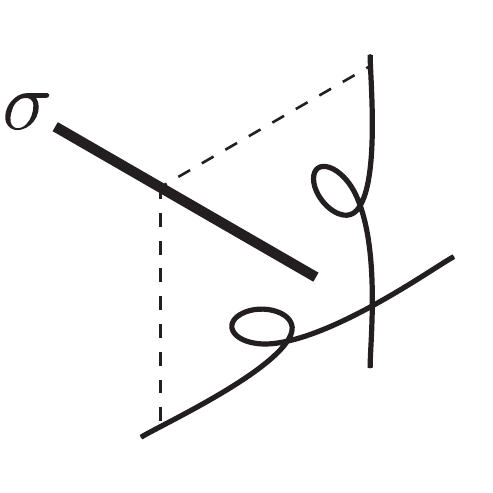} \hfil  \\
\end{tabu}\\
\hline
\end{tabu}

\begin{tabu}{|@{}X[c]@{}|}
\hline
\extrarowsep=0.2em
\begin{tabu}{@{}X[0.8c]|X[0.6c]|X[0.7c]@{}}
$U = \mathtt{Sm}_1^\emptyset \times \mathtt{N}_2^\emptyset $ & $e(U) =  -120 $ & $\chi(\O_D) =  0 $ \\
\end{tabu}\\
\hline
\begin{tabu}{@{}X[5.2cm] @{}|@{}X[cm]@{}}
$ e(\eta_\bullet^{-1}(a,x,\bm{m})) =
 Q_1 Q_2 Q_3^{2} p \frac{(\mathsf{V}_{\square \square \emptyset})}{(\mathsf{V}_{\emptyset\emptyset\emptyset})^3} \cdot Q_\sigma^a \Psi^{\emptyset,\emptyset}(a) \cdot \prod \limits_{i=1}^{12}Q_3^{m_i}\Phi^{\emptyset,\emptyset}(m_i) $   &  \hfil \includegraphics[width=2cm]{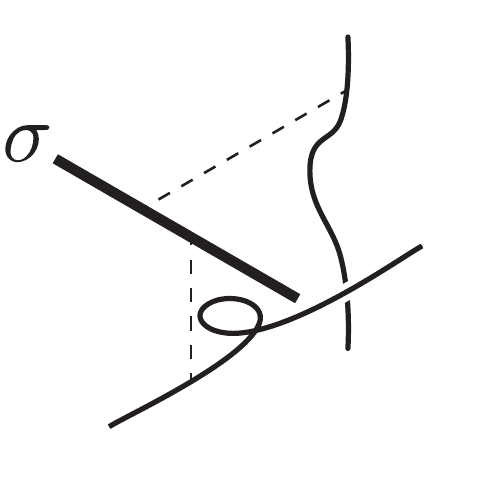} \hfil  \\
\end{tabu}\\
\hline
\extrarowsep=0.2em
\begin{tabu}{@{}X[0.8c]|X[0.6c]|X[0.7c]@{}}
$U = \mathtt{N}_1^\emptyset \times \mathtt{Sm}_2^\emptyset$ & $e(U) =  -120 $ & $\chi(\O_D) =  0 $  \\
\end{tabu}\\
\hline
\begin{tabu}{@{}X[5.2cm] @{}|@{}X[cm]@{}}
$ e(\eta_\bullet^{-1}(a,x,\bm{m})) =
 Q_1 Q_2 Q_3^{2} p \frac{(\mathsf{V}_{\square \square \emptyset})}{(\mathsf{V}_{\emptyset\emptyset\emptyset})^3} \cdot Q_\sigma^a \Psi^{\emptyset,\emptyset}(a) \cdot \prod \limits_{i=1}^{12}Q_3^{m_i}\Phi^{\emptyset,\emptyset}(m_i) $   &  \hfil \includegraphics[width=2cm]{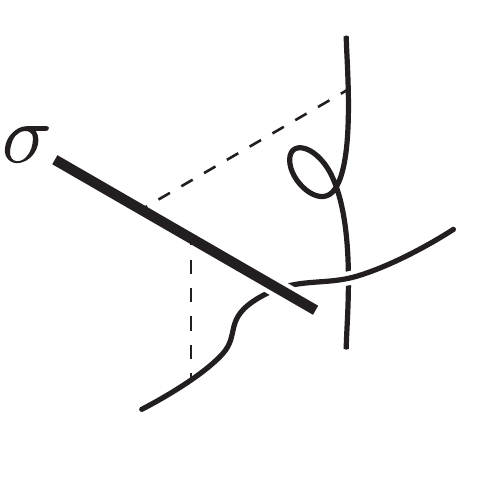}  \hfil  \\
\end{tabu}\\
\hline
\extrarowsep=0.2em
\begin{tabu}{@{}X[0.8c]|X[0.6c]|X[0.7c]@{}}
$U  \hspace{-0.12cm}= \hspace{-0.06cm} \mathtt{Sm}_1^\emptyset \times \mathtt{Sm}_2^\emptyset \setminus \mathtt{D}$ & $e(U) =110 $ & $\chi(\O_D) =  0 $ \\
\end{tabu}\\
\hline
\begin{tabu}{@{}X[5.2cm] @{}|@{}X[cm]@{}}
$ e(\eta_\bullet^{-1}(a,x,\bm{m})) =
 Q_1 Q_2 Q_3^{2}  \frac{1}{(\mathsf{V}_{\emptyset\emptyset\emptyset})^2} \cdot  Q_\sigma^a\Psi^{\emptyset,\emptyset}(a) \cdot \prod \limits_{i=1}^{12}Q_3^{m_i}\Phi^{\emptyset,\emptyset}(m_i) $  &  \hfil \includegraphics[width=2cm]{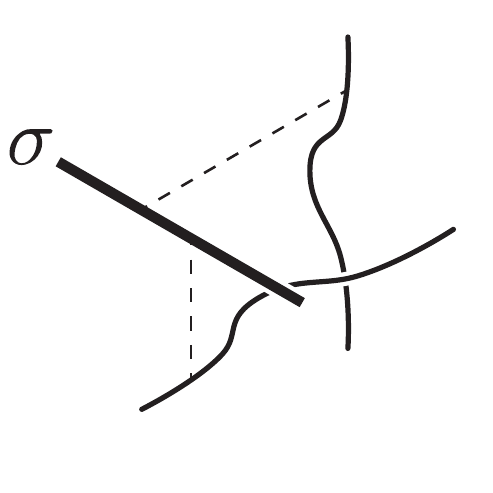} \hfil  \\
\end{tabu}\\
\hline
\extrarowsep=0.2em
\begin{tabu}{@{}X[0.8c]|X[0.6c]|X[0.7c]@{}}
$U  \hspace{-0.12cm}= \hspace{-0.08cm}  (\mathtt{Sm}_1^\emptyset \times \mathtt{Sm}_2^\emptyset) \cap  \mathtt{D}$ & $e(U) \hspace{-0.05cm}=\hspace{-0.05cm}  - 10 $ & $\chi(\O_D) =  -1  $ \\
\end{tabu}\\
\hline
\begin{tabu}{@{}X[5.2cm] @{}|@{}X[cm]@{}}
$ e(\eta_\bullet^{-1}(a,x,\bm{m})) =
 Q_1 Q_2 Q_3^{2}  \frac{(\mathsf{V}_{\square \square \emptyset})}{(\mathsf{V}_{\square \emptyset \emptyset})^2(\mathsf{V}_{\emptyset\emptyset\emptyset})} \cdot Q_\sigma^a\Psi^{\emptyset,\emptyset}(a) \cdot \prod \limits_{i=1}^{12}Q_3^{m_i}\Phi^{\emptyset,\emptyset}(m_i) $  &  \hfil \includegraphics[width=2cm]{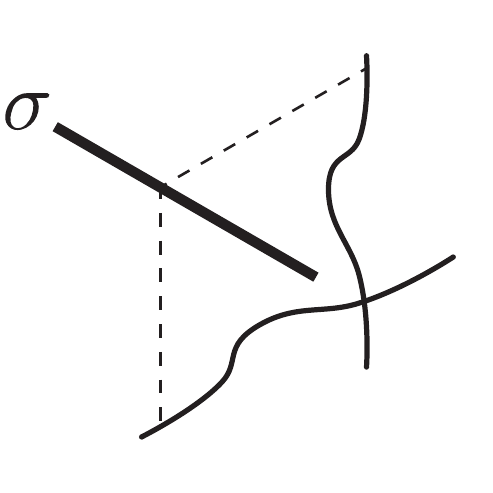} \hfil  \\
\end{tabu}\\
\hline
\end{tabu}
\mbox{}\\

From lemmas \ref{vertex_squares_lemma} and \ref{banana_subpartition_functions} we have:
\begin{enumerate}[label={\arabic*)}]
\item $\mathsf{V}_{\emptyset\emptyset\emptyset} = M(p)$
\item $\mathsf{V}_{\square\emptyset\emptyset} = M(p) \frac{1}{1-p}$
\item $\mathsf{V}_{\square\square\emptyset} = M(p) \frac{p^2-p+1}{p(1-p)^2}$
\item $\sum_{m\geq 0} Q^{m}\Phi^{\emptyset,\emptyset}(m) = M(p)^2 \prod\limits_{m>0}(1+p^m Q)^{m}$
\end{enumerate}

Summing the contributions from the above groupings we arrive at the overall contribution from part a:
\begin{align*}
&e\Big(\mathbb{Z}_{\geq0} \times S_{1}^\circ \times S_{2}^\circ \times \Sym_{Q_3}^{\bullet}(B_{\mathsf{op}}) , ~ (\eta_\bullet)_*1 \Big) \\
&=
Q_1 Q_2Q_3^2 M(p)^{24} \left( \prod\limits_{m>0}(1+p^m Q_{\sigma})^{m}(1+p^m Q_3)^{12m}\right) \\
&\hspace{1em} \cdot
\left(
\frac{2 \left(p^2+10 p+1\right) \left(p^4-2 p^3+8 p^2-2 p+1\right)}{(p-1)^4 p}
\right) \\
&=
Q_1 Q_2Q_3^2 Q_\sigma M(p)^{24} \left( \prod\limits_{m>0}(1+p^m Q_{\sigma})^{m}(1+p^m Q_3)^{12m}\right)\\
&\hspace{1em} \cdot \left(120  \psi_0+Q_\sigma \left(144 \psi_0^2+48 \psi_0+4\right)\right)
\end{align*}

\end{re}

\vspace{0.2cm}
\begin{re}\textbf{Part b-c:}
By the symmetry of $X$ we only need to consider part b, with part c being completely analogous. For each $k\in\{1,\ldots,12\}$ we begin by decomposing $S_{1}^\circ$ into the following six parts:
\[
\mathtt{Sm}_1^\sigma ~\amalg~ \mathtt{Sm}_1^\emptyset ~\amalg~ \mathtt{N}_1^{\sigma,(k)} ~\amalg~ \mathtt{N}_1^{\sigma,c} ~\amalg~ \mathtt{N}_1^{\emptyset,(k)} ~\amalg~ \mathtt{N}_1^{\emptyset,c}
\]
where $\mathtt{N}_1^{\sigma,(k)}$ is the connected component of $\mathtt{N}_1^{\sigma}$ corresponding the the $k$th banana configuration and $\mathtt{N}_1^{\sigma,c}$ is its complement in $\mathtt{N}_1^{\sigma}$. The same definition is true for $\mathtt{N}_1^{\emptyset}$. We use the above six-part decomposition for
\begin{align*}
 \mathbb{Z}_{\geq0} \times S_{1}^\circ  \times\Sym_{Q_3}^\bullet( \{ b_{\mathsf{op}}^{k}\}) \times \Sym_{Q_3}^\bullet( B_{\mathsf{op}}\setminus \{b_{\mathsf{op}}^{k}\}). 
\end{align*}
The following is the summary of results from \ref{quot_decomposition_(i,j,bullet)_V_tilde} and \ref{fibre_general_(i,j,bullet)} for this stratification. \\

\begin{tabu}{|@{}X[c]@{}|}
\hline
\extrarowsep=0.2em
\begin{tabu}{@{}X[0.7c]|X[0.8c]|X[0.8c]@{}}
$U  = \mathtt{N}_1^{\sigma,c}  $ & $e(U) = 11 $ & $\chi(\O_D) =  1  $ \\
\end{tabu}\\
\hline
\begin{tabu}{@{}X[5.2cm] @{}|@{}X[cm]@{}}
$ e(\eta_\bullet^{-1}(a,x,n,\bm{m})) =
 Q_1 Q_2 Q_3 p^2 \frac{(\mathsf{V}_{\square \square \emptyset})}{(\mathsf{V}_{\square \emptyset \emptyset})(\mathsf{V}_{\emptyset \emptyset \emptyset})^2} \cdot Q_\sigma^a\Phi^{-,\emptyset}(a) \cdot Q_3^{n}\Phi^{-,-}(n) 
  \prod\limits_{
 \genfrac{.}{.}{0pt}{2}{i=1}{i\neq k}
 }^{12}  
 Q_3^{m_i}\Phi^{\emptyset,\emptyset}(m_i)  $  &  \hfil \includegraphics[width=2cm]{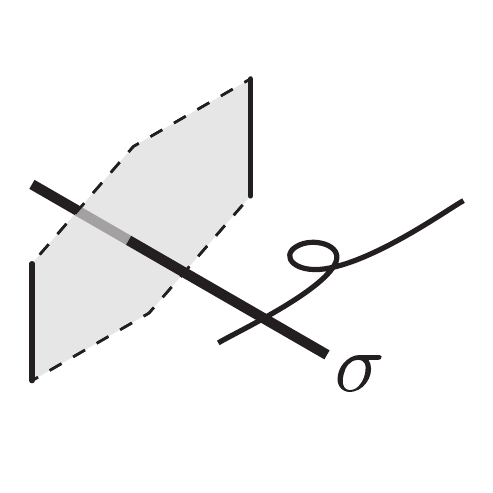} \hfil  \\
\end{tabu}\\
\hline
\extrarowsep=0.2em
\begin{tabu}{@{}X[0.7c]|X[0.8c]|X[0.8c]@{}}
$U  = \mathtt{N}_1^{\emptyset,c}  $ & {\small$ e(U)= -11 $} & $\chi(\O_D) =  1  $ \\
\end{tabu}\\
\hline
\begin{tabu}{@{}X[5.2cm] @{}|@{}X[cm]@{}}
$ e(\eta_\bullet^{-1}(a,x,n,\bm{m})) =
 Q_1 Q_2 Q_3 p^2 \frac{(\mathsf{V}_{\square \square \emptyset})}{(\mathsf{V}_{\emptyset \emptyset \emptyset})^3} \cdot Q_\sigma^a\Phi^{\emptyset,\emptyset}(a) \cdot Q_3^{n} \Phi^{-,-}(n) 
  \prod\limits_{
 \genfrac{.}{.}{0pt}{2}{i=1}{i\neq k}
 }^{12}  
 Q_3^{m_i}\Phi^{\emptyset,\emptyset}(m_i)  $   &  \hfil \includegraphics[width=2cm]{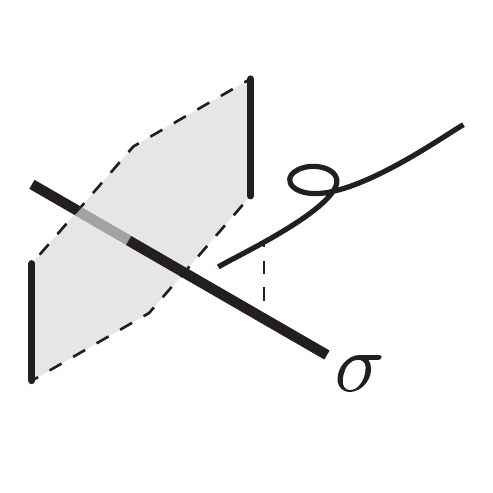} \hfil  \\
\end{tabu}\\
\hline
\extrarowsep=0.2em
\begin{tabu}{@{}X[0.7c]|X[0.8c]|X[0.8c]@{}}
$U   =\mathtt{N}_1^{\sigma,(k)}  $ & $e(U) = 1 $ & $\chi(\O_D) =  0   $ \\
\end{tabu}\\
\hline
\begin{tabu}{@{}X[5.2cm] @{}|@{}X[cm]@{}}
$ e(\eta_\bullet^{-1}(a,x,n,\bm{m})) =
 Q_1 Q_2 Q_3p^2 \frac{ (\mathsf{V}_{\square \square \square})}{(\mathsf{V}_{\square\emptyset \emptyset})^2(\mathsf{V}_{\emptyset\emptyset \emptyset})} \cdot Q_\sigma^a \Phi^{-,\emptyset}(a) \cdot Q_3^{n}\Phi^{-,-}(n) 
  \prod\limits_{
 \genfrac{.}{.}{0pt}{2}{i=1}{i\neq k}
 }^{12}  
 Q_3^{m_i}\Phi^{\emptyset,\emptyset}(m_i) $  &  \hfil \includegraphics[width=2cm]{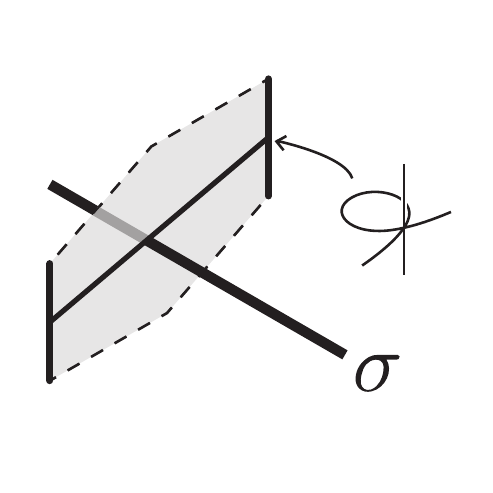} \hfil  \\
\end{tabu}\\
\hline
\extrarowsep=0.2em
\begin{tabu}{@{}X[0.7c]|X[0.8c]|X[0.8c]@{}}
$U = \mathtt{N}_1^{\emptyset,(k)} $ & $e(U)= -1$ & $\chi(\O_D) =  0    $ \\
\end{tabu}\\
\hline
\begin{tabu}{@{}X[5.2cm] @{}|@{}X[cm]@{}}
$ e(\eta_\bullet^{-1}(a,x,n,\bm{m})) =
 Q_1 Q_2 Q_3 p^2\frac{ (\mathsf{V}_{\square \square \square})}{(\mathsf{V}_{\square\emptyset \emptyset})(\mathsf{V}_{\emptyset \emptyset \emptyset})^2} \cdot Q_\sigma^a\Phi^{\emptyset,\emptyset}(a) \cdot Q_3^{n}\Phi^{-,-}(n) 
  \prod\limits_{
 \genfrac{.}{.}{0pt}{2}{i=1}{i\neq k}
 }^{12}  
 Q_3^{m_i}\Phi^{\emptyset,\emptyset}(m_i)  $&  \hfil \includegraphics[width=2cm]{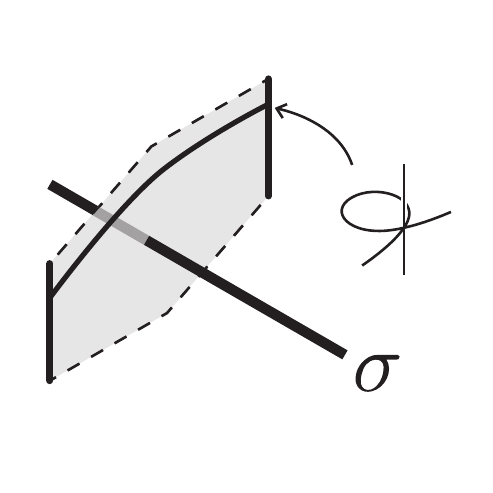} \hfil  \\
\end{tabu}\\
\hline
\extrarowsep=0.2em
\begin{tabu}{@{}X[0.7c]|X[0.8c]|X[0.8c]@{}}
$U   = \mathtt{Sm}_1^\sigma$ & $e(U) = -10 $ & $\chi(\O_D) =  1  $  \\
\end{tabu}\\
\hline
\begin{tabu}{@{}X[5.2cm] @{}|@{}X[cm]@{}}
$ e(\eta_\bullet^{-1}(a,x,n,\bm{m})) =
 Q_1 Q_2 Q_3 p \frac{1}{(\mathsf{V}_{\square \emptyset \emptyset})(\mathsf{V}_{\emptyset \emptyset \emptyset})} \cdot Q_\sigma^a\Phi^{-,\emptyset}(a) \cdot Q_3^{n}\Phi^{-,-}(n) 
  \prod\limits_{
 \genfrac{.}{.}{0pt}{2}{i=1}{i\neq k}
 }^{12}  
 Q_3^{m_i}\Phi^{\emptyset,\emptyset}(m_i)  $  &  \hfil \includegraphics[width=2cm]{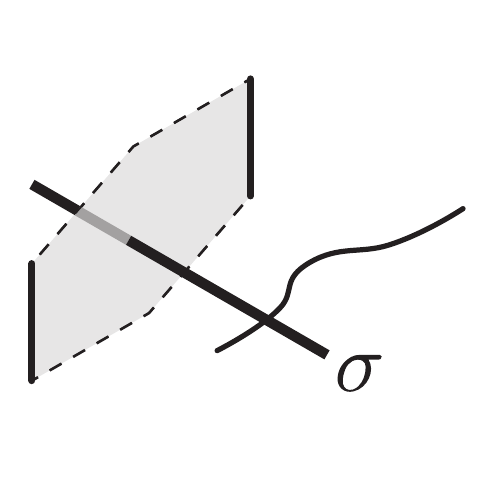} \hfil  \\
\end{tabu}\\
\hline
\extrarowsep=0.2em
\begin{tabu}{@{}X[0.7c]|X[0.8c]|X[0.8c]@{}}
$U   = \mathtt{Sm}_1^\emptyset$ & $e(U)=  10 $ & $\chi(\O_D) =  1  $ \\
\end{tabu}\\
\hline
\begin{tabu}{@{}X[5.2cm] @{}|@{}X[cm]@{}}
$ e(\eta_\bullet^{-1}(a,x,n,\bm{m})) =$
$Q_1 Q_2 Q_3 p\frac{1}{(\mathsf{V}_{\emptyset \emptyset \emptyset})^2}  \cdot Q_\sigma^a\Phi^{\emptyset,\emptyset}(a) \cdot Q_3^{n}\Phi^{-,-}(n) 
 \prod\limits_{
 \genfrac{.}{.}{0pt}{2}{i=1}{i\neq k}
 }^{12}  
Q_3^{m_i}\Phi^{\emptyset,\emptyset}(m_i) $  &  \hfil \includegraphics[width=2cm]{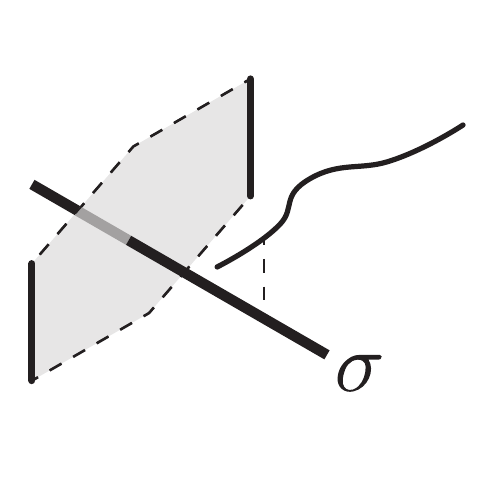} \hfil  \\
\end{tabu}\\
\hline
\end{tabu}

From lemmas \ref{vertex_squares_lemma}, \ref{banana_subpartition_functions} and \ref{vertex_section_formulas} we have:
\begin{enumerate}[label={\arabic*)}]
\item $\mathsf{V}_{\emptyset\emptyset\emptyset} = M(p)$
\item $\mathsf{V}_{\square\emptyset\emptyset} = M(p) \frac{1}{1-p}$
\item $\mathsf{V}_{\square\square\emptyset} = M(p) \frac{p^2-p+1}{p(1-p)^2}$
\item $\mathsf{V}_{\square\square\square} = M(p) \frac{p^4-p^3+p^2-p+1}{ p^2(1-p)^3}$
\item $\sum_{m\geq 0} Q^{m}\Phi^{\emptyset,\emptyset}(m) = M(p)^2 \prod\limits_{m>0}(1+p^m Q)^{m}$
\item $\sum_{m\geq 0} Q^{m}\Phi^{-,\emptyset}(m) = M(p)^2\dfrac{1+Q}{1-p} \prod\limits_{m>0}(1+p^{m} Q)^m$
\item $\sum_{m\geq 0} Q^{m}\Phi^{-,-}(m) = M(p)^2\dfrac{1}{p} (\psi_0 +(\psi_1 +2\psi_0) Q +\psi_0 Q^2) \prod\limits_{m>0}(1+p^m Q)^{m}$.
\end{enumerate}

There are 12 singular fibres of $\mathrm{pr}$. So, we have that the combined contribution from parts c and d is:

\begin{align*}
e\Big(&\underset{k=1}{\overset{12}{\amalg}}  S_{1}^\circ  \times\Sym_{Q_3}^\bullet( \{ b_{\mathsf{op}}^{k}\}) \times \Sym_{Q_3}^\bullet( B_{\mathsf{op}}\setminus \{b_{\mathsf{op}}^{k}\}) ,~ (\eta_\bullet)_*1 \Big)\\
+&e\Big(\underset{k=1}{\overset{12}{\amalg}}  S_{2}^\circ  \times\Sym_{Q_3}^\bullet( \{ b_{\mathsf{op}}^{k}\}) \times \Sym_{Q_3}^\bullet( B_{\mathsf{op}}\setminus \{b_{\mathsf{op}}^{k}\})  ,~ (\eta_\bullet)_*1 \Big)\\
=~&24Q_1 Q_2 Q_3 Q_\sigma M(p)^{24} \left( \prod\limits_{m>0}(1+p^m Q_{\sigma})^{m}(1+p^m Q_3)^{12m}\right)\\
&\cdot
(\psi_0 +(\psi_1 +2\psi_0) Q_3 +\psi_0 Q_3^2)
 \left(
12 \psi_0+4 \psi_1+\psi_2
\right)
\end{align*}

\end{re}

\vspace{0.5cm}
\begin{re}\textbf{Part d-e:}
Parts d and e parametrise the cases when $D$ is the union of $C_2^{(k)}$ and $C_2^{(l)}$. We have the spaces:
\begin{enumerate}[label={\arabic*)}]
\item  $\hspace{-0.58em}\underset{\mbox{\tiny$\begin{array}{c}k,l=1\\k\neq l\end{array}$}}{\overset{12}{\amalg}} \hspace{-0.58em} \Sym_{Q_3}^\bullet( \{ b_{\mathsf{op}}^{k}\})   \times\Sym_{Q_3}^\bullet( \{ b_{\mathsf{op}}^{l}\}) \times \Sym_{Q_3}^\bullet( B_{\mathsf{op}}\setminus \{b_{\mathsf{op}}^{k},b_{\mathsf{op}}^{l}\})$,
\item  $~\underset{k=1}{\overset{12}{\amalg}}  \Sym_{Q_3}^\bullet( \{ b_{\mathsf{op}}^{k}\}) \times \Sym_{Q_3}^\bullet( B_{\mathsf{op}}\setminus \{b_{\mathsf{op}}^{k}\})$.
\end{enumerate}

The following table is the summary of results from \ref{quot_decomposition_(i,j,bullet)_V_tilde} and \ref{fibre_general_(i,j,bullet)} for this stratification.\\

\begin{tabu}{|@{}X[c]@{}|}
\hline
\extrarowsep=0.2em
\begin{tabu}{@{}X[1.2c]|X[.7c]|X[1c]@{}}
$U  =\{(k,l)\}$, $k\neq l$& $e(U) = 1$ & $\chi(\O_D) =  2  $ \\
\end{tabu}\\
\hline
\begin{tabu}{@{}X[5.2cm] @{}|@{}X[cm]@{}}
{\small $ e(\eta_\bullet^{-1}(a,c,d,\bm{m})) =
 Q_1 Q_2 p^2 \frac{1}{(\mathsf{V}_{\emptyset \emptyset \emptyset})^2} \cdot Q_\sigma^a\Phi^{\emptyset,\emptyset}(a)  \cdot Q_3^{c} \Phi^{-,-}(c)\cdot Q_3^{d} \Phi^{-,-}(d) 
  \prod\limits_{
 \genfrac{.}{.}{0pt}{2}{i=1}{i\neq k,l}
 }^{12}  
 Q_3^{m_i}\Phi^{\emptyset,\emptyset}(m_i)  $ } &  \hfil \includegraphics[width=2cm]{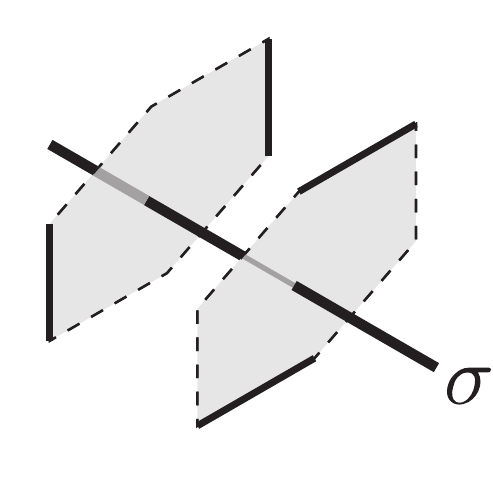} \hfil  \\
\end{tabu}\\
\hline
\extrarowsep=0.2em
\begin{tabu}{@{}X[1.2c]|X[.7c]|X[1c]@{}}
$U  =\{(k,k)\}$& $e(U) = 1 $ & $\chi(\O_D) = 0  $ \\
\end{tabu}\\
\hline
\begin{tabu}{@{}X[5.2cm] @{}|@{}X[cm]@{}}
$ e(\eta_\bullet^{-1}(a,m_k,\bm{m})) =
 Q_1 Q_2 \frac{1}{(\mathsf{V}_{\emptyset \emptyset \emptyset})^2} \cdot Q_\sigma^a\Psi^{\emptyset,\emptyset}(a) \cdot Q_3^{m_k}\Phi^{+,+}(m_k)
  \prod\limits_{
 \genfrac{.}{.}{0pt}{2}{i=1}{i\neq k}
 }^{12}  
 Q_3^{m_i}\Phi^{\emptyset,\emptyset}(m_i)  $  &  \hfil \includegraphics[width=2cm]{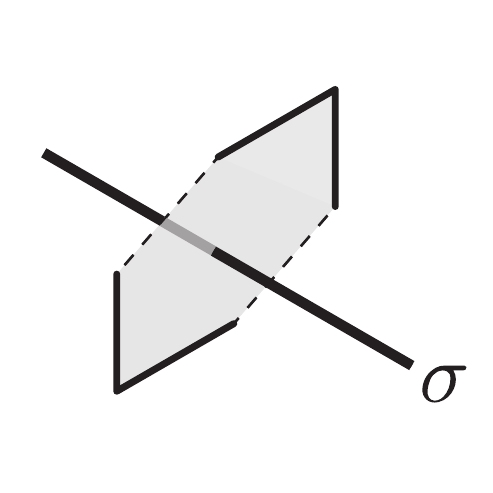} \hfil  \\
\end{tabu}\\
\hline
\end{tabu}
\mbox{}\\

From lemmas \ref{vertex_squares_lemma}, \ref{banana_subpartition_functions} and \ref{vertex_section_formulas} we have:
\begin{enumerate}[label={\arabic*)}]
\item $\mathsf{V}_{\emptyset\emptyset\emptyset} = M(p)$
\item $\sum_{m\geq 0} Q^{m}\Phi^{\emptyset,\emptyset}(m) = M(p)^2 \prod\limits_{m>0}(1+p^m Q)^{m}$
\item $\sum_{m\geq 0} Q^{m}\Phi^{-,-}(m) = M(p)^2\dfrac{1}{p} (\psi_0 +(\psi_1 +2\psi_0) Q +\psi_0 Q^2) \prod\limits_{m>0}(1+p^m Q)^{m}$.
\item $\sum_{m\geq 0} Q^{m}\Phi^{+,+}(m) $\\
$=M(p)^2 \prod\limits_{m>0}(1+p^m Q)^{m}\Big(Q^4 (2 \psi_0+\psi_1)+Q^3 (8 \psi_0+6 \psi_1+\psi_2)+Q^2 (12 \psi_0\hspace{2em}$\\
\mbox{}\hspace{12em}$+10 \psi_1+2 \psi_2)+Q (8 \psi_0+6 \psi_1+\psi_2)+(2 \psi_0+\psi_1)\Big)$
\end{enumerate}

There are $136$ choices for two distinct fibres. Hence the contribution from part d is:
\begin{align*}
e\Big(&\hspace{-0.58em}\underset{\mbox{\tiny$\begin{array}{c}k,l=1\\k\neq l\end{array}$}}{\overset{12}{\amalg}} \hspace{-0.58em} \Sym_{Q_3}^\bullet( \{ b_{\mathsf{op}}^{k}\})   \times\Sym_{Q_3}^\bullet( \{ b_{\mathsf{op}}^{l}\}) \times \Sym_{Q_3}^\bullet( B_{\mathsf{op}}\setminus \{b_{\mathsf{op}}^{k},b_{\mathsf{op}}^{l}\}),~ (\eta_\bullet)_*1 \Big)\\
=~&132Q_1 Q_2 M(p)^{24} \left( \prod\limits_{m>0}(1+p^m Q_{\sigma})^{m}(1+p^m Q_3)^{12m}\right)(\psi_0 +(\psi_1 +2\psi_0) Q_3 +\psi_0 Q_3^2)^2.
\end{align*}

The 12 singular fibres give the contribution of part e as:
\begin{align*}
e\Big(&\underset{k=1}{\overset{12}{\amalg}}  \Sym_{Q_3}^\bullet( \{ b_{\mathsf{op}}^{k}\}) \times \Sym_{Q_3}^\bullet( B_{\mathsf{op}}\setminus \{b_{\mathsf{op}}^{k}\}),~ (\eta_\bullet)_*1 \Big)\\
=~&12Q_1 Q_2 M(p)^{24} \left( \prod\limits_{m>0}(1+p^m Q_{\sigma})^{m}(1+p^m Q_3)^{12m}\right)\\
&\cdot \Big(\big( Q_3^2 \psi_0 + Q_3 (2 \psi_0 +\psi_1)+\psi_0 \big)^2 +  \big( Q_3^4 (2 \psi_0+\psi_1)+ Q_3^3 (8 \psi_0+6 \psi_1+\psi_2)\\
&\hspace{5.75cm} + Q_3^2 (12 \psi_0+10 \psi_1+2 \psi_2) \\
&\hspace{5.75cm}+ Q_3 (8 \psi_0+6 \psi_1+\psi_2) + (2 \psi_0+\psi_1)\big)\Big).
\end{align*}

Summing the contributions of parts d and e we have:
\begin{align*}
&Q_1 Q_2 M(p)^{24} \left( \prod\limits_{m>0}(1+p^m Q_{\sigma})^{m}(1+p^m Q_3)^{12m}\right)\\
&\cdot \Big(144\big( Q_3^2 \psi_0 + Q_3 (2 \psi_0 +\psi_1)+\psi_0 \big)^2 +  12\big( Q_3^4 (2 \psi_0+\psi_1)+ Q_3^3 (8 \psi_0+6 \psi_1+\psi_2)\\
&\hspace{6.5cm} + Q_3^2 (12 \psi_0+10 \psi_1+2 \psi_2) \\
&\hspace{6.5cm}+ Q_3 (8 \psi_0+6 \psi_1+\psi_2) + (2 \psi_0+\psi_1)\big)\Big).
\end{align*}

\end{re}

\begin{re}\textbf{Part f:} Recall from lemma \ref{classes_bs+(i,j,d)_main_chow_main_lemma} that part f, $\mathtt{Diag}^{\bullet}$ has the further decomposition:
\begin{enumerate}
\item[g)] ~$\mathtt{Sm}_{1} \times \Sym_{Q_3}^\bullet( B_{\mathsf{op}})$
\item[h)] ~$\amalg~ \mathtt{Sm}_{2} \times \Sym_{Q_3}^\bullet( B_{\mathsf{op}}) $
\item[i)] ~$\hspace{-0.3em}\underset{y\in\mathtt{J}}{\amalg} ~E_{\pi(y)} \times \widetilde{\mathrm{Aut}}(E_{\pi(y)}) \times \Sym_{Q_3}^\bullet( B_{\mathsf{op}}) $
\item[j)] ~$\hspace{-0.35em}\underset{k=1}{\overset{12}{\amalg}}~ \mathtt{L} \times \Sym_{Q_3}^\bullet( \{ b_{\mathsf{op}}^{k}\}) \times \Sym_{Q_3}^\bullet( B_{\mathsf{op}}\setminus \{b_{\mathsf{op}}^{k}\})$.
\end{enumerate}
Where we have used the notation:
\begin{enumerate}[label={\arabic*)}]
\item $\mathtt{J}^{0}$ and $\mathtt{J}^{1728}$ to be the subsets of points $x\in \P^1$ such that $\pi^{-1}(x)$ has $j$-invariant $0$ or $1728$ respectively and $\mathtt{J} = \mathtt{J}^{0}~\amalg ~\mathtt{J}^{1728}$.
\item $\mathtt{L}$ to be the linear system $|f_1+f_2|$ on $\P^1\times \P^1$ with the singular divisors removed where $f_1$ and $f_2$ are fibres of the two projection maps. 
\item $ \widetilde{\mathrm{Aut}}(E):= \mathrm{Aut}(E) \setminus\{\pm1\}$. 
\end{enumerate}
\end{re}

\begin{re}\textbf{Parts g-i:} 

The results for parts g-i will all be very similar. The key differences are:
\begin{enumerate}[label={\arabic*)}]
\item The overall factor of $Q_3$ may be different. 
\item The Euler characteristics of the space parametrising the $D$'s may be different. 
\end{enumerate}

We define $U$ to be one of
\begin{enumerate}
\item[g)] ~$\mathtt{Sm}_{1} $ noting that $e(\mathtt{Sm}_{1} \cap \{\sigma \} )= -10$ and $e(\mathtt{Sm}_{1} \setminus \{\sigma \} )= 10$.
\item[h)] ~$\mathtt{Sm}_{2} $ noting that $e(\mathtt{Sm}_{2} \cap \{\sigma \} )= -10$ and $e(\mathtt{Sm}_{2} \setminus \{\sigma \} )= 10$.
\item[i)] ~$E_{\pi(y)}$ for $y\in\mathtt{J}$ noting that $e(E_{\pi(y)}\cap \{\sigma \} )= 1$ and  $e(E_{\pi(y)} \setminus \{\sigma \} )= -1$.\\
\end{enumerate}

\begin{tabu}{|@{}X[c]@{}|}
\hline
\extrarowsep=0.2em
\begin{tabu}{@{}X[1c]|X[1c]@{}}
$U\cap\{\sigma\}$  & $\chi(\O_D) =  0 $ \\
\end{tabu}\\
\hline
\begin{tabu}{@{}X[5.2cm] @{}|@{}X[cm]@{}}
$ e(\eta_\bullet^{-1}(a,x,m_k,\bm{m})) =
 Q_1 Q_2 Q_3^{n} \frac{1}{(\mathsf{V}_{\square \emptyset \emptyset})(\mathsf{V}_{\emptyset \emptyset \emptyset})} \cdot Q_\sigma^a \Phi^{-,\emptyset}(a) \cdot\prod \limits_{i=1}^{12}Q_3^{m_i}\Phi^{\emptyset,\emptyset}(m_i)   $  &  \hfil \includegraphics[width=2cm]{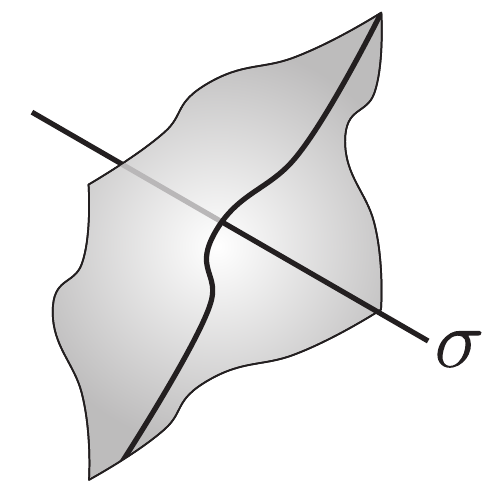} \hfil  \\
\end{tabu}\\
\hline
\extrarowsep=0.2em
\begin{tabu}{@{}X[1c]|X[1c]@{}}
$U\setminus \{\sigma\}$   & $\chi(\O_D) =  0 $ \\
\end{tabu}\\
\hline
\begin{tabu}{@{}X[5.2cm] @{}|@{}X[cm]@{}}
$ e(\eta_\bullet^{-1}(a,x,m_k,\bm{m})) =
 Q_1 Q_2 Q_3^{n}  \frac{1}{(\mathsf{V}_{\emptyset \emptyset \emptyset})^2}\cdot Q_\sigma^a\Phi^{\emptyset,\emptyset}(a) \cdot\prod \limits_{i=1}^{12}Q_3^{m_i}\Phi^{\emptyset,\emptyset}(m_i)   $ &  \hfil \includegraphics[width=2cm]{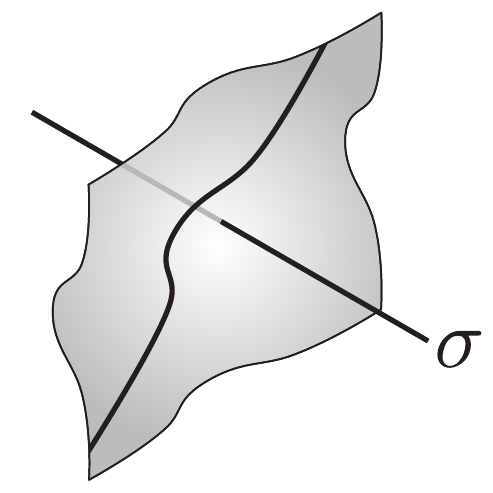} \hfil  \\
\end{tabu}\\
\hline
\end{tabu}
\mbox{}\\

From lemmas \ref{vertex_squares_lemma}, \ref{banana_subpartition_functions} and \ref{vertex_section_formulas} we have:
\begin{enumerate}[label={\arabic*)}]
\item $\mathsf{V}_{\emptyset\emptyset\emptyset} = M(p)$
\item $\mathsf{V}_{\square\emptyset\emptyset} = M(p) \frac{1}{1-p}$
\item $\sum_{m\geq 0} Q^{m}\Phi^{\emptyset,\emptyset}(m) = M(p)^2 \prod\limits_{m>0}(1+p^m Q)^{m}$
\item $\sum_{m\geq 0} Q^{m}\Phi^{-,\emptyset}(m) = M(p)^2\dfrac{1+Q}{1-p} \prod\limits_{m>0}(1+p^{m} Q)^m$
\end{enumerate}

The overall factors of $Q_3^n$ are calculated in \ref{ExE_curve_class_lemma2} to be:
\begin{enumerate}[label={\arabic*)}]
\item $n=4$ for (g) and $n=0$ for (h). 
\item
 If $j(E)=1728$ and $E \cong \C/i$ then
\begin{enumerate}
\item[-] $n=2$ occurs when $D$ is a translation of the graph $\{(x,\pm i x)\}$. 
\end{enumerate}
\item If $j(E)=0$ and $E \cong \C/\tau$ with $\tau=\frac{1}{2}(1+i \sqrt{3})$ then
\begin{enumerate}
\item[-] $n=1$ occurs when $D$ is a translation of the graph $\{(x,-\tau x)\}$ or the graph $\{(x, (\tau-1) x)\}$. 
\item[-] $n=3$ occurs when $D$ is a translation of the graph $\{(x,\tau x)\}$ or the graph $\{(x, (-\tau+1) x)\}$. 
\end{enumerate}
\end{enumerate}
Lastly, in a generic pencil we have $e(\mathtt{J}^0) = 4$ and $e(\mathtt{J}^{1728}) = 6$. \\

Hence the contribution for parts g-i is:
\begin{align*}
&Q_1 Q_2 Q_\sigma M(p)^{24} \left( \prod\limits_{m>0}(1+p^m Q_{\sigma})^{m}(1+p^m Q_3)^{12m}\right)
\left( -10 +8Q_3+12Q_3^2 + 8Q_3^3-10Q_3^4 \right).
\end{align*}\\

\end{re}

\begin{re}\textbf{Part j:} 

In the appendix \ref{L_decomposition} we give the following decomposition for $\mathtt{L}$ into groupings:
\begin{enumerate}[label={\arabic*)}]
\item $ \mathtt{L}_{(0,0), (\infty,\infty)}^\sigma ~\amalg~\mathtt{L}_{(0,0), (\infty,\infty)}^\emptyset ~\amalg~ \mathtt{L}_{(0,\infty), (\infty,0)}^\sigma~\amalg~ \mathtt{L}_{(0,\infty), (\infty,0)}^\emptyset$
\item $ \amalg~ \mathtt{L}_{(0,0)}^\sigma ~\amalg~\mathtt{L}_{(0,0)}^\emptyset ~\amalg~ \mathtt{L}_{(\infty,\infty)}^\sigma  ~\amalg~ \mathtt{L}_{(\infty,\infty)}^\emptyset$
\item $\amalg~ \mathtt{L}_{(0,\infty)}^\sigma ~\amalg~\mathtt{L}_{(0,\infty)}^\emptyset ~\amalg~ \mathtt{L}_{(\infty,0)}^\sigma ~\amalg~ \mathtt{L}_{(\infty,0)}^\emptyset$
\item $ \amalg~ \mathtt{L}_{\emptyset}^\sigma \amalg~ \mathtt{L}_{\emptyset}^\emptyset.$
\end{enumerate}
The Euler characteristics of the parts of this decomposition are computed in \ref{f1+f2_linear_system_decomp} and the overall factors of $Q_1$, $Q_2$ and $Q_3$ are calculated in lemma \ref{FSing_curve_class_lemma2}.\\

\textit{Grouping 1:} The following table is the summary of results from \ref{quot_decomposition_(i,j,bullet)_V_tilde} and \ref{fibre_general_(i,j,bullet)} for the strata in grouping 1:
\[
\mathbb{Z}_{\geq 0} \times U  \times \Sym_{Q_3}^\bullet( \{ b_{\mathsf{op}}^{k}\}) \times \Sym_{Q_3}^\bullet( B_{\mathsf{op}}\setminus \{b_{\mathsf{op}}^{k}\}).
\]

Note that the vertex is different for $ \mathtt{L}_{(0,0), (\infty,\infty)}$ as described in \ref{relative_C3_Vertex_remark}.\\

\begin{tabu}{|@{}X[c]@{}|}
\hline
\extrarowsep=0.2em
\begin{tabu}{@{}X[0.8c]|X[.6c]|X[1c]@{}}
$U  =\mathtt{L}_{(0,0), (\infty,\infty)}^\sigma$ & $e(U) \hspace{-0.05cm}=\hspace{-0.05cm}  1 $ & $\chi(\O_D) =  1 $ \\
\end{tabu}\\
\hline
\begin{tabu}{@{}X[5.2cm] @{}|@{}X[cm]@{}}
$ e(\eta_\bullet^{-1}(a,x,m_k,\bm{m})) =
 Q_1 Q_2 Q_3^{2} p \frac{1}{(\mathsf{V}_{\square \emptyset \emptyset})(\mathsf{V}_{\emptyset \emptyset \emptyset})}  Q_\sigma^a  \Phi^{-,\emptyset}(a) Q_3^{m_k}\Phi^{-,\,\mid\,}(m_k) 
  \prod\limits_{
 \genfrac{.}{.}{0pt}{2}{i=1}{i\neq k}
 }^{12}  
 Q_3^{m_i}\Phi^{\emptyset,\emptyset}(m_i)   $   &  \hfil \includegraphics[width=2cm]{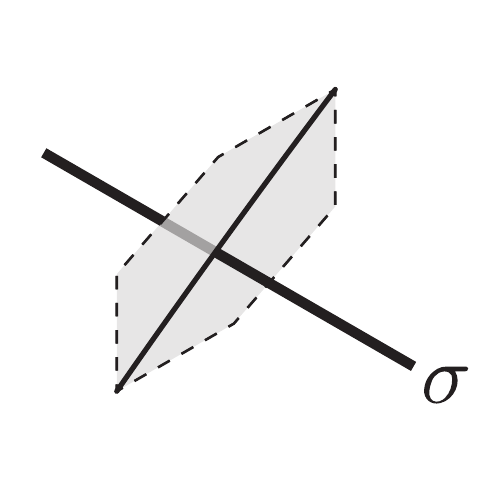} \hfil  \\
\end{tabu}\\
\hline
\extrarowsep=0.2em
\begin{tabu}{@{}X[0.8c]|X[.6c]|X[1c]@{}}
$U  = \mathtt{L}_{(0,0), (\infty,\infty)}^\emptyset $ & $e(U) \hspace{-0.05cm}=\hspace{-0.05cm}  -1 $ & $\chi(\O_D) =  1   $ \\
\end{tabu}\\
\hline
\begin{tabu}{@{}X[5.2cm] @{}|@{}X[cm]@{}}
$ e(\eta_\bullet^{-1}(a,x,m_k,\bm{m})) =
 Q_1 Q_2 Q_3^{2} p \frac{1}{(\mathsf{V}_{\emptyset \emptyset \emptyset})^2}  Q_\sigma^a  \Phi^{\emptyset,\emptyset}(a) Q_3^{m_k}\Phi^{-,\,\mid\,}(m_k) 
  \prod\limits_{
 \genfrac{.}{.}{0pt}{2}{i=1}{i\neq k}
 }^{12}  
 Q_3^{m_i}\Phi^{\emptyset,\emptyset}(m_i)   $   &  \hfil \includegraphics[width=2cm]{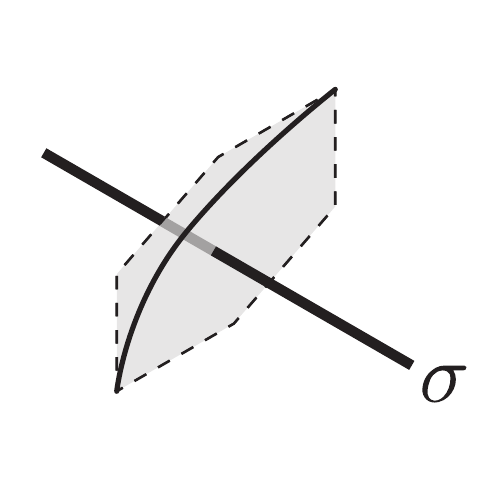} \hfil  \\
\end{tabu}\\
\hline
\extrarowsep=0.2em
\begin{tabu}{@{}X[0.8c]|X[.6c]|X[1c]@{}}
$U  \hspace{-0.05cm}=\hspace{-0.05cm}\mathtt{L}_{(0,\infty), (\infty,0)}^\sigma $ & $e(U) = 1 $ & $\chi(\O_D) =  0  $ \\
\end{tabu}\\
\hline
\begin{tabu}{@{}X[5.2cm] @{}|@{}X[cm]@{}}
$ e(\eta_\bullet^{-1}(a,x,m_k,\bm{m})) =
 Q_1 Q_2 \frac{1}{(\mathsf{V}_{\square \emptyset \emptyset})(\mathsf{V}_{\emptyset \emptyset \emptyset})} \cdot Q_\sigma^a \Phi^{-,\emptyset}(a) \cdot Q_3^{m_k}\Phi^{+,\emptyset}(m_k) 
  \prod\limits_{
 \genfrac{.}{.}{0pt}{2}{i=1}{i\neq k}
 }^{12}  
 Q_3^{m_i}\Phi^{\emptyset,\emptyset}(m_i) $  &  \hfil \includegraphics[width=2cm]{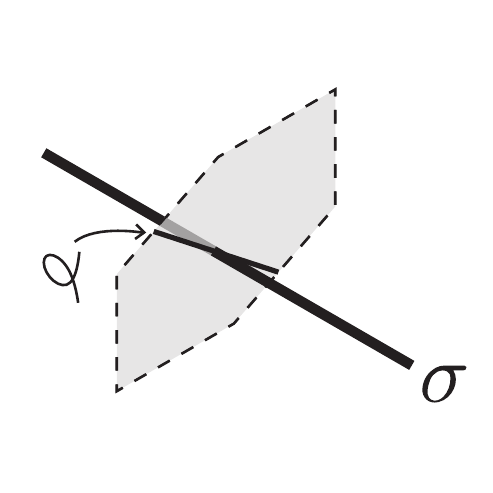} \hfil  \\
\end{tabu}\\
\hline
\extrarowsep=0.2em
\begin{tabu}{@{}X[0.8c]|X[.6c]|X[1c]@{}}
$U  \hspace{-0.05cm}=\hspace{-0.05cm} \mathtt{L}_{(0,\infty), (\infty,0)}^\emptyset $ & $e(U)  =-1$ & $\chi(\O_D) =  0 $ \\
\end{tabu}\\
\hline
\begin{tabu}{@{}X[5.2cm] @{}|@{}X[cm]@{}}
$ e(\eta_\bullet^{-1}(a,x,m_k,\bm{m})) =
Q_1 Q_2 \frac{1}{(\mathsf{V}_{\emptyset \emptyset \emptyset})^2}  \cdot Q_\sigma^b  \Phi^{\emptyset,\emptyset}(b) \cdot Q_3^{m_k}\Phi^{+,\emptyset}(m_k) 
 \prod\limits_{
 \genfrac{.}{.}{0pt}{2}{i=1}{i\neq k}
 }^{12}  
Q_3^{d_i}\Phi^{\emptyset,\emptyset}(d_i)   $  &  \hfil \includegraphics[width=2cm]{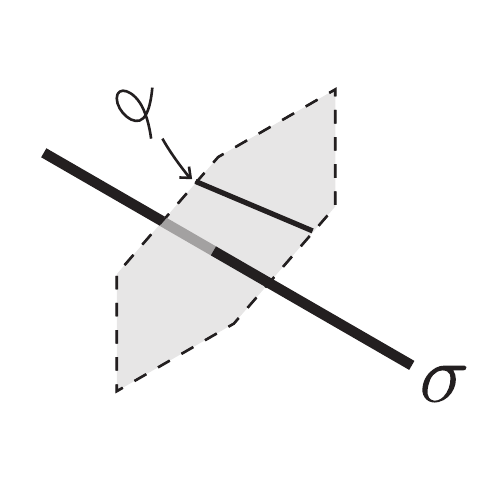} \hfil  \\
\end{tabu}\\
\hline
\end{tabu}

From lemmas \ref{vertex_squares_lemma}, \ref{banana_subpartition_functions} and \ref{vertex_section_formulas} we have:
\begin{enumerate}[label={\arabic*)}]
\item $\mathsf{V}_{\emptyset\emptyset\emptyset} = M(p)$
\item $\mathsf{V}_{\square\emptyset\emptyset} = M(p) \frac{1}{1-p}$
\item $\sum_{m\geq 0} Q^{m}\Phi^{\emptyset,\emptyset}(m) = M(p)^2 \prod\limits_{m>0}(1+p^m Q)^{m}$
\item $\sum_{m\geq 0} Q^{m}\Phi^{-,\emptyset}(m) = M(p)^2\dfrac{1+Q}{1-p} \prod\limits_{m>0}(1+p^{m} Q)^m$
\item $\sum_{m\geq 0} Q^{m}\Phi^{-,-}(m) = M(p)^2\frac{1}{p} (\psi_0 +(\psi_1 +2\psi_0) Q +\psi_0 Q^2) \prod\limits_{m>0}(1+p^m Q)^{m}$
\item {\small $\sum_{m\geq 0} Q^{m}\Phi^{-,\,\mid\,}(m) = M(p)^2(\psi_0 +(2\psi_0+ \psi_1 ) Q +(\psi_0+\psi_1) Q^2) \prod\limits_{m>0}(1+p^m Q)^{m}$}
\item $\sum_{m\geq 0} Q^{m}\Phi^{+,\emptyset}(m) =M(p)^2(\psi_1+\psi_0 +(\psi_1 +2\psi_0) Q +\psi_0 Q^2) \prod\limits_{m>0}(1+p^m Q)^{m}$.
\end{enumerate}

So the after accounting for the 12 singular fibres we have the contribution from grouping 1 as:
\begin{align*}
& Q_1 Q_2 M(p)^{24} \left( \prod\limits_{m>0}(1-p^m Q_{\sigma})^{m}(1-p^m Q_3)^{12m}\right)\\
&\cdot 12 Q_\sigma Q_3^2 
\left( (\psi_0+\psi_1)+ Q_3 (2 \psi_0+\psi_1)+2 Q_3^2 \psi_0+Q_3^3 (2 \psi_0+\psi_1)+Q_3^4 (\psi_0+\psi_1)
\right)
\end{align*}

\textit{Grouping 2:} We compute the results for $\mathtt{L}_{(0,0)}$ with $\mathtt{L}_{(\infty,\infty)}$ being completely analogous. The following table is the summary of results from \ref{quot_decomposition_(i,j,bullet)_V_tilde} and \ref{fibre_general_(i,j,bullet)} for the strata in grouping 2:
\[
\mathbb{Z}_{\geq 0} \times U  \times \Sym_{Q_3}^\bullet( \{ b_{\mathsf{op}}^{k}\}) \times \Sym_{Q_3}^\bullet( B_{\mathsf{op}}\setminus \{b_{\mathsf{op}}^{k}\}).
\]

\begin{tabu}{|@{}X[c]@{}|}
\hline
\extrarowsep=0.2em
\begin{tabu}{@{}X[0.89c]|X[.55c]|X[1c]@{}}
$U  =\mathtt{L}_{(0,0)}^\sigma$ & $e(U) \hspace{-0.05cm}=\hspace{-0.05cm}  -1 $ & $\chi(\O_D) =  1 $ \\
\end{tabu}\\
\hline
\begin{tabu}{@{}X[5.2cm] @{}|@{}X[cm]@{}}
$ e(\eta_\bullet^{-1}(a,x,m_k,\bm{m})) =
Q_1 Q_2 Q_3^2 p\frac{1}{(\mathsf{V}_{\emptyset \emptyset \emptyset})^2} Q_\sigma^a \Phi^{-,\emptyset}(a) Q_3^{m_k}\Phi^{-,\emptyset}(m_k) 
 \prod\limits_{
 \genfrac{.}{.}{0pt}{2}{i=1}{i\neq k}
 }^{12}  
Q_3^{m_i}\Phi^{\emptyset,\emptyset}(m_i) $ 
    &  \hfil \includegraphics[width=2cm]{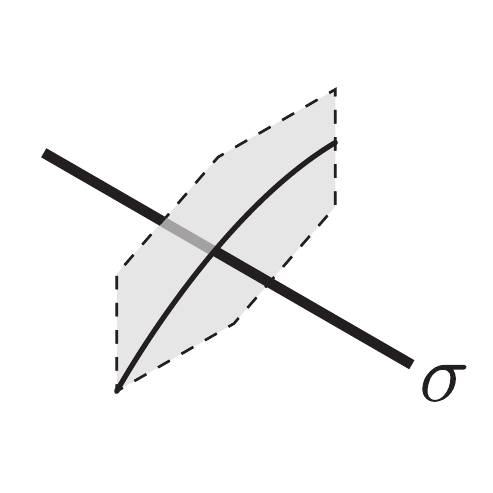} \hfil  \\
\end{tabu}\\
\hline
\extrarowsep=0.2em
\begin{tabu}{@{}X[0.89c]|X[.55c]|X[1c]@{}}
$U  = \mathtt{L}_{(0,0)}^\emptyset $ & $e(U) \hspace{-0.05cm}=\hspace{-0.05cm}  1 $ & $\chi(\O_D) =  1   $ \\
\end{tabu}\\
\hline
\begin{tabu}{@{}X[5.2cm] @{}|@{}X[cm]@{}}
$ e(\eta_\bullet^{-1}(a,x,m_k,\bm{m})) =
   Q_1 Q_2 Q_3^2 p\frac{(\mathsf{V}_{\square \emptyset \emptyset})}{(\mathsf{V}_{\emptyset \emptyset \emptyset})^3} Q_\sigma^a \Phi^{\emptyset,\emptyset}(a) Q_3^{m_k}\Phi^{-,\emptyset}(m_k) 
    \prod\limits_{
 \genfrac{.}{.}{0pt}{2}{i=1}{i\neq k}
 }^{12}  
   Q_3^{m_i}\Phi^{\emptyset,\emptyset}(m_i) $ 
      &  \hfil \includegraphics[width=2cm]{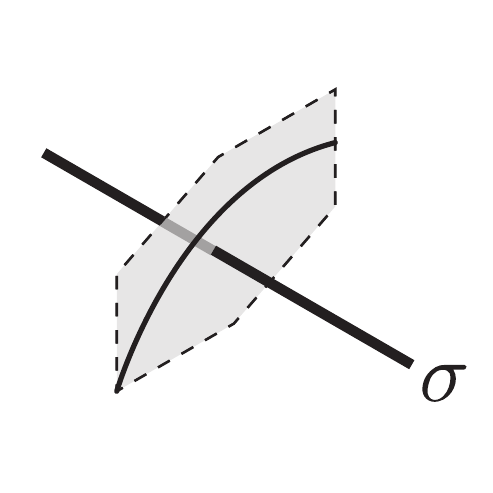} \hfil  \\
\end{tabu}\\
\hline
\end{tabu}
\mbox{}\\

From lemmas \ref{vertex_squares_lemma}, \ref{banana_subpartition_functions} and \ref{vertex_section_formulas} we have:
\begin{enumerate}[label={\arabic*)}]
\item $\mathsf{V}_{\emptyset\emptyset\emptyset} = M(p)$
\item $\mathsf{V}_{\square\emptyset\emptyset} = M(p) \frac{1}{1-p}$
\item $\sum_{m\geq 0} Q^{m}\Phi^{\emptyset,\emptyset}(m) = M(p)^2 \prod\limits_{m>0}(1+p^m Q)^{m}$
\item $\sum_{m\geq 0} Q^{m}\Phi^{-,\emptyset}(m) = M(p)^2\dfrac{1+Q}{1-p} \prod\limits_{m>0}(1+p^{m} Q)^m$
\end{enumerate}

Accounting for both $\mathtt{L}_{(0,0)}$ and $\mathtt{L}_{(\infty,\infty)}$,  the contribution for grouping 2 is:
\begin{align*}
&Q_1 Q_2  M(p)^{24} \left(\prod\limits_{m>0}(1+p^m Q_\sigma)^{m}(1+p^m Q_3)^{12m}\right)\\
&\cdot (-24)  Q_\sigma Q_3^2
 \left(\psi_0 + Q_3 \psi_0 \right)
\end{align*}

\textit{Grouping 3:} We compute the results for $\mathtt{L}_{(0,\infty)}$ with $\mathtt{L}_{(\infty,0)}$ being completely analogous. The following table is the summary of results from \ref{quot_decomposition_(i,j,bullet)_V_tilde} and \ref{fibre_general_(i,j,bullet)} for the strata in grouping 3:
\[
\mathbb{Z}_{\geq 0} \times U  \times \Sym_{Q_3}^\bullet( \{ b_{\mathsf{op}}^{k}\}) \times \Sym_{Q_3}^\bullet( B_{\mathsf{op}}\setminus \{b_{\mathsf{op}}^{k}\}).
\]

\begin{tabu}{|@{}X[c]@{}|}
\hline
\extrarowsep=0.2em
\begin{tabu}{@{}X[0.89c]|X[.55c]|X[1c]@{}}
$U  =\mathtt{L}_{(0,\infty)}^\sigma$ & $e(U) \hspace{-0.05cm}=\hspace{-0.05cm}  -1 $ & $\chi(\O_D) =  1 $ \\
\end{tabu}\\
\hline
\begin{tabu}{@{}X[5.2cm] @{}|@{}X[cm]@{}}
$ e(\eta_\bullet^{-1}(a,x,m_k,\bm{m})) =
Q_1 Q_2 Q_3 p\frac{1}{(\mathsf{V}_{\emptyset \emptyset \emptyset})^2} Q_\sigma^a \Phi^{-,\emptyset}(a) Q_3^{m_k}\Phi^{-,\emptyset}(m_k)
 \prod\limits_{
 \genfrac{.}{.}{0pt}{2}{i=1}{i\neq k}
 }^{12}  
Q_3^{m_i}\Phi^{\emptyset,\emptyset}(m_i) $ 
   &  \hfil \includegraphics[width=2cm]{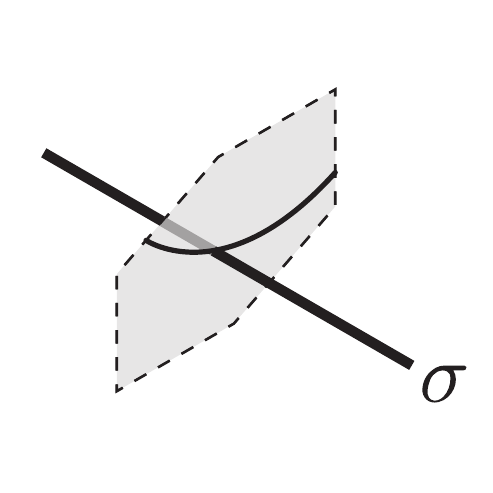} \hfil  \\
\end{tabu}\\
\hline
\extrarowsep=0.2em
\begin{tabu}{@{}X[0.89c]|X[.55c]|X[1c]@{}}
$U  = \mathtt{L}_{(0,\infty)}^\emptyset $ & $e(U) \hspace{-0.05cm}=\hspace{-0.05cm}  1 $ & $\chi(\O_D) =  1   $ \\
\end{tabu}\\
\hline
\begin{tabu}{@{}X[5.2cm] @{}|@{}X[cm]@{}}
$ e(\eta_\bullet^{-1}(a,x,m_k,\bm{m})) =
   Q_1 Q_2 Q_3 p\frac{(\mathsf{V}_{\square \emptyset \emptyset})}{(\mathsf{V}_{\emptyset \emptyset \emptyset})^3} Q_\sigma^a \Phi^{\emptyset,\emptyset}(a) Q_3^{m_k}\Phi^{-,\emptyset}(m_k) 
    \prod\limits_{
 \genfrac{.}{.}{0pt}{2}{i=1}{i\neq k}
 }^{12}  
   Q_3^{m_i}\Phi^{\emptyset,\emptyset}(m_i) $ 
  &  \hfil \includegraphics[width=2cm]{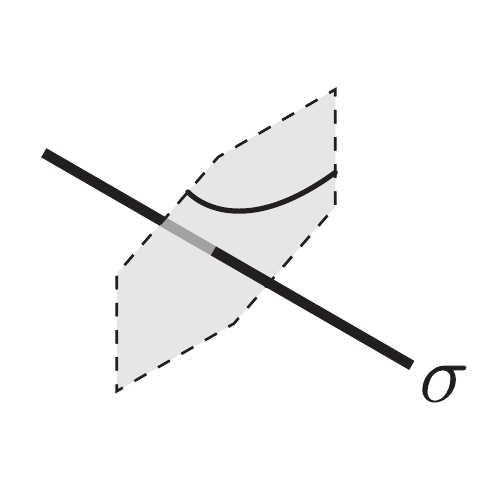} \hfil  \\
\end{tabu}\\
\hline
\end{tabu}
\mbox{}\\

From lemmas \ref{vertex_squares_lemma}, \ref{banana_subpartition_functions} and \ref{vertex_section_formulas} we have:
\begin{enumerate}[label={\arabic*)}]
\item $\mathsf{V}_{\emptyset\emptyset\emptyset} = M(p)$
\item $\mathsf{V}_{\square\emptyset\emptyset} = M(p) \frac{1}{1-p}$
\item $\sum_{m\geq 0} Q^{m}\Phi^{\emptyset,\emptyset}(m) = M(p)^2 \prod\limits_{m>0}(1+p^m Q)^{m}$
\item $\sum_{m\geq 0} Q^{m}\Phi^{-,\emptyset}(m) = M(p)^2\dfrac{1+Q}{1-p} \prod\limits_{m>0}(1+p^{m} Q)^m$
\end{enumerate}

Accounting for both $\mathtt{L}_{(0,\infty)}$ and $\mathtt{L}_{(\infty,0)}$,  the contribution for grouping 3 is:
\begin{align*}
&Q_1 Q_2  M(p)^{24} \left(\prod\limits_{m>0}(1+p^m Q_\sigma)^{m}(1+p^m Q_3)^{12m}\right)\\
&\cdot (-24)  Q_\sigma Q_3^2
 \left(\psi_0 + Q_3 \psi_0 \right)
\end{align*}

\textit{Grouping 4:} The following table is the summary of results from \ref{quot_decomposition_(i,j,bullet)_V_tilde} and \ref{fibre_general_(i,j,bullet)} for the strata in grouping 4:
\[
\mathbb{Z}_{\geq 0} \times U  \times \Sym_{Q_3}^\bullet( \{ b_{\mathsf{op}}^{k}\}) \times \Sym_{Q_3}^\bullet( B_{\mathsf{op}}\setminus \{b_{\mathsf{op}}^{k}\}).
\]

\begin{tabu}{|@{}X[c]@{}|}
\hline
\extrarowsep=0.2em
\begin{tabu}{@{}X[0.89c]|X[.55c]|X[1c]@{}}
$U  =\mathtt{L}_{\emptyset}^\sigma$ & $e(U) \hspace{-0.05cm}=\hspace{-0.05cm}  2 $ & $\chi(\O_D) =  1 $ \\
\end{tabu}\\
\hline
\begin{tabu}{@{}X[5.2cm] @{}|@{}X[cm]@{}}
$ e(\eta_\bullet^{-1}(a,x,m_k,\bm{m})) =
Q_1 Q_2 Q_3^2 p\frac{(\mathsf{V}_{\square \emptyset \emptyset})}{(\mathsf{V}_{\emptyset \emptyset \emptyset})^3} Q_\sigma^a \Phi^{-,\emptyset}(a) \prod\limits_{i=1}^{12}  Q_3^{m_i}\Phi^{\emptyset,\emptyset}(m_i) $ 
   &  \hfil \includegraphics[width=2cm]{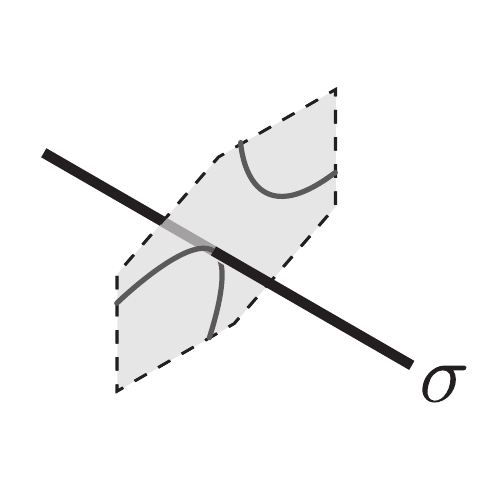} \hfil  \\
\end{tabu}\\
\hline
\extrarowsep=0.2em
\begin{tabu}{@{}X[0.89c]|X[.55c]|X[1c]@{}}
$U  = \mathtt{L}_{\emptyset}^\emptyset $ & $e(U) \hspace{-0.05cm}=\hspace{-0.05cm}  -2 $ & $\chi(\O_D) =  1   $ \\
\end{tabu}\\
\hline
\begin{tabu}{@{}X[5.2cm] @{}|@{}X[cm]@{}}
$ e(\eta_\bullet^{-1}(a,x,m_k,\bm{m})) =
Q_1 Q_2 Q_3^2 p\frac{(\mathsf{V}_{\square \emptyset \emptyset})^2}{(\mathsf{V}_{\emptyset \emptyset \emptyset})^4} Q_\sigma^a \Phi^{\emptyset,\emptyset}(a) \prod\limits_{i=1}^{12}  Q_3^{m_i}\Phi^{\emptyset,\emptyset}(m_i) $ 
  &  \hfil \includegraphics[width=2cm]{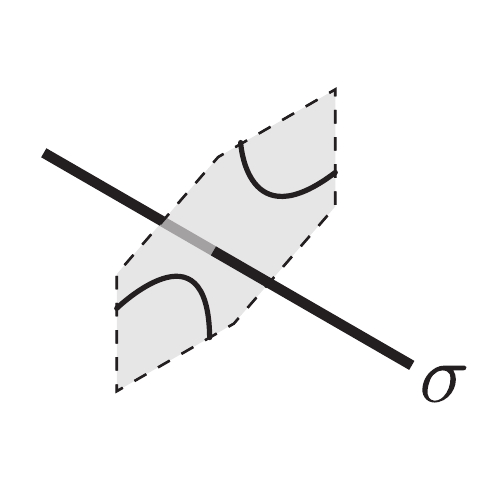} \hfil  \\
\end{tabu}\\
\hline
\end{tabu}
\mbox{}\\

From lemmas \ref{vertex_squares_lemma}, \ref{banana_subpartition_functions} and \ref{vertex_section_formulas} we have:
\begin{enumerate}[label={\arabic*)}]
\item $\mathsf{V}_{\emptyset\emptyset\emptyset} = M(p)$
\item $\mathsf{V}_{\square\emptyset\emptyset} = M(p) \frac{1}{1-p}$
\item $\sum_{m\geq 0} Q^{m}\Phi^{\emptyset,\emptyset}(m) = M(p)^2 \prod\limits_{m>0}(1+p^m Q)^{m}$
\item $\sum_{m\geq 0} Q^{m}\Phi^{-,\emptyset}(m) = M(p)^2\dfrac{1+Q}{1-p} \prod\limits_{m>0}(1+p^{m} Q)^m$
\end{enumerate}

So the contribution for grouping 4 is:
\begin{align*}
&Q_1 Q_2 M(p)^{24} \left(\prod\limits_{m>0}(1+p^m Q_\sigma)^{m}(1+p^m Q_3)^{12m}\right)\cdot 24 Q_\sigma Q_3^2 \psi_0
\end{align*}

Combining groupings 1-4 we have the overall contribution for part j is:
\begin{align*}
&Q_1 Q_2 M(p)^{24} \left(\prod\limits_{m>0}(1+p^m Q_\sigma)^{m}(1+p^m Q_3)^{12m}\right)\\
&\cdot 12 Q_\sigma  \Big((\psi_0 +\psi_1) + Q_3 \psi_1 +Q_3^3 \psi_1+Q_3^4(\psi_0 +\psi_1)\Big)
\end{align*}\\

\end{re}

\section{Appendix}

\subsection{Connected Invariants and their Partition Functions}\label{connected_section}

For the rank four sub-lattice $\Gamma \subset H_2(X, \mathbb{Z})$ generated by a section and banana curves, we can consider the \textit{connected} unweighted Pandharipande-Thomas invariants. They are defined formally via the following partition function
\[
\widehat{Z}^{\mathsf{PT}, \mathsf{Con}}_{\Gamma} (X) := \log\left(\frac{ \widehat{Z}_{\Gamma}(X) }{ \widehat{Z}_{(0,\bullet,\bullet)}|_{Q_i=0} }\right). 
\]
For the partition function in theorem \ref{main_DT_calc_theorem_A} we consider the first terms of the expansion in $Q_\sigma$ and $Q_1$:
\begin{align*}
\frac{ \widehat{Z}_{\Gamma}(X) }{ \widehat{Z}_{(0,\bullet,\bullet)}|_{Q_i=0} }
& = \frac{\widehat{Z}_{(0,\bullet,\bullet)} }{ \widehat{Z}_{(0,\bullet,\bullet)}|_{Q_i=0} } + Q_\sigma \frac{\widehat{Z}_{\sigma + (0,\bullet,\bullet)} }{ \widehat{Z}_{(0,\bullet,\bullet)}|_{Q_i=0} }+ \cdots \\
& =  \frac{\widehat{Z}_{(0,\bullet,\bullet)} }{ \widehat{Z}_{(0,\bullet,\bullet)}|_{Q_i=0} } \Big( 1 + Q_\sigma\frac{\widehat{Z}_{\sigma + (0,\bullet,\bullet)} }{\widehat{Z}_{(0,\bullet,\bullet)}}+ \cdots\Big).
\end{align*}
So the first terms of the expansion in $Q_\sigma$ and $Q_1$ of the connected partition function are:
\[
\widehat{Z}^{\mathsf{PT},\mathsf{Con}}_{\Gamma}(X)  
=
 \frac{\widehat{Z}_{(0,\bullet,\bullet)} }{ \widehat{Z}_{(0,\bullet,\bullet)}|_{Q_i=0} } 
-  Q_\sigma \frac{\widehat{Z}_{\sigma + (0,\bullet,\bullet)} }{\widehat{Z}_{(0,\bullet,\bullet)}}
+ \cdots. 
\]
In particular we have the connected version of $\widehat{Z}_{\sigma + (0,\bullet,\bullet)} $ as:
\begin{align*}
\widehat{Z}^{\mathsf{PT},\mathsf{Con}}_{\sigma + (0,\bullet,\bullet)}  
 = \frac{-p}{(1-p)^2} \prod_{m>0} \frac{1}{(1-Q_2^m  Q_3^m)^{8}(1-pQ_2^m  Q_3^m)^{2}(1-p^{-1}Q_2^m  Q_3^m)^{2}},
\end{align*}
proving corollary \ref{PT_sigma+(0,bullet,bullet)}. For the partition function in theorem \ref{main_DT_calc_theorem_B} we consider the first terms of the expansion in $Q_1$ and $Q_2$:
\begin{align*}
&\frac{ \widehat{Z}_{\Gamma}(X) }{ \widehat{Z}_{(0,\bullet,\bullet)}|_{Q_i=0} }\\
& = \frac{\widehat{Z}_{(0,\bullet,\bullet)} }{ \widehat{Z}_{(0,\bullet,\bullet)}|_{Q_i=0} } \Big(1  + Q_1 \frac{\widehat{Z}_{\bullet\sigma + (1,0,\bullet)}}{\widehat{Z}_{\bullet\sigma + (0,0,\bullet)}}  + Q_2 \frac{\widehat{Z}_{\bullet\sigma + (0,1,\bullet)}}{\widehat{Z}_{\bullet\sigma + (0,0,\bullet)}}  + Q_1 Q_2 \frac{\widehat{Z}_{\bullet\sigma + (1,1,\bullet)}}{\widehat{Z}_{\bullet\sigma + (0,0,\bullet)}} + \cdots \Big).
\end{align*}
So the first terms of the expansion in $Q_1$ and $Q_2$ of the connected partition function are:
\begin{align*}
\widehat{Z}^{\mathsf{PT},\mathsf{Con}}_{\Gamma}(X)  
=~&  \log\left(\frac{\widehat{Z}_{(0,\bullet,\bullet)} }{ \widehat{Z}_{(0,\bullet,\bullet)}|_{Q_i=0} } \right)
- Q_1 \frac{\widehat{Z}_{\bullet\sigma + (1,0,\bullet)}}{\widehat{Z}_{\bullet\sigma + (0,0,\bullet)}}  - Q_2 \frac{\widehat{Z}_{\bullet\sigma + (0,1,\bullet)}}{\widehat{Z}_{\bullet\sigma + (0,0,\bullet)}}\\
&  + Q_1 Q_2\left(\frac{\widehat{Z}_{\bullet\sigma + (1,0,\bullet)}\widehat{Z}_{\bullet\sigma + (0,1,\bullet)}}{\big(\widehat{Z}_{\bullet\sigma + (0,0,\bullet)}\big)^2} -  \frac{\widehat{Z}_{\bullet\sigma + (1,1,\bullet)}}{\widehat{Z}_{\bullet\sigma + (0,0,\bullet)}}  \right)+ \cdots
\end{align*}
In particular we have the connected version of $\widehat{Z}_{\bullet \sigma + (0,0,\bullet)} $ as
\begin{align*}
\widehat{Z}^{\mathsf{PT}, \mathsf{Con}}_{\bullet \sigma + (0,0,\bullet)}
=~&
\log\left(\frac{\widehat{Z}_{\bullet\sigma + (0,0,\bullet)}}{\widehat{Z}_{(0,\bullet,\bullet)}|_{Q_i =0}}\right)\\
=~&
\log\left( \prod\limits_{m>0}(1+p^m Q_{\sigma})^{m}(1+p^m Q_3)^{12m}\right)\\
=~&
\sum_{n>0}  \frac{p^n}{(1-p^n)^2}\frac{(-Q_\sigma)^n}{n}
+ \sum_{n>0} 12  \frac{p^n}{(1-p^n)^2}\frac{(-Q_3)^n}{n}\\
=~&
\sum_{n>0}  \psi_0(p^n) \frac{(-Q_\sigma)^n}{n}
+ \sum_{n>0} 12 \psi_0(p^n) \frac{(-Q_3)^n}{n}
\end{align*}
and the connected version of $\widehat{Z}_{\bullet \sigma + (0,1,\bullet)} $ (and also of $\widehat{Z}_{\bullet \sigma + (1,0,\bullet)}$) given by:
\[
\widehat{Z}^{\mathsf{PT},\mathsf{Con}}_{\bullet \sigma + (0,1,\bullet)}  = -\Big(\big(12\psi_0 +Q_3 (24\psi_0+12\psi_1)  +Q_3^2 (12\psi_0) \big) + Q_\sigma Q_3 \big(\psi_0 +2\psi_1\big) \Big)
\]
and the connected version of $\widehat{Z}_{\bullet \sigma + (1,1,\bullet)}$ given by:
 \begin{align*}
\widehat{Z}&^{\mathsf{PT},\mathsf{Con}}_{\bullet \sigma + (1,1,\bullet)}\\
=&~ \Big( 12\big( Q_3^4 (2 \psi_0+\psi_1) + Q_3^3 (8 \psi_0+6 \psi_1+\psi_2)+Q_3^2 (12 \psi_0+10 \psi_1+2 \psi_2) 
\\
&\hspace{1.7cm}
+ Q_3 (8 \psi_0+6 \psi_1+\psi_2) + (2 \psi_0+\psi_1)\big)\Big)
\\
&
+Q_\sigma\Big(\big( 12\psi_0+2\psi_1\big) + Q_3\big(48 \psi_0+44\psi_1 \big) 
+ Q_3^2\big(216\psi_0+108\psi_1+24 \psi_2\big)\\
&\hspace{1.7cm}+ Q_3^3\big(48 \psi_0+44\psi_1 \big)  + Q_3^4\big(12\psi_0+2\psi_1\big)  \Big).
\end{align*}

Corollary \ref{GV_bsigma+(i,i,bullet)}  now follows immediately. 

\subsection{Linear System in \texorpdfstring{$\P^1\times \P^1$}{P1xP1}}

In this section we consider a stratification of the following linear system in $\P^1\times \P^1$ with strata determined by the intersections of the associated divisors with a collection of points. \\

Consider the fibres of the projection maps $\mathrm{pr}_i: \P^1\times \P^1 \rightarrow \P^1$ and a fibre from each $f_i$. The linear system in $\P^1\times \P^1$ defined by the sum of a fibre from each map is $| f_1 +f_2 | = \P^3$. This is the collection of bi-homogeneous polynomials of degree $(1,1)$: 
\[
\Big\{~ a x_0 y_0 + b x_0 y_1 + c x_1 y_0 + d x_1 y_1 =0 ~\Big|~ [a:b:c:d] \in \P^3~\Big\}
\]

\begin{re}
There are five points in $\P^1\times\P^1$ that are of interest to us: 
\[
\sigma = \big([1:1], [1:1]\big) 
\hspace{0.5cm}\mbox{and}\hspace{0.5cm}
\mathsf{P}:= \big\{(0,0), (0,\infty), (\infty,0), (\infty,\infty) \Big\}. 
\]
where we have used the standard notation $0=[0:1]$ and $\infty = [1:0]$. We will decompose $| f_1 +f_2 |$ into strata based on which points the divisor intersects. Consider a divisor $D\in |f_1+f_2|$. Then $D$ passes through:
\begin{enumerate}[label={\arabic*)}]
\item $\big(0,0\big)$ if and only if $d=0$;
\item $\big(0,\infty\big)$ if and only if $c=0$;
\item $\big(\infty,0\big)$ if and only if $b=0$;
\item $\big(\infty,\infty\big)$ if and only if $a=0$.
\end{enumerate}
\end{re}

\begin{re} \label{L_decomposition}
Define the following convenient notation for $y,x \in \mathsf{P}$: 
\begin{enumerate}[label={\arabic*)}]
\item $\mathsf{Sing} \subset | f_1 +f_2 | $ is the subset of singular divisors.
\item $\mathtt{L}_{\emptyset} \subset \big(| f_1 +f_2 | \setminus  \mathsf{Sing} \big)$ is the subset of smooth curves not passing through any points of $\mathsf{P}$ .
\item $\mathtt{L}_{x} \subset \big(| f_1 +f_2 | \setminus  \mathsf{Sing} \big)$ is the subset of smooth curve passing through $x$ but no other points of $\mathsf{P}$ .
\item $\mathtt{L}_{x,y} \subset \big(| f_1 +f_2 | \setminus  \mathsf{Sing} \big)$ is the subset of smooth curve passing through $x$ and $y$ but no other points of $\mathsf{P}$.
\item Also let $\mathtt{L}^\sigma_{\emptyset}$, $\mathtt{L}^\sigma_{x}$ and $\mathtt{L}^\sigma_{x,y}$ be subsets of $\mathtt{L}_{\emptyset}$, $\mathtt{L}_{x}$ and $\mathtt{L}_{x,y}$ respectively with the further condition that the curves pass through $\sigma$. 
\item Let $\mathtt{L}^\emptyset_{\emptyset}$, $\mathtt{L}^\emptyset_{x}$ and $\mathtt{L}^\emptyset_{x,y}$ be the complements of $\mathtt{L}^\sigma_{\emptyset}$, $\mathtt{L}^\sigma_{x}$ and $\mathtt{L}^\sigma_{x,y}$ in $\mathtt{L}_{\emptyset}$, $\mathtt{L}_{x}$ and $\mathtt{L}_{x,y}$ respectively. 
\end{enumerate}

\begin{figure}
\centering\vspace{-1cm}
\begin{tabu}{ccc}
  $\mathtt{L}^\sigma_{(0,0), (\infty,\infty)}  : \hspace{-1em}\begin{array}{c}\includegraphics[width=2.5cm]{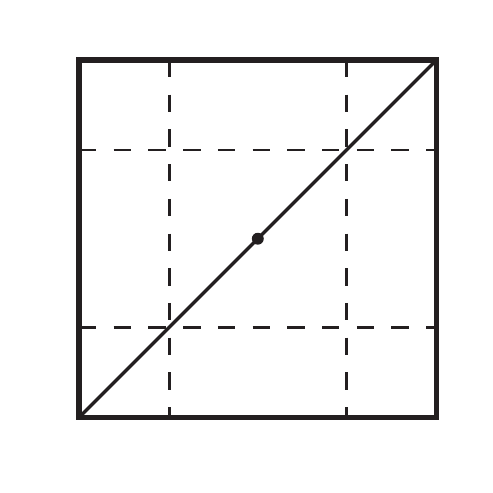}\end{array}$
&
 $\mathtt{L}^\sigma_{(0,0)}  : \hspace{-1em}\begin{array}{c}\includegraphics[width=2.5cm]{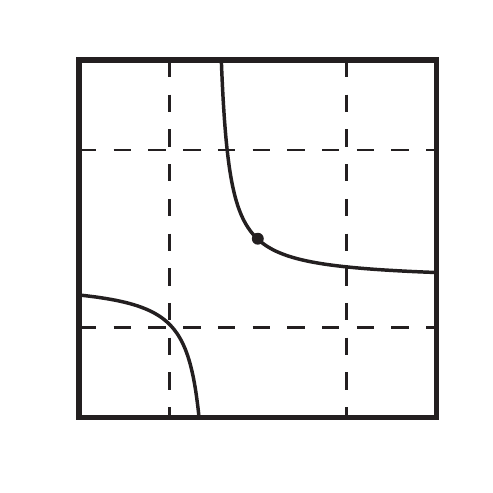}\end{array}$
 &
 $\mathtt{L}^\sigma_\emptyset : \hspace{-1em}\begin{array}{c}\includegraphics[width=2.5cm]{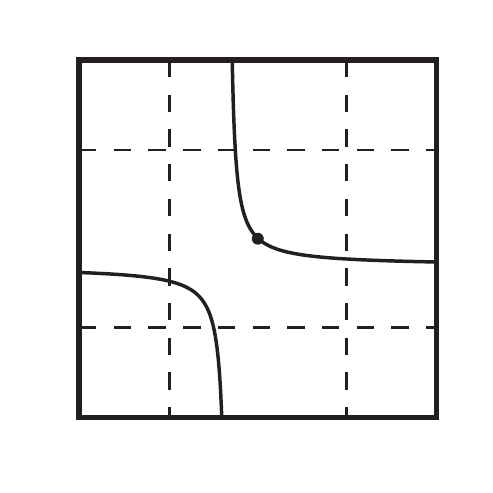} 
\end{array}$
 \vspace{-1cm}\\
$\mathtt{L}^\emptyset_{(0,0), (\infty,\infty)} : \hspace{-1em}\begin{array}{c}\includegraphics[width=2.5cm]{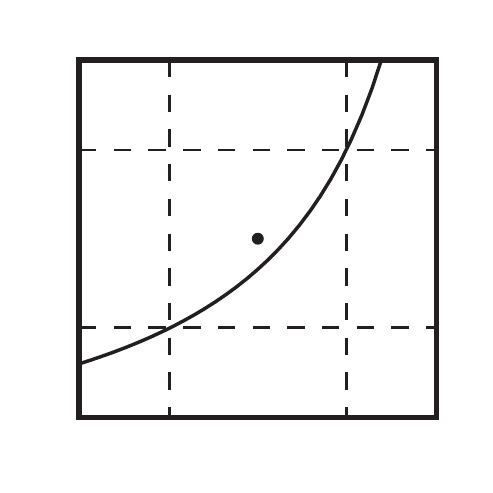} 
\end{array}$
&
$\mathtt{L}^\emptyset_{(0,0)} : \hspace{-1em}\begin{array}{c}\includegraphics[width=2.5cm]{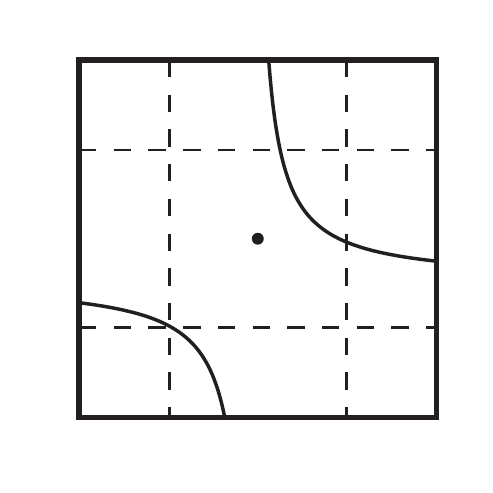} 
\end{array}$
&
$\mathtt{L}^\emptyset_\emptyset : \hspace{-1em}\begin{array}{c}\includegraphics[width=2.5cm]{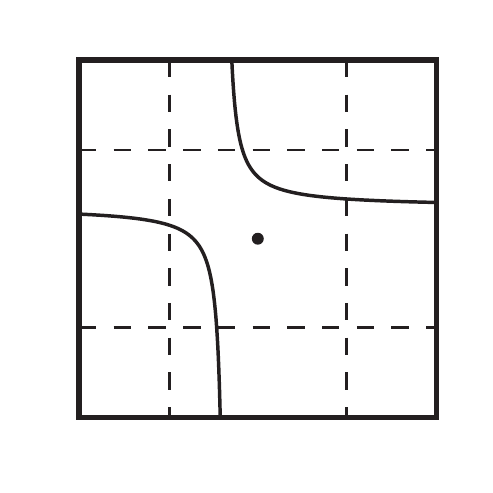} 
\end{array}$ \vspace{-0.2cm}
\end{tabu}
  \caption[Linear system $|f_1 +f_2|$ on $\P^1\times \P^1$.]{Depictions of the curves in the decomposition of the linear system $|f_1 +f_2|$ on $\P^1\times \P^1$.}
\end{figure}

With this notation we have the following decomposition of $| f_1 +f_2 |$:
\begin{align*}
| f_1 +f_2 |  
=&~
\mathsf{Sing} ~\amalg~ \mathtt{L}_{(0,0), (\infty,\infty)} ~\amalg~ \mathtt{L}_{(0,\infty), (\infty,0)}\\
&~ \hspace{2.2em}\amalg~ \mathtt{L}_{(0,0)} ~\amalg~ \mathtt{L}_{(0,\infty)} ~\amalg~ \mathtt{L}_{(\infty,0)} ~\amalg~ \mathtt{L}_{(\infty,\infty)}\\
&~ \hspace{2.2em}\amalg~ \mathtt{L}_{\emptyset}.
\end{align*}
\end{re}

\begin{re} \label{f1+f2_linear_system_decomp}
We now consider the strata of this collection and their Euler characteristics:
\begin{itemize}
\item[$\mathsf{ban}$:]  A curve in $|f_1+f_2|$ is singular if and only if the equation for the curve factorises:
\[
a x_0 y_0 + b x_0 y_1 + c x_1 y_0 + d x_1 y_1 = (\alpha x_0 +\beta x_1) (\gamma y_0 + \delta y_1) = 0
\]
where $[\alpha:\beta], [\gamma:\delta] \in \P^1$. Hence $\mathsf{Sing} \cong \P^1 \times \P^1$ and the Euler characteristic is $e(\mathsf{ban}) = e(|f_1+f_2|)=4$. 

\item[$ \mathtt{L}_{x, y}$:] We consider for $x=(0,0)$ and $y=(\infty,\infty)$ with the case $(0,\infty)$ and $(\infty,0)$ being completely analogous. The points $[a:b:d:c]\in |f_1+f_2|$ correspond to a curve passing through $x$ and $y$ if and only if $a=d=0$. Moreover, this is singular when either $b=0$ or $c=0$. Hence $ \mathtt{L}_{x, y}\cong \P^1\setminus\{0,\infty\}$ and $e(\mathtt{L}_{x, y}) = 0$.\\

\vspace{-0.1cm}
\noindent The set $\mathtt{L}^\sigma_{x, y}$ is when $b+c =0$, which is a point in $\P^1$. So we have $e(\mathtt{L}^\sigma_{x, y}) = 1$ and $e(\mathtt{L}^\emptyset_{x, y}) = -1$. 

\item[$ \mathtt{L}_{x}$:] We consider the case $x=(0,0)$ with the other cases being completely analogous. So the subspace of all divisors passing through $x$ is $[a:b:d:c]\in |f_1+f_2|$ where $d=0$. This is a $\P^2\subset \P^3$. The subspace where the curve doesn't pass through one of the other points is where $a,b,c\neq 0$ which is given by $\C^*\times \C^* \cong \P^2 \setminus \big(\{a=0\} \cup \{b=0\} \cup \{c=0\}\big)$. None of the equations for these curves factorise since such a factorisation would require either $b=0$ or $c=0$. Hence, $\mathtt{L}_{x} \cong \C^*\times \C^*$ and $e(\mathtt{L}_{x}) = 0$. \\

\vspace{-0.1cm}
\noindent The subset $\mathtt{L}^\sigma_{x}$ is defined by the further condition $a+b+c=0$ which gives
\[
\mathtt{L}^\sigma_{x} = \Big\{~ [a:b:c]\in \P^2 ~\Big|~ a,b,c \neq 0 \mbox{ and } a+b=1~\Big\} \cong \C^* \setminus \mathrm{pt}
\]
Hence we have the Euler characteristics $e(\mathtt{L}^\sigma_{x}) =-1$ and $e(\mathtt{L}^\emptyset_{x}) = 1$. 

\item[$ \mathtt{L}_{\emptyset}$:] The set of curves not passing through any points of $\mathsf{P}$ is given by 
\begin{align*}
\Big\{~  [a:b:c:d] \in |f_1+f_2| ~\Big|~ a,b,c,d \neq 0 ~\Big\}  \cong \Big\{~  b,c,d\in(\C^{*})^3  ~\Big\}
\end{align*}
The singular curves are given by the factorisation condition:
\begin{align*}
 x_0 y_0 + b x_0 y_1 + c x_1 y_0 + d x_1 y_1 = ( x_0 +\beta x_1) (y_0 + \delta y_1) 
\end{align*}
which is the condition that $d=bc$. So the subspace of curves which are singular is $(\C^*)^2 \subset (\C^*)^3$. Hence $\mathtt{L}_{\emptyset} \cong \{ (b,c,d) \in (\C^*)^3 | b\neq dc \}$ and $e(\mathtt{L}_{\emptyset}) = 0$. \\

\noindent  $ \mathtt{L}^\sigma_{\emptyset}$ is given by the further condition that $b+c+d =0$, so we have: 
\begin{align*}
 \mathtt{L}^\sigma_{\emptyset}  &\cong\Big\{~  (b,c,d)\in  (\C^*)^3 ~\Big|~ d\neq bc \mbox{ and } 1+b+c+d = 0 ~\Big\} \\
& \cong \Big\{~  (b,c)\in  \big(\C^*\big)^2 ~\Big|~ (b+1)(c+1) \neq 0 \mbox{ and } b+c \neq -1~\Big\}\\
& \cong \Big\{~  (b,c)\in \big(\C^* \setminus\{-1\}\big)^2  ~\Big|~  b+c \neq -1~\Big\}\\
& \cong   \big(\C^* \setminus\{-1\}\big)^2 - \big(\C \setminus\{2\mathrm{pt}\}\big).
\end{align*}
Hence we have the Euler characteristics $e(\mathtt{L}^\sigma_{\emptyset}) =2$ and $e(\mathtt{L}^\emptyset_{\emptyset}) = -2$. 
\end{itemize}
\end{re}

\subsection{Topological Vertex Formulas}

In this section of the appendix we collect some useful formulas for partition functions involving the topological vertex. 

Define the ``MacMahon'' notation:
\[
M(p,Q) = \prod_{m>0} (1-p^m Q)^{-m}
\]
and the simpler version $M(p) = M(p,1)$. 

\begin{lemma} \label{vertex_splitting_lemma}
We have the equality: 
\[
\mathsf{V}_{\lambda \square \square}\mathsf{V}_{\lambda \emptyset \emptyset} = \frac{1}{p}\mathsf{V}_{\lambda \emptyset \emptyset}\mathsf{V}_{\lambda \emptyset \emptyset} +\mathsf{V}_{\lambda \square \emptyset} \mathsf{V}_{\lambda \emptyset \square}
\]
\end{lemma}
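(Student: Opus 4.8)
The plan is to prove the identity
\[
\mathsf{V}_{\lambda \square \square}\mathsf{V}_{\lambda \emptyset \emptyset} = \tfrac{1}{p}\mathsf{V}_{\lambda \emptyset \emptyset}\mathsf{V}_{\lambda \emptyset \emptyset} +\mathsf{V}_{\lambda \square \emptyset} \mathsf{V}_{\lambda \emptyset \square}
\]
by translating every term into the normalised vertex $\widetilde{\mathsf{V}}$ via lemma \ref{Vtilde_to_V_lemma}, and then identifying $\widetilde{\mathsf{V}}_{\lambda\mu\nu}$ with the Euler characteristic of a $\Quot$ scheme so that the identity becomes a statement about a geometric decomposition of a space of quotients. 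First I would clear the renormalisation factors: using lemma \ref{Vtilde_to_V_lemma} parts (1)--(3) we have $\mathsf{V}_{\lambda\emptyset\emptyset}=\widetilde{\mathsf{V}}_{\lambda\emptyset\emptyset}$, $\mathsf{V}_{\lambda\square\emptyset}=p^{-\lambda_1}\widetilde{\mathsf{V}}_{\lambda\square\emptyset}$, and by the symmetry $\mathsf{V}_{\lambda\emptyset\square}=p^{-\lambda_1^t}\widetilde{\mathsf{V}}_{\lambda\emptyset\square}$, while $\mathsf{V}_{\lambda\square\square}=p^{-\lambda_1-\lambda_1^t}\widetilde{\mathsf{V}}_{\lambda\square\square}$. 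Substituting these into the claimed identity and dividing through, the powers of $p$ on the two product terms on the right must be reconciled against those on the left; the clean way to organise this is to check that after substitution the identity is equivalent to the additive $\widetilde{\mathsf{V}}$-relation
\[
\widetilde{\mathsf{V}}_{\lambda \square \square}\,\widetilde{\mathsf{V}}_{\lambda \emptyset \emptyset}
= \widetilde{\mathsf{V}}_{\lambda \emptyset \emptyset}\,\widetilde{\mathsf{V}}_{\lambda \emptyset \emptyset}
+ \widetilde{\mathsf{V}}_{\lambda \square \emptyset}\,\widetilde{\mathsf{V}}_{\lambda \emptyset \square},
\]
and I would verify the exponent bookkeeping carefully here, since that is where an off-by-one in $p$ would sneak in.

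Next I would prove the $\widetilde{\mathsf{V}}$-relation geometrically. Recall $\widetilde{\mathsf{V}}_{\lambda\mu\nu}=e(\Quot^\bullet(\lambda,\mu,\nu))$ counts quotients of $I_{\lambda\mu\nu}$ supported at the origin in $\C^3$. The product $\widetilde{\mathsf{V}}_{\lambda\square\square}\,\widetilde{\mathsf{V}}_{\lambda\emptyset\emptyset}$ is the Euler characteristic of the $\Quot$ scheme of the disjoint union of a configuration with all three legs thickened (legs $\lambda$ and two $\square$-legs) and one with only the $\lambda$-leg. The idea is to set up a stratification/geometric bijection, in the spirit of lemma \ref{quot_decomposition_lemma}, that partitions quotients according to whether length is or is not absorbed along the two extra $\square$-legs near the origin. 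Concretely, I would use the box-counting description from section \ref{Topological_vertex_to_quot_scheme_section}: the lowest-order term of each $\widetilde{\mathsf{V}}$ is $1$, and one compares the generating functions of 3D partitions asymptotic to $(\lambda,\square,\square)$, $(\lambda,\emptyset,\emptyset)$, $(\lambda,\square,\emptyset)$ and $(\lambda,\emptyset,\square)$. The combinatorial heart is that a 3D partition asymptotic to $(\lambda,\square,\square)$ either does or does not contain the single extra box sitting at the intersection of the two $\square$-legs; splitting on this box should produce exactly the two terms on the right-hand side.

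Alternatively, and probably more cleanly, I would follow the template of the proof of lemma 17/15 in \cite{BK} and derive the identity directly from the Okounkov--Reshetikhin--Vafa formula for $\mathsf{V}_{\lambda\mu\nu}$ quoted in section \ref{Topological_vertex_to_quot_scheme_section}, specialising $\mu,\nu\in\{\emptyset,\square\}$. For $\nu=\emptyset$ or $\square$ the skew-Schur sums collapse dramatically: $S_{\emptyset^t/\eta}$ forces $\eta=\emptyset$, and $S_{\square/\eta}$ admits only $\eta=\emptyset$ or $\eta=\square$. Expanding $\mathsf{V}_{\lambda\square\square}$ this way yields a two-term expression whose factors are recognisable as $\mathsf{V}_{\lambda\square\emptyset}$, $\mathsf{V}_{\lambda\emptyset\square}$ and $\mathsf{V}_{\lambda\emptyset\emptyset}$ after using the principal-specialisation identity $S_{\square}(p^{-\rho})=\tfrac{1}{p^{1/2}-p^{-1/2}}$ (equivalently the standard evaluation giving the factor $1/p$). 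I expect the main obstacle to be precisely this specialisation-and-matching step: keeping the half-integer $p$-shifts from the $p^{-\nu-\rho}$ arguments consistent, and confirming that the single $1/p$ discrepancy in the ORV normalisation is exactly what produces the $\tfrac1p$ coefficient in the statement rather than a spurious extra power. Once the exponent bookkeeping from the first paragraph and this specialisation agree, the identity follows by direct comparison of the two sides.
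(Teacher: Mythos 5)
Your proposal has genuine gaps in both of its routes.

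\textbf{Route 1 (normalised vertex / box-splitting).} The translation into $\widetilde{\mathsf{V}}$'s is wrong. By lemma \ref{Vtilde_to_V_lemma} we have $\mathsf{V}_{\lambda\emptyset\emptyset}=\widetilde{\mathsf{V}}_{\lambda\emptyset\emptyset}$ (no renormalisation factor), while $\mathsf{V}_{\lambda\square\square}=p^{-\lambda_1-\lambda_1^t}\widetilde{\mathsf{V}}_{\lambda\square\square}$ and $\mathsf{V}_{\lambda\square\emptyset}\mathsf{V}_{\lambda\emptyset\square}=p^{-\lambda_1-\lambda_1^t}\widetilde{\mathsf{V}}_{\lambda\square\emptyset}\widetilde{\mathsf{V}}_{\lambda\emptyset\square}$. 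Substituting, the lemma is equivalent not to your clean additive relation but to
\[
\widetilde{\mathsf{V}}_{\lambda \square \square}\,\widetilde{\mathsf{V}}_{\lambda \emptyset \emptyset}
= p^{\lambda_1+\lambda_1^t-1}\,\widetilde{\mathsf{V}}_{\lambda \emptyset \emptyset}^{\,2}
+ \widetilde{\mathsf{V}}_{\lambda \square \emptyset}\,\widetilde{\mathsf{V}}_{\lambda \emptyset \square},
\]
with a $\lambda$-dependent monomial on the first term. This is not an ``off-by-one in $p$'' that careful bookkeeping fixes; it destroys the proposed combinatorial mechanism, since no bijection that splits on a single fixed box can produce a $\lambda$-dependent power of $p$. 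Moreover the splitting itself is not well-posed: the two $\square$-legs meet only at the corner box, and \emph{every} 3D partition asymptotic to $(\lambda,\square,\square)$ contains that box (the legs pass through it), so there is nothing to split on.

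\textbf{Route 2 (ORV formula).} This is the paper's method in spirit, but you are missing its one essential organising step: use the cyclic symmetry of the vertex to put the arbitrary partition in the \emph{third} slot and prove the equivalent ratio identity
\[
\frac{\mathsf{V}_{ \square \square\nu}}{\mathsf{V}_{ \emptyset \emptyset \nu}}  = \frac{1}{p} +\frac{\mathsf{V}_{\square \emptyset \nu} \mathsf{V}_{ \emptyset \square\nu}}{(\mathsf{V}_{ \emptyset \emptyset \nu})^2}.
\]
With $\square$'s in the first two slots, the ORV sum collapses to $\eta\in\{\emptyset,\square\}$, the $\eta=\square$ term gives the $\tfrac1p$ (which comes from the prefactor $p^{-\frac12(\|\square\|^2+\|\square^t\|^2)}$, not from any evaluation of $S_\square(p^{-\rho})$), and the $\eta=\emptyset$ term is verbatim $\mathsf{V}_{\square\emptyset\nu}\mathsf{V}_{\emptyset\square\nu}/(\mathsf{V}_{\emptyset\emptyset\nu})^2$, since all common factors $S_{\nu^t}(p^{-\rho})$ and $p^{-\frac12\|\nu\|^2}$ cancel in the ratios. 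Your setup instead keeps $\lambda$ in the first slot and specialises $\mu,\nu\in\{\emptyset,\square\}$; there the collapse is governed by slot two (the factor $S_{\mu/\eta}$), not by $\nu$ as you claim ($\nu$ only shifts arguments), and the expansion of $\mathsf{V}_{\lambda\square\square}$ produces skew Schur functions at the \emph{shifted} specialisation $p^{-\square-\rho}$, e.g.\ $S_{\lambda^t/\square}(p^{-\square-\rho})$, which do not match the unshifted factors $S_{\lambda^t/\square}(p^{-\rho})$ appearing in $\mathsf{V}_{\lambda\square\emptyset}$. So the ``recognise the factors and compare'' step fails as described; closing it would require nontrivial identities for Schur functions at shifted principal specialisations, all of which the cyclic rotation renders unnecessary.
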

\begin{proof}
We prove the equivalent equation:
\[
\frac{\mathsf{V}_{ \square \square\nu}}{\mathsf{V}_{ \emptyset \emptyset \nu}}  = \frac{1}{p} +\frac{\mathsf{V}_{\square \emptyset \nu} \mathsf{V}_{ \emptyset \square\nu}}{(\mathsf{V}_{ \emptyset \emptyset \nu})^2}
\]
From the definition we have:
\begin{align*}
\frac{\mathsf{V}_{ \square \square\nu}}{\mathsf{V}_{ \emptyset \emptyset \nu}} 
=&
\frac{1}{p} \sum_{\eta \subset \square} S_{\square/\eta}(p^{-\nu-\rho})S_{\square/\eta}(p^{-\nu^t-\rho})\\
=& 
\frac{1}{p} \left(S_{\square/\emptyset}(p^{-\nu-\rho})S_{\square/\emptyset}(p^{-\nu^t-\rho})+ S_{\square/\square}(p^{-\nu-\rho})S_{\square/\square}(p^{-\nu^t-\rho})\right)\\
=& 
\frac{\mathsf{V}_{\square \emptyset \nu} \mathsf{V}_{ \emptyset \square\nu}}{(\mathsf{V}_{ \emptyset \emptyset \nu})^2} +\frac{1}{p} 
\end{align*}
\end{proof}

\begin{lemma}\label{vertex_squares_lemma}
We have
\begin{enumerate}[label={\arabic*)}]
\item\label{vertex_squares_lemma_eee} $\mathsf{V}_{\emptyset\emptyset\emptyset} = M(p)$
\item\label{vertex_squares_lemma_bee} $\mathsf{V}_{\square\emptyset\emptyset} = M(p) \frac{1}{1-p}$
\item\label{vertex_squares_lemma_bbe} $\mathsf{V}_{\square\square\emptyset} = M(p) \frac{p^2-p+1}{p(1-p)^2}$
\item\label{vertex_squares_lemma_bbb} $\mathsf{V}_{\square\square\square} = M(p) \frac{p^4-p^3+p^2-p+1}{ p^2(1-p)^3}$
\end{enumerate}

\end{lemma}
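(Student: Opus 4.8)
The plan is to obtain parts (1)--(3) by direct evaluation of the Okounkov--Reshetikhin--Vafa formula
\[
\mathsf{V}_{\lambda\mu\nu} = M(p)\, p^{-\frac{1}{2}(\|\lambda\|^2+\|\mu^t\|^2+\|\nu\|^2)} S_{\nu^t}(p^{-\rho}) \sum_{\eta} S_{\lambda^t/\eta}(p^{-\nu-\rho})S_{\mu/\eta}(p^{-\nu^t-\rho}),
\]
recorded earlier in the excerpt, and then to deduce (4) from the splitting identity of Lemma \ref{vertex_splitting_lemma}. Part (1) is immediate: with all three partitions empty every Schur factor is $1$, the sum over $\eta$ reduces to $\eta=\emptyset$, and the prefactor exponent vanishes, so $\mathsf{V}_{\emptyset\emptyset\emptyset}=M(p)$. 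For the remaining cases I first record the principal specialization: since $\rho=(-\tfrac12,-\tfrac32,\dots)$ we have $p^{-\rho}=(p^{1/2},p^{3/2},\dots)$, whence the single-box Schur function is $S_{\square}(p^{-\rho})=\sum_{i\geq 0}p^{\,i+1/2}=\tfrac{p^{1/2}}{1-p}$.

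For part (2) the choice $\mu=\emptyset$ forces $\eta=\emptyset$, so the sum collapses to $S_{\square}(p^{-\rho})$, and with $\|\lambda\|^2=1$ the prefactor $p^{-1/2}$ cancels the $p^{1/2}$ to give $M(p)/(1-p)$. For part (3), computing $\mathsf{V}_{\square\square\emptyset}$, the index $\eta$ now ranges over the partitions contained in $\mu=\square$, namely $\{\emptyset,\square\}$; these contribute $S_{\square}(p^{-\rho})^2=\tfrac{p}{(1-p)^2}$ and $1$ respectively. The prefactor is $p^{-1}$ (as $\|\lambda\|^2=\|\mu^t\|^2=1$), and using $p+(1-p)^2=p^2-p+1$ yields $M(p)\tfrac{p^2-p+1}{p(1-p)^2}$.

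For part (4) I would specialize Lemma \ref{vertex_splitting_lemma} to $\lambda=\square$, giving
\[
\mathsf{V}_{\square\square\square}\,\mathsf{V}_{\square\emptyset\emptyset} = \tfrac{1}{p}\,\mathsf{V}_{\square\emptyset\emptyset}^2 + \mathsf{V}_{\square\square\emptyset}\,\mathsf{V}_{\square\emptyset\square}.
\]
One then needs $\mathsf{V}_{\square\emptyset\square}$, which equals $\mathsf{V}_{\square\square\emptyset}$ by cyclic symmetry of the vertex; to be safe I would confirm this by a direct evaluation (see below). Dividing through by $\mathsf{V}_{\square\emptyset\emptyset}$ and substituting the values from (2)--(3) gives $\mathsf{V}_{\square\square\square}=M(p)\big(\tfrac{1}{p(1-p)}+\tfrac{(p^2-p+1)^2}{p^2(1-p)^3}\big)$. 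Placing both terms over $p^2(1-p)^3$, the numerator is $p(1-p)^2+(p^2-p+1)^2=p^4-p^3+p^2-p+1$, which is exactly the claimed expression.

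The only genuinely delicate point is the evaluation of $S_{\square}(p^{-\nu-\rho})$ when $\nu=\square$, where the first specialization variable is shifted to $p^{-1/2}$: one finds $S_{\square}(p^{-\nu-\rho})=p^{-1/2}+\tfrac{p^{3/2}}{1-p}=\tfrac{p^{-1/2}(p^2-p+1)}{1-p}$. Feeding this into the ORV formula for $\mathsf{V}_{\square\emptyset\square}$ reproduces $M(p)\tfrac{p^2-p+1}{p(1-p)^2}$, confirming both $\mathsf{V}_{\square\emptyset\square}=\mathsf{V}_{\square\square\emptyset}$ and cyclic invariance in this normalization. Everything else is finite bookkeeping of the sum over $\eta$ together with elementary manipulation of rational functions in $p$, so I expect no real obstacle beyond keeping the shifted specializations straight.
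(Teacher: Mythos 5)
Your proposal is correct and follows essentially the same route as the paper: parts (1)--(3) by direct evaluation of the ORV formula (with the sum over $\eta\in\{\emptyset,\square\}$ in part (3) contributing $\tfrac{p}{(1-p)^2}+1$), and part (4) by specializing Lemma \ref{vertex_splitting_lemma} to $\lambda=\square$ and dividing by $\mathsf{V}_{\square\emptyset\emptyset}$. The one point where you go beyond the paper is the explicit verification that $\mathsf{V}_{\square\emptyset\square}=\mathsf{V}_{\square\square\emptyset}$ via the shifted specialization $S_{\square}(p^{-\square-\rho})=p^{-1/2}(p^2-p+1)/(1-p)$; the paper uses this equality silently (by cyclic symmetry of the vertex), so your check is a welcome, correct addition rather than a divergence.
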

\begin{proof}
Part \ref{vertex_squares_lemma_eee} is immediate from the definition. For part \ref{vertex_squares_lemma_bee} we have:
\begin{align*}
\mathsf{V}_{\square\emptyset\emptyset} 
&= M(p) p^{-\frac{1}{2}}   S_{\emptyset}(p^{-\rho}) \sum_{\eta} S_{\square/\eta}(p^{-\rho})S_{\emptyset/\eta}(p^{-\rho})\\
&= M(p) \frac{1}{1-p}
\end{align*}
For part \ref{vertex_squares_lemma_bbe} we have:
\begin{align*}
\mathsf{V}_{\square\square\emptyset} 
&= M(p) p^{-1}   S_{\emptyset}(p^{-\rho}) \sum_{\eta} S_{\square/\eta}(p^{-\rho})S_{\square/\eta}(p^{-\rho})\\
&= M(p)p^{-1} \Big(S_{\square/\emptyset}(p^{-\rho})S_{\square/\emptyset}(p^{-\rho}) + S_{\square/\square}(p^{-\rho})S_{\square/\square}(p^{-\rho}) \Big)\\
&= M(p)p^{-1} \Big(\frac{p}{(1-p)^2} +1\Big)\\
&= M(p) \frac{p^2-p+1}{p(1-p)^2}
\end{align*}
Part \ref{vertex_squares_lemma_bbb} follows from parts \ref{vertex_squares_lemma_bee} and \ref{vertex_squares_lemma_bbe} and lemma \ref{vertex_splitting_lemma}:
\[
\mathsf{V}_{\square \square \square}= \frac{1}{p}\mathsf{V}_{\square \emptyset \emptyset} +\frac{\mathsf{V}_{\square \square \emptyset} \mathsf{V}_{\square \emptyset \square}}{\mathsf{V}_{\square \emptyset \emptyset} }
\]

\end{proof}

\begin{re}\label{full_banana_configuration_partition_re} It is shown in \cite[\S 4.3]{Bryan_Banana} that:
\begin{align*}
&\sum\limits_{\nu, \alpha, \mu} Q_1^{|\nu|}Q_2^{|\alpha|}Q_3^{|\mu|}p^{\frac{1}{2}(\|\nu\|^2+\| \nu^t\|^2+\|\alpha\|^2+\| \alpha^t\|^2 +\|\mu\|^2+\| \mu^t\|^2)} (\mathsf{V}_{ \nu \mu \alpha}\mathsf{V}_{  \nu^t \mu^t  \alpha^t}).\\
&= \prod_{d_1,d_2,d_3\geq 0} \prod_{k} (1- (-Q_1)^{d_1}(-Q_2)^{d_2}(-Q_3)^{d_3} p^k )^{- c(\|\bm{d}\|,k)}
\end{align*}
where $\bm{d} = (d_1,d_2,d_3)$ and the second product is over $k\in \mathbb{Z}$ unless $\bm{d} = (0,0,0)$ in which case $k>0$. The powers $ c(\|\bm{d}\|,k)$ are defined by
\[
\sum_{a=-1}^\infty \sum_{k\in\mathbb{Z}} c(a,k)Q^a y^k := \frac{\sum_{k\in \mathbb{Z}} Q^{k^2} (-y)^k}{\left(\sum_{k\in \mathbb{Z}+\frac{1}{2}} Q^{2k^2} (-y)^k\right)^2} = \frac{\vartheta_4(2\tau,z)}{\vartheta_1(4\tau,z)^2}
\]
and $\|\bm{d}\|:= 2d_1d_2 +2 d_2d_3 +2d_3d_1 -d_1^2 -d_2^2-d_3^2$. Also, if we recall the notation that
\[
\psi_g := \left(\frac{p}{(1-p)^2}\right)^{1-g}
\]
then we have the following corollary.

\end{re}

\begin{lemma}\label{banana_subpartition_functions}
We have:
\begin{enumerate}[label={\arabic*)}]
\item \label{banana_subpartition_functions_double}
$\sum\limits_{\alpha, \mu} Q_2^{|\alpha|}Q_3^{|\mu|}p^{\frac{1}{2}(\|\alpha\|^2+\| \alpha^t\|^2 +\|\mu\|^2+\| \mu^t\|^2)} (\mathsf{V}_{ \emptyset \mu \alpha}\mathsf{V}_{  \emptyset \mu^t  \alpha^t})$\\
$= M(p)^{2} \prod\limits_{m>0} \dfrac{M(p,Q_2^m Q_3^m)^{2}}{(1-Q_2^m  Q_3^m) M(p,-Q_2^{m-1}Q_3^m) M(p,-Q_2^m Q_3^{m-1})}$

\item \label{banana_subpartition_functions_single_empty_empty}
$\sum\limits_{\alpha} Q^{|\alpha|} p^{\frac{1}{2}(\|\alpha\|^2+\| \alpha^t\|^2)} (\mathsf{V}_{ \emptyset \emptyset \alpha}\mathsf{V}_{ \emptyset \emptyset \alpha^t})  =M(p)^2 \prod\limits_{m>0}(1+p^m Q)^{m}$

\item
$\sum\limits_{\alpha} Q^{|\alpha|} p^{\frac{1}{2}(\|\alpha\|^2+\| \alpha^t\|^2)+1} (\mathsf{V}_{ \square \emptyset \alpha}\mathsf{V}_{ \square \emptyset \alpha^t})$\\ 
$= M(p)^2 (\psi_0 +(\psi_1 +2\psi_0) Q +\psi_0 Q^2) \prod\limits_{m>0}(1+p^m Q)^{m}$

\item
$\sum\limits_{\alpha} Q^{|\alpha|} p^{\frac{1}{2}(\|\alpha\|^2+\| \alpha^t\|^2)+2} (\mathsf{V}_{ \square \square \alpha}\mathsf{V}_{ \square \square \alpha^t})$\\
$= M(p)^2 \prod\limits_{m>0}(1+p^m Q)^{m}\Big(Q^4 (2 \psi_0+\psi_1)+Q^3 (8 \psi_0+6 \psi_1+\psi_2)+Q^2 (12 \psi_0\hspace{2em}$\\
\mbox{}\hspace{12em}$+10 \psi_1+2 \psi_2)+Q (8 \psi_0+6 \psi_1+\psi_2)+(2 \psi_0+\psi_1)\Big)$\\
\end{enumerate}
\end{lemma}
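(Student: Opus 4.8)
The plan is to obtain all four identities as low-order coefficient extractions from the three-variable product of Remark~\ref{full_banana_configuration_partition_re}. First I would observe that on the left-hand side of that remark the summand carries the weight $Q_1^{|\nu|}Q_3^{|\mu|}$ together with the factor $p^{\frac12(\|\nu\|^2+\|\nu^t\|^2+\|\mu\|^2+\|\mu^t\|^2)}$, so extracting the coefficient of $Q_1^{i}Q_3^{j}$ forces $\nu$ (resp.\ $\mu$) to be the unique partition of $i$ (resp.\ $j$). Since $|\nu|=1$ forces $\nu=\square$ and contributes $p^{\frac12(1+1)}=p$, the specializations $(i,j)=(0,0),(1,0),(1,1)$ of the left-hand side are exactly the sums in parts~2,~3 and~4 (with $Q=Q_2$ the weight of the remaining $\alpha$-leg, and the $p$-shifts $+1,+2$ accounted for by $\|\square\|^2=1$), while $i=0$ with $Q_3$ retained is the sum in part~1. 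Thus it suffices to extract the same coefficients from the right-hand product and simplify.

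For the right-hand side I would pass to the logarithm, $\log(\mathrm{RHS})=\sum_{\bm d}\sum_k\big(-c(\|\bm d\|,k)\big)\log\!\big(1-(-\bm Q)^{\bm d}p^k\big)$, where coefficient extraction in $Q_1,Q_3$ is additive. The crucial finiteness is that $c(a,k)=0$ for $a<-1$, so only $\bm d$ with $\|\bm d\|\ge -1$ contribute. For part~1 ($d_1=0$) one has $\|\bm d\|=-(d_2-d_3)^2$, forcing $d_2-d_3\in\{-1,0,1\}$, which produces precisely the three families of factors $Q_2^mQ_3^m$, $-Q_2^{m-1}Q_3^m$, $-Q_2^mQ_3^{m-1}$ appearing on the right of part~1; matching them against $M(p)^2$ and the MacMahon factors only requires the values of $c(0,k)$ and $c(-1,k)$. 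Setting $Q_3=0$ in part~1 then immediately yields part~2, the base product $M(p)^2\prod_{m>0}(1+p^mQ)^m$ surviving from the boundary factor $M(p,-Q_2)^{-1}$.

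For parts~3 and~4 I would extract the linear term in $Q_1$, using $[Q_1^1]\exp(F)=\exp(F_0)\cdot[Q_1^1]F$ with $F_0$ the $Q_1$-constant part. This exhibits the answer as the same base product $M(p)^2\prod_{m>0}(1+p^mQ)^m$ times a polynomial in $Q$ coming from the $d_1=1$ slice of the logarithm. With $d_1=1,d_3=0$ one computes $\|\bm d\|=-(d_2-1)^2$, nonzero only for $d_2\in\{0,1,2\}$ (degree $2$ in $Q$, matching part~3), and with $d_1=d_3=1$ one gets $\|\bm d\|=4-(d_2-2)^2$, nonzero for $d_2\in\{0,1,2,3,4\}$ (degree $4$, matching part~4). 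The coefficients of these polynomials are the single-sum evaluations $\sum_k c(a,k)p^k$ for $a\in\{-1,0,3,4\}$, which I would rewrite in terms of $\psi_0=p/(1-p)^2$, $\psi_1=1$ and $\psi_2=(1-p)^2/p$. To handle the double box $\mathsf{V}_{\square\square\alpha}$ economically one could instead invoke Lemma~\ref{vertex_splitting_lemma} to split it before summing, but the coefficient route already reduces everything to the known $c(a,k)$.

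The hard part will be the explicit evaluation of the theta-quotient coefficients $c(a,k)$ for the small values $a\in\{-1,0,3,4\}$ and the resummation $\sum_k c(a,k)p^k$ into the stated $\psi_g$-combinations (for example the factor $8\psi_0+6\psi_1+\psi_2$ in part~4); part~1 is essentially immediate from the product once $c(0,k)$ and $c(-1,k)$ are identified, whereas the $a=3,4$ evaluations feeding part~4 are the most delicate bookkeeping. As a cross-check I would verify that the degrees and the palindromic symmetry of the prefactors (e.g.\ the $Q^{d}\leftrightarrow Q^{4-d}$ symmetry in part~4) are forced by the reflection $d_2\mapsto 4-d_2$ that fixes $\|\bm d\|$.
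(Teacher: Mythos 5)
Your strategy is precisely the paper's own: the proof in the paper is the one-line observation that all four identities are coefficients of the partition function of Remark \ref{full_banana_configuration_partition_re} (part 3, for example, is identified there as the coefficient of $Q_1^1Q_2^0$). Your elaboration of what this entails is largely correct and goes beyond what the paper records: forcing $\nu,\mu\in\{\emptyset,\square\}$ on the left-hand side, the finiteness coming from $c(a,k)=0$ for $a<-1$, and the slices $\|(0,d_2,d_3)\|=-(d_2-d_3)^2$, $\|(1,d_2,0)\|=-(d_2-1)^2$, $\|(1,d_2,1)\|=4-(d_2-2)^2$ are all right, and your treatments of parts 1, 2 and 3 are sound.

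There is, however, a concrete error in your plan for part 4. That identity is a \emph{double} extraction, the coefficient of $Q_1^1Q_3^1$, and double coefficient extraction from an exponential produces a cross term. Writing $F=\log(\mathrm{RHS})$ and $F=F_{00}+Q_1F_{10}+Q_3F_{01}+Q_1Q_3F_{11}+O(Q_1^2,Q_3^2)$ with the $F_{ij}$ depending on $Q_2$ and $p$, one has
\[
[Q_1^1Q_3^1]\exp(F)\;=\;\exp(F_{00})\bigl(F_{11}+F_{10}F_{01}\bigr),
\]
so the polynomial multiplying the base product $M(p)^2\prod_{m>0}(1+p^mQ)^m$ is \emph{not} the $d_1=d_3=1$ slice $F_{11}$ of the logarithm, as your sketch asserts; it also contains $F_{10}F_{01}$, and $F_{01}$ comes from the $d_1=0$, $d_3=1$ part of the logarithm, not from the $d_1=1$ slice at all. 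This cross term is genuinely nonzero: the permutation symmetry of $\|\bm{d}\|$ gives $F_{01}=F_{10}$, and your own part-3 computation identifies $F_{10}=\psi_0+(\psi_1+2\psi_0)Q+\psi_0Q^2$, so for instance the constant term of $F_{10}F_{01}$ is $\psi_0^2=\psi_{-1}\neq 0$. Followed literally, your recipe would therefore return a wrong polynomial for part 4 (every coefficient would be off). The repair is pure bookkeeping — add $F_{10}F_{01}$ to the $d_1=d_3=1$ slice — after which the evaluations $\sum_k c(a,k)p^k$ for $a\in\{-1,0,3,4\}$ that you list are indeed exactly the data required.
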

\begin{proof}
These are all coefficients of the partition function in \ref{full_banana_configuration_partition_re}. For example part 3) is the coefficient of $Q_1^1Q_2^0$. 
\end{proof}

\begin{lemma}\label{vertex_section_formulas}
We have the following equalities:
\begin{enumerate}[label={\arabic*)}]
\item
$\sum\limits_{\alpha} Q^{|\alpha|} p^{\frac{1}{2}(\|\alpha\|^2+\| \alpha^t\|^2)} (\mathsf{V}_{ \square \emptyset \alpha}\mathsf{V}_{  \emptyset  \emptyset \alpha^t}) 
= M(p)^2\dfrac{1+Q}{1-p} \prod\limits_{m>0}(1+p^{m} Q)^m  $
\item
$\sum\limits_{\alpha} Q^{|\alpha|} p^{\frac{1}{2}(\|\alpha\|^2+\| \alpha^t\|^2)+1} (\mathsf{V}_{ \square \square \alpha}\mathsf{V}_{  \emptyset  \emptyset \alpha^t})$\\
$= M(p)^2((\psi_0+\psi_1) +(2\psi_0 +\psi_1) Q +\psi_0 Q^2) \prod\limits_{m>0}(1+p^m Q)^{m}$
\item
$\sum\limits_{\alpha} Q^{|\alpha|} p^{\frac{1}{2}(\|\alpha\|^2+\| \alpha^t\|^2)+1} (\mathsf{V}_{\square \emptyset  \alpha})^2$\\
$=M(p)^2(\psi_0 +(2\psi_0+ \psi_1 ) Q +(\psi_0+\psi_1) Q^2) \prod\limits_{m>0}(1+p^m Q)^{m}$
\end{enumerate}

\end{lemma}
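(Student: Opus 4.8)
The plan is to reduce all three generating functions to single sums over $2$D partitions weighted by principal specialisations of Schur functions, to recognise formula (2) as a repackaging of Lemma \ref{banana_subpartition_functions}, and to evaluate the two genuinely new sums behind formulas (1) and (3) directly. First I would feed the explicit formula for $\mathsf{V}_{\lambda\mu\nu}$ from \cite{ORV} through the observation that the $\eta$-sum collapses whenever one of the first two legs is empty. This gives the factorisations
\begin{align*}
\mathsf{V}_{\square\emptyset\alpha} &= p^{-1/2}\,S_{\square}(p^{-\alpha-\rho})\,\mathsf{V}_{\emptyset\emptyset\alpha}, & \mathsf{V}_{\emptyset\square\alpha} &= p^{-1/2}\,S_{\square}(p^{-\alpha^t-\rho})\,\mathsf{V}_{\emptyset\emptyset\alpha},
\end{align*}
and, after cyclically rotating Lemma \ref{vertex_splitting_lemma} so the fixed partition sits in the third slot,
\[
\mathsf{V}_{\square\square\alpha} = p^{-1}\big(1 + S_{\square}(p^{-\alpha-\rho})\,S_{\square}(p^{-\alpha^t-\rho})\big)\,\mathsf{V}_{\emptyset\emptyset\alpha}.
\]
Writing $\mathsf{V}_{\emptyset\emptyset\alpha} = M(p)\,p^{-\|\alpha\|^2/2}S_{\alpha^t}(p^{-\rho})$ and using $S_{\alpha^t}(p^{-\rho}) = p^{\kappa_\alpha/2}S_\alpha(p^{-\rho})$ with $\kappa_\alpha = \|\alpha\|^2-\|\alpha^t\|^2$, the prefactor $p^{\frac12(\|\alpha\|^2+\|\alpha^t\|^2)}$ cancels cleanly and each summand becomes $M(p)^2$ times $w_\alpha\,u_\alpha^{k}$, where $u_\alpha := S_{\square}(p^{-\alpha-\rho})$ and $w_\alpha := S_\alpha(p^{-\rho})S_{\alpha^t}(p^{-\rho}) = \prod_{x\in\alpha}p^{h(x)}/(1-p^{h(x)})^2$ (product over boxes with hook length $h(x)$). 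Here $k=1$ for formula (1), $k=2$ for formula (3), while formula (2) produces the combination $w_\alpha\big(1+u_\alpha u_{\alpha^t}\big)$.

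With the reduction in place, formula (2) is immediate. Its sum equals $M(p)^2\sum_\alpha Q^{|\alpha|}w_\alpha + M(p)^2\sum_\alpha Q^{|\alpha|}w_\alpha\,u_\alpha u_{\alpha^t}$, and these two sums are exactly parts (2) and (3) of Lemma \ref{banana_subpartition_functions} (the latter because $\mathsf{V}_{\square\emptyset\alpha}\mathsf{V}_{\square\emptyset\alpha^t}$ carries precisely the factor $u_\alpha u_{\alpha^t}$). Adding the two right-hand sides and using $\psi_1 = 1$ to combine the constant $1$ from part (2) with the $\psi_0$ from part (3) produces the stated bracket $(\psi_0+\psi_1)+(2\psi_0+\psi_1)Q+\psi_0 Q^2$. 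I would also record at this stage that $\sum_\alpha Q^{|\alpha|}w_\alpha = \prod_{m>0}(1+p^mQ)^m$, since this infinite product is the common factor in all three right-hand sides.

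The hard part is formulas (1) and (3), i.e. the content-insertion sums $\sum_\alpha Q^{|\alpha|}w_\alpha\,u_\alpha$ and $\sum_\alpha Q^{|\alpha|}w_\alpha\,u_\alpha^{2}$. Because $u_\alpha = S_{\square}(p^{-\alpha-\rho}) = \sum_{i\geq 1}p^{\,i-1/2-\alpha_i}$ is not invariant under $\alpha\leftrightarrow\alpha^t$, these are not symmetric in the transpose and therefore are not coefficients of the transpose-paired banana partition function of \ref{full_banana_configuration_partition_re}; they cannot be read off from Lemma \ref{banana_subpartition_functions} and must be computed independently. I would evaluate them either through the free-fermion/vertex-operator realisation underlying \ref{full_banana_configuration_partition_re}, inserting the content operator $\sum_i p^{\,i-\alpha_i}$ once or twice against the measure $w_\alpha Q^{|\alpha|}$, or by a direct Cauchy-type manipulation of the skew Schur specialisations, matching the outputs to $\tfrac{1+Q}{1-p}\prod_{m>0}(1+p^mQ)^m$ and to $\big(\psi_0+(2\psi_0+\psi_1)Q+(\psi_0+\psi_1)Q^2\big)\prod_{m>0}(1+p^mQ)^m$ respectively. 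This evaluation of the asymmetric insertion sums is where essentially all the work lies, and it is the step I expect to be the main obstacle; everything else is bookkeeping built on the reductions above.

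As low-order consistency checks I would verify the first coefficients by hand. For instance $u_\emptyset = p^{1/2}/(1-p)$ gives the $Q^0$ term $\tfrac{1}{1-p}$ of formula (1), and a short computation with $\alpha=\square$ (where $w_\square=\psi_0$ and $u_\square = p^{-1/2}\tfrac{1-p+p^2}{1-p}$) reproduces its $Q^1$ coefficient $\tfrac{1-p+p^2}{(1-p)^3}$, in agreement with $\tfrac{1+Q}{1-p}\prod(1+p^mQ)^m$; the analogous checks at $Q^0,Q^1$ for formula (3) fix the coefficients of $\psi_0,\psi_1$ there. These serve both to confirm the reduction and to pin down any normalisation ambiguities in the insertion-sum evaluation.
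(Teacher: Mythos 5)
Your reductions are sound: the factorisations of $\mathsf{V}_{\square\emptyset\alpha}$, $\mathsf{V}_{\emptyset\square\alpha}$ and (via the cyclically rotated Lemma \ref{vertex_splitting_lemma}) $\mathsf{V}_{\square\square\alpha}$ through $\mathsf{V}_{\emptyset\emptyset\alpha}$ are correct, and your proof of part (2) is complete --- it is essentially the same argument the paper uses, splitting the sum for $\mathsf{V}_{\square\square\alpha}\mathsf{V}_{\emptyset\emptyset\alpha^t}$ into parts (2) and (3) of Lemma \ref{banana_subpartition_functions} and using $\psi_1=1$. You are also right that parts (1) and (3) cannot be read off from Lemma \ref{banana_subpartition_functions}, since that lemma only ever sees the transpose-symmetric combination $u_\alpha u_{\alpha^t}$.

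The genuine gap is that you never evaluate the two sums $\sum_\alpha Q^{|\alpha|}w_\alpha u_\alpha$ and $\sum_\alpha Q^{|\alpha|}w_\alpha u_\alpha^2$ which you correctly identify as the content of parts (1) and (3): you name two candidate techniques (a free-fermion insertion computation, or ``a direct Cauchy-type manipulation'') and then simply assert that the outputs match the stated right-hand sides, flagging this step yourself as the main obstacle. Since part (2) is a corollary of earlier lemmas, these two evaluations are the entire substance of the statement, so what you have is a set-up, not a proof. Worse, your reduction to the insertion form $u_\alpha = S_\square(p^{-\alpha-\rho})$ points away from the resolution: once the $\alpha$-dependence sits inside the specialisation argument, no standard Cauchy identity sums over $\alpha$. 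The paper's route keeps $\alpha$ as an \emph{index} rather than as a specialisation: by cyclic symmetry $\mathsf{V}_{\square\emptyset\alpha}\mathsf{V}_{\emptyset\emptyset\alpha^t} = \mathsf{V}_{\alpha\square\emptyset}\mathsf{V}_{\alpha^t\emptyset\emptyset}$ and $(\mathsf{V}_{\square\emptyset\alpha})^2 = \mathsf{V}_{\alpha\square\emptyset}\mathsf{V}_{\emptyset\alpha\square}$, so the formula of \cite{ORV} expresses each summand through skew Schur functions $S_{\alpha/\delta}$, $S_{\alpha^t/\eta}$ evaluated at arguments independent of $\alpha$ (namely $p^{-\rho}$, or $p^{-\square-\rho}$ in part (3)); after absorbing $Q^{|\alpha|}$ into $S_\alpha(Q\,\cdot)$, the $\alpha$-sum collapses by the skew Cauchy identity \cite[Eqn.~2, pg.~96]{Macdonald}, and part (3) is finished by the elementary identity $S_\square(p^{-\rho})S_\square(p^{-\square-\rho}) = S_\square(p^{-\rho})^2+1$. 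Without carrying out this (or the vertex-operator computation you allude to), parts (1) and (3) remain unproven. A minor further point: in part (1) your summand is $p^{-1/2}M(p)^2 w_\alpha u_\alpha$, not $M(p)^2 w_\alpha u_\alpha$; your $Q^0$ check silently uses the extra $p^{-1/2}$.
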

\begin{proof}
Part 1) is given by:
\begin{align*}
&\sum\limits_{\alpha} Q^{|\alpha|} p^{\frac{1}{2}(\|\alpha\|^2+\| \alpha^t\|^2)} (\mathsf{V}_{\alpha \square \emptyset }\mathsf{V}_{ \alpha^t \emptyset  \emptyset }) \\
&= p^{-\frac{1}{2}}M(p)^2\sum\limits_{\alpha} Q^{|\alpha|}   \sum_{\eta} S_{\alpha^t/\eta}(p^{-\rho})S_{\square/\eta}(p^{-\rho})\sum_{\delta} S_{\alpha/\delta}(p^{-\rho})S_{\emptyset/\delta}(p^{-\rho})\\
&=p^{-\frac{1}{2}} M(p)^2\sum\limits_{\alpha} Q^{|\alpha|}   \Big(S_{\alpha^t}(p^{-\rho})S_{\square}(p^{-\rho})+ S_{\alpha^t/\square}(p^{-\rho}) \Big)S_{\alpha}(p^{-\rho})\\
&=p^{-\frac{1}{2}} M(p)^2  \Big(S_{\square}(p^{-\rho}) \sum\limits_{\alpha\supset \emptyset} S_{\alpha^t/\emptyset}(p^{-\rho})S_{\alpha/\emptyset}(Qp^{-\rho})+ \sum\limits_{\alpha\supset \square} S_{\alpha^t/\square}(p^{-\rho}) S_{\alpha/\emptyset}(Qp^{-\rho})\Big)
\end{align*}\\

\vspace{-0.25cm}
After applying \cite[Eqn. 2, pg. 96]{Macdonald} the equation becomes
\begin{align*}
& p^{-\frac{1}{2}}M(p)^2 \prod_{i,j>0}(1+p^{i+j} Q) \\
& \Big(S_{\square}(p^{-\rho}) \sum\limits_{\tau\subset\emptyset} S_{\emptyset/\tau}(p^{-\rho})S_{\emptyset/\tau}(Qp^{-\rho})+ \sum\limits_{\tau\subset\emptyset} S_{\emptyset/\tau^t}(p^{-\rho}) S_{\square/\tau}(Qp^{-\rho})\Big)\\
&= p^{-\frac{1}{2}}M(p)^2 \prod_{m>0}(1+p^{m} Q)^m  (1+Q)\frac{p^{\frac{1}{2}}}{1-p}.
\end{align*}\\

\vspace{-0.25cm}
Part 2) follows from lemmas  \ref{vertex_splitting_lemma} and \ref{banana_subpartition_functions}:
\begin{align*}
&\sum\limits_{\alpha} Q^{|\alpha|} p^{\frac{1}{2}(\|\alpha\|^2+\| \alpha^t\|^2)+1} (\mathsf{V}_{ \square \square \alpha}\mathsf{V}_{  \emptyset  \emptyset \alpha^t})\\
&=
\sum\limits_{\alpha} Q^{|\alpha|} p^{\frac{1}{2}(\|\alpha\|^2+\| \alpha^t\|^2)} (\mathsf{V}_{ \emptyset  \emptyset \alpha}\mathsf{V}_{  \emptyset  \emptyset \alpha^t})
+\sum\limits_{\alpha} Q^{|\alpha|} p^{\frac{1}{2}(\|\alpha\|^2+\| \alpha^t\|^2)+1} (\mathsf{V}_{ \square \emptyset \alpha}\mathsf{V}_{   \square \emptyset \alpha^t}).
\end{align*}\\

\vspace{-0.25cm}
Part 3) is given by:
\begin{align*}
&\sum\limits_{\alpha} Q^{|\alpha|} p^{\frac{1}{2}(\|\alpha\|^2+\| \alpha^t\|^2)+1} (\mathsf{V}_{  \square \emptyset \alpha})^2 \\
&=
\sum\limits_{\alpha} Q^{|\alpha|} p^{\frac{1}{2}(\|\alpha\|^2+1)}(\mathsf{V}_{ \alpha \square \emptyset })~ p^{ \frac{1}{2}(\| \alpha^t\|^2+1)} (\mathsf{V}_{  \emptyset \alpha\square }) \\
&=
M(p)^2\sum\limits_{\alpha} Q^{|\alpha|}   
S_{\square}(p^{-\rho})\sum_{\delta} S_{\alpha/\delta}(p^{-\square-\rho})S_{\emptyset/\delta}(p^{-\square-\rho})\\
&\hspace{7.8em}S_{\emptyset}(p^{-\rho}) \sum_{\eta} S_{\alpha^t/\eta}(p^{-\rho})S_{\square/\eta}(p^{-\rho})\\ 
&=
M(p)^2 S_{\square}(p^{-\rho}) \sum\limits_{\alpha}    S_{\alpha}(Q p^{-\square-\rho})
\Big(
 S_{\alpha^t}(p^{-\rho})S_{\square}(p^{-\rho}) + S_{\alpha^t/\square}(p^{-\rho})
\Big)
\end{align*}
After applying \cite[Eqn. 2, pg. 96]{Macdonald} the equation becomes
\begin{align*}
M(p)^2 S_{\square}(p^{-\rho}) (1+Q)\prod_{m>0}(1+Qp^m)^m 
\Big(
S_{\square}(p^{-\rho}) + S_{\square}(p^{-\square-\rho})
\Big).
\end{align*}
The result follows from a quick computation involving $\mathsf{V}_{  \emptyset  \square\square}=\mathsf{V}_{  \square\square \emptyset}$ showing that 
\[
S_{\square}(p^{-\rho})S_{\square}(p^{-\square-\rho}) = S_{\square}(p^{-\rho})^2 +1.
\]\end{proof}

\begin{lemma}\label{vertex_trace_formulas}
The following are true
\begin{enumerate}[label={\arabic*)}]
\item
$\sum\limits_\alpha Q^{|\alpha|}  = \prod\limits_{d>0}\dfrac{1}{(1-Q^d)}$
\item 
$\sum\limits_\alpha Q^{|\alpha|}\dfrac{(\mathsf{V}_{\alpha \square \emptyset})}{(\mathsf{V}_{\alpha\emptyset\emptyset})}  =\dfrac{1}{1-p} \prod\limits_{d>0}\dfrac{(1-Q^d)}{(1-p Q^d)(1-p^{-1}Q^d)}$
\item 
$\sum\limits_\alpha p^{\|\alpha\|^2} Q^{|\alpha|}\dfrac{(\mathsf{V}_{\alpha \alpha^t \emptyset})}{(\mathsf{V}_{\emptyset\emptyset\emptyset})}  =\prod\limits_{d>0}\dfrac{M(p,Q^d)}{(1-Q^d)}$
\item
$\sum\limits_\alpha p^{\|\alpha\|^2} Q^{|\alpha|}\dfrac{(\mathsf{V}_{\alpha \alpha^t \emptyset})(\mathsf{V}_{\alpha \square \emptyset})}{(\mathsf{V}_{\alpha\emptyset\emptyset})(\mathsf{V}_{\emptyset\emptyset\emptyset})}  =\dfrac{1}{1-p}\prod\limits_{d>0}\dfrac{M(p,Q^d)}{(1-p Q^d)(1-p^{-1}Q^d)}$
\end{enumerate}
\end{lemma}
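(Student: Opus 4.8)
The plan is to reduce all four identities to manipulations of principally specialised (skew) Schur functions, in the same spirit as the proofs of Lemmas \ref{banana_subpartition_functions} and \ref{vertex_section_formulas}. Identity~1 is simply Euler's partition generating function $\sum_{\alpha}Q^{|\alpha|}=\prod_{d>0}(1-Q^d)^{-1}$ and needs no vertex input. For the other three I would substitute the Okounkov--Reshetikhin--Vafa formula for $\mathsf{V}_{\lambda\mu\nu}$ from \cite{ORV}, recording $\mathsf{V}_{\emptyset\emptyset\emptyset}=M(p)$, $\mathsf{V}_{\alpha\emptyset\emptyset}=M(p)\,p^{-\frac12\|\alpha\|^2}S_{\alpha^t}(p^{-\rho})$, together with $S_{\square}(p^{-\rho})=p^{1/2}/(1-p)$ and $S_{\square/\square}=1$. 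In each case the leg carrying $\square$ or $\emptyset$ collapses the internal sum over $\eta$ in the vertex to at most the two terms $\eta=\emptyset,\square$.

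I would treat Identity~3 first, as the cleanest model. Since $\nu=\emptyset$ both Schur arguments specialise to $p^{-\rho}$, so $p^{\|\alpha\|^2}\mathsf{V}_{\alpha\alpha^t\emptyset}/\mathsf{V}_{\emptyset\emptyset\emptyset}=\sum_{\eta}S_{\alpha^t/\eta}(p^{-\rho})^2$, and with $\beta=\alpha^t$ the full sum becomes $\sum_{\eta}\sum_{\beta\supseteq\eta}Q^{|\beta|}S_{\beta/\eta}(p^{-\rho})^2$. Using homogeneity to absorb a factor of $Q^{1/2}$ into the alphabet and applying the skew Cauchy identity \cite[Eqn.~2, pg.~96]{Macdonald}, namely $\sum_{\beta}S_{\beta/\eta}(x)S_{\beta/\eta}(y)=\prod_{i,j}(1-x_iy_j)^{-1}\sum_{\tau}S_{\eta/\tau}(x)S_{\eta/\tau}(y)$, produces the edge factor $\prod_{i,j\ge 1}(1-Qp^{i+j-1})^{-1}=\prod_{m>0}(1-Qp^m)^{-m}=M(p,Q)$. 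Resolving the remaining sum by the ORV diagonal-slicing (transfer-matrix) argument and combining with Identity~1 yields $\prod_{d>0}M(p,Q^d)/(1-Q^d)$; this is exactly the local nodal-fibre partition function, consistent with its role in the nodal-fibre strata of Section~\ref{main_computation_section_A}.

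For Identities~2 and~4 I would insert the single box on the section leg. Collapsing the $\eta$-sum gives $\mathsf{V}_{\alpha\square\emptyset}/\mathsf{V}_{\alpha\emptyset\emptyset}=\tfrac{1}{1-p}+p^{-1/2}\,S_{\alpha^t/\square}(p^{-\rho})/S_{\alpha^t}(p^{-\rho})$. The constant piece contributes $\tfrac{1}{1-p}$ times Identity~1 (respectively Identity~3), and the box-removal term is evaluated by the same Cauchy machinery after rewriting $S_{\alpha^t/\square}$ through its principal-specialisation branching formula. A useful organising check is that the stated right-hand sides satisfy $(\text{Identity }4)=(\text{Identity }2)\cdot(\text{Identity }3)$, since $\prod_{d>0}\tfrac{1-Q^d}{(1-pQ^d)(1-p^{-1}Q^d)}\cdot\prod_{d>0}\tfrac{M(p,Q^d)}{1-Q^d}=\prod_{d>0}\tfrac{M(p,Q^d)}{(1-pQ^d)(1-p^{-1}Q^d)}$; thus it suffices to establish Identities~2 and~3 and deduce Identity~4 from this factorisation. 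Lemma~\ref{vertex_splitting_lemma} and the values in Lemma~\ref{vertex_squares_lemma} can be used to cross-check the low-order coefficients.

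The main obstacle is the $\alpha$-dependent normalisation $1/\mathsf{V}_{\alpha\emptyset\emptyset}$, equivalently $1/S_{\alpha^t}(p^{-\rho})$, in Identities~2 and~4: unlike Identity~3, the weight $Q^{|\alpha|}/S_{\alpha^t}(p^{-\rho})$ is not of Cauchy type, so the sum cannot be resolved by a single application of the Macdonald identity. I expect to handle this by recasting the box-removal ratio $S_{\alpha^t/\square}(p^{-\rho})/S_{\alpha^t}(p^{-\rho})$ as a content sum over the removable corners of $\alpha^t$, after which the generating function telescopes into the displayed infinite products; alternatively, since $\sum_{\alpha}Q^{|\alpha|}\mathsf{V}_{\alpha\square\emptyset}/\mathsf{V}_{\alpha\emptyset\emptyset}$ is precisely the connected contribution of a thickened smooth fibre meeting the section, these formulas may be imported directly from the local elliptic surface computations of \cite{BK}, whose method this section follows throughout.
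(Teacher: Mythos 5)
Your plan splits into pieces of very different status, and the two hard pieces are exactly where it fails. Parts 1) and 3) are fine: part 1) is Euler's generating function, and for part 3) the iterated skew Cauchy argument you sketch does work --- each iteration rescales the alphabet by $Q^{1/2}$ and emits a factor $M(p,Q^d)$ at step $d$, while the remainder converges (in the formal power series topology) to $\sum_\tau Q^{|\tau|}=\prod_{d>0}(1-Q^d)^{-1}$, giving $\prod_{d>0}M(p,Q^d)/(1-Q^d)$. But for parts 2) and 4) you correctly identify the obstruction --- the weight $Q^{|\alpha|}/S_{\alpha^t}(p^{-\rho})$ coming from the normalisation $1/\mathsf{V}_{\alpha\emptyset\emptyset}$ is not of Cauchy type --- and then offer only the hope that a corner-removal rewriting ``telescopes''. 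No mechanism is given, and none is known along these lines: these two sums are precisely the hard ``trace identities'' that required the separate paper \cite{BKY}, where they are proved by expressing the sums as traces of vertex operators on the fermionic Fock space. The paper's own proof of this lemma is simply that part 1) is classical and parts 2)--4) are the content of \cite[Thm. 3]{BKY}; your fallback instinct to import the formulas from the literature is what the paper actually does, except the correct source is \cite{BKY} rather than \cite{BK} (the latter itself relies on the former).

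There is also a genuine logical error in your proposed shortcut. You observe that the right-hand sides satisfy $(\text{Identity }4)=(\text{Identity }2)\cdot(\text{Identity }3)$ and conclude that ``it suffices to establish Identities 2 and 3 and deduce Identity 4 from this factorisation.'' That deduction is invalid: the left-hand side of 4) is $\sum_\alpha f_\alpha g_\alpha$ where $f_\alpha = p^{\|\alpha\|^2}Q^{|\alpha|}\mathsf{V}_{\alpha\alpha^t\emptyset}/\mathsf{V}_{\emptyset\emptyset\emptyset}$ is the summand of 3) and $g_\alpha = \mathsf{V}_{\alpha\square\emptyset}/\mathsf{V}_{\alpha\emptyset\emptyset}$ is the weight appearing in 2), and a sum of products is not the product of the sums. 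The multiplicativity of the right-hand sides is a remarkable \emph{output} of the trace-identity structure (traces of products of vertex operators factorising), not an input one can use; identity 4) requires its own proof, which is again supplied by \cite[Thm. 3]{BKY}. So as written the proposal proves 1) and 3) but leaves 2) and 4) unproved, and the claimed reduction of 4) to 2) and 3) must be deleted.
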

\begin{proof}
The first is a classical result and the other three are the content of \cite[Thm. 3]{BKY}. 
\end{proof}

\end{document}